\numberwithin{equation}{section}
\newcommand{\St}{\operatorname{St}}
\newcommand{\matr}[4]{\left(\begin{smallmatrix}#1 & #2 \\ #3 & #4\end{smallmatrix}\right)}
\newcommand{\eps}{\varepsilon}
\renewcommand{\P}{\mathbb{P}}
\newcommand{\ind}{\operatorname{ind}}
\renewcommand{\O}{\mathcal{O}}
\newcommand{\m}{\mathfrak{m}}
\newcommand{\Reptors}{\operatorname{Rep}_{\text{tors}}}
\newcommand{\RepOE}{\operatorname{Rep}_{\O_E}}
\newcommand{\RepE}{\operatorname{Rep}_{E}}
\newcommand{\JH}{\operatorname{JH}}
\newcommand\Id{\mathrm{Id}}
\newcommand\GL{{\mathrm{GL}}}
\newcommand\Qbar{{\bar{\Q}}}
\newcommand{\Q}{\mathbb{Q}}
\newcommand{\Z}{\mathbb{Z}}
\newcommand{\Zbar}{{\bar{\Z}}}
\newcommand{\F}{\mathbb{F}}
\newcommand\Hom{{\mathrm {Hom}}}
\newcommand\Sym{{\mathrm {Sym}}}
\newcommand\iso{{\> \simeq \>}}
\newcommand\linee{{
        {\begin{tiny}
        \begin{xymatrix}{
         \bullet  \ar@{-}[d] \\ 
         \bullet  \ar@{-}[d] \\
         \bullet} 
       \end{xymatrix}
       \end{tiny}} }}  
\newcommand\diamondd{{
        \begin{tiny}
        \begin{xymatrix}{
         & \bullet  \ar@{-}[dl] \ar@{-}[dr] &  \\ 
         \bullet  \ar@{-}[dr] &   &   \bullet  \ar@{-}[dl] \\
         & \bullet & } 
       \end{xymatrix}
       \end{tiny} }}
\newtheorem{thm}{Theorem}[section]
\newtheorem{theorem}[thm]{Theorem}
\newtheorem{cor}[thm]{Corollary}
\newtheorem{prop}[thm]{Proposition}
\newtheorem{lemma}[thm]{Lemma}
\theoremstyle{definition}
\newtheorem{definition}[thm]{Definition}
\theoremstyle{remark}
\newtheorem{remark}[thm]{Remark}
\theoremstyle{remark}
\begin{document}

\title[Reductions of Galois Representations of slope 1]{Reductions of Galois representations of slope 1}

\author[S. Bhattacharya]{Shalini Bhattacharya}
\address{Department of Mathematics, Bar-Ilan University, Ramat Gan-5290002, Israel}
\email{shaliniwork16@gmail.com}

\author[E. Ghate]{Eknath Ghate} 
\address{School of Mathematics, Tata Institute of Fundamental Research, Homi Bhabha Road, Mumbai-5, India}
\email{eghate@math.tifr.res.in}

\author[S. Rozensztajn]{Sandra Rozensztajn}
\address{UMPA, \'ENS de Lyon, UMR 5669 du CNRS, 46, all\'ee d’Italie, 69364 Lyon Cedex 07, France}
\email{sandra.rozensztajn@ens-lyon.fr}

\begin{abstract}
We compute the reductions of irreducible crystalline two-dimensional
representations of $G_{\Q_p}$ of slope 1, for primes $p \geq 5$, and all
weights. We describe the semisimplification of the reductions completely.
In particular, we show that the reduction is often reducible.
We also investigate whether the extension obtained is peu or tr\`es
ramifi\'ee, in the relevant reducible non-semisimple 
cases. The proof
uses the compatibility between the $p$-adic and mod $p$ Local Langlands
Correspondences, and involves a detailed study of the reductions of both
the standard and non-standard 
lattices in certain $p$-adic Banach spaces. 
\end{abstract}

%\subjclass[2010]{Primary: 11F80} %, Secondary: 11F80, 11F30}
%\keywords{Reductions of Galois representations, Local Langlands Correspondence, Hecke operators.}
%\date{September 2015}
\maketitle

%%%%%%%%%%%%%%%%%%%%%%%%%%%%%%%%%%%%%%%%%%%%%%%%%%%%%%%%%%%%%%%%%%%%%%%%%%%%%%%%%%%%%%%%%%%%%%%%%%%%%%%%%%%%%%%%%%%%%%%%%%%%%%%%%%%%%%%%%%%%%%%%%%%%%

%%%%%

\section{Introduction}

Let $p$ be an odd prime. This paper is concerned with computing the
reductions of certain crystalline two-dimensional representations of the
local Galois group $G_{\Q_p}$.  The first computations of the reduction
in positive slope, after Edixhoven \cite{Edixhoven92}, were carried out by Breuil
in \cite{Br03}, for weights at most $2p+1$.  The reductions are also
known for slopes which are large compared to the weight by Berger-Li-Zhu
\cite{BLZ}; see also \cite{YY} for results using similar techniques. In
the other direction, the reductions have also recently been computed for
small fractional slopes, namely, for slopes in $(0,1)$ by Buzzard-Gee
\cite{BG09}, \cite{BG13}, and for slopes in $(1,2)$ in
\cite{GG15}, \cite{Bhattacharya-Ghate}, under a mild hypothesis.  In this paper we
compute the reduction in the important missing case of integral slope 1.
As far as we know, the shape of the reduction is not known for all
weights, for any other (positive) integral slope. 

Let us introduce some notation.
Let $E$ be a finite extension field of $\Q_p$ and let $v$ be the
valuation of $\bar\Q_p$ normalized so that $v(p) = 1$. Let $a_p \in E$
with $v(a_p) > 0$ and let $k \geq 2$.  Let $V_{k,a_p}$ be the irreducible
crystalline representation of $G_{\Q_p}$ with Hodge-Tate weights
$(0,k-1)$ and slope $v(a_p) > 0$ such that $D_\mathrm{cris}(V_{k,a_p}^*)
= D_{k,a_p}$, where $D_{k,a_p}  = E e_1 \oplus E e_2$ is the filtered
$\varphi$-module as defined in \cite[\S 2.3]{Berger11}.  The
semisimplification $\bar{V}_{k,a_p}^{ss}$ of the reduction
$\bar{V}_{k,a_p}$ with respect to a lattice in ${V}_{k,a_p}$ is
independent of the choice of the lattice.  Let $\omega = \omega_1$ and
$\omega_2$ denote the fundamental characters of level 1 and 2
respectively. Let $\mathrm{ind}(\omega_2^{a})$ denote the  representation
of $G_{\Q_p}$ obtained by inducing the character $\omega_2^a$, for $a \in
\Z$, from $G_{\Q_{p^2}}$ to  $G_{\Q_p}$; it is irreducible if $p+1 \nmid
a$.  Finally, let $\mu_x$ be the unramified character of $G_{\Q_p}$
taking (geometric) Frobenius at $p$ to $x \in \bar\F_p^\times$. 

The following theorem describes the reduction $\bar{V}_{k,a_p}^{ss}$ when
the slope $v(a_p)$ is equal to $1$, for all primes $p > 3$.

\begin{theorem}
 \label{maintheoslopeone}
  Let $p>3$, let $k\geq 2p+2$ and let $r = k-2 \equiv b \mod (p-1)$, with $ 2 \leq b\leq p$. Suppose that the slope $v(a_p)=1$.  
  Then $\bar{V}_{k, a_p}^{ss}$ is as follows:
\begin{eqnarray*}
b = 2 & \implies & 
\begin{cases}
    \mathrm{ind}(\omega_2^{b+1}),  & \text{if }  v\left(\dfrac{a_p}{p} -\dbinom{r}{2} \dfrac{p}{a_p} \right)< v(r-2) \vspace{.2cm} \\
	\mu_{\lambda} \cdot \omega^b\,\oplus\,\mu_{\lambda^{-1}} \cdot \omega, &
\text{if }  v\left(\dfrac{a_p}{p} -\dbinom{r}{2}\dfrac{p}{a_p}\right) = v(r-2),\> \text{ with }   
\lambda = \overline{\dfrac{2}{2-r}\left(\dfrac{a_p}{p} -\dbinom{r}{2}\dfrac{p}{a_p}\right)} \vspace{.2cm} \\ 
	\mathrm{ind}(\omega_2^{b+p}), & \text{if } v\left(\dfrac{a_p}{p} -\dbinom{r}{2} \dfrac{p}{a_p} \right)>v(r-2),       
\end{cases} \\
3\leq b\leq p-1 & \implies & 
\begin{cases}
	\mu_{\lambda} \cdot \omega^b\oplus \mu_{\lambda^{-1}} \cdot
\omega, &\text{if } p\nmid r-b, \text{ with } \lambda
=\overline{\dfrac{b}{b-r} \cdot
\dfrac{a_p}{p}}\in\bar{\mathbb{F}}_p^\times\\
	\mathrm{ind}(\omega_2^{b+1}), &\text{if } p\mid r-b,
\end{cases} \\
b=p & \implies & 
\begin{cases}
	\mathrm{ind}(\omega_2^{b+p}), &\text{if } p\nmid r-b\\
	\mu_{\lambda} \cdot \omega\,\oplus\, \mu_{\lambda^{-1}} \cdot \omega, &\text{if } p\mid r-b, 
\text{ with } \lambda + \dfrac{1}{\lambda} = \overline{\dfrac{a_p}{p}-\dfrac{r-p}{a_p}}\in\bar{\mathbb{F}}_p.
\end{cases}     
\end{eqnarray*}
\end{theorem}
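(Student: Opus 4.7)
The plan is to invoke the compatibility between the $p$-adic and mod $p$ Local Langlands Correspondences, following the strategy of Breuil and its subsequent refinements by Buzzard-Gee for slopes in $(0,1)$ and by the first two authors for slopes in $(1,2)$. By the $p$-adic LLC, the representation $V_{k,a_p}$ corresponds to a unitary $\GL_2(\Q_p)$-Banach representation $\Pi_{k,a_p}$, and there is a surjection
\[
\mathrm{ind}_{KZ}^{\GL_2(\Q_p)}\,\Sym^{r}\O_E^2 \twoheadrightarrow \Theta_{k,a_p}
\]
onto a $\GL_2(\Q_p)$-stable lattice whose kernel is generated by $T - a_p$, where $T$ is the usual Hecke operator. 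Berger's compatibility theorem identifies $\bar{V}_{k,a_p}^{\mathrm{ss}}$ with the image, under the mod $p$ LLC of Breuil, of the semisimplification of the cosocle of the reduction $\bar{\Theta}_{k,a_p}$. Since $v(a_p) = 1$, we have $\bar{a}_p = 0$, so $\bar{\Theta}_{k,a_p}$ is a priori a quotient of $\mathrm{ind}_{KZ}^{\GL_2(\Q_p)}\,\Sym^{r}\bar{\F}_p^2 / T$.

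The technical heart of the proof is to identify precisely which Jordan-H\"older factors of $\Sym^{r}\bar{\F}_p^2$ survive in the cosocle of $\bar{\Theta}_{k,a_p}$. I would choose a test vector $f \in \mathrm{ind}_{KZ}^{\GL_2(\Q_p)}\,\Sym^{r}\O_E^2$ supported on the identity coset of $KZ$, with value a polynomial adapted to the JH factor $\sigma_b \subset \Sym^{r}\bar{\F}_p^2$ relevant to the residue class of $r$, and compute $(T - a_p)f = T^{+}f + T^{-}f - a_p f$ explicitly, working modulo chosen $\GL_2(\Q_p)$-subrepresentations matching the standard JH filtration. Because $v(a_p) = 1$, the $a_p f$ term and certain terms of $T^{+}f$ lie at the same $p$-adic depth; dividing through by $p$ produces a controlled congruence in $\bar{\Theta}_{k,a_p}$ that pins down both the surviving JH factor and the scalar $\lambda$ (its Hecke eigenvalue).

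The case analysis then proceeds according to $b$. For $3 \le b \le p-1$, when $p \nmid r - b$ the leading coefficient $\dfrac{b}{b-r}\cdot\dfrac{a_p}{p}$ is a nonzero unit mod $p$, giving a principal series quotient which, via the mod $p$ LLC, yields the reducible sum $\mu_\lambda\omega^b \oplus \mu_{\lambda^{-1}}\omega$; when $p \mid r - b$ this coefficient vanishes, collapsing the principal series quotient and forcing a supersingular reduction, corresponding to the irreducible $\mathrm{ind}(\omega_2^{b+1})$. The endpoint $b = 2$ is more delicate: an additional JH factor at the bottom of $\Sym^{r}\bar{\F}_p^2$ enters at the same $p$-adic depth through a $\binom{r}{2}\,p/a_p$ correction, producing the trichotomy governed by the comparison of $v\!\left(\dfrac{a_p}{p} - \binom{r}{2}\dfrac{p}{a_p}\right)$ with $v(r-2)$. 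The case $b = p$ is similarly delicate, complicated by the coincidence $\omega^p = \omega$ which merges two would-be distinct characters.

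The main obstacle is controlling the non-standard lattice phenomenon in the endpoint cases $b = 2$ and $b = p$: here the competition between two terms of the same $p$-adic valuation produces the subtle expressions in the theorem, and a naive use of only the standard lattice $\mathrm{ind}_{KZ}^{\GL_2(\Q_p)}\,\Sym^{r}\O_E^2/(T - a_p)$ misses the distinction between the irreducible and reducible reductions. Verifying the required cancellations and $p$-adic valuation estimates for the combinatorial coefficients that appear for arbitrary weights $k \geq 2p+2$ (as opposed to the small-weight cases computable by hand in \cite{Br03}) will be the principal technical burden.
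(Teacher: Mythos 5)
Your overall plan — reduce to the automorphic side via the compatibility of the $p$-adic and mod $p$ Local Langlands correspondences, compute $(T-a_p)f$ for well-chosen integral test functions, and feed the resulting constraints into the mod $p$ LLC dictionary — is indeed the strategy of the paper, and the case split on $b$ is correctly identified. However, several points in the sketch are off or incomplete in ways that matter.

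First, the compatibility statement is stated slightly wrong: what Berger's theorem identifies is $\bar\Theta_{k,a_p}^{\mathrm{ss}}$, the full semisimplification of the reduced lattice, not the semisimplification of its cosocle; these differ whenever $\bar\Theta$ is not semisimple, which is the generic situation here. Second, and more seriously, the final paragraph claims that the standard lattice $\mathrm{ind}_{KZ}^G \Sym^r\O_E^2/(T-a_p)$ ``misses the distinction between the irreducible and reducible reductions'' in the endpoint cases $b=2,p$. This is false: the semisimplification $\bar V^{\mathrm{ss}}_{k,a_p}$ is lattice-independent, and the entire proof of the theorem you are asked to prove proceeds with only the standard lattice. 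Non-standard lattices enter the paper only for the separate ``peu/tr\`es ramifi\'ee'' question (Theorem~\ref{maintheopeuvstres}), not here. Third, you describe test vectors ``supported on the identity coset of $KZ$.'' This is nowhere near enough. For $3\le b\le p-1$ the paper's functions already spread out to radius $2$, for $b=p$ they involve sums over radius-$2$ cosets with explicit Teichm\"uller coefficients, and — this is the genuine novelty of the $b=2$ trichotomy — the function $\chi$ of Lemma~\ref{defauxfunc} and its translates $\chi_g$ are supported on balls of radius $v(r-2)+1$, unboundedly large with $r$. Your sketch does not account for this, nor for the intermediate filtration structure that makes it work: the map $\ind_{KZ}^G V_r \to \bar\Theta$ factors through $\ind_{KZ}^G P$ with $P=V_r/(X_r+V_r^{**})$, and the whole analysis is organized around the three graded pieces $J_0,J_1,J_2$ of $P$ and the factors $F_i$ of $\bar\Theta$ they induce (Section~\ref{sectionP} and Diagrams~\eqref{gendiagram1}, \eqref{gendiagram2}). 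Without this filtration and the large-radius test functions, you cannot extract the valuation comparison with $v(r-2)$ that drives the $b=2$ trichotomy, nor the quadratic Hecke relation $T^2-cT+1$ in the $b=p$, $p\mid r$ case.
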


\begin{remark}
We have:
\begin{itemize}
   \item Theorem~\ref{maintheoslopeone} describes the reduction $\bar{V}_{k,a_p}^{ss}$ completely for slope $v(a_p) = 1$ and primes $p \geq 5$.
The results in the theorem match with all previously known results for weights $k \leq 2p+1$ 
summarized in \cite{Berger11}, 
except for $k = 2$, where our theorem does not give the correct result, 
and for the cases $(k, a_p)=(4,\pm p)$, where the theorem does not
directly apply as the quantity $\lambda$ is undefined.\footnote{Recent work by Arsovski \cite{Arsovski}  includes
a study of the reduction $\bar{V}_{k,a_p}^{ss}$ when the slope is 1, for
$b \neq 3$, $p$, though the reduction is not  uniquely specified there.} So, the theorem could have been stated 
for all weights excluding these cases.%
%at least if we exclude the case$v(r-2)=\infty=v\left(\frac{a_p}{p}- \binom{r}{2} \frac{p}{a_p}\right)$, but $\lambda$ is undefined
\item It is a classical fact that the reduction is 
reducible in the ordinary case, that is, when the slope is $0$.
The theorem shows that the reduction
$\bar{V}_{k,a_p}^{ss}$ is often reducible when the slope is
%the next positive integer, namely 
$1$. For example, the reduction is reducible
for the congruence classes $3 \leq b \leq p-1$ if $p \nmid r - b$.
This behaviour at the integral slopes $\{0\} \cup \{1\}$
contrasts with the behaviour of the reduction for fractional
slopes $v(a_p) \in (0,1) \cup (1,2)$, where $\bar{V}_{k,a_p}$ is mostly irreducible.
\item However, when $v(a_p) = 1$, the theorem shows that in each congruence class of
weights mod $(p-1)$ there are further congruence classes of weights mod
$p$ where $\bar{V}_{k,a_p}$ is irreducible.
 \item A surprising trichotomy occurs for $b = 2$. This 
           seems to be a more complicated manifestation of the dichotomy 
          that occurs for weights $r \equiv 1 \mod (p-1)$ when $v(a_p) = \frac{1}{2}$ in \cite{BG13}.
\end{itemize}
\end{remark}

The proof  of Theorem~\ref{maintheoslopeone} uses the compatibility of the $p$-adic and mod
$p$ Local Langlands Correspondences, with respect to the process of reduction \cite{B10}. This compatibility allows one to 
reduce the reduction problem to one on the `automorphic side', namely, to computing
the reduction of a lattice in a certain Banach space. 
%We must then solve this problem using some delicate harmonic analysis on the 
%Bruhat-Tits tree. 
Let $G =  \mathrm{GL}_2(\Q_p)$ and let $B(V_{k,a_p})$ be the unitary $G$-Banach space associated
to $V_{k,a_p}$ by the $p$-adic Local Langlands Correspondence. 
The reduction $\overline{B(V_{k,a_p})}^{ss}$ of a lattice 
in this Banach space coincides with the image of $\bar{V}_{k,a_p}^{ss}$
under the (semisimple) mod $p$ Local Langlands Correspondence defined in \cite{Br03}. 
Since the mod $p$ correspondence is by definition injective, it suffices to compute the reduction 
$\overline{B(V_{k,a_p})}^{ss}$.

Let $K = \mathrm{GL}_2(\Z_p)$, a maximal compact open subgroup of $G =
\mathrm{GL}_2(\Q_p)$, and let $Z= \Q_p^\times$ be the center of $G$.  Let $X =
KZ \backslash G$ be the (vertices of the) Bruhat-Tits tree associated to
$G$.  The module $\mathrm{Sym}^r \bar\Q_p^2$, for $r = k-2$, carries a
natural action of $KZ$, and the projection

$$
KZ \backslash ( G \times   {\mathrm {Sym}}^{k-2} \bar\Q_p^2 ) \rightarrow KZ \backslash G = X
$$
defines a  local system on $X$. The smooth representation of $G$ 
\begin{eqnarray*}  
   {\mathrm{ind}}_{KZ}^{G} \> {\mathrm {Sym}}^{k-2} \bar\Q_p^2
\end{eqnarray*}
consists of all sections $f : G \rightarrow \mathrm{Sym}^{k-2}
\bar\Q_p^2$ of this local system which are compactly supported mod $KZ$.
There is a $G$-equivariant Hecke operator $T$ acting on this space of
sections. Let $\Pi_{k,a_p}$ be the locally algebraic representation of
$G$ defined by taking the cokernel of $T-a_p$ acting on the above space.
Let $\Theta_{k,a_p}$ be the image of the integral sections
${\mathrm{ind}}_{KZ}^{G} \> {\mathrm {Sym}}^{k-2} \bar\Z_p^2$ in
$\Pi_{k,a_p}$. 
Then $B(V_{k,a_p})$ is the completion $\hat{\Pi}_{k,a_p}$ of
$\Pi_{k,a_p}$ with respect to the lattice $\Theta_{k,a_p}$.
The completion $\hat{\Theta}_{k,a_p}$, and sometimes by
abuse of notation $\Theta_{k,a_p}$ itself, is called the standard lattice
in $B(V_{k,a_p})$.  We have $\overline{B(V_{k,a_p})}^{ss} \cong
\bar{\hat{\Theta}}_{k,a_p}^{ss} \cong \bar\Theta_{k,a_p}^{ss}$.

Thus, to compute $\bar{V}^{ss}_{k,a_p}$, it suffices to compute the reduction 
$\bar\Theta_{k,a_p}^{ss}$ of $\Theta_{k,a_p}$.
There are two main steps. First, we study a 
quotient $P$ of $\mathrm{Sym}^{k-2} \bar\F_p^2$ whose Jordan-H\"older factors  provide an upper bound on the possible Jordan-H\"older factors that can contribute to $\bar\Theta_{k,a_p}^{ss}$. This is fairly routine and is done in Section~\ref{sectionP}. It turns out that $P$ has $3$ (and sometimes 2) Jordan-H\"older factors.

Second, we  determine the eigenspaces of the Hecke operator $T$ through which each of 
these Jordan-H\"older factors contribute to $\bar\Theta_{k,a_p}^{ss}$, if at all. This involves
some delicate spectral analysis of the Hecke operator $T$ and is the hardest
part of the argument. 
The thrust of this analysis can be summarized as follows: we 
either `eliminate' all but one Jordan-H\"older factor of $P$, or eliminate all but two 
Jordan-H\"older factors of $P$ that `pair up' nicely under the mod $p$ Local Langlands Correspondence.
In the former case, $\bar\Theta_{k,a_p}$ is generically a supersingular representation and $\bar{V}_{k,a_p}$ 
is irreducible. In the latter case,
$\bar\Theta_{k,a_p}^{ss}$ is generically a direct sum of principal series representations and $\bar{V}_{k,a_p}^{ss}$ 
is reducible. There are additional complications. For instance, if the dimension 
of the sole surviving  Jordan-H\"older factor of $P$ in the former case is equal to $p-1$, it is also possible 
for $\bar\Theta_{k,a_p}^{ss}$ to be a direct sum of principal series, in which case 
$\bar{V}_{k,a_p}^{ss}$ is reducible with the inertia subgroup $I_p$ acting by scalars.

In order to carry out this spectral analysis, we must hunt for certain explicit rational sections $f$, 
respectively $f^\pm$,  of the cover above, and use the explicit formula for the Hecke 
operator $T$ to show that $(T-a_p)f$,
respectively $(T-a_p)f^\pm$, is integral with non-zero reduction of a relatively simple form.
In particular, we require that either the image of $\overline{(T-a_p)f}$ %in ${\mathrm{ind}}_{KZ}^{G} \> P$ 
generates ${\mathrm{ind}}_{KZ}^{G} \> J$, for some Jordan-H\"older factor $J$ of $P$, 
or that $\overline{(T-a_p)f^\pm}$ generates $(T-\lambda^{\pm 1}) ({\mathrm{ind}}_{KZ}^{G} \> J^{\pm})$, for some $\lambda \in \bar\F_p^\times$ and some pair of Jordan-H\"older factors $J^\pm$ of $P$.
 In the former case, an easy argument shows that $J$ does not contribute to $\bar{\Theta}_{k,a_p}$, so is 
`eliminated', and in the latter case the factors $J^{\pm}$ `pair up' nicely.
 Note that neither the Jordan-Holder factors $J$, $J^\pm$ nor the `eigenvalue' $\lambda$ 
are specified in advance, which makes even starting this analysis rather difficult. 

The spectral analysis is carried out for the congruence classes $a \in \{1, 2, \ldots, p-1\} $ of $r$ mod $(p-1)$ in three batches. The generic case $3 \leq a= b \leq p-1$ is treated first in Section~\ref{case3ap-1}. 
The cases $a = 1$ (so $b = p$) and $a = 2$ (so $b = 2$) are more 
complicated and require special treatment (see
Sections~\ref{casea=1}, \ref{casea2}).
Generally speaking, the explicit functions $f$ we use are 
supported mod $KZ$ on a ball of radius at most 2 centered
at the origin of the tree.
But to establish the trichotomy in the case $b = 2$, we work with functions $f$ supported mod $KZ$ 
on a ball of radius $v(r-2) + 1$ centered at the origin, which may be arbitrarily large depending
on the valuation of $r-2$.  Moreover, the trichotomy we establish when $b = 2$ shows that each 
of the three possibilities for  $\bar{V}^{ss}_{k,a_p}|_{I_p}$ allowed by the structure of $P$ in this case and the mod $p$ Local Langlands Correspondence actually does occur. 

As already remarked, Theorem~\ref{maintheoslopeone} shows that $\bar{V}_{k,a_p}$ is often
reducible. This gives rise to a couple of subtler questions concerning the reduction, which do not seem to have been addressed for the crystalline representations $V_{k,a_p}$ treated so far in the literature, 
%i.e., for $\bar{V}_{k,a_p}$, even for small weights $k$, for large slopes $v(a_p)$ or for positive 
%fractional slopes $v(a_p)$ less than $2$, 
perhaps primarily because the reduction $\bar{V}_{k,a_p}$ in these cases
was found to be generically irreducible.  
We describe these two questions now.

%A substantial portion of this paper (Section~\ref{peu-tres}) is
%concerned with studying more subtle
%properties of the reduction $\bar{V}_{k,a_p}$, when it is reducible. 

First, if $\bar{V}_{k,a_p}^{ss}$ is isomorphic to $\omega \oplus 1$, up to a twist, then 
by a well-known result of Ribet, there is a lattice which is unique up to homothety 
inside $V_{k,a_p}$ that reduces to a non-split 
extension of $1$ by $\omega$, up to the same twist. One may  ask whether this extension 
is ``peu ramifi\'ee" or ``tr\`es ramifi\'ee" in the sense of Serre \cite{Serre87}.
The answer does not depend on the choice of the lattice or the choice of the basis.
In the context of slope $v(a_p) = 1$ and Theorem~\ref{maintheoslopeone}, this question 
arises in exactly two cases:
\begin{enumerate}
  \item $b=2$, $r\geq p+1$, $v\left(\dfrac{a_p}{p} -\dbinom{r}{2}\dfrac{p}{a_p}\right) = v(r-2)$ and 
  $\lambda = \overline{\dfrac{2}{2-r}\left(\dfrac{a_p}{p} -\dbinom{r}{2}\dfrac{p}{a_p}\right)} 
         = \pm \overline{1}$, 
   %$v(a_p^2-\binom{r}{2}p^2) = 2+ v(r-2)$ and $\lambda = \overline{\dfrac{2(a_p^2-{r\choose2}p^2)}      {(2-%r)pa_p}} = \pm \overline{1}$.
so $$\overline{\dfrac{a_p}{p}} = \pm \overline{\dfrac{r}{2}} \quad \text{or} \quad \overline{\dfrac{a_p}{p}} = \pm \overline{(1-r)},$$
\item $b=p-1$, $r\geq 2p-2$, $p \nmid r- b$ and $\lambda = \overline{\dfrac{1}{r+1} \cdot \dfrac{a_p}{p}} = \pm \overline{1}$,  so
$$\overline{\dfrac{a_p}{p}} = \pm \overline{(r+1)}.$$
\end{enumerate}

Section~\ref{peu-tres} of this paper is primarily concerned with providing an answer to
this question. The following theorem summarizes our results. To state it, note that
the two roots $\overline{\frac{a_p}{p}}$ of the quadratic equation $\lambda = \pm \overline{1}$ in case (1) above are distinct if and only if $r \not \equiv 2/3 \mod p$.  We settle this question for most cases when $r \not \equiv 2/3 \mod p$ in case (1),  and for all $r$ in case (2), as follows.

\begin{theorem}
 \label{maintheopeuvstres}
Let $p>3$, let $k\geq p+3$ and let $r = k-2 \equiv b \mod (p-1)$, with $b = 2$
or $p-1$.  Suppose that $v(a_p) = 1$ and that $\bar{V}_{k,a_p}$, the
reduction of a lattice in $V_{k,a_p}$, is a non-split extension of $1$ by
$\omega$, up to a twist. 
  \begin{enumerate}
  \item If $b = 2$, $r \not\equiv 2/3 \mod p$, and 
\begin{eqnarray*}
  u := \dfrac{2}{2-r} \left( \dfrac{a_p}{p} -\binom{r}{2}\dfrac{p}{a_p} \right)
%satisfies $v(u(a_p)) = 0$ and
\> \text{is a $p$-adic unit with } \>
\overline{u} = \overline \eps, \text{ for } \eps \in \{\pm 1\}, 
\end{eqnarray*}
and 
  \begin{enumerate}
    \item $\overline{\dfrac{a_p}{p}} = \eps \overline{\dfrac{r}{2}}$, then
    $\bar{V}_{k,a_p}$ is ``peu ramifi\'ee'',
    \item $\overline{\dfrac{a_p}{p}} = \eps\overline{(1-r)}$, and if in addition, when $r\equiv 2\mod p$, 
     either \\ ${ } \qquad \Q_p(a_p)$ is an unramified extension of $\Q_p$ or $u-\eps$ is a uniformizer of
 $\Q_p(a_p)$, \\ %  (or both). 
then
	%\begin{enumerate}
	%\item %if $r \not\equiv 2 \mod p$
 $\bar{V}_{k,a_p}$ is ``peu ramifi\'ee'' 
    if and only if $$v(u - \eps) < 1.$$
          Moreover, %for any fixed $r$, 
          as $a_p$ varies 
          with $v(u -\eps) \geq 1$,
   the reduction $\bar{V}_{k,a_p}$ varies through all ``tr\`es ramifi\'ee" extensions. 
	%\item if $r \equiv 2 \mod p$
	%and moreover ,  then $\bar{V}_{k,a_p}$ is ``peu ramifi\'ee'' 
%if and only if $$v(u(a_p) - \eps) < 1.$$
%and moreover $u(a_p)-\eps$ is a uniformizer of
%$\Q_p(a_p)$  then $\bar{V}_{k,a_p}$ is ``peu ramifi\'ee'' 
%if and only if $$v(u(a_p) - \eps) < 1.$$
	%\end{enumerate}
 \end{enumerate}
  \item If $b = p-1$, $p \nmid r-b$ and $\overline{\dfrac{a_p}{p}} = \pm \overline{(r+1)}$, 
        then $\bar{V}_{k,a_p}$ is ``peu ramifi\'ee". 
 \end{enumerate}
\end{theorem}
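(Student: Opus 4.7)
The plan is to extend the spectral analyses of Sections~\ref{case3ap-1} and \ref{casea2} so as to resolve not merely the semisimplification of $\bar{\Theta}_{k,a_p}$ but the extension class of the non-split piece, and then to transport this class through the mod $p$ Local Langlands Correspondence of \cite{Br03} into $H^1(G_{\Q_p}, \omega)$, up to the implicit twist that makes $\bar V_{k,a_p}^{ss} \cong \omega \oplus 1$. Since Serre's peu ramifi\'ee classes form a one-dimensional subspace of $H^1(G_{\Q_p}, \omega)$ with a well-understood description, via the mod $p$ LLC, in terms of the two principal series $\pi^\pm$ whose semisimplification equals $\bar{\Theta}_{k,a_p}^{ss}$, the dichotomy can be read off directly from an automorphic-side computation of the extension class of $\bar{\Theta}_{k,a_p}$.

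The main tool is a second-order refinement of the identities $(T - a_p) f^{\pm} \equiv (\text{generator of } \ind_{KZ}^G J^{\pm}) \pmod p$ produced in the earlier sections. Working modulo $p^2$, we write $(T - a_p) f^{+} = c^{+} g^{+} + p\, h \pmod{p^2}$ for an explicit correction function $h$ supported on the same ball as $f^{+}$; the image of $\bar{h}$ in $\pi^{-}$, modulo the remaining image of $T - a_p$, represents the extension class. In cases (1a) and (2), an algebraic identity coming from the quadratic satisfied by $\overline{a_p/p}$ (for (1a), the vanishing of $x^2 - \eps(1 - r/2)\, x - \binom{r}{2}$ at $x = \eps\, r/2$; for (2), the analogous identity from the $b = p - 1$ spectral analysis) forces $\bar{h}$ to land in the peu ramifi\'ee direction independently of the other parameters, giving the asserted peu ramifi\'ee conclusion. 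In case (1b), however, the same computation leaves a contribution proportional to $u - \eps$ in a peu ramifi\'ee direction together with a secondary contribution of higher valuation in a transverse, tr\`es ramifi\'ee direction; whichever of these has the smaller $p$-adic valuation dominates, which produces exactly the dichotomy $v(u - \eps) < 1$ versus $v(u - \eps) \geq 1$.

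The ``moreover'' statement, that every tr\`es ramifi\'ee class is realised as $a_p$ varies with $v(u - \eps) \geq 1$, follows because the transverse contribution depends affinely on $a_p$ with nonzero derivative, and so surjects onto the one-dimensional quotient of $H^1(G_{\Q_p}, \omega)$ by the peu ramifi\'ee subspace. The extra hypothesis in (1b) when $r \equiv 2 \pmod p$ is imposed precisely so that $v(u - \eps)$ is an intrinsic invariant of $V_{k,a_p}$, rather than an artifact of choosing a non-canonical uniformiser of $\Q_p(a_p)$: the two alternatives given ensure that the regime distinction is preserved under all admissible normalisations.

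The hard part will be computing $h$ explicitly in case (1b). The test functions $f^{\pm}$ of Section~\ref{casea2} are supported on a ball of radius $v(r - 2) + 1$, which is unbounded in $r$, so the $p$-adic expansion of $(T - a_p) f^{\pm}$ accumulates many terms that must be tracked to second order. Once $h$ has been isolated, its projection onto $\pi^-$ must be paired against an explicit cocycle representative of the peu ramifi\'ee generator in $\operatorname{Ext}^1_G(\pi^+, \pi^-)$, and the transverse contribution detected separately; carrying out this pairing, and fixing the choice of basis of $\bar{\Theta}_{k,a_p}^{ss}$ compatibly with the mod $p$ LLC, is the technical core of Section~\ref{peu-tres}.
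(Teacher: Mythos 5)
Your proposal misses the central obstruction that forces the paper to work with Colmez's Montr\'eal functor and with non-standard lattices, rather than with the extension class of $\bar\Theta_{k,a_p}$ and Breuil's semisimple mod $p$ LLC.  The correspondence of \cite{Br03} is a map on semisimplifications and does not transport extension classes; what is needed is the functor $V$ of \cite{Col}, which is compatible with reduction of lattices and which carries $\Pi_\tau \otimes \omega^m$ to the Galois extension, peu ramifi\'ee precisely when $[\tau]=(0\!:\!1)$.  More seriously, the extension class of the \emph{standard} lattice simply does not see the invariant you want.  In case (2), $V(\bar\Theta)$ is an extension of $\omega$ by $1$ (the wrong direction), and in case (1b) the paper proves (Corollary~\ref{computedsplit}, using Proposition~\ref{split}) that when $F_1=0$ the Galois representation $V(\bar\Theta)$ is in fact \emph{split}.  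So a second-order expansion of $(T-a_p)f^\pm \pmod{p^2}$ followed by a projection onto some $\pi^-$ in $\bar\Theta$, however carefully carried out, cannot produce the peu/tr\`es distinction: there is no non-split class there to read off.

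The actual argument therefore constructs non-standard lattices.  For $b=p-1$ one replaces $\Sym^r\Zbar_p^2$ by the $K$-stable lattice $\mathcal V_r=\Sym^r\Zbar_p^2+\eta\Sym^{r-(p+1)}\Zbar_p^2$; for $b=2$ one decomposes the standard lattice as $S_1\oplus S_2\oplus S_3$ lifting the filtration of $\bar\Theta$ and forms $\Theta'=\delta S_2\oplus S_1\oplus S_3$ (Proposition~\ref{S1S2} is needed to see this is $G$-stable).  One then computes $[\tau]$ via the $A$-invariant linear form $\mu$ on $\St\otimes\omega^m$ of Lemma~\ref{linearStomega} and the formula $[\tau]=(\mu(\gamma_0e-e):\mu(\gamma_1e-e))$.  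Your ``correction function $h$'' intuition is present in spirit---the proofs of Propositions~\ref{gamma1delta}, \ref{gamma0e-e} and \ref{existsc} do track terms to second $p$-adic order, and the quantity $\delta=u-\eps$ (or $p$) does govern the answer---but the framework in which these computations are meaningful requires first choosing the right lattice, and the comparison with the model value $a_p^0=p(1-r)$ (Propositions~\ref{compquotient}--\ref{compsublattice}, Lemma~\ref{muindepap}) is essential to controlling the $a_p$-independence of the residual constant $c$ in Proposition~\ref{existsc}.  Without Colmez's $E_\tau$/$\Pi_\tau$ machinery and the change of lattice, the bridge from the automorphic-side computation to ``peu'' versus ``tr\`es ramifi\'ee'' has no support.
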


\begin{remark} We have:
\begin{itemize}
%Note that $v(u(a_p)) = 0$ and $\overline{u(a_p)} = \overline \eps$, with
%$\eps \in \{\pm 1\}$ implies that $\overline{a_p/p}$ is equal either to
%$\overline{\eps r/2}$ or $\overline{\eps(1-r)}$ (but not both if $r
%\not\equiv 2/3 \mod p$). So in case (1), parts (a) and (b) are mutually
%exclusive and cover all possibilities for $\overline{a_p/p}$.
  \item Theorem~\ref{maintheopeuvstres} distinguishes between the
 ``peu'' and ``tr\`es ramifi\'ee'' possibilities completely for primes $p \geq 5$, 
            when $r \not\equiv 2$ or $2/3 \mod p$.  The extra assumptions on $\Q_p(a_p)$ in part (1) (b), 
           when $r \equiv 2 \mod p$, 
%that $u(a_p)-\eps$ is a uniformizer 
could probably be removed. 
%and the final result should be similar. 
%to the one with $r \not\equiv 2 \mod p$.

\item As far as we are aware, part (1) (b) of the theorem provides, for a fixed prime $p$, the first
explicit infinite family of crystalline representations %whose members have 
with ``tr\`es ramifi\'ee" reduction.  

%For example, returning to $\Delta$
%and $p =5$, one checks that $\overline{u(a_p)} = \varepsilon = 1$ and
%$\overline{\frac{a_p}{p} }= 1 = \overline{(1-r)}$, since $r = 10$. It
%follows that the reduction $\bar \rho_\Delta |_{G_{\Q_p}}$  is ``tr\`es
%ramifi\'ee'' whenever it is a non-split extension of $\omega$ by
%$\omega^2$, since $v(u(a_p) - \varepsilon)>0$ and $v$ takes only integral
%values on $\Q_p$.  
\item The inequalities for $u$ that occur in
part (1) (b) of the theorem are reminiscent of similar inequalities in the semistable
(non-crystalline) case \cite{Breuil-Mezard02}, separating out the ``peu''
and ``tr\`es ramifi\'ee'' cases in terms of the ${\mathcal L}$-invariant. 
\end{itemize}
\end{remark}

The proof of Theorem~\ref{maintheopeuvstres} relies on the work of Colmez \cite{Col}, 
where the Langlands correspondences are studied functorially. One difficulty that arises is that one cannot 
usually use the standard lattice above to study subtler properties of the reduction $\bar{V}_{k,a_p}$.
Colmez's functors establish correspondences between  all stable lattices on the
`Galois side' and certain stable lattices on the `automorphic side'. Moreover, these functors are compatible with 
the process of taking reduction, again allowing us to work on the automorphic side. 
The key is to find a suitable $\GL_2(\Q_p)$-stable lattice lying in the image of Colmez's functor 
whose reduction admits a particular composition series, which makes it possible to recognize whether
the corresponding mod $p$ Galois representation is ``peu'' or ``tr\`es ramifi\'ee''.

We now provide some more details about the proof, referring to the text for some  notation.
Let $\mathrm{St}$ be the mod $p$ Steinberg representation of $G$ and
let $1$ be the trivial mod $p$ representation of $G$.
The isomorphism classes of (non-split) extensions $E_\tau$ of $1$ by $\mathrm{St}$ 
are parameterized by 
%(non-zero)  $\tau \in \mathrm{Hom}_\mathrm{cont}(\Q_p^\times, \bar\F_p)$, up to a constant, so by
classes $$[\tau] := (\tau(1+p) :  \tau(p)) \in \mathbb{P}^1(\bar\F_p),$$ 
of (non-zero) maps  $\tau \in \mathrm{Hom}_\mathrm{cont}(\Q_p^\times, \bar\F_p)$. 
For instance, if $E_\tau$ is the non-split extension of $1$ by ${\mathrm{St}}$ given 
by $\pi(0,1,1)$, the cokernel of $T-1$ acting on compactly
supported mod $p$ functions on the tree $X$, then $[\tau] = (0:1) \in  \mathbb{P}^1(\bar\F_p)$. 
Here $\pi(r,\lambda,\eta)$ is the basic mod $p$ representation of $G$, defined using
compact induction, for $0 \leq r \leq p-1$, $\lambda \in \bar\F_p$
and $\eta$ a smooth mod $p$ character of $\Q_p^\times$ (cf. Section~\ref{subsectionLLC}).

Let $\Pi_\tau$  be the unique non-split extension of the principal series representation 
$\pi(p-3,1,\omega)$ by $E_\tau$, for $\tau \neq 0$, defined in \cite{Col}.  If $V$ is Colmez's functor from
the `automorphic side' to the `Galois side', then it is known that $V(\Pi_\tau \otimes \omega^m)$ is a non-split extension of $\omega^m$ by $\omega^{m+1}$, for $m \in \Z$. 
Moreover, the extension $V(\Pi_\tau \otimes \omega^m)$ is known to be ``peu ramifi\'ee'' if and only if 
$E_\tau \cong \pi(0,1,1)$, that is, $[\tau] = (0:1)$. % \in \P^1(\bar\F_p)$.
Thus if one can construct a possibly non-standard complete stable lattice $\Theta'$ in $B(V_{k,a_p})$ 
with $\bar{\Theta}' \cong \Pi_\tau \otimes \omega^m$, in and of itself not easy to do,
to check whether $V(\bar{\Theta}')$ is ``peu ramifi\'ee'', one `only' needs to compute $[\tau]$. 
To this end, there is a
linear form $\mu : \mathrm{St} \otimes \omega^m \rightarrow \bar\F_p(\omega^m)$
which has the property that if $e \in (E_\tau \otimes \omega^m) \setminus (\mathrm{St} \otimes \omega^m$), then 
$$[\tau] = \left( \mu \left( \left( \begin{smallmatrix} 1+p & 0 \\ 0 & 1 \end{smallmatrix} \right) \cdot e - e \right) : 
                            \mu \left( \left( \begin{smallmatrix} p & 0 \\ 0 & 1 \end{smallmatrix} \right) \cdot e - e \right) \right) \in \P^1(\bar\F_p). $$

For instance, when $b = 2$ and so $m = 1$, under the conditions of  part (1) (b), the most difficult part of Theorem~\ref{maintheopeuvstres} to prove, if  we take $\eps = 1$ for simplicity, then we construct  a non-standard lattice $\Theta'$ such that $\bar{\Theta}' \cong \Pi_\tau \otimes \omega$  
and show that
\begin{itemize}
  \item $v(u - \eps) < 1 \implies [\tau] = (0:1) \in   \P^1(\bar\F_p)$,  and,  
  \item $v(u - \eps) \geq 1 \implies [\tau] = (\overline{3r-2}: y) \in \P^1(\bar\F_p)$, for some (all) $y \in \bar\F_p$,
\end{itemize}
proving this part of the theorem.

We now briefly turn to the second question alluded to above.
If $\bar{V}_{k,a_p}^{ss}$ is the trivial representation up to a twist, then again by Ribet, there is a lattice inside
$V_{k,a_p}$ that reduces to a non-split extension of the trivial representation $1$ by $1$, up to the same twist.
One may ask whether this reduction is unramified or ramified. 
In the context of Theorem~\ref{maintheoslopeone}, this question arises exactly when
\begin{enumerate}
  \item [(3)] $b=p$, $r \geq 3p-2$, $p \mid r- b$ and $\lambda =  \pm \overline{1}$,  so
$$\overline{\dfrac{a_p}{p}} =  \lambda \pm \overline{\sqrt{r/p}}.$$
\end{enumerate}
This second question
seems rather difficult to answer in general because of the large number of possible non-homothetic 
lattices involved. However, as an example, at the end of Section~\ref{peu-tres},
we show that the reduction corresponding to the standard lattice
is (after twisting) non-split and unramified (see Theorem~\ref{unramified} in Section~\ref{sectiontrivialnonsplit}).

We end this paper in Section~\ref{sectionexamples} by giving  some examples to illustrate Theorems~\ref{maintheoslopeone} and \ref{maintheopeuvstres}.
In particular, we compare our results 
%with the work of Swinnerton-Dyer and Serre \cite{Serre73} who studied the
with the local restrictions of some known reductions of global $p$-adic Galois representations 
attached to modular forms of level one and small weight computed in \cite{Serre73}. 
We show that  our results
match and also provide some new local information about the reductions in these special cases.
%, in the special case that $p$ exactly divides the $p$-th Fourier coefficient $a_p \in \Z$ of the form.

\section{Basics}\label{basics}

In this section, we recall some notation and well-known facts.
Further details can be found in \cite{Br03} and \cite{Bhattacharya-Ghate}.

\subsection{Hecke operator $T$ }
  \label{Hecke}

Let $G = \mathrm{GL}_2(\Q_p)$, $K = \mathrm{GL}_2(\Z_p)$ be the standard
maximal compact subgroup of $G$ and $Z = \Q_p^\times$ be the center of $G$.
Let $R$ be a $\Z_p$-algebra and let $V = \Sym^r R^2\otimes D^s$ be the
usual symmetric power representation of $KZ$ twisted by a power of the
determinant character $D$, modeled on homogeneous polynomials of degree
$r$ in the variables $X$, $Y$ over $R$. We will denote $\mathrm{ind}_{KZ}^{G}$ 
to mean compact induction. Thus $\mathrm{ind}_{KZ}^{G} V$ consists of functions
$f : G \rightarrow V$ such that $f(hg) = h \cdot f(g)$, for all $h \in KZ$
and $g \in G$, and $f$ is compactly supported mod $KZ$.   For $g \in G$, $v \in V$, let
$[g,v] \in \mathrm{ind}_{KZ}^{G} V$ be the function with support in
${KZ}g^{-1}$ given by 
  $$g' \mapsto
     \begin{cases}
         g'g \cdot v,  \ & \text{ if } g' \in {KZ}g^{-1} \\
         0,                  & \text{ otherwise.}
      \end{cases}$$ 
Any function in $\mathrm{ind}_{KZ}^G V$ is a finite linear combination of functions of the form $[g,v]$, for $g\in G$ and $v\in V$.  
The Hecke operator $T$ is defined by its action on these elementary functions via

\begin{equation}\label{T} T([g,v(X,Y)])=\underset{\lambda\in\F_p}\sum\left[g\left(\begin{smallmatrix} p & [\lambda]\\
                                                 0 & 1
                                                \end{smallmatrix}\right),\:v\left(X, -[\lambda]X+pY\right)\right]+\left[g\left(\begin{smallmatrix} 1 & 0\\
                                                                                                                                                   0 & p
                                                \end{smallmatrix}\right),\:v(pX,Y)\right],\end{equation}
 where $[\lambda]$ denotes the Teichm\"uller representative of $\lambda\in\F_p$.                                               

For $m = 0$, set $I_0 = \{0\}$, and 
for $m >0$, let
$I_m = \{ [\lambda_0] + [\lambda_1] p + \cdots + [\lambda_{m-1}]p^{m-1}  \> : \>  \lambda_i \in \F_p \} 
              \subset \Z_p$,
where the square brackets denote Teichm\"uller representatives. For $m \geq 1$, 
there is a truncation map
$[\quad]_{m-1}: I_{m} \rightarrow I_{m-1}$ given by taking the first $m-1$ terms in the $p$-adic expansion above;
for $m = 1$, $[\quad]_{m-1}$ is the $0$-map.
Let $\alpha =  \left( \begin{smallmatrix} 1 & 0 \\ 0  & p \end{smallmatrix} \right)$. 
For $m \geq 0$ and $\lambda \in I_m$, let
\begin{eqnarray*}
  g^0_{m, \lambda} =  \left( \begin{smallmatrix} p^m & \lambda \\ 0 & 1 \end{smallmatrix} \right) & \quad \text{and} \quad 
  g^1_{m, \lambda} = \left( \begin{smallmatrix} 1 & 0  \\ p \lambda  & p^{m+1} \end{smallmatrix} \right),
\end{eqnarray*}
noting that $g^0_{0,0}=\mathrm{Id}$ is the identity matrix and $g_{0,0}^1=\alpha$ in $G$. % and
Recall the decomposition
\begin{eqnarray*}
    G & = & \coprod_{\substack{m\geq 0,\,\lambda \in I_m,\\ i\in\{0,1\}}} {KZ} (g^i_{m, \lambda})^{-1}.
\end{eqnarray*}
Thus a general element in $\mathrm{ind}_{KZ}^G V$ is a finite sum of functions of the form $[g,v]$, 
with $g=g_{m,\lambda}^0$ or $\,g_{m,\lambda}^1$, for some $\lambda\in I_m$ and $v\in V$.
For a $\Z_p$-algebra $R$, let $v = \sum_{i=0}^r c_i X^{r-i} Y^i \in V = \mathrm{Sym}^r R^2\otimes D^s$.
Expanding the formula \eqref{T} for the Hecke operator $T$ one may write
$T  = T^+ + T^-$, with 
\begin{eqnarray*}
  T^+([g^0_{n,\mu},v]) & = & \sum_{\lambda \in I_1} \left[ g^0_{n+1, \mu +p^n\lambda},    
         \sum_{j=0}^r \left( p^j \sum_{i=j}^r c_i \binom{i}{j}(-\lambda)^{i-j} \right) X^{r-j} Y^j \right], \\
  T^-([g^0_{n,\mu},v]) & = & \left[ g^0_{n-1, [\mu]_{n-1}},    
         \sum_{j=0}^r \left( \sum_{i=j}^r p^{r-i} c_i {i \choose j} 
         \left( \frac{\mu - [\mu]_{n-1}}{p^{n-1}} \right)^{i-j} \right) X^{r-j} Y^j \right] \quad (n > 0), \\
  T^-([g^0_{n,\mu},v]) & = &  [ \alpha,  \sum_{j=0}^r  p^{r-j}  c_j  X^{r-j} Y^j ] \quad  (n=0). 
\end{eqnarray*}
These explicit formulas for $T^+$ and $T^-$ will be used to compute $(T-a_p)f$, for $f \in \mathrm{ind}_{KZ}^G \Sym^r\bar\Q_p^2$.

\subsection{The mod $p$ Local Langlands Correspondence}
\label{subsectionLLC}

For $0 \leq r \leq p-1$, $\lambda \in \bar{\F}_p$ and $\eta : \Q_p^\times
\rightarrow \bar\F_p^\times$ a smooth character, let
\begin{eqnarray*}
  \pi(r, \lambda, \eta) & := & \frac{\mathrm{ind}_{KZ}^{G} \:\Sym^r\bar\F_p^2}{T-\lambda} \otimes (\eta\circ \mathrm{det})
\end{eqnarray*}
be the smooth admissible representation of ${G}$, %where $\mathrm{ind}_{KZ}^G$ is compact induction, and  $T$ is the Hecke operator. 
known to be irreducible unless $(r,\lambda)=(0,\pm 1)$ or $(p-1,\pm 1)$, by the classification of irreducible  representations  of $G$ in characteristic $p$ in \cite{BL94, BL95, Breuil03a}.
With this notation, Breuil's semisimple mod $p$ Local Langlands Correspondence %(see, e.g., 
\cite[Def. 1.1]{Br03} %)
is given by:
\begin{itemize} 
  \item  $\lambda = 0$: \quad
             $\mathrm{ind}(\omega_2^{r+1}) \otimes \eta \:\:\overset{LL}\longmapsto\:\: \pi(r,0,\eta)$,
  \item $\lambda \neq 0$: \quad
             $\left( \mu_\lambda\cdot \omega^{r+1}  \oplus \mu_{\lambda^{-1}} \right) \otimes \eta 
                   \:\:\overset{LL}\longmapsto\:\:  \pi(r, \lambda, \eta)^{ss} \oplus  \pi([p-3-r], \lambda^{-1}, \eta \omega^{r+1})^{ss}$, 
\end{itemize}
where $\{0,1, \ldots, p-2 \} \ni [p-3-r] \equiv p-3-r \mod (p-1)$.

Consider the locally algebraic representation of $G$ given by %compact induction
\begin{eqnarray*} 
\Pi_{k, a_p} = \frac{ \mathrm{ind}_{{KZ}}^{G}
\Sym^r \bar\Q_p^2 }{T-a_p}, 
\end{eqnarray*} 
where $r=k-2 \geq 0$ and $T$ is the
Hecke operator. Consider the standard lattice in $\Pi_{k,a_p}$ given by
\begin{equation} 
\label{definetheta}  
\Theta=\Theta_{k, a_p} :=
\mathrm{image} \left( \mathrm{ind}_{KZ}^{G} \Sym^r \bar\Z_p^2 \rightarrow
\Pi_{k, a_p} \right) \iso \frac{ \mathrm{ind}_{KZ}^{G} \Sym^r \bar\Z_p^2
}{(T-a_p)(\mathrm{ind}_{KZ}^{G} \Sym^r \bar\Q_p^2) \cap
\mathrm{ind}_{KZ}^{G} \Sym^r \bar\Z_p^2 }.  
\end{equation} 
It is known
that the semisimplification of the reduction of this lattice satisfies
$\bar\Theta_{k,a_p}^\mathrm{ss} \iso LL(\bar{V}_{k,a_p}^{ss})$, where $LL$ is
the (semisimple) mod $p$ Local Langlands Correspondence above \cite{B10}.
Since the map ${LL}$ is clearly injective, it is enough to know ${LL}(\bar V_{k,a_p}^{ss})$ to determine 
$\bar{V}_{k,a_p}^{ss}$.

\subsection{Useful lemmas}

 We recall some combinatorial results from \cite{Bhattacharya-Ghate}. Lemma \ref{comb6} is not stated there %in \cite{Bhattacharya-Ghate},
  but we skip the proof here, since it is similar.
  
\begin{lemma}\label{comb1}
     For $r\equiv a\mod (p-1)$, with $1\leq a\leq p-1$, we have $$S_r:=\sum_{\substack{0\,<j\,<\,r,\\ j\,\equiv\, a\mod(p-1)}}\binom{r}{j}\equiv 0\mod p.$$
     Moreover, we have $\frac{1}{p}S_r \equiv \frac{a-r}{a}\mod p$,  for $p>2$.
   \end{lemma}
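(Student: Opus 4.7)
The plan is to compute $S_r$ modulo $p^2$ via a character sum over $\F_p^\times$. Let $[c]\in\Z_p$ denote the Teichm\"uller lift of $c\in\F_p^\times$, and let $A_a$ denote the full sum $\sum\binom{r}{j}$ taken over indices $0\le j\le r$ with $j\equiv a\pmod{p-1}$. Orthogonality of Teichm\"uller characters gives the exact identity
$$(p-1)A_a=\sum_{c\in\F_p^\times}[c]^{-a}(1+[c])^r$$
in $\Z_p$. Writing $r=a+(p-1)m$, and $(1+[c])^{p-1}=1+p\,q(c)$ for $c\ne -1$, I expand $(1+[c])^r\equiv (1+[c])^a(1+pm\,q(c))\pmod{p^2}$ (the $c=-1$ term vanishes), so that
$$(p-1)A_a\equiv \Sigma_1+pm\,\Sigma_2\pmod{p^2},$$
with $\Sigma_1=\sum_{c\ne -1}[c]^{-a}(1+[c])^a$ and $\Sigma_2=\sum_{c\ne -1}[c]^{-a}(1+[c])^a\,q(c)$.

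Next, I would evaluate $\Sigma_1$ exactly. The substitution $d=1/c$ converts it to $\sum_{d\ne -1}(1+[d])^a$. Expanding by the binomial theorem and applying the exact orthogonality $\sum_{d\in\F_p^\times}[d]^k=0$ for $1\le k\le a\le p-2$, combined with $\sum_{k=0}^a(-1)^k\binom{a}{k}=0$, gives the clean identity $\Sigma_1=p-1$ in $\Z_p$. Since $A_a-S_r=1$ for $a\le p-2$ (the boundary $j=r$) and $=2$ for $a=p-1$ (boundaries $j=0,r$), this already proves the first assertion $S_r\equiv 0\pmod p$.

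The hard step will be computing $\Sigma_2$ modulo $p$. From the standard congruence $\binom{p-1}{k}\equiv(-1)^k(1-pH_k)\pmod{p^2}$ with $H_k:=\sum_{i=1}^k 1/i$, together with the Frobenius identity $\sum_{k=0}^{p-1}(-1)^k[c]^k=1$ (a consequence of $[c]^p=[c]$, valid for $c\ne -1$), one obtains $q(c)\equiv -\sum_{k=1}^{p-1}(-1)^k H_k[c]^k\pmod p$. Substituting into $\Sigma_2$ and exchanging sums, the inner sum $\sum_{c\in\F_p^\times}[c]^{k-a}(1+[c])^a$ evaluates modulo $p$ by Fermat's little theorem to $-\binom{a}{k}$ for $1\le k\le p-2$ and to $-1$ for $k=p-1$. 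Combined with Wolstenholme's congruence $H_{p-1}\equiv 0\pmod p$, this reduces $\Sigma_2$ modulo $p$ to the combinatorial identity
$$\sum_{k=1}^a(-1)^k\binom{a}{k}H_k=-\frac{1}{a},$$
which I would prove by Abel summation using $H_k-H_{k-1}=1/k$ and the partial sum $\sum_{k=j}^a(-1)^k\binom{a}{k}=(-1)^j\binom{a-1}{j-1}$. Hence $\Sigma_2\equiv -1/a\pmod p$, so $A_a\equiv 1+pm/a\pmod{p^2}$, giving $S_r\equiv pm/a\pmod{p^2}$. Since $(a-r)/a=-m(p-1)/a\equiv m/a\pmod p$, the second claim follows. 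The case $a=p-1$ is handled in parallel: one uses $(1+[c])^r=(1+pq(c))^{m+1}$ to replace $\Sigma_2$ by $(m+1)\sum_{c\ne -1}q(c)$, and the analogous harmonic-sum manipulation (noting $\sum_{k=1}^{p-2}H_k\equiv 1\pmod p$, again via Wolstenholme) gives $S_r/p\equiv -m\equiv (a-r)/a\pmod p$.
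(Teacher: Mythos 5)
Your proof is correct, and the framework --- averaging $(1+[c])^r$ against Teichm\"uller characters over $\F_p^\times$, expanding $(1+[c])^{p-1}=1+pq(c)$, and then evaluating the resulting character sums --- is exactly the one the paper uses for the finer congruences of Propositions~\ref{congrbinom2}--\ref{congrbinom12} (the paper itself does not prove Lemma~\ref{comb1}, citing \cite{Bhattacharya-Ghate} instead, so the only in-house proof we can compare with is that of Proposition~\ref{congrbinom2}). Where you diverge from the paper is in how the residual constants are pinned down. The paper introduces the unknowns $s=\sum_{\xi\in\mu'}(1+\xi)^2$ and $s'=\sum_{\xi\in\mu'}(1+\xi)^2 z_\xi$ and determines them by specializing the exact identity $(p-1)(S_{i,r}+\binom{r}{i})=\binom{r}{i}U_{i,r}$ to small weights ($r=2$ and $r=p+1$); this is short and avoids any arithmetic of harmonic numbers. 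You instead expand $q(c)$ explicitly via $\binom{p-1}{k}\equiv(-1)^k(1-pH_k)\pmod{p^2}$, invoke Wolstenholme to kill the $k=p-1$ term, and close with the Abel-summation identity $\sum_{k=1}^a(-1)^k\binom{a}{k}H_k=-1/a$; this is a bit heavier but fully explicit and makes the source of the factor $a^{-1}$ completely transparent.

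One slip worth fixing: the clean equality $\Sigma_1=p-1$ holds only for $1\le a\le p-2$. For $a=p-1$ the exact value is $\Sigma_1=2(p-1)$ (the extra $p-1$ comes from the $k=p-1$ term $\binom{p-1}{p-1}\sum_{d\ne -1}[d]^{p-1}=p-2$, which you cannot discard). Your sentence ``\dots this already proves the first assertion $S_r\equiv 0\pmod p$'' therefore does not follow for $a=p-1$ from $\Sigma_1=p-1$ together with $A_a-S_r=2$ (that combination would give $S_r\equiv -1$); it \emph{does} follow from $\Sigma_1=2(p-1)$, or equivalently from the separate $a=p-1$ computation $(p-1)A_{p-1}\equiv (p-2)+p(m+1)\sum_{c\ne -1}q(c)\pmod{p^2}$ that you carry out at the end. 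So the proof is sound, but the presentation should either restrict the claim $\Sigma_1=p-1$ to $a\le p-2$ or record the corrected value $2(p-1)$ when $a=p-1$.
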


\begin{lemma}
\label{comb2}
 Let $2p\leq r\equiv a\mod(p-1)$, with $2\leq a\leq p-1$. Then one can choose integers $\alpha_j\in\mathbb{Z}$,
 for all $j$, with $0<j<r$ and $j\equiv a\mod (p-1)$, such that 
 the following properties hold:
  \begin{enumerate}
    \item For all $j$ as above, $\binom{r}{j}\equiv \alpha_j \mod p$,
    \item $\underset{j\geq n}\sum\binom{j}{n} \alpha_j\equiv 0\mod p^{3-n}$, for $n=0$, $1$,
    %\item $\underset{j}\sum j\,\alpha_j\equiv 0\mod p^2$,
    \item  $\underset{j\geq 2}\sum\binom{j}{2}\,\alpha_j \equiv\begin{cases}
                                                                 0\mod p, &\text{if } 3\leq a\leq p-1\\
                                                                 \binom{r}{2}\mod p, &\text{if } a=2.
                                                             \end{cases}$
  \end{enumerate}
\end{lemma}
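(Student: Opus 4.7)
The plan mirrors that of the analogous lemmas in \cite{Bhattacharya-Ghate}: I would begin with the naive choice $\alpha_j^{(0)} := \binom{r}{j}$, which tautologically satisfies (1), verify property (3) for this choice using combinatorial identities, and then perturb to $\alpha_j = \binom{r}{j} + p\,\gamma_j$ with $\gamma_j \in \Z$ in order to arrange the two congruences in (2).

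For property (3), I would apply the identity $\binom{j}{2}\binom{r}{j} = \binom{r}{2}\binom{r-2}{j-2}$ and re-index by $i := j-2$, turning the relevant sum into $\binom{r}{2}\sum_i \binom{r-2}{i}$ with $i \equiv a-2 \pmod{p-1}$ and $0 \leq i \leq r - p - 1$. For $3 \leq a \leq p-1$, the smallest $i$ in this range is $a-2 \geq 1$, and Lemma~\ref{comb1} applied to $(r-2, a-2)$ (with $1 \leq a-2 \leq p-3$) gives $\equiv 0 \pmod p$. For $a = 2$, the smallest $i$ is $0$; here Lemma~\ref{comb1} applied to $(r-2, p-1)$ (valid since $r-2 \equiv 0 \equiv p-1 \pmod{p-1}$) gives $\sum_{0 < i < r-2,\, i \equiv 0 \pmod{p-1}} \binom{r-2}{i} \equiv 0 \pmod p$, so the partial sum equals $\binom{r-2}{0} + 0 = 1 \pmod p$. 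Multiplying by $\binom{r}{2}$ then yields the desired value $0$ or $\binom{r}{2}$ in the two cases.

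For property (2), the substitution $\alpha_j = \binom{r}{j} + p\,\gamma_j$ recasts the two conditions as
\[
\sum_j \gamma_j \equiv -\tfrac{1}{p}\sum_j \binom{r}{j} \pmod{p^2}, \qquad \sum_j j\,\gamma_j \equiv -\tfrac{1}{p}\sum_j j\binom{r}{j} \pmod{p}.
\]
Both right-hand sides are well-defined integers: the first by Lemma~\ref{comb1} applied to $(r,a)$, and the second by the absorption identity $j\binom{r}{j} = r\binom{r-1}{j-1}$ together with Lemma~\ref{comb1} applied to $(r-1, a-1)$ (valid since $1 \leq a-1 \leq p-2$). Since $r \geq 2p$ forces at least two indices $a$ and $a+(p-1)$ to lie in the summation range, I would concentrate the perturbation there; the resulting $2\times 2$ system in $(\gamma_a, \gamma_{a+p-1})$ has coefficient determinant $(a+p-1) - a = p-1 \equiv -1 \pmod p$, a unit, so eliminating one variable produces explicit integer solutions (all other $\gamma_j$ being set to $0$).

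The main obstacle, beyond the routine bookkeeping, is the $a = 2$ case of property~(3), which differs from the generic cases by the contribution of the boundary index $i = 0$. Recognizing that this boundary term supplies exactly the needed $1$ modulo $p$, and hence the required $\binom{r}{2}$ after rescaling by $\binom{r}{2}$, is the essential point that distinguishes $a = 2$ from $3 \leq a \leq p-1$.
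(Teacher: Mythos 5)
Your proposal is correct, and the strategy — set $\alpha_j^{(0)} = \binom{r}{j}$, verify property~(3) directly, then perturb by $p\gamma_j$ supported at two indices to enforce the two congruences in~(2) — is the natural one; note however that the paper itself does not prove Lemma~\ref{comb2} but only cites its counterpart in \cite{Bhattacharya-Ghate}, so there is no in-text proof to compare against.

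Two small points worth making explicit if this were written up. First, in reducing property~(3) to Lemma~\ref{comb1}, you correctly observe that after the substitution $\binom{j}{2}\binom{r}{j}=\binom{r}{2}\binom{r-2}{j-2}$ and reindexing $i=j-2$, the set of admissible $i$ is precisely $\{i: 0\le i<r-2,\; i\equiv a-2 \bmod (p-1)\}$ (the upper endpoint $i=r-p-1$ being the largest such residue class representative below $r-2$), so Lemma~\ref{comb1} applies verbatim to $(r-2,a-2)$ for $3\le a\le p-1$ and to $(r-2,p-1)$ for $a=2$, with the boundary term $\binom{r-2}{0}=1$ accounting for the extra $\binom{r}{2}$ in the $a=2$ case — this is indeed the key observation. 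Second, for the perturbation step, the two congruences have different moduli ($p^2$ and $p$), so the phrase ``$2\times 2$ system'' should be taken loosely: the cleaner presentation is to first set $\gamma_a+\gamma_{a+p-1}=A$ exactly over $\Z$ and then solve the resulting single congruence $(1-p)\gamma_a\equiv B-(a-1)A\bmod p$ for $\gamma_a$, which is possible since $1-p\equiv 1$; both $j=a$ and $j=a+p-1$ lie in the summation range because $a+p-1\le 2p-2<r$. With these clarifications the argument is complete.
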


\begin{lemma}\label{comb3}
 We have:
  \begin{enumerate}
      \item[(i)] If $r\equiv b\mod (p-1)$, with $2\leq b\leq p$, then 
               $$T_r:=\sum_{\substack{0<\,j\,<\,r-1,\\ j\equiv\, b-1\mod (p-1)}} \binom{r}{j}\equiv b-r\mod p.$$
      \item[(ii)] %Let $ 3\leq b\leq p$ and $p\mid b-r$. 
                 If $r\equiv b\mod p(p-1)$, with $3\leq b\leq p$, then one can choose  integers $\beta_j\,$,  for all 
                  $j\equiv b-1\mod (p-1)$,  with  $b-1\leq j< r-1$, satisfying the following properties:
                  \begin{enumerate}
                     \item[(1)] $\beta_j\equiv\binom{r}{j} \mod p$, for all $j$ as above,
                     \item[(2)] $\underset{j\geq n}\sum \,\binom{j}{n}\beta_j\equiv 0\mod p^{3-n}$, for $n=0$, $1$, $2$.
                     %\item[(3)] $\underset{j}\sum \,j\,\beta_j\equiv 0\mod p^2$,
                     %\item[(4)] Moreover  $\underset{j}\sum\,\binom{j}{2}\,\beta_j\equiv 0\mod p$. % if $\,4\leq b\leq p$.
                   \end{enumerate}
   \end{enumerate}
\end{lemma}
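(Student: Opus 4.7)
The plan is to prove both parts via a $(p-1)$-th roots-of-unity filter applied to the expansion of $(1+x)^r$, in the style of the proofs of Lemmas~\ref{comb1} and~\ref{comb2}. For part~(i), let $\zeta\in\Z_p$ be the Teichm\"uller lift of a generator of $\F_p^\times$. The filter identity
\[
U_r^{(c)}\;:=\;\sum_{\substack{0\le j\le r\\ j\equiv c \mod (p-1)}}\binom{r}{j}\;=\;\frac{1}{p-1}\sum_{k=0}^{p-2}\zeta^{-ck}(1+\zeta^k)^r,
\]
together with $\zeta^{(p-1)/2}=-1$ (which kills that term) and $(1+\zeta^k)^{p-1}\equiv 1\pmod p$ for the remaining $k$, yields $U_r^{(c)}\equiv U_b^{(c)}\pmod p$. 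Taking $c=b-1$, the reduced sum $U_b^{(b-1)}$ is very short: it equals $\binom{b}{b-1}=b$ for $2\le b\le p-1$ and $\binom{p}{0}+\binom{p}{p-1}=1+p$ for $b=p$. Since $T_r$ differs from $U_r^{(b-1)}$ only by the excluded term $\binom{r}{r-1}=r$ (the index $j=r-1\equiv b-1$ always lies in the class), plus the additional excluded term $\binom{r}{0}=1$ when $b=p$, one obtains $T_r\equiv b-r\pmod p$ in every case.

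For part~(ii), I would first upgrade this congruence to modulus $p^2$. For $k\ne(p-1)/2$ we have $(1+\zeta^k)^{p-1}\in 1+p\Z_p$, so raising to the $p$-th power (using $p\ge 3$, whence the binomial cross terms land in $p^3\Z_p$) gives $(1+\zeta^k)^{p(p-1)}\equiv 1\pmod{p^2}$. Combined with $r\equiv b\pmod{p(p-1)}$, this yields $U_r^{(c)}\equiv U_b^{(c)}\pmod{p^2}$. Applying the identity $\binom{j}{n}\binom{r}{j}=\binom{r}{n}\binom{r-n}{j-n}$ and the shift $i=j-n$, the $n$-th raw moment $M_n(r):=\sum_j\binom{j}{n}\binom{r}{j}$ over our index set becomes $\binom{r}{n}$ times a restricted class sum in $\binom{r-n}{i}$. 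The same boundary analysis as in part~(i), applied with $r-n\equiv b-n\pmod{p(p-1)}$, then gives
\[
M_n(r)\;\equiv\;\binom{r}{n}(b-r)\;\pmod{p^2},\qquad n=0,1,2,
\]
and each $M_n(r)$ is divisible by $p$ since $p(p-1)\mid b-r$. In particular, property~(2) for $n=2$ already holds for the naive choice $\beta_j=\binom{r}{j}$.

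Finally, I would set $\beta_j=\binom{r}{j}+p\gamma_j$ for integers $\gamma_j$ to be determined. Property~(1) is automatic; the $n=0,1$ cases of property~(2) reduce, after dividing through by $p$, to the linear system
\[
\sum_j \gamma_j\equiv -M_0(r)/p\pmod{p^2},\qquad \sum_j j\,\gamma_j\equiv -M_1(r)/p\pmod p.
\]
I would solve this by concentrating the support of $\gamma$ on the two indices $j_1=b-1$ and $j_2=b-1+(p-1)$, both of which lie in the index set as soon as it has at least two elements (guaranteed by $r\equiv b\pmod{p(p-1)}$ whenever $r>b$; the case $r=b$ is vacuous). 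Since $j_2-j_1=p-1\not\equiv 0\pmod p$, the $2\times 2$ system is invertible, and any integer lifts of the resulting $(\gamma_{j_1},\gamma_{j_2})$ produce the required $\beta_j$.

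The main obstacle is the careful bookkeeping in the moment step: one must identify precisely the boundary terms to be excluded in passing from each full class sum $U_{r-n}^{(b-1-n)}$ to the restricted sums appearing in $M_n(r)$, and verify that the upgraded mod-$p^2$ congruence propagates intact through this bookkeeping so that the clean formula $M_n(r)\equiv \binom{r}{n}(b-r)\pmod{p^2}$ emerges cleanly; once that is in hand, the linear-algebra construction of the $\gamma_j$ is routine.
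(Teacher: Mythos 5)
Your proof is correct. I've checked the bookkeeping in the moment step: with $r'=r-n$, $b'=b-n$, the restricted class sum $\Sigma_n := \sum_{\substack{b'-1\le i<r'-1\\ i\equiv b'-1\bmod(p-1)}}\binom{r'}{i}$ differs from $U_{r'}^{(b'-1)}$ exactly by the top term $\binom{r'}{r'-1}=r'$ (always removed), plus the bottom term $\binom{r'}{0}=1$ only in the single case $b'=p$, i.e.\ $n=0$ and $b=p$; and since $U_{r'}^{(b'-1)}\equiv U_{b'}^{(b'-1)}\pmod{p^2}$ equals $b'$ (respectively $1+p$ when $b'=p$), this gives $\Sigma_n\equiv b'-r'=b-r\pmod{p^2}$ for each $n\in\{0,1,2\}$, hence $M_n(r)\equiv\binom{r}{n}(b-r)\pmod{p^2}$ as you claim. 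The subsequent construction of the $\beta_j$ by perturbing $\binom{r}{j}$ at only the two indices $j_1=b-1$ and $j_2=b-1+(p-1)$ (both present once $r>b$) is sound, since the Vandermonde-type $2\times 2$ matrix with determinant $j_2-j_1=p-1$ is invertible mod $p$, and one can lift $\gamma_{j_1}$ mod $p^2$ while fixing $\gamma_{j_2}$ mod $p$. Note that this paper does not itself prove Lemma~\ref{comb3}: it recalls it from \cite{Bhattacharya-Ghate}. Your $(p-1)$-th roots-of-unity filter is, however, precisely the device the paper uses for its own analogous congruences (Propositions~\ref{congrbinom1}--\ref{congrbinom12}), so the method is in the spirit of the surrounding text, and the two-index perturbation gives a particularly clean existence argument for part (ii).
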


\begin{lemma}
 \label{comb6}
 Let $p\geq 3$, $2p\leq r\equiv 1\mod (p-1)$ and let $p$ divide $r$.  Then one can choose integers $\alpha_j\in\mathbb{Z}$, for all $j$ 
 with $1<j<r$ and $j\equiv 1\mod (p-1)$, such that 
 the following properties hold:
  \begin{enumerate}
   \item For all $j$ as above, $\binom{r}{j}\equiv \alpha_j \mod p$,
   \item $\underset{j\geq n}\sum \binom{j}{n}\alpha_j\equiv 0\mod p^{3-n}$, for $n=0,1,2$.
  \end{enumerate}
\end{lemma}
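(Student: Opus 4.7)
The plan is to mimic the proof of Lemma~\ref{comb2}: start with the naive choice $\alpha_j^{(0)} := \binom{r}{j}$, which automatically satisfies~(1), and then modify by multiples of $p$ to enforce the stronger congruences in~(2). The workhorse identity throughout is $\binom{j}{n}\binom{r}{j} = \binom{r}{n}\binom{r-n}{j-n}$.

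First, I would dispose of condition~(2) for $n=2$ essentially for free. Summing the identity above over the allowed $j$ gives
\[
\sum_{j} \binom{j}{2}\binom{r}{j} \;=\; \binom{r}{2}\sum_{j} \binom{r-2}{j-2},
\]
and since $p\mid r$ and $p>2$, we have $p\mid \binom{r}{2} = r(r-1)/2$. Hence the $n=2$ congruence already holds for $\alpha_j^{(0)}$. Moreover, any subsequent modification of the form $\alpha_j \mapsto \alpha_j + p c_j$ preserves $\sum \binom{j}{2}\alpha_j$ modulo $p$, so this condition will persist through the correction step.

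Turning to $n=1$ and $n=0$: with $\alpha_j^{(0)} = \binom{r}{j}$ one has $A_1 := \sum_{j} j\binom{r}{j} = r\sum_{j} \binom{r-1}{j-1}$, divisible by $p$ because $p\mid r$; and $A_0 := \sum_{j}\binom{r}{j} = S_r$, divisible by $p$ by Lemma~\ref{comb1}. To upgrade these divisibilities to $p^2$ and $p^3$ respectively, I would set $\alpha_j := \binom{r}{j} + p c_j$ for integers $c_j$ supported on the two smallest allowed indices $j = p$ and $j = 2p-1$. Both lie in the index set since the three hypotheses together force $r \geq p^2$ (the smallest $r$ with $r\equiv 1\pmod{p-1}$, $p\mid r$ and $r\geq 2p$). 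The conditions~(2) for $n=1$ and $n=0$ then read
\[
p\, c_p + (2p-1)\, c_{2p-1} \equiv -A_1/p \pmod{p}, \qquad c_p + c_{2p-1} \equiv -A_0/p \pmod{p^2}.
\]
The first reduces mod~$p$ to $-c_{2p-1} \equiv -A_1/p \pmod{p}$, pinning down $c_{2p-1}$ modulo $p$; the second then determines $c_p$ modulo $p^2$. Any integer lifts complete the definition of the $\alpha_j$.

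The main point, and the feature distinguishing the present lemma from Lemma~\ref{comb2} in the case $a=2$, is the free divisibility at $n=2$: here $p\mid\binom{r}{2}$ comes directly from the extra hypothesis $p\mid r$. Once that is in hand, the remainder is a purely triangular two-variable fix-up and presents no real obstacle.
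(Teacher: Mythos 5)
Your proof is correct, and it is exactly the sort of argument the paper has in mind: the paper does not actually write out a proof of Lemma~\ref{comb6}, but says only that it is ``similar'' to the other combinatorial lemmas (principally Lemma~\ref{comb2}), and your naive-choice-plus-correction scheme is the standard approach there. The key observation that the $n=2$ condition comes for free from $p\mid r$ (via $p\mid\binom{r}{2}$ and the identity $\binom{j}{2}\binom{r}{j}=\binom{r}{2}\binom{r-2}{j-2}$), and that the remaining two conditions give a triangular system in $c_{2p-1}$ (mod $p$) and then $c_p$ (mod $p^2$), is clean and does the job.

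One small slip worth fixing: you write $A_0=\sum_j\binom{r}{j}=S_r$, but the index set in Lemma~\ref{comb6} is $1<j<r$, whereas $S_r$ from Lemma~\ref{comb1} sums over $0<j<r$. So in fact $A_0=S_r-\binom{r}{1}=S_r-r$. The conclusion $p\mid A_0$ still holds, since $p\mid S_r$ by Lemma~\ref{comb1} and $p\mid r$ by hypothesis, but as stated the equality is off by the $j=1$ term. Also, your observation that $r\geq p^2$ is correct (the CRT forces $r=kp$ with $k\equiv1\bmod(p-1)$ and $k\geq2$, so $k\geq p$), though for the purpose of having $p$ and $2p-1$ in range one only needs $r\geq2p$, which is a hypothesis. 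Everything else is fine.
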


%%%%%

\subsection{Further useful results}
We now state some identities involving sums of products of 
binomial coefficients, which will be useful in treating the delicate case $r \equiv 2 \mod (p-1)$.

\begin{lemma}
\label{congruences1}
Let $p \geq 3$. If $s \equiv 1 \mod (p-1)$ and  $t = v(s-1)$,
then $p^i \binom{s}{i} \equiv 0 \mod {p^{t+2}}$, for all $i \geq 2$.
\end{lemma}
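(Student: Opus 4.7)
The plan is to bound $v_p\bigl(\binom{s}{i}\bigr)$ directly, splitting on whether $t=0$ or $t\geq 1$, and then multiply by $p^i$.

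\textbf{Case $t=0$.} For any $s\in\Z_p$ and $i\geq 0$, the value $\binom{s}{i}$ lies in $\Z_p$, so $v_p\bigl(\binom{s}{i}\bigr)\geq 0$ and hence $v_p\bigl(p^i\binom{s}{i}\bigr)\geq i\geq 2=t+2$. This disposes of the case $t=0$ with nothing to check.

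\textbf{Case $t\geq 1$.} Since $s\equiv 1\pmod{p-1}$ and $p^t\mid s-1$ with $t\geq 1$, we have $s\equiv 1\pmod p$, hence $v_p(s)=0$. The key move is to use the identity
\[
\binom{s}{i}\;=\;\frac{s(s-1)}{i(i-1)}\binom{s-2}{i-2},
\]
valid for $i\geq 2$. Because $\binom{s-2}{i-2}\in\Z_p$, one obtains
\[
v_p\!\left(\binom{s}{i}\right)\;\geq\;v_p(s)+v_p(s-1)-v_p\bigl(i(i-1)\bigr)\;=\;t-v_p\bigl(i(i-1)\bigr).
\]
The rest is a numerical inequality: I would show $v_p\bigl(i(i-1)\bigr)\leq i-2$ for $i\geq 2$ and $p\geq 3$. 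Since $p\geq 3$, the consecutive integers $i$ and $i-1$ cannot both be divisible by $p$, so $v_p\bigl(i(i-1)\bigr)=\max\bigl(v_p(i),v_p(i-1)\bigr)\leq\lfloor\log_p i\rfloor$. A short check handles the small cases ($i=2$: $\lfloor\log_p 2\rfloor=0=i-2$; $i=3$: $\lfloor\log_p 3\rfloor\leq 1=i-2$), and for $i\geq 4$ one has $p^{i-2}\geq 3^{i-2}\geq i$, giving $\lfloor\log_p i\rfloor\leq i-2$.

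Putting it together, $v_p\bigl(p^i\binom{s}{i}\bigr)\geq i+t-(i-2)=t+2$, which is the required bound. There is no genuine obstacle; the only point that warrants care is ensuring the final inequality $v_p(i(i-1))\leq i-2$ is verified at the boundary value $i=2$, where it holds with equality only because $p$ is odd — this is where the hypothesis $p\geq 3$ is essential.
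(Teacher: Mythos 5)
Your argument is correct. Note first that the paper does not prove this lemma itself; it simply cites \cite[Lemma 2.1]{BG13}, so there is no in-paper proof to compare against. Your factorization $\binom{s}{i}=\frac{s(s-1)}{i(i-1)}\binom{s-2}{i-2}$ isolates the factor $s-1$ (contributing $p^t$) and the unit $s$ in one stroke, reducing everything to the elementary inequality $v_p\bigl(i(i-1)\bigr)\leq i-2$, which you justify correctly via $v_p(i(i-1))=\max(v_p(i),v_p(i-1))\leq\lfloor\log_p i\rfloor\leq i-2$; the boundary case $i=2$ (and, when $p=3$, $i=3$) is exactly where $p\geq 3$ is needed, and your observation that this is where the hypothesis enters is right. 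One small remark: the hypothesis $s\equiv 1\pmod{p-1}$ is never actually used in your proof — once $t\geq 1$ you only invoke $p^t\mid s-1$ to conclude $s\equiv 1\pmod p$. So your argument in fact proves the slightly stronger statement with the congruence mod $p-1$ dropped. This is not an error, just a sign that the congruence hypothesis is carried in the lemma's statement because of how it is applied elsewhere in the paper, not because the bound requires it.
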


\begin{lemma}
\label{congruences2}
Let $p > 3$. If $r \equiv 2 \mod (p-1)$ and $t = v(r-2)$, then
$p^i\binom{r}{i} \equiv 0 \mod {p^{t+3}}$, for all $i \geq 3$.
\end{lemma}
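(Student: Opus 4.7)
The plan is to exploit the telescoping identity
$$\binom{r}{i} = \frac{r(r-1)(r-2)}{i(i-1)(i-2)}\binom{r-3}{i-3}, \qquad i \geq 3,$$
which isolates the single factor $r-2$ carrying the valuation $t$. After dispensing with the trivial case $r = 2$ (where $\binom{2}{i} = 0$ for $i \geq 3$ and $t = \infty$), the equivalent goal is to show $v\!\left(\binom{r}{i}\right) \geq t + 3 - i$.

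The first step is to show $v(r(r-1)(r-2)) \geq t$. Since $p \geq 5$, at most one of any three consecutive integers is divisible by $p$, so the left side equals the $p$-adic valuation of whichever of $r, r-1, r-2$ is divisible by $p$ (and is $0$ otherwise). If $r \equiv 2 \mod p$, that valuation is exactly $v(r-2) = t$; if not, then $t = 0$ and the bound is automatic. The second, more technical step is the companion estimate
$$v(i(i-1)(i-2)) \leq i - 3 \qquad (i \geq 3,\; p \geq 5),$$
which I would prove by splitting into $i < p$ (all three factors are $p$-adic units, so the left side vanishes, and $0 \leq i-3$ since $i \geq 3$) and $i \geq p$ (at most one factor is divisible by $p$, and its valuation $k$ satisfies $p^k \leq i$, hence $k \leq \log_p i \leq i - 3$, the last inequality being elementary for $i \geq p \geq 5$).

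Combining these two bounds with the non-negativity of $v\!\left(\binom{r-3}{i-3}\right)$, the identity will give
$$v\!\left(\binom{r}{i}\right) \;=\; v(r(r-1)(r-2)) \;-\; v(i(i-1)(i-2)) \;+\; v\!\left(\binom{r-3}{i-3}\right) \;\geq\; t - (i-3),$$
whence $v(p^i \binom{r}{i}) \geq i + t - (i-3) = t + 3$, as required. There is no deep obstacle here; the hypothesis $p > 3$ enters precisely to ensure that at most one of three consecutive integers can be divisible by $p$, which is what makes both auxiliary estimates clean.
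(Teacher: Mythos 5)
Your proof is correct. The paper skips this proof, saying only that it is ``very similar'' to that of Lemma~\ref{congruences1} (which is Lemma~2.1 of \cite{BG13}); your argument — pulling out the factors $r(r-1)(r-2)$ and $i(i-1)(i-2)$ via the identity $\binom{r}{i} = \tfrac{r(r-1)(r-2)}{i(i-1)(i-2)}\binom{r-3}{i-3}$, then bounding $v(r(r-1)(r-2)) \geq t$ from below and $v(i(i-1)(i-2)) \leq i-3$ from above using that for $p \geq 5$ at most one of three consecutive integers is divisible by $p$ — is exactly the three-factor analogue of that argument, so it matches the intended proof.
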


Lemma~\ref{congruences1} is \cite[ Lemma 2.1]{BG13}, and the proof
of Lemma~\ref{congruences2} is very similar, so is skipped here.

%\begin{proof}
%We have $v_p(p^i\binom{r}{i}) = i + v_p(r(r-1)\dots (r-i+1)) - v_p(i!)
%\geq t+i-v_p(i!)$. As $p > 3$, we have $v_p(3!)=0$ so the result holds
%for $i = 3$. We have $v_p(i!) = (i-s_p(i))/(p-1)$, where
%$s_p(i)$ is the sum of digits of $i$ when written in base $p$. So in
%particular $v_p(i!) \leq i/(p-1)$. So we get
%$v_p(p^i\binom{r}{i}) \geq t + i(p-2)/(p-1)$. We have $i(p-2)/(p-1) \geq
%3$ if and only if $i \geq 3(1 + 1/(p-2))$. As $1/(p-2) \leq 1$, we get
%that $i(p-2)/(p-1) \geq 3$ as soon as $i \geq 4$, which gives the result.
%\end{proof}

\begin{lemma}
\label{ppower}
Let $p > 2$. If $P(X) = 1+pX \in \Z_p[X]$, then $P(X)^{p^t} \equiv 1 + p^{t+1}X \mod
{p^{t+2}}$, for all $t \geq 0$.
\end{lemma}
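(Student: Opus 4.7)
The plan is to proceed by induction on $t \geq 0$. The base case $t = 0$ is immediate, since $P(X) = 1 + pX \equiv 1 + pX \pmod{p^2}$ by definition. For the inductive step, assume the congruence holds for some $t$, so we may write
\[
P(X)^{p^t} = 1 + p^{t+1}X + p^{t+2}Q(X)
\]
for some $Q(X) \in \Z_p[X]$, and raise both sides to the $p$-th power. Setting $A := p^{t+1}X + p^{t+2}Q(X) \in \Z_p[X]$, the binomial theorem gives
\[
P(X)^{p^{t+1}} = (1+A)^p = 1 + pA + \sum_{i=2}^{p-1}\binom{p}{i}A^i + A^p.
\]
The linear term $pA = p^{t+2}X + p^{t+3}Q(X)$ already contributes the expected leading $p^{t+2}X$ modulo $p^{t+3}$, so the remaining task is to show that each higher term lies in $p^{t+3}\Z_p[X]$.

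For the middle terms with $2 \leq i \leq p-1$, the coefficient $\binom{p}{i}$ is divisible by $p$, and every coefficient of $A^i$ has valuation at least $i(t+1) \geq 2(t+1)$; hence each coefficient of $\binom{p}{i}A^i$ has valuation at least $2t+3 \geq t+3$. For the final term, each coefficient of $A^p$ has valuation at least $p(t+1) = pt + p$, and the inequality $pt + p \geq t + 3$ rearranges to $(p-1)t \geq 3 - p$, which holds for all $t \geq 0$ precisely because $p \geq 3$. Summing the contributions yields $P(X)^{p^{t+1}} \equiv 1 + p^{t+2}X \pmod{p^{t+3}}$, completing the induction.

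There is no real obstacle: the argument is a direct valuation count. The only subtlety worth flagging is where the hypothesis $p > 2$ is used, namely in the last inequality $p(t+1) \geq t+3$, which fails for $p = 2, t = 0$; this is of course consistent with the well-known fact that the analogous congruence on $1 + 2\Z_2$ needs a shifted exponent. An alternative, non-inductive proof is to expand $(1+pX)^{p^t} = \sum_{i=0}^{p^t} \binom{p^t}{i} p^i X^i$ directly and invoke the identity $v_p\bigl(\binom{p^t}{i}\bigr) = t - v_p(i)$ for $1 \leq i \leq p^t$ to reduce the claim for $i \geq 2$ to the elementary bound $i - v_p(i) \geq 2$, which again holds exactly when $p > 2$.
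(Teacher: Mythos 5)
Your proof is correct and follows essentially the same route as the paper: induct on $t$, write $P(X)^{p^t} = 1 + p^{t+1}X + p^{t+2}Q(X)$, expand $(1+A)^p$ by the binomial theorem, and bound the $p$-adic valuations of the terms with $i\geq 2$ (splitting into $i<p$ and $i=p$), using $p>2$ only in the $i=p$ case. The closing remark giving a direct proof via $v_p\bigl(\binom{p^t}{i}\bigr)=t-v_p(i)$ is a nice alternative but is not what the paper does.
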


\begin{proof}
We induct on $t$. The case $t = 0$ is clear. Write
$P(X)^{p^t} = 1+p^{t+1}X + p^{t+2}Q(X)$, for $Q(X) \in \Z_p[X]$.
Then 
$P(X)^{p^{t+1}} = (1+p^{t+1}X + p^{t+2}Q(X))^p
= \sum_{i \geq 0}\binom{p}{i}p^{i(t+1)}(X+pQ(X))^i$.
We have $v_p(\binom{p}{i}p^{i(t+1)}) \geq t+3$, for all $i \geq 2$. Indeed,
for $i<p$, we have $v_p(\binom{p}{i}p^{i(t+1)}) = 1+i(t+1) = ti+i+1$, and
for $i=p$, we have $v_p(\binom{p}{i}p^{i(t+1)}) = p(t+1) \geq t+3$, as $p\neq 2$.
So 
$P(X)^{p^{t+1}} \equiv 1 + p^{t+2}X \mod {p^{t+3}}$.
\end{proof}

The first part of the following proposition generalizes Lemma~\ref{comb1} when $a = 2$ and 
$p > 3$.

\begin{prop}
\label{congrbinom2}
Let $p > 3$ and write $r = 2+n(p-1)p^t$, with $t \geq 0$ and $n > 0$. 
Then, the following identities hold:
\begin{enumerate}
\item
\begin{eqnarray*}
\sum_{{0<j<r \atop j \equiv 2 \mod (p-1)}}\binom{r}{j} 
& \equiv  & \dfrac{p(2-r)}{2} \mod {p^{t+2}}. 
\end{eqnarray*}
\item
\begin{eqnarray*}
\sum_{{0<j<r \atop j \equiv 2 \mod (p-1)}}j\binom{r}{j} 
& \equiv & % nrp^{t+1}
\dfrac{pr(2-r)}{1-p} \mod {p^{t+2}}.
\end{eqnarray*}
\item
\begin{eqnarray*}
\sum_{{0<j<r \atop j \equiv 2 \mod (p-1)}}\binom{j}{2}\binom{r}{j} 
& \equiv & \dfrac{\binom{r}{2}}{1-p}
\mod {p^{t+1}}.
\end{eqnarray*}
\item For all $i \geq 3$,
\begin{eqnarray*}
\sum_{{0<j<r \atop j \equiv 2 \mod (p-1)}} \binom{j}{i}\binom{r}{j} 
\equiv 0 \mod {p^{t+3-i}}.
\end{eqnarray*}
\end{enumerate}
\end{prop}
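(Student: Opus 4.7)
The plan is to reduce all four identities to explicit character sums via a Teichm\"uller filter. The basic tool is the orthogonality relation $\sum_{\lambda\in\F_p^\times}[\lambda]^m = p-1$ if $(p-1)\mid m$ and $0$ otherwise (valid in $\Z_p$ since the Teichm\"uller lifts form a cyclic group of order $p-1$), which yields
\[
\sum_{\substack{0\leq k\leq N\\ k\equiv a\mod(p-1)}}\binom{N}{k} \,=\, \frac{1}{p-1}\sum_{\lambda\in\F_p^\times}[\lambda]^{-a}(1+[\lambda])^N
\]
for any $N\geq 0$ and $a\in\Z$. Combined with the identity $\binom{j}{i}\binom{r}{j}=\binom{r}{i}\binom{r-i}{j-i}$ and the substitution $k=j-i$, this rewrites each of the four target sums as $\binom{r}{i}$ times a filtered sum of $\binom{r-i}{k}$ over $k\equiv 2-i\mod(p-1)$ with $0\leq k\leq r-i-1$.

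Part (4) is then immediate: for $i\geq 3$, Lemma~\ref{congruences2} gives $v_p(\binom{r}{i})\geq t+3-i$, and the inner filtered sum is an integer, so the full sum vanishes modulo $p^{t+3-i}$.

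For parts (1)--(3), I would evaluate the filtered sum via the Teichm\"uller formula plus a boundary correction for $k=r-i$ (the excluded $j=r$ contributes $\binom{r-i}{r-i}=1$). The $\lambda=-1$ term contributes $0$ to the character sum whenever $r-i\geq 1$. For $\lambda\neq -1$, set $u=1+[\lambda]\in\Z_p^\times$ and $z(\lambda)=(u^{p-1}-1)/p\in\Z_p$. Writing $r=2+n(p-1)p^t$, Lemma~\ref{ppower} yields $u^{(p-1)p^t}\equiv 1+p^{t+1}z(\lambda)\bmod p^{t+2}$, whence
\[
u^{r-i}\,\equiv\, u^{2-i}\bigl(1+np^{t+1}z(\lambda)\bigr)\bmod p^{t+2}.
\]
Since $u^{2-i}z(\lambda)=p^{-1}(u^{p+1-i}-u^{2-i})$, the task reduces to evaluating a handful of sums $\sum_\lambda[\lambda]^{i-2}(1+[\lambda])^M$ for $M\in\{2-i,\,p+1-i\}$, each of which one expands binomially and computes by the orthogonality relation above. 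Carrying out the bookkeeping for $i=0,1,2$ matches the formulas in parts (1), (2), (3).

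The main obstacle will be part (3), where the target modulus $p^{t+1}$ is one lower than the modulus $p^{t+2}$ that the method produces directly. There, the calculation gives the sum $\equiv \binom{r}{2}\cdot(np^{t+1}-1)/(p-1)\bmod p^{t+2}$, and one must verify that the extra term $\binom{r}{2}\cdot np^{t+1}/(p-1)$ vanishes modulo $p^{t+1}$. Using the identity $np^{t+1}(p-1)=p(r-2)$, this quantity equals $r(r-1)(r-2)p/(2(p-1)^2)$, whose $p$-adic valuation is at least $v_p(r-2)+1\geq t+1$, as required.
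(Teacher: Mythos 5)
Your plan is correct and is essentially the paper's proof: summing over the Teichm\"uller lifts is the same as summing over the $(p-1)$-st roots of unity $\mu_{p-1}$ that the paper uses, the identity $\binom{j}{i}\binom{r}{j}=\binom{r}{i}\binom{r-i}{j-i}$ is exactly the content of the paper's $g_{i,r}(x)=\binom{r}{i}x^{i-2}(1+x)^{r-i}=\sum_{\ell}\binom{r}{\ell}\binom{\ell}{i}x^{\ell-2}$, and both proofs invoke Lemma~\ref{ppower} in the same way and then evaluate the same two anchor character sums (the paper does this by computing $\Sigma_2$, $\Sigma_{p+1}$, $U_{1,2}$, $U_{1,p+1}$, which amounts to your binomial expansion plus orthogonality). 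One minor remark: the ``obstacle'' you flag in part (3) is not really one --- the paper avoids it simply by applying Lemma~\ref{ppower} only to precision $p^{t+1}$ there, so the unevaluated $\sum_\xi z_\xi$ term never appears; your extra valuation check is sound but unnecessary.
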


\begin{proof}
Let $0 \leq i < r$. Let $S_{i,r} = 
\sum_{{0<j<r \atop j \equiv 2 \mod (p-1)}}\binom{j}{i}\binom{r}{j}$.
Let $\Sigma_{i,r} = (p-1)
\sum_{{j \geq 0 \atop j \equiv 2 \mod (p-1)}}\binom{j}{i}\binom{r}{j}$,
so $\Sigma_{i,r} = (p-1)(S_{i,r} + \binom{r}{i})$.
Let $f_r(x) = (1+x)^r = \sum_{\ell\geq 0}\binom{r}{\ell}x^\ell$, 
which we consider as a function $\Z_p \to \Z_p$. 
Let $g_{i,r}(x) = \frac{x^{i-2}}{i!}f^{(i)}_r(x)$,
so that
$g_{i,r}(x) = \binom{r}{i}x^{i-2}(1+x)^{r-i}
= \sum_{\ell \geq 0}\binom{r}{\ell}\binom{\ell}{i}x^{\ell-2}$.

Let $\mu' = \mu_{p-1}\setminus \{-1\}$ be the $(p-1)$-st roots of unity sans $-1$.
Then $\Sigma_{i,r} = \sum_{\xi\in \mu_{p-1}}g_{i,r}(\xi)= \sum_{\xi\in
\mu'}g_{i,r}(\xi)$, as $r > i$.
Let $U_{i,r} = \sum_{\xi\in\mu'}\xi^{i-2}(1+\xi)^{r-i}$. Then 
$\Sigma_{i,r} = \binom{r}{i}U_{i,r}$, so that finally
$(p-1)(S_{i,r}+\binom{r}{i}) = \binom{r}{i}U_{i,r}$.

When $i \geq 3$, observe that $U_{i,r} \in \Z_p$, so 
part (4) of the proposition follows from Lemma~\ref{congruences2}.

When $i=2$, we have  $U_{2,r} = \sum_{\xi\in\mu'}(1+\xi)^{n(p-1)p^t}$. 
If $\xi\in \mu'$, then $1+\xi \in
\Z_p^\times$, so $(1+\xi)^{p-1} = 1+pz_{\xi}$, for some $z_{\xi}\in \Z_p$. 
By Lemma~\ref{ppower}, $(1+pz_{\xi})^{p^t} \equiv 1 \mod {p^{t+1}}$, so
$(1+\xi)^{r-2} \equiv 1 \mod {p^{t+1}}$.
So $U_{2,r} \equiv p-2 \mod {p^{t+1}}$ and $S_{2,r} \equiv \binom{r}{2}/(1-p) \mod
{p^{t+1}}$, proving part (3).

When $i = 1$, 
write again $(1+\xi)^{p-1} = 1+pz_\xi$. Then $(1+\xi)^{(p-1)p^t} \equiv  
1+p^{t+1}z_\xi \mod {p^{t+2}}$, by Lemma \ref{ppower}.
So we get 
$$
U_{1,r} \equiv  \sum_{\xi\in\mu'} \xi^{-1}(1+\xi) + np^{t+1}
\sum_{\xi\in\mu'}\xi^{-1}(1+\xi)z_{\xi} \mod {p^{t+2}}.
$$
By computing $U_{1,2}$ and $U_{1,p+1}$ we get
that $\sum_{\xi\in \mu'}\xi^{-1}(1+\xi) = p-1$ and 
$\sum_{\xi\in\mu'}\xi^{-1}(1+\xi)z_{\xi} \equiv -1 \mod p$, which gives
$U_{1,r} \equiv p-1-np^{t+1} \mod {p^{t+2}}$, so
$S_{1,r} \equiv nrp^{t+1} \mod {p^{t+2}}$, proving part (2).

Let $S_r =  S_{0,r}$ 
and $\Sigma_r = \Sigma_{0,r}$,
so that $\Sigma_r = (p-1)(S_r + 1)$.
Note $\Sigma_r = \sum_{\xi\in \mu_{p-1}}f_r(\xi)= \sum_{\xi\in \mu'}f_r(\xi)$, as $r > 0$.
We have
$(1+\xi)^{2+n(p-1)p^t} \equiv (1+\xi)^2(1+np^{t+1}z_{\xi}) \mod {p^{t+2}}$, so that
$$
\Sigma_r \equiv \sum_{\xi\in\mu'}(1+\xi)^2 + 
np^{t+1}\sum_{\xi\in\mu'}(1+\xi)^2z_{\xi} \mod {p^{t+2}}.
$$
Let $s = \sum_{\xi\in\mu'}(1+\xi)^2$ and let
$s'= \sum_{\xi\in\mu'}(1+\xi)^2z_{\xi}$. 
Then $s = \Sigma_2 = p-1$. On the other hand, 
$s + ps' \equiv \Sigma_{p+1} \mod {p^2}$. We also have that
$\Sigma_{p+1} \equiv (p-1)(1+\binom{p+1}{p-1}) \equiv (p-1)(1+p/2) \mod {p^2}$.
So $s' \equiv -1/2 \mod p$.
Putting everything together, we get 
$\Sigma_r \equiv (p-1)-p^{t+1}n/2 \mod {p^{t+2}}$
and so
$S_r \equiv p^{t+1}n/2 \mod {p^{t+2}}$, proving part (1) as well.
\end{proof}

We now state two more propositions of a similar nature.

\begin{prop}
\label{congrbinom1}
Let $p > 3$ and write $s = 1+n(p-1)p^t$, with $t \geq 0$ and $n > 0$. Then, we have:
\begin{enumerate}
\item
\begin{eqnarray*}
\sum_{j \equiv 1 \mod (p-1)} \binom{s}{j} 
& \equiv & 1+np^{t+1} \mod {p^{t+2}}.
\end{eqnarray*}
\item
\begin{eqnarray*}
\sum_{j \equiv 1 \mod (p-1)} j\binom{s}{j} 
& \equiv & \dfrac{s(p-2)}{p-1} - snp^{t+1} \mod {p^{t+2}}.
\end{eqnarray*}
\item For all $i \geq 2$,
\begin{eqnarray*}
\sum_{j \equiv 1 \mod (p-1)} \binom{j}{i}\binom{s}{j} 
& \equiv & 0 \mod {p^{t+2-i}}.
\end{eqnarray*}
\end{enumerate}
\end{prop}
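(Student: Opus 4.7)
My plan is to mimic the proof of Proposition~\ref{congrbinom2} almost verbatim, using the standard root-of-unity filter
\[
(p-1)\!\!\sum_{j \equiv a \mod (p-1)}\!\!c_j \;=\; \sum_{\xi \in \mu_{p-1}} \xi^{-a}\sum_{j} c_j \xi^j,
\]
where $\mu_{p-1}$ is the group of $(p-1)$-st roots of unity in $\Z_p^\times$ and $\mu' = \mu_{p-1}\setminus\{-1\}$. For $\xi\in\mu'$ write $(1+\xi)^{p-1} = 1+pz_\xi$ with $z_\xi\in\Z_p$; Lemma~\ref{ppower} then gives $(1+\xi)^{n(p-1)p^t} \equiv 1+np^{t+1}z_\xi \mod p^{t+2}$, which drives the whole calculation.

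For part (1), applying the filter with $c_j = \binom{s}{j}$ (the $\xi=-1$ term vanishes since $s\geq 1$) yields
\[
(p-1)\sum_{j \equiv 1 \mod (p-1)}\binom{s}{j} \;\equiv\; \sum_{\xi\in\mu'}\xi^{-1}(1+\xi) \;+\; np^{t+1}\!\sum_{\xi\in\mu'}\xi^{-1}(1+\xi)z_\xi \mod p^{t+2}.
\]
The constants on the right, equal to $p-1$ and $\equiv -1\mod p$ respectively, are already pinned down inside the proof of Proposition~\ref{congrbinom2}, so dividing by $p-1$ (noting $1/(p-1)\equiv -1\mod p$) gives the claim. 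Part (2) runs in parallel after the collapse $j\binom{s}{j} = s\binom{s-1}{j-1}$: filtering $\binom{s-1}{k}$ with residue $a=0$ (again the $\xi=-1$ term vanishes as $s>1$) produces
\[
\tfrac{p-1}{s}\!\sum_{j\equiv 1 \mod (p-1)}\!j\binom{s}{j} \;=\; \sum_{\xi\in\mu'}(1+\xi)^{s-1} \;\equiv\; (p-2) + np^{t+1}\!\sum_{\xi\in\mu'}z_\xi \mod p^{t+2}.
\]
The only new input needed is $c := \sum_{\xi\in\mu'}z_\xi \mod p$, which I would fix by specializing to $s=p$ (so $n=1$, $t=0$), where the LHS is $p\cdot 2 = 2p$ directly from $j\binom{p}{j}=p\binom{p-1}{j-1}$; matching forces $c\equiv 1\mod p$. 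Multiplying back through by $s\in\Z_p^\times$ and simplifying $(p-2+np^{t+1})/(p-1)$ produces the stated formula.

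Part (3) does not require the filter at all. The Vandermonde-style identity $\binom{j}{i}\binom{s}{j} = \binom{s}{i}\binom{s-i}{j-i}$ factors $\binom{s}{i}$ out of the sum:
\[
\sum_{j \equiv 1 \mod (p-1)}\binom{j}{i}\binom{s}{j} \;=\; \binom{s}{i}\!\!\sum_{k \equiv 1-i \mod (p-1)}\!\!\binom{s-i}{k},
\]
the inner sum is an integer, and Lemma~\ref{congruences1} (with $v(s-1)=t$) gives $\binom{s}{i}\equiv 0\mod p^{t+2-i}$, so the congruence is immediate.

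The main obstacle is the single new constant $c := \sum_{\xi\in\mu'}z_\xi \mod p$ needed for part (2): unlike the constants used in part (1), it is not recorded in the proof of Proposition~\ref{congrbinom2} and must be pinned down separately, either by the test-case computation above or by recognising it as a sum of Fermat quotients. Once $c\equiv 1\mod p$ is in hand the remaining manipulations are bookkeeping, and the close parallel with Proposition~\ref{congrbinom2} makes the argument essentially automatic.
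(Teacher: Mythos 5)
Your proof is correct and takes essentially the same route the paper prescribes (``very similar to Proposition~\ref{congrbinom2}, using Lemma~\ref{congruences1} instead of Lemma~\ref{congruences2}''): parts (1) and (2) mirror the corresponding steps of Proposition~\ref{congrbinom2} via the $\mu_{p-1}$-filter and Lemma~\ref{ppower}, with the test case $s=p$ cleanly determining the one new constant $\sum_{\xi\in\mu'}z_\xi\equiv 1\bmod p$, and part (3) is the factorization $\binom{j}{i}\binom{s}{j}=\binom{s}{i}\binom{s-i}{j-i}$ (the paper's $\Sigma_{i,s}=\binom{s}{i}U_{i,s}$ in different clothing) combined with Lemma~\ref{congruences1}. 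One small caveat, which traces back to the paper's statement rather than to your argument: part (2) in fact requires $s>1$ (i.e.\ $n\geq 1$), since for $s=1$ the $\xi=-1$ term $0^{s-1}$ does not vanish and the left-hand side is $1$, whereas $\tfrac{p-2}{p-1}\equiv 2\bmod p$; your derivation correctly uses $s>1$, and this is harmless since the proposition is only ever applied with $s=r-1$ and $r=2+n(p-1)p^t$, $n\geq 1$.
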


\begin{prop}
\label{congrbinom12}
Let $p > 3$ and write $r = 2+n(p-1)p^t$, with $t \geq 0$ and  $n > 0$. 
Then, we have:
\begin{enumerate}
\item
\begin{eqnarray*}
\sum_{{1<j \leq r-1 \atop j \equiv 1 \mod (p-1)}}\binom{r}{j} 
& \equiv & 2-r + 2p^{t+1}n \mod {p^{t+2}}.
\end{eqnarray*}
\item
\begin{eqnarray*}
\sum_{1<j \leq r-1 \atop j \equiv 1 \mod (p-1)} j\binom{r}{j} 
& \equiv  & nrp^{t+1} \mod {p^{t+2}}.
\end{eqnarray*}
\item
\begin{eqnarray*}
\sum_{1<j  \leq r-1 \atop j \equiv 1 \mod (p-1)}\binom{j}{2}\binom{r}{j} 
& \equiv  & \dfrac{\binom{r}{2}}{p-1} \mod {p^{t+1}}.
\end{eqnarray*}
\item For all $i \geq 3$,
\begin{eqnarray*}
\sum_{1<j \leq r-1 \atop  j \equiv 1 \mod (p-1)}\binom{j}{i}\binom{r}{j} 
& \equiv &  0 \mod {p^{t+3-i}}.
\end{eqnarray*}
\end{enumerate}
\end{prop}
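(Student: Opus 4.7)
\emph{Proof sketch.} The strategy is to mirror the proof of Proposition~\ref{congrbinom2}, now working with the congruence class $1$ mod $(p-1)$ and the range $1 < j \leq r-1$. Set $\mu' := \mu_{p-1}\setminus\{-1\}$ and
$$
\tilde\Sigma_{i,r} := \sum_{{j \geq 0 \atop j \equiv 1 \mod (p-1)}} \binom{r}{j}\binom{j}{i}.
$$
The Vandermonde identity $\sum_\ell \binom{r}{\ell}\binom{\ell}{i}x^\ell = \binom{r}{i}x^i(1+x)^{r-i}$ together with the standard root-of-unity filter yields
$$
(p-1)\,\tilde\Sigma_{i,r} \;=\; \binom{r}{i}\sum_{\xi\in\mu'} \xi^{i-1}(1+\xi)^{r-i},
$$
where $\xi = -1$ may be dropped since $r > i$. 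As $j=0$ does not satisfy $j \equiv 1 \mod (p-1)$ and the $j=1$ term contributes $r\binom{1}{i}$, the target sum $S_{i,r}$ in the proposition is $S_{i,r} = \tilde\Sigma_{i,r} - r\binom{1}{i}$.

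Part (4) for $i \geq 3$ is then immediate: the sum $\sum_\xi \xi^{i-1}(1+\xi)^{r-i}$ lies in $\Z_p$, and $v_p(\binom{r}{i}) \geq t+3-i$ by Lemma~\ref{congruences2}. For part (3), write $(1+\xi)^{p-1} = 1+pz_\xi$ with $z_\xi \in \Z_p$ and apply Lemma~\ref{ppower} to $r-2 = n(p-1)p^t$ to get $(1+\xi)^{r-2} \equiv 1 \mod {p^{t+1}}$. Combined with $\sum_{\xi\in\mu'}\xi = 1$, this produces $(p-1)\tilde\Sigma_{2,r} \equiv \binom{r}{2} \mod {p^{t+1}}$; since $\binom{1}{2}=0$, one has $S_{2,r} = \tilde\Sigma_{2,r}$ and part (3) follows.

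For parts (1) and (2), the sharper congruence $(1+pz_\xi)^{np^t} \equiv 1+np^{t+1}z_\xi \mod {p^{t+2}}$, again from Lemma~\ref{ppower}, gives
$(1+\xi)^{r-1} \equiv (1+\xi)(1+np^{t+1}z_\xi)$ and $(1+\xi)^r \equiv (1+\xi)^2(1+np^{t+1}z_\xi)$ modulo $p^{t+2}$.
Summing over $\mu'$ reduces the computation to four scalar sums: $\sum_\xi(1+\xi) = p-1$ and $\sum_\xi\xi^{-1}(1+\xi)^2 = 2(p-1)$ are elementary (both from $\sum_{\mu'}\xi = \sum_{\mu'}\xi^{-1} = 1$), while $A := \sum_\xi (1+\xi)z_\xi$ and $B := \sum_\xi\xi^{-1}(1+\xi)^2 z_\xi$ must be determined modulo $p$.

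Pinning down $A$ and $B \mod p$ is the main obstacle, and I would handle it via the same calibration trick used in the proof of Proposition~\ref{congrbinom2}: apply the general formula $(p-1)\tilde\Sigma_{i,r_0} = \binom{r_0}{i}\sum_\xi \xi^{i-1}(1+\xi)^{r_0-i}$ to the test value $r_0 = p+1$, for which one checks directly that $\tilde\Sigma_{1,p+1} = (p+1)^2$ and $\tilde\Sigma_{0,p+1} = 2(p+1)$. This will force $A \equiv -1$ and $B \equiv -2 \mod p$. Combining everything and using $1/(p-1) \equiv -1 \mod p$ gives $\tilde\Sigma_{1,r} \equiv r + nrp^{t+1}$ and $\tilde\Sigma_{0,r} \equiv 2 + 2np^{t+1}$ modulo $p^{t+2}$; subtracting the boundary term $r$ in each case yields parts (2) and (1).
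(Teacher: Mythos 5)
Your proposal is correct and follows the strategy the paper indicates for this proposition: it is the direct analogue of the proof of Proposition~\ref{congrbinom2}, using the same root-of-unity filter on $(1+x)^r$ (with the shift $x^{i-1}$ now selecting $j \equiv 1 \bmod (p-1)$), the same appeal to Lemmas~\ref{ppower} and~\ref{congruences2}, and the same calibration at small test weights to pin down the $\bmod\ p$ constants. The values $A \equiv -1$, $B \equiv -2 \pmod p$, the boundary correction $S_{i,r} = \tilde\Sigma_{i,r} - r\binom{1}{i}$, and the final congruences all check out.
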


\noindent We do not give the proofs of Proposition \ref{congrbinom1} and
\ref{congrbinom12} as they are very similar to the proof of Proposition
\ref{congrbinom2}. The proof of Proposition~\ref{congrbinom1} makes use of
Lemma~\ref{congruences1} instead of Lemma \ref{congruences2}.

%%%%%

\section{A quotient of $V_r = \Sym^r\bar{\mathbb{F}}_p^2$}
\label{sectionP}

\subsection{Definition of $P$}
Let $V_r$ denote the $(r+1)$-dimensional $\bar{\mathbb{F}}_p$-vector space of 
homogeneous polynomials in two variables $X$ and $Y$ of degree $r$ over
$\bar{\mathbb{F}}_p$. 
The group $\Gamma=\mathrm{GL}_2(\mathbb{F}_p)$
acts on $V_r$ by the formula 
$\left(\begin{smallmatrix} a & b\\
c & d
\end{smallmatrix}\right)\cdot F(X,Y)=F(aX+cY, bX+dY)$.

Let $X_r \subset V_r$ denote the
$\bar{\mathbb{F}}_p[\Gamma]$-span of the monomial $X^r$.
Let $\theta(X,Y)=X^pY-XY^p$.
The action of $\Gamma$ on $\theta$ is via the determinant.
We define two important submodules of $V_r$ as follows.
$$
V_r^*:=\{F\in V_r:\theta \mid F\}\cong \begin{cases}
                                          0, &\text{if } r<p+1\\
                                          V_{r-p-1}\otimes D, &\text{if } r\geq p+1, %\text{ and }
                                         \end{cases}
$$ %is the largest singular submodule inside $V_r$. Then we define
$$
V_r^{**}:=\{F \in V_r:\theta^2 \mid F\}\cong\begin{cases} 0, &\text{if
} r<2p+2\\ V_{r-2p-2}\otimes D^2, &\text{if } r\geq 2p+2.  \end{cases}
$$
We set $X_r^*:=X_r\cap V_r^*$ and
$X_r^{**}=X_r\cap V_r^{**}$.

The mod $p$ reduction $\overline{\Theta}_{k,a_p}$ of the lattice
$\Theta_{k,a_p}$ is a quotient of $\mathrm{ind}_{KZ}^G V_r$, for $r=k-2$. By 
\cite[Rem. 4.4]{BG09} we know that if $v(a_p)=1$ and $r \geq p$, then  the map $\mathrm{ind}_{KZ}^G V_r \twoheadrightarrow\overline{\Theta}_{k,a_p}$
factors through $\mathrm{ind}_{KZ}^G P$, where $$P:=\frac{V_r} {X_r + V_r^{**}}.$$
We now study the $\Gamma$-module structure of the quotient $P$ of $V_r$.
In particular, we will show that $P$ has $3$ (and occasionally $2$) Jordan-H\"older (JH) factors.

\subsection{A filtration on $P$}
\label{filtrationP}
In order to make use of results in Glover \cite{G78}, let us abuse notation a bit and  model $V_r$ on  the space of
homogeneous polynomials  in two variables $X$ and $Y$ of degree $r$ with coefficients in $\F_p$.
Once we  establish
the structure of  $P$ over $\F_p$, it will immediately imply the corresponding result over $\bar\F_p$, by extension 
of scalars.

We work with the representatives $1 \leq a \leq p-1$ of the congruence classes of $r \mod (p-1)$.
Note that $a = 1$ corresponds to $b = p$ from the Introduction, whereas the other values of $a$ and $b$ coincide.

\begin{prop}\label{es1}
 Let $p\geq 3$, and $r\equiv a\mod (p-1)$, with $1\leq a\leq
p-1$. 
  \begin{enumerate}
       \item[(i)] For $r\geq p$, the $\Gamma$-module structure of $V_r/V_r^*$ is given by
               \begin{eqnarray*}
                 0\rightarrow V_{a}\rightarrow\frac{V_r}{V_r^*}\rightarrow V_{p-a-1}\otimes D^{a}\rightarrow 0,
                \end{eqnarray*} 
             and the  sequence splits  if and only if $a=p-1$.
       \item[(ii)] For $r\geq 2p+1$, the $\Gamma$-module structure of $V_r^*/V_r^{**}$ is given by
              \begin{eqnarray*}
                0\rightarrow V_{p-2}\otimes D\rightarrow\frac{V_r^*}{V_r^{**}}\rightarrow V_{1}\rightarrow 0, & & \text{ if } a=1,\\
                0\rightarrow V_{p-1}\otimes D\rightarrow\frac{V_r^*}{V_r^{**}}\rightarrow V_{0}\otimes D\rightarrow 0, & &\text{ if } a=2,\\
                0\rightarrow V_{a-2}\otimes D\rightarrow\frac{V_r^*}{V_r^{**}}\rightarrow V_{p-a+1}\otimes D^{a-1}\rightarrow 0, & & \text{ if } 3\leq a\leq p-1,
               \end{eqnarray*} 
              and the sequences above split  if and only if $a=2$.
   \end{enumerate} 
\end{prop}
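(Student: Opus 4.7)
The plan is to invoke Glover's structure theorem \cite{G78} on $V_r$ as a $\Gamma = \GL_2(\F_p)$-module and to exploit the isomorphisms $V_r^* \cong V_{r-p-1} \otimes D$ and $V_r^{**} \cong V_{r-2p-2} \otimes D^2$ stated just before the proposition. These identifications reduce part (ii) immediately to part (i): since $V_r^*/V_r^{**} \cong (V_{r-p-1}/V_{r-p-1}^*) \otimes D$ and $r \equiv a \mod (p-1)$ forces $r-p-1 \equiv a-2 \mod (p-1)$, I would apply part (i) with the congruence class of $r-p-1$, which is $p-2$ if $a=1$, $p-1$ if $a=2$, and $a-2$ if $3 \leq a \leq p-1$. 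The three claimed short exact sequences and the splitting behaviour drop out by tracking the extra twist by $D$; note in particular that the $a=2$ subcase of (ii) corresponds precisely to the $a'=p-1$ split case of (i).

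For part (i), I would begin with the dimension count $\dim(V_r/V_r^*) = (r+1) - (r-p) = p+1 = \dim V_a + \dim(V_{p-a-1} \otimes D^a)$, which pins down the possible Jordan--H\"older factors. I would then identify a natural $\Gamma$-equivariant surjection $V_r/V_r^* \twoheadrightarrow V_{p-a-1} \otimes D^a$; this can be taken as the top layer of the cosocle filtration of $V_r$ in Glover's description, or equivalently realised as a projection modulo the relations coming from $\theta$. The kernel has dimension $a+1$ and I would identify it with $V_a$ by matching JH composition factors against the known list for $V_r$, or by producing an explicit $\Gamma$-equivariant injection $V_a \hookrightarrow V_r/V_r^*$.

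For the splitting: in the case $a = p - 1$, one has $V_a = V_{p-1}$, the Steinberg representation of $\Gamma$, and $V_{p-a-1} \otimes D^a = V_0 \otimes D^{p-1} = V_0$ (trivial). The Steinberg is projective (equivalently injective, since $\bar\F_p[\Gamma]$ is a Frobenius algebra), so any short exact sequence with $V_{p-1}$ as submodule splits. For $a \neq p - 1$, I would show non-splitting by exhibiting an element whose $\Gamma$-orbit spans $V_r/V_r^*$, forcing it to be indecomposable; since the two composition factors are non-isomorphic, indecomposability is equivalent to the extension being non-split. A natural candidate for such a cyclic generator is the image of $Y^r$ modulo $V_r^*$, whose $\Gamma$-translates include all $(cX+Y)^r$ and, via the Weyl element, $X^r$.

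The main obstacle I anticipate is the non-split direction in part (i) for $a \neq p-1$: while dimension counts and the JH-factor identification follow directly from Glover, carefully verifying that the cosocle of $V_r/V_r^*$ is simple (equivalently, that $V_a$ does not split off) requires an explicit calculation of the $\Gamma$-action on a chosen generator, or a targeted $\mathrm{Ext}^1_\Gamma$-computation. The deduction of part (ii) from (i) is, by comparison, routine bookkeeping of the determinant twist $D$ through the three subcases.
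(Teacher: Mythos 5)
Your reduction of part (ii) to part (i) via the compatible isomorphisms $V_r^*\cong V_{r-p-1}\otimes D$ and $V_r^{**}\cong V_{r-p-1}^*\otimes D$ (so $V_r^*/V_r^{**}\cong (V_{r-p-1}/V_{r-p-1}^*)\otimes D$, and $r-p-1\equiv a-2 \bmod (p-1)$) is correct and a genuinely economical observation; the paper itself simply cites Propositions 2.1 and 2.2 of Bhattacharya--Ghate, where the two parts are proved separately. The dimension count and JH-factor identification for part (i), and the splitting for $a=p-1$ via projectivity/injectivity of the Steinberg module $V_{p-1}$, are all fine.

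The genuine gap is in your non-splitting argument for $a\neq p-1$, and it has two components. First, the candidate generator is wrong: the image of $Y^r$ (equivalently $X^r$, by the Weyl element) in $V_r/V_r^*$ generates precisely the image of the submodule $X_r\subset V_r$, and the paper's own filtration in Section 3.2 identifies the cokernel of this image with $J_2\cong V_{p-a-1}\otimes D^a\neq 0$; so the image of $X_r$ in $V_r/V_r^*$ is the \emph{sub}-representation $V_a$, a proper submodule, and $Y^r$ is not a cyclic generator. Second, and more importantly, even a correct cyclic generator would not establish indecomposability: if $A$ and $B$ are non-isomorphic simple $\Gamma$-modules, the direct sum $A\oplus B$ is itself cyclic (any element with nonzero components in both summands generates it, since a submodule surjecting onto both of two non-isomorphic simple factors must be everything). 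So ``cyclic $\Rightarrow$ indecomposable'' fails exactly in the situation at hand. To close the gap you would instead need to show that $V_a$ does not split off, e.g., that $V_a$ is not in the cosocle of $V_r/V_r^*$ (equivalently, that $\Hom_\Gamma(V_r/V_r^*,V_a)=0$, or that $\mathrm{Ext}^1_\Gamma(V_{p-a-1}\otimes D^a,V_a)$ is one-dimensional and the class is nonzero), which requires the explicit computation with Glover's basis that you flag at the end as the anticipated obstacle but do not supply.
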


\begin{proof}
  See \cite[Prop. 2.1, Prop. 2.2]{Bhattacharya-Ghate}.
\end{proof}

\begin{lemma}
\label{Xr*/Xr**}
  Let $p\geq 3$, $r\geq 2p$ and $r\equiv a\mod (p-1)$, with $1\leq a\leq p-1$. Then 
  $$X_r^*/X_r^{**}=\begin{cases}
                    V_{p-2}\otimes D, &\text{if } a=1 \text { and } p\nmid r,\\
                    0, &\text{otherwise}. 
                   \end{cases}$$
\end{lemma}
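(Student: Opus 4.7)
The plan is to use that $X_r$ is a quotient of the $(p+1)$-dimensional principal series $\mathrm{Ind}_B^{\Gamma}\chi_a$, where $B$ is the upper triangular Borel and $\chi_a$ the character through which $B$ acts on the highest-weight vector $X^r$ (depending only on $a \equiv r \bmod (p-1)$). The Jordan--H\"older factors of $X_r$ therefore lie in $\{V_a,\ V_{p-1-a}\otimes D^a\}$, with $V_a$ appearing as a quotient.

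First, I would combine this with Proposition~\ref{es1}(i): the image of $X^r$ in $V_r/V_r^*$ is a nonzero $B$-eigenvector with eigencharacter $\chi_a$, hence it lies in and generates the irreducible submodule $V_a$. So the image of $X_r$ in $V_r/V_r^*$ equals $V_a$, giving
\[
0 \to X_r^* \to X_r \to V_a \to 0,
\]
and $X_r^*$ is either $0$ or isomorphic to $V_{p-1-a}\otimes D^a$. A direct comparison of weights and determinant twists against the Jordan--H\"older factors of $V_r^*/V_r^{**}$ listed in Proposition~\ref{es1}(ii) shows that $V_{p-1-a}\otimes D^a$ occurs there if and only if $a = 1$, in which case it is the socle $V_{p-2}\otimes D$. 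Hence for $a \neq 1$, the natural composition $X_r^* \hookrightarrow V_r^* \twoheadrightarrow V_r^*/V_r^{**}$ must be zero, so $X_r^* \subseteq V_r^{**}$, i.e., $X_r^* = X_r^{**}$, and the quotient vanishes.

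For $a = 1$ and $p \mid r$, I would use the Frobenius substitution $\phi\colon V_{r/p}\to V_r$, $F(X,Y)\mapsto F(X^p,Y^p)$. Since $a^p = a$ for $a\in\F_p$, $\phi$ is a $\Gamma$-equivariant injection sending $\gamma\cdot X^{r/p}$ to $\gamma\cdot X^r$, so $\phi(X_{r/p}) = X_r$. Moreover $\phi(\theta) = \theta^p$, so if $F = \theta G\in V_{r/p}^*$ then $\phi(F) = \theta^p\phi(G)\in V_r^{**}$ (as $p\geq 2$); conversely, $\theta\mid\phi(F)$ iff $\phi(F)$ vanishes at every $\F_p$-point of $\P^1$ iff $F$ does, iff $\theta\mid F$. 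Combining, $X_r^* = \phi(X_{r/p}^*)\subseteq \phi(V_{r/p}^*) \subseteq V_r^{**}$, and so $X_r^* = X_r^{**}$.

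For $a = 1$ and $p \nmid r$, I would use the test vector $S := \sum_{\lambda\in\F_p}(X+\lambda Y)^r\in X_r$. Since $\sum_{\lambda\in\F_p}\lambda^j$ equals $-1$ when $j > 0$ and $(p-1)\mid j$, and $0$ otherwise, a short check shows that $S$ vanishes at each of the $p+1$ $\F_p$-rational roots of $\theta$ (here $a = 1\neq p-1$ is crucial), so $\theta \mid S$ and $S \in X_r^*$. The partial derivative $\partial S/\partial X|_{(X,Y)=(0,1)} = r\sum_{\lambda\in\F_p^\times}\lambda^{r-1} = -r \neq 0$, so $S$ does not vanish to order $2$ at $(0:1)$; hence $\theta^2 \nmid S$ and $S \notin X_r^{**}$. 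Since the possible $X_r^*$ is irreducible ($\cong V_{p-2}\otimes D$), its submodule $X_r^{**}$ is either $0$ or all of $X_r^*$, and the existence of $S$ rules out the latter, forcing $X_r^{**} = 0$ and $X_r^*/X_r^{**} \cong V_{p-2}\otimes D$. The main obstacle is the $p\mid r$ case, where the key observation is that $\phi$ carries $V_{r/p}^*$ into $V_r^{**}$ (not merely into $V_r^*$), which is precisely what collapses $X_r^*/X_r^{**}$ to zero.
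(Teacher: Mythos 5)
Your proof is correct; since the paper just cites \cite[Lem.~3.1, Lem.~4.7]{Bhattacharya-Ghate} rather than giving an argument, a direct comparison is not possible. Your route is: (i) realise $X_r$ as a quotient of the uniserial $(p+1)$-dimensional module $\mathrm{ind}_B^\Gamma\chi_a$ to pin down $X_r^*$ as $0$ or the socle $V_{p-1-a}\otimes D^a$; (ii) compare against the Jordan--H\"older factors of $V_r^*/V_r^{**}$ from Proposition~\ref{es1}(ii) to force $X_r^*\subseteq V_r^{**}$ whenever $a\neq 1$; (iii) treat $a=1$, $p\mid r$ by the Frobenius $\phi:F\mapsto F(X^p,Y^p)$; (iv) treat $a=1$, $p\nmid r$ by the test vector $S=\sum_{\lambda\in\F_p}(X+\lambda Y)^r$. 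All steps check out, and the Frobenius observation that $\phi(\theta)=\theta^p$, so that $\phi$ sends $V_{r/p}^*$ into $V_r^{**}$ and not merely into $V_r^*$, is an especially clean way to see the collapse when $p\mid r$.

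One point of rigour in step (i): the justification via $B$-eigencharacters (``\ldots hence it lies in \ldots $V_a$'') is not airtight when $a=p-1$, because then $\chi_{p-1}$ is trivial, $V_r/V_r^*\cong V_{p-1}\oplus(V_0\otimes D^{p-1})$ splits, and \emph{both} summands contain $B$-eigenvectors of character $\chi_{p-1}$, so the eigencharacter alone does not locate the image of $X^r$. The gap is harmless: you never need the image of $X_r$ in $V_r/V_r^*$ to equal exactly $V_a$. Since $\mathrm{ind}_B^\Gamma\chi_a$ is uniserial with socle $V_{p-1-a}\otimes D^a$ for every $1\leq a\leq p-1$ (including $a=p-1$, where the socle is $V_0$), and $X_r^*$ is a \emph{proper} submodule of the quotient $X_r$ (because $\theta\nmid X^r$), the dichotomy ``$X_r^*=0$ or $X_r^*\cong V_{p-1-a}\otimes D^a$'' is immediate; and even if one worried $X_r^*$ could be $V_{p-1}$ when $a=p-1$, a direct check shows that neither $V_{p-1}$ nor $V_0\otimes D^{p-1}$ occurs in $\mathrm{JH}(V_r^*/V_r^{**})$ in that case, so step (ii) goes through regardless. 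You should reword step (i) to invoke uniseriality directly and drop the eigencharacter argument.
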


\begin{proof}
This follows from \cite[Lem. 3.1, Lem. 4.7]{Bhattacharya-Ghate}.
\end{proof}

There is a filtration on $V_r / V_r^{**}$ given by
$$
0 \subset 
\dfrac{ \langle\theta X^{r-p-1}\rangle + V_r^{**}}{  V_r^{**}} \subset
\dfrac{V_r^*} { V_r^{**}} \subset 
\dfrac{X_r + V_r^{*} }{ V_r^{**}} \subset
\dfrac{V_r}{  V_r^{**}}\, ,
$$
where the graded parts of this filtration are non-zero irreducible $\Gamma$-modules.
Looking at the image of this filtration inside $P$, we get a
filtration:
$$ 
0 \subset 
W_ 0 := \dfrac{\langle\theta X^{r-p-1}\rangle + X_r + V_r^{**}}{ V_r^{**} + X_r} \subset
W_1 := \dfrac{V_r^* + X_r}{ V_r^{**} + X_r} \subset
W_ 2 := \dfrac{V_r}{ V_r^{**} + X_r}=P.
$$
Each of the graded pieces of this filtration is either irreducible or zero.
We set $J_i := W_i / W_{i-1}$, for $i = 0$, $1$, $2$, with $W_{-1} = 0$.
Note that $W_1\cong \dfrac{V_r^*/V_r^{**}}{X_r^*/X_r^{**}}$ is never zero
by Proposition \ref{es1} (ii) and  Lemma \ref{Xr*/Xr**}. In fact, $W_1$
has two JH factors $J_0$ and $J_1$ unless  $a=1$ and $p\nmid r$, in which
case $W_0 = J_0 = 0$ and $W_1 = J_1$ is irreducible. Also, $J_2\cong
\dfrac{V_r}{V_r^*+X_r}\cong \dfrac{V_r/V_r^*}{X_r/X_r^*}$ is a proper
quotient of $V_r/V_r^*$.  Using Proposition \ref{es1}, and
\cite[(4.5)]{G78}, we obtain:

\begin{prop} 
\label{P}
 Let $p\geq 3$, $r\geq 2p$ and $ r\equiv a\mod (p-1)$, with $1\leq a\leq p-1$. 
 Then the  structure of $P$ is given by the following short exact sequences of $\,\Gamma$-modules:
 \begin{enumerate}
   \item[(i)] If $a=1$ and $p\nmid r$, then
   $$ 0\rightarrow V_1\rightarrow P\rightarrow V_{p-2}\otimes D\rightarrow 0,$$
	that is, $J_0 = 0$, $J_1 = V_1$ and $J_2 = V_{p-2}\otimes D$.
   \item[(ii)] If $a=1$ and $p\mid r$, or if $\,2\leq a\leq p-1$, then
   $$ 0\rightarrow W_1\cong V_r^*/V_r^{**}\rightarrow P\rightarrow V_{p-a-1}\otimes D^a\rightarrow 0,$$
	that is,
	\begin{enumerate}
	\item
	$J_0 = V_{p-2}\otimes D$, $J_1 = V_1$ and $J_2 = V_{p-2} \otimes D$
         if $a = 1$ and $p \> | \>r$, 
	\item
	%$J_0 = V_{p-1} \otimes D$, $J_1 = 0$ and $J_2 = V_{p-3} \otimes D^2$ if $r=2p$,
	%
	$J_0 = V_{p-1} \otimes D$, $J_1 = V_0\otimes D$ and $J_2 = V_{p-3} \otimes D^2$ if $a = 2$, $r>2p$ 
    ($J_1 = 0$ if $r=2p$),
	\item	
	$J_0 = V_{a-2}\otimes D$, $J_1 = V_{p-a+1}\otimes D^{a-1}$ and 
        $J_2 = V_{p-1-a}\otimes D^a$ if $3 \leq a\leq p-1.$
	\end{enumerate}
 \end{enumerate}
\end{prop}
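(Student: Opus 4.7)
The plan is to identify the graded pieces $J_0$, $J_1$, $J_2$ of the filtration $0 \subset W_0 \subset W_1 \subset W_2 = P$ one by one, using Proposition~\ref{es1} and Lemma~\ref{Xr*/Xr**}. Two applications of the modular law yield the key identifications
\begin{equation*}
W_1 \cong (V_r^*/V_r^{**})/(X_r^*/X_r^{**})
\quad \text{and} \quad
J_2 = W_2/W_1 \cong (V_r/V_r^*)/(X_r/X_r^*),
\end{equation*}
so everything reduces to locating the image of $X_r$ inside the two subquotients $V_r/V_r^*$ and $V_r^*/V_r^{**}$ of the filtration of $V_r/V_r^{**}$. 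The image in $V_r^*/V_r^{**}$ is $X_r^*/X_r^{**}$ and is given by Lemma~\ref{Xr*/Xr**}; the image in $V_r/V_r^*$ is $X_r/X_r^*$, which by Glover's explicit description~\cite[(4.5)]{G78} coincides with the submodule $V_a$ of $V_r/V_r^*$ appearing in Proposition~\ref{es1}(i). Consequently $J_2 \cong V_{p-a-1}\otimes D^a$ in every case, matching the statement.

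It remains to identify $W_0$ and $W_1$. First suppose $a\geq 2$ or ($a=1$ and $p\mid r$). Then Lemma~\ref{Xr*/Xr**} gives $X_r^*/X_r^{**} = 0$, so $W_1 = V_r^*/V_r^{**}$ and Proposition~\ref{es1}(ii) provides the composition factors $J_0$ (the submodule) and $J_1$ (the quotient), giving cases (ii)(a), (ii)(b), and (ii)(c) of the statement. One also checks that $W_0$ really is this submodule of $V_r^*/V_r^{**}$: under the isomorphism $V_r^* \cong V_{r-p-1}\otimes D$ the element $\theta X^{r-p-1}$ corresponds to $X^{r-p-1}$, whose image generates the socle of $V_{r-p-1}/V_{r-p-1}^*$ by Proposition~\ref{es1}(i) applied to the appropriate residue class of $r-p-1 \bmod (p-1)$. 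The degenerate subcase $a=2$, $r=2p$ is handled separately: here $V_r^{**} = 0$, so $V_r^*/V_r^{**} = V_r^* \cong V_{p-1}\otimes D$ is already irreducible, making $J_1$ vanish while $J_0 = V_{p-1}\otimes D$.

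Finally, when $a=1$ and $p\nmid r$, Lemma~\ref{Xr*/Xr**} gives $X_r^*/X_r^{**} = V_{p-2}\otimes D$, which coincides precisely with the submodule in the extension for $V_r^*/V_r^{**}$ from Proposition~\ref{es1}(ii). The quotient therefore collapses $V_r^*/V_r^{**}$ onto its top, leaving $W_1 = V_1$ irreducible, and the generator $\theta X^{r-p-1}$ of $W_0$ is already absorbed into $X_r$ on passage to $P$, forcing $W_0 = 0$. This produces case (i).

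The main obstacle is the identification of $X_r/X_r^*$ with the submodule $V_a$ of $V_r/V_r^*$, which relies on Glover's detailed analysis of the $\Gamma$-module generated by $X^r$; once this is granted together with Lemma~\ref{Xr*/Xr**}, the rest of the proposition is a routine piece-by-piece unraveling of the filtration, with care needed only in the boundary case $r = 2p$, $a = 2$ where $V_r^{**}$ itself vanishes.
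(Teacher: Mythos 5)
Your proposal is correct and follows essentially the same route as the paper: the paper's proof amounts to the two subquotient identifications $W_1 \cong (V_r^*/V_r^{**})/(X_r^*/X_r^{**})$ and $J_2 \cong (V_r/V_r^*)/(X_r/X_r^*)$ stated just before the proposition, combined with Proposition~\ref{es1}, Lemma~\ref{Xr*/Xr**}, and Glover's (4.5) identifying the image of $X_r$ in $V_r/V_r^*$ with the submodule $V_a$. You unpack the modular-law steps and the degenerate boundary case $a=2$, $r=2p$ a bit more explicitly than the paper does, but the argument is the same.
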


The next lemma  will be used many times below and describes some explicit properties of the maps $W_i\twoheadrightarrow J_i$, 
for $i=0$, $1$, $2$. 

\begin{lemma}
 \label{generator}
Let $p\geq 3$, $r\geq 2p$, $ r\equiv a\mod (p-1)$, with $1\leq a\leq p-1$. 

If $3 \leq a\leq p-1$, then:
 \begin{enumerate}  

   \item[(i)] 
The image of 
$\theta X^{r-p-1}$ in $W_0$ maps to
$X^{a-2} \in J_0$.  %= V_{a-2}\otimes D$.

   \item[(ii)] 
The image of 
$\theta X^{r-p-a+1}Y^{a-2}$ in $W_1$ maps to
$X^{p-a+1} \in J_1$. %= V_{p-a+1}\otimes D^{a-1}$.

   \item[(iii)] 
    The image of 
$X^{r-i}Y^{i}$ in $W_2$ maps to 
$0 \in J_2$, %= V_{p-a-1}\otimes D^a$, 
for $0\leq i\leq a-1$, whereas the image of 
$X^{r-a}Y^{a}$ in $W_2$ maps to 
$X^{p-a-1}$ in $J_2$.
  \end{enumerate}

If $a = 2$, then:
 \begin{enumerate}  

   \item[(i)] 
The image of 
$\theta X^{r-p-1}$ in $W_0$ maps to
$X^{p-1} \in J_0$. %= V_{p-1}\otimes D$.

   \item[(ii)] 
For $r > 2p$,
the image of 
$\theta X^{r-2p}Y^{p-1}$ in $W_1$ maps to
$1 \in J_1$. % = V_0\otimes D$.

   \item[(iii)] The image of 
$X^{r-i}Y^{i}$ in $ W_2$ maps to 
$0 \in J_2$,  %  = V_{p-3}\otimes D^2$, 
for $ i=0, 1$, whereas
the image of 
$X^{r-2}Y^{2}$ in $ W_2$ maps to 
$X^{p-3} \in J_2$.
\end{enumerate}

If $a = 1$, then:
 \begin{enumerate}  

   \item[(i)] 
If $p\mid r$,
the image of 
$\theta X^{r-p-1}$ in $W_0$ maps to
$X^{p-2} \in J_0$. %= V_{p-2}\otimes D$.

   \item[(ii)] 
The image of 
$\theta X^{r-2p+1}Y^{p-2}$ in $W_1$ maps to
$X \in J_1$. % = V_{1}$.

   \item[(iii)] 
The image of 
$X^{r-1}Y$ in $W_2$ maps to
$X^{p-2} \in J_2$. % = V_{p-2}\otimes D$.

  \end{enumerate}
\end{lemma}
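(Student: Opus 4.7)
The proof is an exercise in tracking the displayed monomials through the nested identifications that produce the filtration $W_0 \subset W_1 \subset W_2 = P$. The central tool is the $\Gamma$-equivariant isomorphism $\cdot\theta\colon V_{r-p-1}\otimes D \xrightarrow{\sim} V_r^*$, which descends to an isomorphism $(V_{r-p-1}/V_{r-p-1}^*)\otimes D \xrightarrow{\sim} V_r^*/V_r^{**}$. Combined with Proposition~\ref{es1}(i) applied both at weight $r$ and at the smaller weight $r-p-1$, and with Lemma~\ref{Xr*/Xr**} to account for $X_r^*/X_r^{**}$, this identifies $W_1$ with $V_r^*/V_r^{**}$ in all cases where $J_0 \neq 0$, and identifies $W_0 \subset W_1$ with the submodule in Proposition~\ref{es1}(ii). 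Moreover, since the image of $X_r$ in $V_r/V_r^*$ is the $\Gamma$-span of $X^r$, namely the submodule $V_a$ appearing in Proposition~\ref{es1}(i), the projection $W_2 \twoheadrightarrow J_2$ is the canonical surjection onto $V_{p-a-1}\otimes D^a$ from the same proposition.

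\textbf{Generic case $3\leq a\leq p-1$.} Part (i): $\theta X^{r-p-1}$ corresponds via $\cdot\theta$ to $X^{r-p-1}\in V_{r-p-1}$, and since $r-p-1\equiv a-2\pmod{p-1}$, Proposition~\ref{es1}(i) applied at weight $r-p-1$ identifies $X^{r-p-1}$ with the generator $X^{a-2}$ of the submodule $V_{a-2}\subset V_{r-p-1}/V_{r-p-1}^*$, which becomes $X^{a-2}\in J_0$ after the $D$-twist. Part (ii): the same reduction sends $\theta X^{r-p-a+1}Y^{a-2}$ to the monomial $X^{(r-p-1)-(a-2)}Y^{a-2} \in V_{r-p-1}$, and Glover's explicit formula~\cite[(4.5)]{G78}, which underlies the quotient direction of Proposition~\ref{es1}(i), sends this monomial to the generator $X^{p-a+1}$ of $V_{p-a+1}\otimes D^{a-2}$; restoring the $D$-twist gives $X^{p-a+1}\in J_1$. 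Part (iii) is the same formula applied at weight $r$: the monomials $X^{r-i}Y^i$ for $0\leq i\leq a-1$ lie in $X_r$ and hence vanish in the quotient $V_r/(V_r^*+X_r)=J_2$, while $X^{r-a}Y^a$ is sent to the generator $X^{p-a-1}$ of $V_{p-a-1}\otimes D^a$.

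\textbf{Edge cases and main obstacle.} For $a=2$, one has $r-p-1\equiv 0\pmod{p-1}$, so the sequence of Proposition~\ref{es1}(i) at weight $r-p-1$ is split, matching the splitting in Proposition~\ref{es1}(ii); the computations go through as above, with $X^{r-p-1}\in V_{r-p-1}$ sent to $X^{p-1}$ in the split summand $V_{p-1}$, the element $X^{r-2p}Y^{p-1}$ sent to the generator $1\in V_0$, and $X^{r-2}Y^2$ sent to $X^{p-3}\in J_2$ by Glover's formula at weight $r$. For $a=1$, the bifurcation is controlled by Lemma~\ref{Xr*/Xr**}: when $p\nmid r$, the factor $X_r^*/X_r^{**}=V_{p-2}\otimes D$ absorbs the $W_0$ layer so that $W_0=J_0=0$, and only parts (ii) and (iii) remain, traced exactly as in the generic case; when $p\mid r$, $X_r^*\subset V_r^{**}$ and all three parts are computed as before, with appropriately shifted indices. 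The main obstacle, on which every part of the lemma rests, is the combinatorial content of Glover's formula \cite[(4.5)]{G78}: one needs that the canonical surjection $V_r\twoheadrightarrow V_{p-a-1}\otimes D^a$ of Proposition~\ref{es1}(i) sends $X^{r-a}Y^a$ to the generator $X^{p-a-1}$ and kills $X^{r-i}Y^i$ for $i<a$ (equivalently, that the latter monomials lie in the $\Gamma$-submodule $X_r$). Once this nontrivial fact is invoked, every assertion of the lemma is immediate by functoriality of the identifications in the first paragraph.
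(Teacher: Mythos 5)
Your overall strategy is essentially the paper's: the proof is indeed just an explicit calculation tracking the displayed monomials through the $\theta$-multiplication isomorphism $V_{r-p-1}\otimes D\xrightarrow{\sim}V_r^*$, the identifications of Proposition~\ref{es1} at weights $r$ and $r-p-1$, and the explicit Glover/Breuil quotient map $V_s/V_s^*\twoheadrightarrow V_{p-1-b}\otimes D^b$ (the paper cites \cite[(4.2)]{G78} and \cite[Lem.~5.3]{Br03} here, while you cite \cite[(4.5)]{G78}, which is what the paper uses for Proposition~\ref{P}; this is a minor discrepancy of citation, not of method). Your reduction to the single combinatorial input --- that the canonical surjection $V_r\twoheadrightarrow V_{p-a-1}\otimes D^a$ kills $X^{r-i}Y^i$ for $i<a$ and sends $X^{r-a}Y^a$ to $X^{p-a-1}$ --- is the right organizing idea, and the case analysis for $a=1,2$ via Lemma~\ref{Xr*/Xr**} and the splitting of Proposition~\ref{es1} is correct.

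One imprecision should be flagged. You assert, twice, that the monomials $X^{r-i}Y^i$ for $0\leq i\leq a-1$ ``lie in the $\Gamma$-submodule $X_r$,'' and offer this as logically equivalent to the vanishing of their images in $J_2$. It is not: the kernel of $V_r\twoheadrightarrow J_2$ is $X_r+V_r^*$, not $X_r$ alone, and $X^{r-i}Y^i$ generally does \emph{not} lie in $X_r$. For instance, with $p=5$, $a=3$, $r=11$, one computes that $X_{11}$ is spanned by $X^{11}$, $X^{10}Y+2X^{6}Y^{5}$, $2X^{5}Y^{6}+XY^{10}$, $Y^{11}$, so $X^{10}Y\notin X_{11}$; on the other hand $X^{10}Y - X^{6}Y^{5}=\theta X^{5}\in V_{11}^*$, whence $X^{10}Y\in X_{11}+V_{11}^*$, which is what you actually need. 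The correct phrasing is that the image of $X^{r-i}Y^i$ in $V_r/V_r^*$ lies in the submodule $V_a$ (which is the image of $X_r$), as one checks by a $T$-weight count or directly from the formula. With that correction the argument is sound and matches the paper.
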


\begin{proof}
 The proof is elementary and consists of explicit calculation with the maps given in \cite[(4.2)]{G78} 
and \cite[Lem. 5.3]{Br03}.% as explained in the proof of \cite[Lem. 8.5]{Bhattacharya-Ghate}.
\end{proof}

Let $U_i$ denote the image of $\mathrm{ind}_{KZ}^G W_i$ under the map $ \mathrm{ind}_{KZ}^G P\twoheadrightarrow\bar\Theta = \bar\Theta_{k,a_p}$, and let 
$F_i:=U_i/U_{i-1}$, for $i=0$, $1$, $2$, with  $U_{-1}:=0$. Then we have the following commutative diagrams of $G$-modules:

\begin{eqnarray}
\label{gendiagram1}
\begin{gathered}
    \xymatrix{
    0 \ar[r] & \mathrm{ind}_{KZ}^G W_1\ar@{->>}[d]\ar[r] & \mathrm{ind}_{KZ}^GP \ar@{->>}[d] \ar[r] & \mathrm{ind}_{KZ}^G J_2 \ar@{->>}[d]\ar[r] & 0 \\
    0 \ar[r] & U_{1} \ar[r]  & \bar{\Theta} \>(=U_2) \ar[r] & F_2 \ar[r] & 0 ,}
\end{gathered}
\end{eqnarray}

\begin{eqnarray}\label{gendiagram2}
\begin{gathered}
    \xymatrix{
    0 \ar[r] & \mathrm{ind}_{KZ}^G J_0\ar@{->>}[d]\ar[r] & \mathrm{ind}_{KZ}^G W_1 \ar@{->>}[d] \ar[r] & \mathrm{ind}_{KZ}^G J_1 \ar@{->>}[d]\ar[r] & 0 \\
    0 \ar[r] & F_0 \> (=U_0)\ar[r]  & U_1 \ar[r] & F_1 \ar[r] & 0. }
\end{gathered}
\end{eqnarray}

\noindent The semisimplification of $\bar\Theta$ is completely determined by the $G$-modules $F_0$, $F_1$ and $F_2$.

\section{The case $3\leq a\leq p-1$}
\label{case3ap-1}

Let $r> 2p$ and $r\equiv a\mod (p-1)$, with $3\leq a \leq p-1$.
If $p\nmid r-a$ and $v(a_p) = 1$, then  $u:=\frac{a}{a-r}\cdot\frac{a_p}{p}$ is a $p$-adic unit. 
In this section we will give a proof of the following result.

\begin{theorem}
\label{3ap-1}

 Let $2p<k-2=r\equiv a\mod (p-1)$, with $3\leq a\leq p-1$, and assume that $v(a_p)=1$.  
  \begin{enumerate}

    \item[(i)] If $p\nmid r-a$, then $\bar{V}_{k,a_p}^{ss}\cong \mu_{\lambda}\cdot\omega^a\oplus \mu_{\lambda^{-1}}\cdot \omega$ is reducible, with 
               $\lambda=\bar{u}\in\F_p^\times$.

     \item[(ii)] If $p\mid r-a$, then $\bar{V}_{k,a_p}\cong \mathrm{ind}(\omega_2^{a+1})$ is irreducible.
  \end{enumerate}
\end{theorem}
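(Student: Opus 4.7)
The plan is to analyze the filtration $0 \subset U_0 \subset U_1 \subset U_2 = \bar\Theta_{k,a_p}$ from diagrams \eqref{gendiagram1} and \eqref{gendiagram2}, whose graded pieces $F_i$ are $G$-equivariant quotients of $\mathrm{ind}_{KZ}^G J_i$. By Proposition~\ref{P}(ii)(c) we have $J_0 = V_{a-2}\otimes D$, $J_1 = V_{p-a+1}\otimes D^{a-1}$, and $J_2 = V_{p-a-1}\otimes D^a$. A direct check using the correspondence of Section~\ref{subsectionLLC} shows that, for any $\lambda \in \bar\F_p^\times$, the principal series $\pi(a-2,\lambda,\omega)$ and $\pi(p-a-1,\lambda^{-1},\omega^a)$ both correspond under $LL$ to the reducible Galois representation $\mu_\lambda\omega^a \oplus \mu_{\lambda^{-1}}\omega$; and the (isomorphic) supersingulars $\pi(a-2,0,\omega) \cong \pi(p-a-1,0,\omega^a)$ both correspond to $\mathrm{ind}(\omega_2^{a+1})$. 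The factor $J_1$ matches neither of these. This motivates the three-step strategy: \emph{eliminate} $J_1$ unconditionally, and \emph{pair} $J_0$ against $J_2$ via a scalar $\lambda \in \bar\F_p^\times$ (for (i)) or $\lambda = 0$ (for (ii)).

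For the elimination of $J_1$, I choose integers $\alpha_j$ for $0 < j < r$ with $j \equiv a \bmod (p-1)$ as supplied by Lemma~\ref{comb2}, and form
\[
f_1 \;:=\; \tfrac{1}{p^2}\sum_{j} \alpha_j\, [\mathrm{Id},\, X^{r-j}Y^j],
\]
supported mod $KZ$ only at the origin. The congruences in Lemma~\ref{comb2} together with $v(a_p)=1$ combine with the explicit formulas of Section~\ref{Hecke} to make $(T-a_p)f_1$ integral. A short calculation using Lemma~\ref{generator} identifies its reduction, modulo the contribution of $J_0$, with a nonzero scalar multiple of $[\mathrm{Id},X^{p-a+1}]$, which generates $\mathrm{ind}_{KZ}^G J_1$ as a $G$-module. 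Pushing to $F_1$ then kills the whole factor: $F_1 = 0$.

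For the pairing, set $u := \tfrac{a}{a-r}\cdot\tfrac{a_p}{p}$, so that $v(u) = -v(r-a)$ under $v(a_p)=1$. When $p \nmid r-a$, $u$ is a $p$-adic unit and $\lambda := \bar u \in \bar\F_p^\times$; when $p \mid r-a$, $u$ has negative valuation. Using Lemma~\ref{comb1} and its refinement $\tfrac{1}{p}S_r \equiv \tfrac{a-r}{a} \bmod p$, I construct two further functions $f_0$, $f_2$ supported on balls of radius at most $2$ such that $(T-a_p)f_0$ and $(T-a_p)f_2$ are integral with reductions that, in the unit case $p \nmid r-a$, generate the submodules $(T-\lambda)\,\mathrm{ind}_{KZ}^G J_0$ and $(T-\lambda^{-1})\,\mathrm{ind}_{KZ}^G J_2$ respectively. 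These relations, together with Step~1 and nonvanishing of $\bar\Theta_{k,a_p}$, force
\[
\bar\Theta_{k,a_p}^{ss} \;\cong\; \pi(a-2,\lambda,\omega)^{ss}\, \oplus\, \pi(p-a-1,\lambda^{-1},\omega^a)^{ss},
\]
and applying $LL^{-1}$ yields part~(i). For part~(ii), the same computation with $\tfrac{a-r}{a} \equiv 0 \bmod p$ now produces reductions of the form $T\cdot[\mathrm{Id},X^{p-a-1}]$ (no shift by $\lambda^{\pm 1}$), forcing $F_2 \subset \pi(p-a-1,0,\omega^a)$ and similarly $F_0 \subset \pi(a-2,0,\omega)$; since these supersingulars are isomorphic and correspond under $LL^{-1}$ to the irreducible $\mathrm{ind}(\omega_2^{a+1})$, we conclude $\bar V_{k,a_p} \cong \mathrm{ind}(\omega_2^{a+1})$.

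The main obstacle is the construction of $f_0$ and $f_2$: pinning down that the leading coefficient of $\overline{(T-a_p)f_0}$ is exactly $\lambda = \bar u$ (and the corresponding one for $f_2$ is $\lambda^{-1}$), not some other constant of the right shape. This requires carefully balancing the $T^+$, $T^-$, and $-a_p$ contributions at every level of the tree up to radius $2$, and using the precise mod-$p$ refinement in Lemma~\ref{comb1}, not merely the congruence itself. The second-order vanishing in Lemma~\ref{comb2}(2) is what allows the denominator $p^2$ in $f_1$ to survive integrality of $(T-a_p)f_1$; absent that refinement, the elimination of $J_1$ would already fail.
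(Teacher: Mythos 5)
Your overall strategy — eliminate the middle factor $J_1$, then pair $J_0$ against $J_2$ via a Hecke eigenvalue $\lambda$ (or $\lambda=0$) and invoke the mod $p$ LLC — is indeed the strategy of the paper: in case (i) the paper shows $F_1=0$ and that $F_0, F_2$ are quotients of $\pi(a-2,\bar u,\omega)$ and $\pi(p-a-1,\bar u^{-1},\omega^a)$; in case (ii) the paper shows $U_1=0$ (so both $F_0$ and $F_1$ vanish, which is stronger than your claim that $F_0 \subset \pi(a-2,0,\omega)$) and concludes that $\bar\Theta$ is a quotient of $\mathrm{ind}_{KZ}^G J_2$.

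However, there is a genuine error in your construction of the eliminating function. You propose
\[
f_1 := \frac{1}{p^2}\sum_{0<j<r,\, j\equiv a \,(p-1)}\alpha_j\,[\mathrm{Id},\,X^{r-j}Y^j],
\]
supported at the origin. The congruences of Lemma~\ref{comb2}(2),(3) are \emph{sum} conditions and do make $T^+f_1$ and $T^-f_1$ die mod $p$; but they say nothing about the individual $\alpha_j$, and indeed $\alpha_j \equiv \binom{r}{j}\bmod p$ is a $p$-adic unit for at least one $j$. Hence the term $-a_pf_1$ has coefficient $-\tfrac{a_p}{p^2}\alpha_j$ at $[\mathrm{Id},X^{r-j}Y^j]$, which has valuation $1+v(\alpha_j)-2 = -1$ for such a $j$. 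So $(T-a_p)f_1$ is \emph{not} integral and cannot be used to eliminate $J_1$. The paper avoids this by taking instead
\[
f_0 = \left[\mathrm{Id},\,\frac{X^{r-a-p+2}Y^{a+p-2}-X^{r-a+1}Y^{a-1}}{a_p}\right]
= -\frac{1}{a_p}\bigl[\mathrm{Id},\,\theta X^{r-p-a+1}Y^{a-2}\bigr].
\]
Here the polynomial is a multiple of $\theta$, so the term $-a_pf_0 = [\mathrm{Id},\theta X^{r-p-a+1}Y^{a-2}]$ is manifestly integral, and only a single $a_p$ (not $p^2$) appears in the denominator; the $\alpha_j$-functions with a level-$2$ denominator do appear in the paper, but only supported at radius $1$ (inside $f_1$ of Proposition~\ref{red}(ii)), where they are combined with $f_0$ and $f_2$ so the non-integral pieces cancel. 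Your function as written fails at the very first step, so the elimination argument does not go through. You should replace it with the $\theta$-multiple function above, which, moreover, is what produces the generator of $\mathrm{ind}_{KZ}^G J_1$ (resp.\ of $\mathrm{ind}_{KZ}^G W_1$) needed for case (i) (resp.\ case (ii)).
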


Let us begin with the following elementary lemma.
\begin{lemma}\label{a mod p}
 Let $r\equiv a\mod (p-1)$, with $3\leq a\leq p-1$. Then we have the congruence 
 $$X^{r-1}Y\equiv \frac{a-r}{a}\cdot\theta X^{r-p-1}\mod\, X_r+V_r^{**}.$$ 
 Hence, if $p\mid r-a$, then $X^{r-1}Y\in X_r+V_r^{**}$.
\end{lemma}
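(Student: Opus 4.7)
The plan is to exhibit the difference $aX^{r-1}Y - (a-r)\theta X^{r-p-1}$ as an explicit element of $X_r + V_r^{**}$, by combining a $\Gamma$-translate of $X^r$ with a $\theta$-reduction. Since $\matr{1}{0}{c}{1}\cdot X^r = (X+cY)^r$ lies in $X_r$ for every $c \in \mathbb{F}_p$, and $\sum_{c \in \mathbb{F}_p^\times}c^{-1} = 0$ for $p \geq 5$, the element $\sum_{c \in \mathbb{F}_p^\times}c^{-1}(X+cY)^r$ also lies in $X_r$. Expanding and using that $\sum_{c \in \mathbb{F}_p^\times}c^{j-1} \equiv -1 \pmod{p}$ precisely when $j \equiv 1 \pmod{p-1}$ (and $\equiv 0$ otherwise), and excluding the impossible index $j=r$ (since $a \neq 1$), this yields
\[\sum_{\substack{1 \leq j \leq r-1 \\ j \equiv 1\,(p-1)}} \binom{r}{j}\, X^{r-j}Y^j \in X_r.\]

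Next I would reduce each summand modulo $V_r^{**}$. For $j = 1 + m(p-1)$ with $m \geq 1$, iterating the rearrangement $X^{r-j}Y^j = X^{r-j+(p-1)}Y^{j-(p-1)} - \theta X^{r-j-1}Y^{j-p}$ of the defining formula for $\theta$ gives
\[X^{r-j}Y^j = X^{r-1}Y - \theta\sum_{t=1}^m X^{r-(p-1)t-2}Y^{(p-1)(t-1)}.\]
Inside the $\theta(\cdots)$, the $t=1$ summand is $X^{r-p-1}$, and for $t \geq 2$ the same identity applied in $V_{r-p-1}$ reduces the summand to $X^{r-2p}Y^{p-1}$ modulo $V_{r-p-1}^*$. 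Since $V_r^{**} = \theta V_{r-p-1}^*$, this gives
\[X^{r-j}Y^j \equiv X^{r-1}Y - \theta X^{r-p-1} - (m-1)\theta X^{r-2p}Y^{p-1} \pmod{V_r^{**}}.\]

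Substituting these into the first relation produces a congruence $\alpha X^{r-1}Y + \beta\, \theta X^{r-p-1} + \gamma\, \theta X^{r-2p}Y^{p-1} \equiv 0 \pmod{X_r + V_r^{**}}$ with explicit binomial sums. The re-indexing $j \leftrightarrow r-j$, which bijects $\{j \equiv 1\,(p-1)\}$ with $\{j \equiv a-1\,(p-1)\}$, combines with Lemma \ref{comb3}(i) to give $\alpha \equiv a$ and $\beta \equiv r-a \pmod{p}$; and the identity $j\binom{r}{j} = r\binom{r-1}{j-1}$ converts the sum defining $\gamma$ into $r$ times a Lemma \ref{comb1}-type sum for $r-1 \equiv a-1 \pmod{p-1}$, so $\gamma \equiv 0 \pmod{p}$. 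Dividing through by the unit $a$ gives $X^{r-1}Y \equiv \tfrac{a-r}{a}\,\theta X^{r-p-1}$, and the "hence" clause is then immediate since $p \mid r-a$ makes the right-hand side vanish modulo $p$.

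The main obstacle is that the residual terms $(m-1)\theta X^{r-2p}Y^{p-1}$ are \emph{not} zero in $V_r/(X_r + V_r^{**})$: they map nontrivially to the upper Jordan--H\"older factor $J_1$ of $V_r^*/V_r^{**}$ (cf.\ Proposition \ref{P}). The argument therefore succeeds only because the aggregate coefficient $\gamma$ vanishes modulo $p$, which is the non-trivial combinatorial input supplied by Lemmas \ref{comb1} and \ref{comb3}.
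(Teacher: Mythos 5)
Your argument is correct, and it follows the same broad strategy as the paper — take a sum of $\Gamma$-translates of $X^r$, expand, and match against $\theta X^{r-p-1}$ — but it replaces the key black-box step by an explicit, self-contained reduction. In the paper's proof one simply writes down the identity
\[
aX^{r-1}Y+\sum_{k\in\F_p}k^{p-a}(kX+Y)^r\equiv (a-r)\theta X^{r-p-1}+F(X,Y)\pmod p,
\]
with $F$ an explicit polynomial, and then cites the membership criterion \cite[Lem.~2.3]{Bhattacharya-Ghate} to conclude $F\in V_r^{**}$. You avoid that external criterion entirely: you reduce each monomial $X^{r-j}Y^j$, $j\equiv 1\pmod{p-1}$, iteratively to $X^{r-1}Y-\theta X^{r-p-1}-(m-1)\theta X^{r-2p}Y^{p-1}$ modulo $V_r^{**}$ and then track the three resulting coefficients, using Lemma~\ref{comb3}(i) for the coefficients of $X^{r-1}Y$ and $\theta X^{r-p-1}$, and a Lemma~\ref{comb1}-computation for the coefficient $\gamma$ of $\theta X^{r-2p}Y^{p-1}$. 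Your approach is more laborious but makes visible exactly which combinatorial cancellations are at play (in particular, the crucial vanishing of $\gamma$, i.e.\ that the image in $J_1$ dies), whereas the paper's proof hides this inside the cited criterion. One small precision: your Lemma~\ref{comb1} step needs an extra re-indexing $i\mapsto (r-1)-i$, since after writing $j\binom{r}{j}=r\binom{r-1}{j-1}$ the sum is over residues $\equiv 0\pmod{p-1}$, not $\equiv a-1$; the binomial symmetry $\binom{r-1}{i}=\binom{r-1}{r-1-i}$ converts one to the other, and then Lemma~\ref{comb1} applies with $r'=r-1\equiv a-1$. (Also, the observation $\sum_{c\in\F_p^\times}c^{-1}=0$ is unnecessary: the sum $\sum_c c^{-1}(X+cY)^r$ lies in $X_r$ simply because each summand does.)
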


\begin{proof} One checks that the following congruence holds:
  $$aX^{r-1}Y+\underset{k\,\in\,\mathbb{F}_p}\sum k^{p-a}(kX+Y)^r\equiv (a-r)\cdot\theta X^{r-p-1}+F(X,Y)\mod p,$$
  with $F(X,Y) :=(a-r)\cdot X^{r-p}Y^p-\underset{\substack{1<j\,\leq r\\j\equiv 1\mod (p-1)}}\sum\binom{r}{j}\cdot X^{r-j}Y^j \in V_r^{**}$, by \cite[Lem 2.3]{Bhattacharya-Ghate}.
\end{proof}

We use the notation from Section \ref{filtrationP} and the Diagrams \eqref{gendiagram1} and \eqref{gendiagram2}.
We have $J_0=V_{a-2}\otimes D$, $J_1=V_{p-a+1}\otimes D^{a-1}$, $J_2=V_{p-a-1}\otimes D^a$,
by Proposition~\ref{P}. Recall that $F_i$ denotes the factor of $\bar{\Theta}$ which is a quotient of $\mathrm{ind}_{KZ}^G J_i$, for $i=0$, $1$, $2$.

\begin{prop}
\label{Theta}
 Let $2p<r\equiv a\mod (p-1)$, with $3\leq a\leq p-1$, and suppose that $v(a_p)=1$.
 \begin{enumerate}
  \item[(i)] If $p\nmid r-a$, then $F_1=0$ and $\bar\Theta$ fits in the short exact sequence $$0\rightarrow F_0=U_1\rightarrow\bar{\Theta}\rightarrow F_2\rightarrow 0.$$
  \item[(ii)] If $p\mid r-a$, then $U_1=0$, and $\bar{\Theta}\cong F_2$ is a quotient of $\mathrm{ind}_{KZ}^G J_2$.
 \end{enumerate}
\end{prop}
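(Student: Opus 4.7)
Plan. The guiding principle is that, by the definition of $\Theta = \Theta_{k,a_p}$, any element of $\mathrm{ind}_{KZ}^G \Sym^r \bar\Z_p^2$ lying in the image of $T-a_p$ on $\mathrm{ind}_{KZ}^G \Sym^r \bar\Q_p^2$ vanishes in $\bar\Theta$. Thus to ``kill'' a Jordan-H\"older factor $J_i$ of $P$, it is enough to exhibit $f \in \mathrm{ind}_{KZ}^G \Sym^r \bar\Q_p^2$ such that $(T-a_p)f$ is integral and its reduction in $\mathrm{ind}_{KZ}^G P$ projects, modulo $\mathrm{ind}_{KZ}^G W_{i-1}$, to a $G$-module generator of $\mathrm{ind}_{KZ}^G J_i$. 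In case (i) I need to kill only $J_1$, in case (ii) both $J_0$ and $J_1$.

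For part (i), I would build a test function $f$, supported mod $KZ$ on a ball of radius at most $2$ at the origin, essentially of the form
\[
f \;=\; \frac{1}{p^2}\bigl[\mathrm{Id},\;\sum_j \alpha_j\, X^{r-j}Y^j\bigr] \;+\; (\text{correction supported at }\alpha),
\]
where the sum is over $0<j<r$ with $j\equiv a \bmod (p-1)$ and the integers $\alpha_j$ are those produced by Lemma~\ref{comb2}. The congruences $\sum_{j\geq n}\binom{j}{n}\alpha_j \equiv 0 \bmod p^{3-n}$ for $n=0,1$ are engineered precisely so that, upon expanding $T^+[\mathrm{Id},\cdot]$ as a sum over $\lambda\in I_1$ via the formulas in Section~\ref{Hecke} and combining with the $-a_p f$ term (using $v(a_p)=1$), the apparent $p^{-2}$ and $p^{-1}$ poles cancel, leaving $(T-a_p)f$ integral; the $T^-$-piece is integral for free, as it carries factors of $p^{r-j}$ with $r\geq 2p$. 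Reducing mod $p$ and invoking Lemma~\ref{a mod p} together with Lemma~\ref{generator}(ii), one identifies $\overline{(T-a_p)f}$ modulo $\mathrm{ind}_{KZ}^G W_0$ with a scalar multiple of $[\mathrm{Id},\,\theta X^{r-p-a+1}Y^{a-2}]$, which projects to the generator $X^{p-a+1}$ of $J_1$. The scalar turns out to be a nonzero unit in $\bar\F_p$ built out of $\overline{a_p/p}$ and $\overline{(a-r)/a}$; both are nonzero exactly because $v(a_p)=1$ and $p\nmid r-a$. This forces $F_1=0$, and the top row of Diagram~\eqref{gendiagram1} degenerates to $0 \to F_0 = U_1 \to \bar\Theta \to F_2 \to 0$.

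For part (ii), when $p\mid r-a$ the congruence of Lemma~\ref{a mod p} strengthens to $X^{r-1}Y \in X_r + V_r^{**}$, so that $X^{r-1}Y$ is itself zero in $P$. This extra relation permits a second analogous construction $f_0$, using the same kind of combinatorial input, whose $(T-a_p)f_0$ is integral and reduces (via Lemma~\ref{generator}(i)) to a generator of $\mathrm{ind}_{KZ}^G J_0$ in $\mathrm{ind}_{KZ}^G P$. Together with a variant of the construction in (i) still killing $J_1$ in this case, one concludes $F_0 = F_1 = 0$, hence $U_1 = 0$ and $\bar\Theta \cong F_2$, a quotient of $\mathrm{ind}_{KZ}^G J_2$.

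The main obstacle throughout is the $p$-adic bookkeeping verifying the integrality of $(T-a_p)f$: one must check, coefficient by coefficient of $X^{r-j}Y^j$, that the $p^{-1}$ and $p^{-2}$ contributions from $\sum_{\lambda\in I_1}$ in $T^+$ cancel against $a_p f$ and the correction supported at $\alpha$, using the precise congruences of Lemma~\ref{comb2}. A secondary subtlety is to guarantee that the reduction does not lie entirely in $\mathrm{ind}_{KZ}^G W_0$ (so that it genuinely projects to a generator of $J_1$) and does not acquire a spurious $J_2$-component; this is where the third congruence of Lemma~\ref{comb2}, part~(3), is decisive, since for $3\leq a\leq p-1$ it forces the coefficient in front of the $J_2$-representative $X^{r-a}Y^a$ to vanish, so the reduction lives in $\mathrm{ind}_{KZ}^G W_1$ as required.
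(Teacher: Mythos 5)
Your proposal goes down a genuinely different and considerably more complicated road than the paper, and as stated it has a gap. The paper's proof of both parts uses a \emph{single} very simple function
\[
f_0 = \left[\mathrm{Id},\ \frac{X^{r-a-p+2}Y^{a+p-2}-X^{r-a+1}Y^{a-1}}{a_p}\right]
 = -\left[\mathrm{Id},\ \frac{\theta X^{r-p-a+1}Y^{a-2}}{a_p}\right],
\]
divided only by $a_p$ (valuation $1$), not by $p^2$, and with no appeal to Lemma~\ref{comb2} at all. Because the numerator is divisible by $\theta$, its lowest $Y$-degree is $a-1\geq 2$, so the $T^+$-formula forces the $X^r$ and $X^{r-1}Y$ coefficients of $T^+f_0$ to be $O(p)$ (using $[\lambda]^{p-1}=1$), and all higher $Y$-degree coefficients carry $p^j$ with $j\geq 2$; hence $T^+f_0$ is integral and, mod $p$, supported only on $X^{r-1}Y$. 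Then $-a_pf_0 = [\mathrm{Id},\theta X^{r-p-a+1}Y^{a-2}]$ is automatically integral and \emph{is} the $J_1$-generator, while $T^\pm f_0$ reduce to multiples of $X^{r-1}Y\equiv \frac{a-r}{a}\theta X^{r-p-1}$ (Lemma~\ref{a mod p}), which dies in $J_1$. Part~(ii) then follows from the same $f_0$ because $X^{r-1}Y\in X_r+V_r^{**}$ when $p\mid r-a$, together with the non-splitness of $W_1$ from Proposition~\ref{es1}.

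The gap in your proposal: the congruences of Lemma~\ref{comb2} were engineered to kill $T^+$ of $\frac{1}{p^2}[\mathrm{Id},\sum_j\alpha_jX^{r-j}Y^j]$ mod $p$ (and this they do, for all coefficients $n=0,1,2$, since $\lambda^{j-n}$ is constant in $j$ for $\lambda\neq 0$). But then nothing survives in radius $1$ to feed $J_1$, and the piece at $\mathrm{Id}$, namely $-a_pf = -\frac{a_p}{p^2}[\mathrm{Id},\sum_j\alpha_jX^{r-j}Y^j]$, has a genuine $p^{-1}$ pole since $v(a_p)=1$ and the $\alpha_j$ are not all $\equiv 0\pmod p$. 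So $(T-a_p)f$ is not integral without a correction. You invoke a ``correction supported at $\alpha$,'' but such a correction must simultaneously cancel the non-integral part at $\mathrm{Id}$ \emph{and} leave behind exactly $[\mathrm{Id},\theta X^{r-p-a+1}Y^{a-2}]$ modulo $W_0$; you do not specify how to achieve both, and it is not clear that it can be done with a single term at $\alpha$ (the scales $p^j$ arising in $T^-$ of an $\alpha$-supported function do not match the scales in $\sum_j\alpha_jX^{r-j}Y^j/p$). In short, the Lemma~\ref{comb2} machinery belongs to the ``push the factor out to radius $2$'' step used in Proposition~\ref{red}(ii), not to the ``kill $J_1$ from the inside'' step needed here; the paper's $f_0/a_p$ is both simpler and actually integral.
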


\begin{proof}
 Let us consider $$f_0:=\left[\mathrm{Id},\dfrac{X^{r-a-p+2}Y^{a+p-2}-X^{r-a+1}Y^{a-1}}{a_p}\right] %= - \left[\mathrm{Id},\frac{1}{a_p}\theta Y^{a-2}X^{r-p-a+1}\right]
 \in\mathrm{ind}_{KZ}^G\mathrm{Sym}^r\bar{\mathbb{Q}}_p^2.$$
 As $r> 2p$,  $T^-f_0$ is integral and dies mod $p$. Also %it is easy to see that 
$T^+f_0\in \mathrm{ind}_{KZ}^G\langle X^{r-1}Y\rangle+ p\cdot\mathrm{ind}_{KZ}^G\mathrm{Sym}^r\bar{\mathbb{Z}}_p^2$ is integral.

  (i) If $p\nmid r-a$, then $T^+f_0\equiv -\frac{p}{a_p}\cdot
  \underset{\lambda\in\F_p}\sum\left[g_{1,[\lambda]}^0,\,(-[\lambda])^{a-2}X^{r-1}Y\right]\mod
p$. By Lemma \ref{a mod p}, this is the same as %the image of
$-\frac{p}{a_p}\cdot\underset{\lambda\in\F_p}\sum\left[g_{1,[\lambda]}^0,\,(-[\lambda])^{a-2}\dfrac{(a-r)}{a}\cdot\theta
X^{r-p-1}\right]$ in $\mathrm{ind}_{KZ}^G P$. 
%The computation for $T^-f_0$ and $a_pf_0$ remains the same as in part (i).  
Hence $(T-a_p)f_0$ maps to $$ T^+f_0-a_pf_0\equiv
-\frac{p}{a_p}\cdot\underset{\lambda\in\F_p}\sum\left[g_{1,[\lambda]}^0,\,(-[\lambda])^{a-2}\dfrac{(a-r)}{a}\cdot\theta
X^{r-p-1}\right] +\left[\mathrm{Id},\theta
X^{r-p-a+1}Y^{a-2}\right]\in\mathrm{ind}_{KZ}^G P.$$ 
Note that $\theta X^{r-p-1}$ maps to $0\in J_1=W_1/W_0$. By Lemma
\ref{generator} (ii),  the polynomial $\theta X^{r-p-a+1}Y^{a-2}$  maps to $X^{p-a+1}\neq 0$ in $J_1$. We conclude that $(T-a_p)f_0$ maps to
$\left[\mathrm{Id},X^{p-a+1}\right]\in\mathrm{ind}_{KZ}^G J_1$, which
generates it as a $G$-module. Hence  $F_1=0$ and the result follows.  

(ii) If $p\mid r-a$, then by Lemma \ref{a mod p}, we know that $X^{r-1}Y$
maps to $0\in P$. 
Thus $T^+f_0$ %\in \mathrm{ind}_{KZ}^G\langle X^{r-1}Y\rangle+ p\cdot\mathrm{ind}_{KZ}^G\mathrm{Sym}^r\bar{\mathbb{Z}}_p^2$ 
dies in $\mathrm{ind}_{KZ}^GP$. Hence $(T-a_p)f_0$ is integral and maps to the image of 
 $-a_pf_0=[\mathrm{Id},\theta X^{r-p-a+1}Y^{a-2}]$ in $\mathrm{ind}_{KZ}^G P$. %Therefore $[\Id,\theta Y^{a-2}X^{r-p-a+1}] \in \mathrm{ind}_{KZ}^G(V_r^*/V_r^{**})$ 
 %must map to $0\in F_{0,1}\subseteq\bar{\Theta}$. 
 By Proposition \ref{es1} (ii), $W_1\cong V_r^*/V_r^{**}$ is a non-split extension of the weight $J_1$ by $J_0$. %We check that the image of
 By Lemma \ref{generator} (ii), the polynomial $\theta
X^{r-p-a+1}Y^{a-2}$  maps to $X^{p-a+1}\neq 0$ in $J_1$ and hence
generates  $W_1 $ as a $\Gamma$-module, so $[\mathrm{Id},\theta
X^{r-p-a+1}Y^{a-2}]$ generates $\mathrm{ind}_{KZ}^G W_1$ as a
$G$-module. This proves that the surjection
$\mathrm{ind}_{KZ}^G W_1\twoheadrightarrow U_1$ is 
the zero map.
\end{proof}

\begin{prop}
\label{red}
 Let $2p<r\equiv a\mod (p-1)$, with $3\leq a\leq p-1$. Assume that $p\nmid r-a$ and $v(a_p)=1$, so
 that $u:=\frac{a}{a-r}\cdot\frac{a_p}{p}$ is a $p$-adic unit. Then 
   \begin{enumerate}
      \item[(i)] $F_0$ is a quotient of $\pi(a-2,\bar{u},\omega)$, %=\dfrac{\mathrm{ind}_{KZ}^G J_0}{(T-\bar{u})}$ , 
      \item[(ii)] $F_2$ is a quotient of $\pi(p-a-1,\bar{u}^{-1},\omega^a)$. %=\dfrac{\mathrm{ind}_{KZ}^G J_2}{(T-\bar{u}^{-1})}$.
    \end{enumerate}
\end{prop}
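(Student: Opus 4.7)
My plan is to prove each part by exhibiting a test function $f\in\mathrm{ind}_{KZ}^G\mathrm{Sym}^r\bar\Q_p^2$ with $(T-a_p)f$ integral, whose reduction yields the desired Hecke-eigenvalue relation in $\bar\Theta$ after identifying the image in $\mathrm{ind}_{KZ}^G P$ modulo the appropriate pieces of the filtration on $\bar\Theta$ that do not contribute to $F_i$. In each part I begin by lifting a generator of $\mathrm{ind}_{KZ}^G J_i$ to $\mathrm{ind}_{KZ}^G V_r$ via Lemma \ref{generator}.

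For part (i), I take
\[
  f_1 \;=\; \tfrac{1}{a_p}[\mathrm{Id},\theta X^{r-p-1}].
\]
Since $r\geq 2p+1$, the $T^-$-part is $p$-divisible enough to die after dividing by $a_p$. For the $T^+$-part, writing $v = X^{r-1}Y - X^{r-p}Y^p$ and expanding $v(X,-\lambda X+pY)$, the leading $X^r$-coefficients cancel by $\lambda^p=\lambda$ for $\lambda\in I_1$ (Fermat for Teichm\"uller lifts), leaving $p\,X^{r-1}Y$ modulo $p^2$. Dividing by $a_p$, reducing mod $p$, and invoking Lemma \ref{a mod p} to rewrite $X^{r-1}Y$ inside $P$, together with the definition $u=\tfrac{a}{a-r}\cdot\tfrac{a_p}{p}$, gives
\[
(T-a_p)f_1 \;\equiv\; \bar u^{-1}\sum_{\lambda\in I_1}[g^0_{1,\lambda},\,\theta X^{r-p-1}] \,-\, [\mathrm{Id},\theta X^{r-p-1}] \pmod{p,\;\mathrm{ind}_{KZ}^G(X_r+V_r^{**})}.
\]
Integrality of $(T-a_p)f_1$ forces the right-hand side to vanish in $\bar\Theta$. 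Identifying $\theta X^{r-p-1}$ with $X^{a-2}\in J_0\subset P$ via Lemma \ref{generator}(i), and observing that the $T^-$-piece of $T[\mathrm{Id},X^{a-2}]$ in $\mathrm{ind}_{KZ}^G J_0$ vanishes mod $p$ because $p^{a-2}\equiv 0$ for $a\geq 3$, we obtain $T[\mathrm{Id},X^{a-2}] = \bar u\,[\mathrm{Id},X^{a-2}]$ in $F_0$. Hence $F_0$ is a quotient of $\pi(a-2,\bar u,\omega)$.

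For part (ii), the generator $[\mathrm{Id},X^{p-a-1}]$ of $\mathrm{ind}_{KZ}^G J_2\twoheadrightarrow F_2$ lifts to $[\mathrm{Id},X^{r-a}Y^a]$ via Lemma \ref{generator}(iii). A one-vertex analog of $f_1$ fails here because the leading $T^+$-contribution to $T[\mathrm{Id},X^{r-a}Y^a]$ vanishes in $\mathrm{ind}_{KZ}^G P$ modulo $p$: the polynomials $X^{r-a}(-\lambda X+pY)^a$ reduce to $(-\lambda)^aX^r\in X_r$. One must instead exploit a deeper cancellation using a test function of the schematic shape
\[
 f_2 \;=\; \tfrac{1}{a_p^{2}}\sum_{\substack{0<j<r\\ j\equiv a\,\mathrm{mod}\,(p-1)}}\alpha_j\,[\mathrm{Id},\,X^{r-j}Y^j] \;+\; \tfrac{1}{a_p}\,[\alpha,\,\text{correction}],
\]
where the integers $\alpha_j$ are those supplied by Lemma \ref{comb2} and the $[\alpha,\cdot]$-correction is chosen to make $(T-a_p)f_2$ integral. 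The congruences $\alpha_j\equiv\binom{r}{j}\bmod p$ match the naive Hecke expansion of $[\alpha,X^{r-a}Y^a]$, while the vanishing $\sum_j\binom{j}{n}\alpha_j\equiv 0\bmod p^{3-n}$ for $n=0,1$ and the $n=2$ congruence kill, to exactly the right orders in $p$, the contributions of $T^\pm$ at the distance-$1$ and distance-$2$ vertices. What remains, modulo terms in $\mathrm{ind}_{KZ}^G(X_r+V_r^{**})$ and in the image of $\mathrm{ind}_{KZ}^G W_1$ (both of which die in $F_2$, since $U_1$ is quotiented out), reduces after applying Lemma \ref{a mod p} to the relation $T[\mathrm{Id},X^{p-a-1}] = \bar u^{-1}\,[\mathrm{Id},X^{p-a-1}]$ in $F_2$. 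This shows $F_2$ is a quotient of $\pi(p-a-1,\bar u^{-1},\omega^a)$.

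The main obstacle lies in part (ii): the precise form of $f_2$ — including the combinatorial weights $\alpha_j$, the factors of $a_p$, and the $[\alpha,\cdot]$-correction — is not visible from the statement, and requires the full strength of Lemma \ref{comb2} to make the error terms land in exactly those pieces of the filtration where they can be discarded. Part (i), by comparison, reduces, after the $X^r$-term cancellation via Fermat and invocation of Lemma \ref{a mod p}, to a direct identification of the eigenvalue at the central vertex.
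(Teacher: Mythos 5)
Part (i) of your proposal is essentially the paper's proof. You take $f_1 = \tfrac{1}{a_p}[\mathrm{Id},\theta X^{r-p-1}]$, where the paper uses $\tfrac{1}{p}[\mathrm{Id},\theta X^{r-p-1}]$; since $p/a_p$ is a unit this is the same computation up to a unit, and your identification of the eigenvalue via Lemma \ref{a mod p} and Lemma \ref{generator}(i) is correct. (One minor wording quibble: integrality of $(T-a_p)f_1$ makes its image vanish in $\bar\Theta$, and then you must note the image in $\mathrm{ind}_{KZ}^G P$ lands in $\mathrm{ind}_{KZ}^G W_0$ so that it already dies under $\mathrm{ind}_{KZ}^G J_0 \to F_0$; you elide this step but it is the right argument.)

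Part (ii) has a genuine gap: the schematic test function you wrote down is missing an essential piece and cannot produce the claimed eigenform. The paper's test function $f = f_0 + f_1 + f_2$ has, besides the weighted sum $f_1 = [g^0_{1,0}, \tfrac{p-1}{pa_p}\sum_j\alpha_j X^{r-j}Y^j]$ (which matches the $\tfrac{1}{a_p^2}\sum_j\alpha_j[\mathrm{Id},\cdot]$ term in your proposal, after centering at $g^0_{1,0}$ and up to a unit) and the piece $f_0$ at radius $0$ (which is zero unless $a=p-1$, and after centering becomes your $[\alpha,\cdot]$ term), also the crucial third piece
$$f_2 = \sum_{\lambda\in\F_p}\left[g^0_{2,p[\lambda]},\tfrac{1}{p}\bigl(Y^r - X^{p-1}Y^{r-p+1}\bigr)\right],$$
supported on the $p$ vertices $g^0_{2,p\lambda}$ at distance one from the center $g^0_{1,0}$. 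Your schematic has no counterpart to this. The point is that the target eigenform $(T-\bar u^{-1})[g^0_{1,0},X^{p-a-1}]$ in $\mathrm{ind}_{KZ}^G J_2$ has a nonzero $T^+$-part supported on exactly those $p$ vertices (since $X^{p-a-1}\in J_2 = V_{p-a-1}\otimes D^a$ is constant in $Y$, so $T^+[g^0_{1,0},X^{p-a-1}] = \sum_\lambda[g^0_{2,p\lambda},X^{p-a-1}]\neq 0$). With the $\alpha_j$ chosen via Lemma \ref{comb2}, the congruences $\sum_{j\geq n}\binom{j}{n}\alpha_j \equiv 0 \bmod p^{3-n}$ make $T^+$ of your $[\mathrm{Id},\cdot]$ piece die $\bmod\,p$ at those vertices, and $T$ of your $[\alpha,\cdot]$ piece never reaches them (it spreads to $\mathrm{Id}$ and to the $g^1_{1,\lambda}$'s). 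So $(T-a_p)$ of your schematic function has no support on the $g^0_{1,\lambda}$'s (or, untranslated, the $g^0_{2,p\lambda}$'s) in $\mathrm{ind}_{KZ}^G J_2$ and cannot equal a unit times $(T-\bar u^{-1})[g^0_{1,0},X^{p-a-1}]$. In the paper's computation, it is precisely $-a_p f_2$ that supplies this contribution; moreover, the $\alpha_j$ are chosen not to ``match the naive Hecke expansion of $[\alpha,X^{r-a}Y^a]$'' as you claim, but to cancel against $T^-f_2$ in radius $1$, so that $T^+f_0 - a_p f_1 + T^-f_2$ reduces cleanly to the central term $[g^0_{1,0},\tfrac{r-a}{a}X^{p-a-1}]$ via Lemmas \ref{comb1} and \ref{comb2}.
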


\begin{proof}
  (i) Consider $f_0\in\mathrm{ind}_{KZ}^G\mathrm{Sym}^r\mathbb{\bar{Q}}_p^2$ given by
              $$f_0=\left[\mathrm{Id},\,\frac{X^{r-1}Y-X^{r-p}Y^p}{p}\right]=\left[\mathrm{Id},\, \frac{\theta X^{r-p-1}}{p}\right].$$
              Since $r\geq p+2$, 
              $T^-f_0\equiv 0\mod p$. By the formula for the Hecke operator,
              $$(T-a_p)f_0\equiv T^+ f_0 - a_p f_0\equiv\underset{\lambda\in\mathbb{F}_p}\sum 
              \left[g_{1,[\lambda]}^0,X^{r-1}Y\right]-\overline{(a_p/p)}
              \left[\mathrm{Id},\,\theta X^{r-p-1}\right]\mod p.$$ By Lemma 
              \ref{a mod p}, $(T-a_p)f_0$ maps to %the image of $(T-a_p)f_0$ in $\mathrm{ind}_{KZ}^G P$ is the same as the image of 
              $\overline{\frac{(a-r)}{a}}\cdot\underset{\lambda\in\mathbb{F}_p}\sum \left[g_{1,[\lambda]}^0,\,\theta X^{r-p-1}\right]-\overline{(a_p/p)}
              \left[\mathrm{Id},\,\theta X^{r-p-1}\right]\in\mathrm{ind}_{KZ}^G P$, %so this element must map to zero in $\bar{\Theta}$.
              which in fact lies in the submodule $\mathrm{ind}_{KZ}^G W_0$.
               This maps to 
              $\overline{\frac{(a-r)}{a}}\cdot(T-\bar{u})\left[\mathrm{Id},\,X^{a-2}\right]\in \mathrm{ind}_{KZ}^G J_0$, by Lemma \ref{generator} (i).
               The element 
              $\left[\mathrm{Id},X^{a-2}\right]$ generates 
              $\mathrm{ind}_{KZ}^G J_0$ as a $G$-module, so the map $\mathrm{ind}_{KZ}^G J_0\twoheadrightarrow F_0$ must factor through $\pi(a-2,\bar{u},\omega)$.
 
 (ii) Let us consider $f=f_0+f_1+f_2\in\mathrm{ind}_{KZ}^G\mathrm{Sym}^r\mathbb{\bar{Q}}_p^2$, where 
              \begin{eqnarray*}
               f_2 &=&\underset{\lambda\in\mathbb{F}_p}\sum \left[g_{2,p[\lambda]}^0,\frac{1}{p}\cdot(Y^r-X^{p-1}Y^{r-p+1})\right],\\
               f_1 &=&\left[g_{1,0}^0,\frac{(p-1)}{pa_p}\underset{\substack{0<j<r\\j\equiv a\mod (p-1)}}\sum\alpha_j\cdot X^{r-j}Y^j\right],\\
               f_0 &=& \begin{cases}
                    0, & \text{ if } 3\leq a< p-1,\\
                    \left[\mathrm{Id}, \,\dfrac{(1-p)}{p}\cdot(X^r-X^{r-p+1}Y^{p-1})\right], & \text{ if } a=p-1,
                   \end{cases}
                   \end{eqnarray*}
 where the $\alpha_j$ are integers from Lemma \ref{comb2}.  We compute that 
 $T^+f_2\in\mathrm{ind}_{KZ}^G\langle X^{r-1}Y\rangle + p\cdot\mathrm{ind}_{KZ}^G\mathrm{Sym}^r\bar\Z_p^2$, which maps to 
 $0\in\mathrm{ind}_{KZ}^G J_2$, by Lemma \ref{generator} (iii).  By Lemma \ref{comb2} and as $p\geq 5$, both $T^+f_1$ and $T^-f_1$ die mod $p$. We have
 $$-a_pf_2\equiv \underset{\lambda\in\mathbb{F}_p}\sum \left[g_{2,p[\lambda]}^0,\frac{a_p}{p}\cdot X^{p-1}Y^{r-p+1} \right] \mod X_r.$$
 Using that  $X^{p-1}Y^{r-p+1}\equiv X^{r-a}Y^a\mod V_r^*$, and Lemma \ref{generator} (iii), we get that $-a_pf_2$
  maps to $\frac{a_p}{p}\cdot\underset{\lambda\in\mathbb{F}_p}\sum \left[g_{2,p[\lambda]}^0, X^{p-a-1}\right] \in \mathrm{ind}_{KZ}^G J_2$.
 Moreover, $T^+f_0-a_pf_1+T^-f_2$ is integral and congruent to 
 $$ \left[g_{1,0}^0,\,\underset{\substack{0<j<r\\j\equiv a\mod (p-1)}}\sum\dfrac{p-1}{p}\left(\binom{r}{j}-\alpha_j\right)\cdot X^{r-j}Y^j  + Y^r \right]\mod p,$$ 
 unless $a=p-1$, in which case we also have the terms $\underset{\lambda\in\F_p}\sum \left[g_{1,[\lambda]}^0,\,(-[\lambda])^{p-2}\cdot X^{r-1}Y\right]$ from $T^+f_0$, in addition to the above.
 In any case, $T^+f_0-a_pf_1+T^-f_2$ maps to $\left[g_{1,0}^0,\,\dfrac{r-a}{a}\cdot X^{p-a-1}\right]\in\mathrm{ind}_{KZ}^G J_2$, by Lemma \ref{comb2}, Lemma \ref{comb1} and
 Lemma \ref{generator} (iii).
 If $a=p-1$, then  $T^-f_0$ dies mod $p$ and $-a_pf_0\equiv \left[\mathrm{Id}, \,\frac{a_p}{p}\cdot X^{r-p+1}Y^{p-1} \right] \mod X_r$ maps to 
 $\frac{a_p}{p}\cdot\left[\mathrm{Id}, \,1\right]$ in $\mathrm{ind}_{KZ}^G J_2=\mathrm{ind}_{KZ}^G V_0$. 
 
 So $(T-a_p)f$ is always integral and maps to 
 $\frac{a_p}{p}\left(T-\frac{p}{a_p}\cdot\frac{(a-r)}{a}\right)[g_{1,0}^0,\, X^{p-a-1}]$ in $\mathrm{ind}_{KZ}^G J_2$. Therefore $F_2$ is a 
 quotient of $\pi(p-a-1, \bar{u}^{-1},\omega^a)$, as desired.
 \end{proof}

\begin{proof}[Proof of Theorem \ref{3ap-1}]
 We have
 \begin{enumerate}
   \item[(i)] If $p\nmid r-a$, Proposition \ref{Theta} (i) tells us that  $\bar{\Theta}^{ss}\cong (F_0\oplus F_2)^{ss}$. By Proposition \ref{red}, we have surjections 
   $ \pi(a-2,\bar{u},\omega)\twoheadrightarrow F_0$, and
    $\pi(p-a-1,\bar{u}^{-1},\omega^a)\twoheadrightarrow F_2$. Using the fact that $\bar{\Theta}$ lies in the image of the mod $p$ Local Langlands 
    Correspondence, we deduce that 
    $\bar{\Theta}^{ss}\cong \pi(a-2,\bar{u},\omega)^{ss}\oplus\pi(p-a-1,\bar{u}^{-1},\omega^a)^{ss}$. %Now the result follows from the 
    %semisimple mod $p$ Local Langlands Correspondence.  

   \item[(ii)] If $p\mid r-a$, then Proposition \ref{Theta} (ii) and \cite[Prop. 3.3]{BG09} together imply that $\bar{V}_{k,a_p}\cong \mathrm{ind}(\omega_2^{a+1})$ is irreducible.
 \end{enumerate}
\end{proof}

%%%%%

\section{The case $a=1$}
\label{casea=1}

We now treat the case $r\equiv 1\mod (p-1)$ separately since the computations are more complicated.

\begin{theorem}
\label{a=1}
 Let $p\geq 5$, $r\geq 2p$ and $r\equiv 1\mod (p-1)$. Assume that $v(a_p) = 1$. 
 \begin{enumerate}
  \item[(i)] If $p\nmid r$, then $\bar V_{k,a_p}\cong \mathrm{ind}(\omega_2^2)$.
  \item[(ii)] If $p\mid r$, then $\bar V_{k,a_p}^{ss}\cong \mu_\lambda \cdot \omega\oplus\mu_{\lambda^{-1}} \cdot \omega$, where $\lambda^2-c\lambda+1=0$ with 
  $c=\overline{\dfrac{a_p}{p}-\dfrac{r-p}{a_p}}\in\bar \F_p$.
 \end{enumerate}
\end{theorem}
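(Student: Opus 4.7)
The proof follows the template of Section~\ref{case3ap-1}, using the filtration on $P$ from Proposition~\ref{P}: for $a=1$ the JH factors are $J_1=V_1$ and $J_2=V_{p-2}\otimes D$, together with $J_0=V_{p-2}\otimes D$ when $p\mid r$. The layers $F_i$ in Diagrams~\eqref{gendiagram1}--\eqref{gendiagram2} are identified by exhibiting explicit sections $f\in\mathrm{ind}_{KZ}^G\Sym^r\bar\Q_p^2$ with $(T-a_p)f$ integral and of a known reduction in $\mathrm{ind}_{KZ}^G P$.

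For case (i), $p\nmid r$, since $J_0=0$ there is no $F_0$. I would first eliminate $F_1$ by imitating Proposition~\ref{Theta}(i): a function of the form $[\mathrm{Id},\,a_p^{-1}(\theta X^{r-2p+1}Y^{p-2}+\text{corrections in }V_r^{**})]$, combined with Lemma~\ref{generator}(ii) for $a=1$, should give $(T-a_p)f$ reducing to a generator of $\mathrm{ind}_{KZ}^G V_1$, forcing $F_1=0$. Then $\bar\Theta=F_2$ is a quotient of $\mathrm{ind}_{KZ}^G V_{p-2}\otimes D$. Next, by analogy with Proposition~\ref{red}(ii), I would use Lemma~\ref{comb6} in place of Lemma~\ref{comb2} and construct a function supported on the ball of radius at most $2$, with normalisations $1/p$ on the outer sphere and $1/(pa_p)$ on the middle sphere, to produce $(T-a_p)f$ reducing to $(T-\overline{p/a_p})[g^0_{1,0},X^{p-2}]\in\mathrm{ind}_{KZ}^G J_2$. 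Since $v(p/a_p)>0$, the Hecke eigenvalue is $0$, so $F_2$ is a quotient of $\pi(p-2,0,\omega)$. Combined with $\bar\Theta\neq 0$ and the mod $p$ LLC, this gives $\bar\Theta\cong \pi(p-2,0,\omega)$, hence $\bar V_{k,a_p}\cong\mathrm{ind}(\omega_2^{2p})\cong\mathrm{ind}(\omega_2^{2})$.

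For case (ii), $p\mid r$, I would first eliminate $F_1$ via a function patterned on $f_0$ of Proposition~\ref{Theta}(ii), using the $a=1$, $p\mid r$ analogue of Lemma~\ref{a mod p} (so that a generator of $\mathrm{ind}_{KZ}^G V_1$ appears in the kernel of $\mathrm{ind}_{KZ}^G W_1\to U_1$). To pin down $F_0$ and $F_2$, I would construct a function $f$ supported on the ball of radius $2$ that couples both extremes: the sphere-$0$ piece, scaled by $a_p/p$, contributes the ``natural'' Hecke eigenvalue, while the sphere-$2$ piece, via a $T^-$-computation leveraging $p\mid r$ and Lemma~\ref{comb6}, contributes the $-(r-p)/a_p$ correction. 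Arranging these so that $(T-a_p)f$ reduces to $(T-\lambda_2)[g^0_{1,0},X^{p-2}]\in\mathrm{ind}_{KZ}^G J_2$ and, after a symmetric construction, to $(T-\lambda_1)[\mathrm{Id},X^{p-2}]\in\mathrm{ind}_{KZ}^G J_0$, with $\lambda_1\lambda_2=1$ and $\lambda_1+\lambda_2=\overline{a_p/p-(r-p)/a_p}=c$, pins down $F_0$ and $F_2$ as quotients of $\pi(p-2,\lambda_1,\omega)$ and $\pi(p-2,\lambda_2,\omega)$ respectively. The mod $p$ LLC then gives the claimed $\bar V_{k,a_p}^{ss}\cong \mu_\lambda\omega\oplus\mu_{\lambda^{-1}}\omega$ with $\lambda,\lambda^{-1}$ the roots of $X^2-cX+1=0$.

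The main obstacle is the trace computation $c=\overline{a_p/p-(r-p)/a_p}$ in case (ii): the two summands arise from different spheres of the tree and different parts of the Hecke operator ($T^+$ on the sphere-$0$ term versus $T^-$ on the sphere-$2$ term), and the identity $\lambda_1\lambda_2=1$ must also emerge from the construction rather than being imposed. Balancing these contributions while preserving integrality of $(T-a_p)f$ requires a careful choice of auxiliary integers in the spirit of Lemma~\ref{comb6} together with essential use of the hypothesis $p\mid r$; this is genuinely new compared with Section~\ref{case3ap-1}, where a single unit eigenvalue $\bar u$ sufficed.
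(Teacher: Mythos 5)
Your proposal for case (i) has a fundamental problem: you propose to eliminate $F_1$ and concentrate $\bar\Theta$ in $F_2$, but when $p\nmid r$ and $a=1$ the module $P$ has \emph{only} the two JH factors $J_1=V_1$ and $J_2=V_{p-2}\otimes D$ ($J_0=0$). The paper's Proposition~\ref{F_2=0} proves $F_2=0$ (for all $r>2p$ with $r\equiv 1\mod (p-1)$, irrespective of whether $p\mid r$), using the function $f_0=\bigl[\mathrm{Id},\,(XY^{r-1}-2X^pY^{r-p}+X^{2p-1}Y^{r-2p+1})/p\bigr]$ and the operator $T^-$. If one could also show $F_1=0$, then $\bar\Theta$ would be zero, a contradiction. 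So $F_1\neq 0$, and your elimination attempt cannot succeed; the computation, done carefully, will simply fail to be integral or will not generate $\mathrm{ind}_{KZ}^G J_1$. There are further secondary errors in the same paragraph: you invoke Lemma~\ref{comb6}, which explicitly requires $p\mid r$ and so is unavailable in case (i), and you assert $v(p/a_p)>0$ when in fact $v(p/a_p)=1-v(a_p)=0$, so $\overline{p/a_p}$ is a unit and the proposed ``Hecke eigenvalue $0$'' conclusion would not follow even if the function existed.

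Case (ii) has a symmetric issue. You correctly aim to eliminate $F_1$ (the paper does this in Proposition~\ref{elimination and}(i)), but then you want $F_0$ and $F_2$ to carry the two reciprocal eigenvalues $\lambda_1,\lambda_2$ separately. Since $F_2=0$ here as well (Proposition~\ref{F_2=0} again), this distribution is impossible. The paper's actual mechanism is different and subtler: both eigenvalues land inside the \emph{single} factor $F_0$, via a function $f=f_2+f_1+f_0$ supported on the ball of radius $2$ whose image $(T-a_p)f$ in $\mathrm{ind}_{KZ}^G J_0$ is precisely $-(T^2-cT+1)[\mathrm{Id},X^{p-2}]$, with $c=\overline{a_p/p-(r-p)/a_p}$. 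The identity $\lambda_1\lambda_2=1$ is not ``arranged'' by a symmetric pair of constructions as you suggest, but is forced because the constant term of the quadratic in $T$ is $1$. This quadratic-in-$T$ trick, which avoids having to split the eigenvalues across two different factors, is the key structural insight you are missing, and it is what makes the residual computation tractable under the hypothesis $p\mid r$. In short: the missing lemma is $F_2=0$, and without it the filtration argument you outline cannot be carried through in either case.
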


Recall that by Proposition~\ref{P}, we have $J_2=V_{p-2}\otimes D$, $J_1=V_1$, and $J_0=\begin{cases}
                                                                                                 V_{p-2}\otimes D, & \text{if } p\mid r,\\
                                                                                                 0, & \text{if } p\nmid r.
                                                                                               \end{cases}$

\begin{prop}\label{F_2=0}
 If $p\geq 3$, $ r > 2p$ and $r\equiv 1\mod (p-1)$, then $F_2=0$. As a consequence, %we have
 \begin{enumerate}
 \item[(i)] If $p\nmid r$, then $\bar{\Theta}\cong F_1$ is a quotient of $\mathrm{ind}_{KZ}^G J_1$. 
 \item[(ii)] If $p\mid r$, then $\bar{\Theta}\cong U_1$, and the bottom row of Diagram \eqref{gendiagram2} reduces to 
 $$0\rightarrow F_0 \rightarrow\bar{\Theta}\rightarrow F_1\rightarrow 0.$$
 \end{enumerate}
\end{prop}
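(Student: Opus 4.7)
The plan is to exhibit explicit functions $f \in \mathrm{ind}_{KZ}^G \Sym^r \bar\Q_p^2$ such that each $(T-a_p)f$ is integral, and together their images in $\mathrm{ind}_{KZ}^G J_2$ suffice to generate it as a $G$-module; this forces the surjection $\mathrm{ind}_{KZ}^G J_2 \twoheadrightarrow F_2$ to factor through zero, giving $F_2 = 0$. By Lemma~\ref{generator}(iii) for $a = 1$, the element $X^{r-1}Y$ maps to the generator $X^{p-2}$ of $J_2 = V_{p-2}\otimes D$, so the target is to realize $[\mathrm{Id}, X^{r-1}Y]$ modulo $\mathrm{ind}_{KZ}^G W_1$.

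The first step is to take $f_0 = [\mathrm{Id}, \theta X^{r-p-1}/p]$; the expression $(T-a_p)f_0$ is integral because $\theta(1, -[\lambda]) = 0$ for every $\lambda \in \F_p$ (so the constant-in-$Y$ part of $T^+f_0$ is integral), and a direct computation with the formulas of Section~\ref{Hecke} shows that its image in $\mathrm{ind}_{KZ}^G J_2$ equals (up to a unit) $T^+[\mathrm{Id}, X^{p-2}]$. Since $T^-[\mathrm{Id}, X^{p-2}] \equiv 0 \bmod p$ for $p \geq 3$, this translates into the relation $T[\mathrm{Id}, X^{p-2}] \equiv 0$ in $F_2$. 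By $G$-equivariance, $T \cdot \mathrm{ind}_{KZ}^G J_2$ lies in the kernel of $\mathrm{ind}_{KZ}^G J_2 \twoheadrightarrow F_2$, so $F_2$ is a quotient of the supersingular representation $\pi(p-2, 0, \omega)$.

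The second step is to produce an additional relation $(T - \lambda)[g, X^{p-2}] \equiv 0$ for some $\lambda \in \bar\F_p^\times$, using a two- or three-level function modeled on the construction in Proposition~\ref{red}(ii) for $a = p-1$. Explicitly, take $f_2 = \sum_\lambda[g^0_{2, p[\lambda]}, (Y^r - X^{p-1}Y^{r-p+1})/p]$ and a level-$1$ correction $f_1 = [g^0_{1,0}, \frac{p-1}{pa_p}\sum_{j}\alpha_j X^{r-j}Y^j]$ summed over $0 < j < r$ with $j \equiv 1 \bmod (p-1)$, where the $\alpha_j$ are the integers furnished by Lemma~\ref{comb6} when $p \mid r$, or an analogous family based on Lemma~\ref{comb1} when $p \nmid r$. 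The crucial identities are $Y^r \in X_r$ and $X^{p-1}Y^{r-p+1} \equiv X^{r-1}Y \pmod{V_r^*}$, the latter valid because $(p-1) \mid (r-p)$, so $X^{p-1} - Y^{p-1}$ divides $X^{r-p} - Y^{r-p}$. The image of $(T-a_p)(f_1 + f_2)$ in $\mathrm{ind}_{KZ}^G J_2$ will then be of the form $c(T - \lambda)[g^0_{1,0}, X^{p-2}]$ with $c \ne 0$ and $\lambda \ne 0$; combined with $T \equiv 0$ from the first step, this yields the extra relation $\lambda[g^0_{1,0}, X^{p-2}] \equiv 0$, forcing $F_2 = 0$ by $G$-translation.

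The hardest part will be verifying the integrality of $(T-a_p)(f_1+f_2)$ term by term: Lemma~\ref{comb6} (respectively Lemma~\ref{comb1}) is calibrated precisely so that the sums of binomial coefficients arising in $T^\pm f_1$ and $T^- f_2$ are divisible by the power of $p$ needed to compensate for the denominator $pa_p$ in $f_1$; without the $\alpha_j$-correction, the $j = 1$ and $j = p$ contributions would spoil integrality. The consequences follow by a standard diagram chase: in case (i), Proposition~\ref{P}(i) gives $W_0 = 0$, so $F_0 = 0$ and $U_1 = F_1$, and Diagram~\eqref{gendiagram1} combined with $F_2 = 0$ yields $\bar\Theta \cong F_1$, a quotient of $\mathrm{ind}_{KZ}^G J_1$; in case (ii), Diagram~\eqref{gendiagram1} gives $\bar\Theta \cong U_1$, and the bottom row of Diagram~\eqref{gendiagram2} becomes the displayed short exact sequence $0 \to F_0 \to \bar\Theta \to F_1 \to 0$.
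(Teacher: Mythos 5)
Your approach is genuinely different from the paper's, and it contains a real gap.

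The paper proves $F_2=0$ with a single function $f_0 = [\mathrm{Id},(XY^{r-1}-2X^pY^{r-p}+X^{2p-1}Y^{r-2p+1})/p]$, observing that $(T-a_p)f_0$ is integral and its image in $\mathrm{ind}_{KZ}^GJ_2$ is $[\alpha,-Y^{p-2}]$ --- already a \emph{generator} of $\mathrm{ind}_{KZ}^GJ_2$, so $F_2=0$ in one stroke. The key trick is that for this $f_0$, the surviving term after reduction mod $p$ is $T^-f_0$ (a ``downward'' term landing at radius one), which is why a radius-$0$ function suffices: no Hecke-eigenvalue argument is needed at all. Your route is a detour: Step~1 correctly shows (via $f_0=[\mathrm{Id},\theta X^{r-p-1}/p]$) that $T[\mathrm{Id},X^{p-2}]$ lies in the kernel, so $F_2$ is a quotient of $\pi(p-2,0,\omega)$; Step~2 then hopes to produce a second relation $(T-\lambda)[\cdot]=0$ with $\lambda\neq 0$ to kill $F_2$. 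This imitates the structure of Proposition~\ref{red}(ii) rather than the actual proof.

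The gap is in Step~2. Following the calculation of Proposition~\ref{red}(ii) for $a=1$, the eigenvalue you would obtain is $\lambda = \overline{\frac{p}{a_p}(1-r)}$ (with $1-r$ replacing $\frac{a-r}{a}$ from Lemma~\ref{comb1}). But $\overline{1-r}$ can perfectly well vanish: whenever $r\equiv 1\bmod p$ (which is allowed under the hypotheses, e.g.\ $r=p^2-p+1>2p$), you get $\lambda=0$, and Step~2 reproduces exactly the relation of Step~1. In that case you cannot conclude $F_2=0$. Your assertion ``with $c\neq 0$ and $\lambda\neq 0$'' is simply unjustified, and in fact false for $r\equiv 1\bmod p$. (Notice that the $3\leq a\leq p-1$ analogue does not run into this problem because when $p\mid r-a$ the theorem there asserts $\bar{V}_{k,a_p}$ is irreducible, not that $F_2=0$; the case $a=1$ is structurally different.) A secondary issue: for $p\nmid r$, you cite ``an analogous family based on Lemma~\ref{comb1}'' to replace Lemma~\ref{comb6}, but no such lemma exists in the paper, and constructing one (with the $O(p^2)$--$O(p^3)$ vanishing needed to offset the $pa_p$ denominator in $f_1$) is itself nontrivial and not checked. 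The deduction of (i) and (ii) from $F_2=0$ at the end of your write-up is correct.
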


\begin{proof}
 Consider the function $f=f_0\in\mathrm{ind}_{KZ}^G\mathrm{Sym}^r\bar{\Q}_p^2$ given by 
 $$f_0=\left[\mathrm{Id},\dfrac{XY^{r-1}-2X^pY^{r-p}+X^{2p-1}Y^{r-2p+1}}{p}\right].$$
 One checks that $T^+f_0,\,T^-f_0,\,a_pf_0$ are all integral, and that $T^+f_0\equiv 0\mod p$, $a_pf_0\in\mathrm{ind}_{KZ}^G V_r^*$ and
 $T^-f_0\equiv [\alpha, XY^{r-1}]\mod p$. By Lemma \ref{generator} (iii) and by $\Gamma$-linearity,
                                                           $XY^{r-1}=\left(\begin{smallmatrix}
                                                                     0 & 1\\
                                                                     1 & 0
                                                                    \end{smallmatrix}\right)\cdot X^{r-1}Y$
         maps to $-Y^{p-2}$, under the map $W_2\twoheadrightarrow J_2$.                                                           
So $(T-a_p)f$ maps to $[\alpha,-Y^{p-2}]\in\mathrm{ind}_{KZ}^G J_2$. %via the 
 %rightmost map of the top row of diagram \eqref{gendiagram1}.
 As $[\alpha,-Y^{p-2}]$ generates $\mathrm{ind}_{KZ}^G J_2$ as a $G$-module,
 we have $F_2=0$.
\end{proof}

\begin{prop}
 \label{elimination and}
 Let $p\geq 3$, $r> 2p$, $r\equiv 1\mod (p-1)$ and $p\mid r$. Then we have 
 \begin{enumerate}
  \item[(i)] $F_1=0$, and 
  \item[(ii)] $F_0$ is a quotient of $\dfrac{\mathrm{ind}_{KZ}^G (V_{p-2}\otimes D)}{T^2-cT+1}$, where
         $c=\overline{\dfrac{a_p}{p}-\dfrac{r-p}{a_p}}\in\bar\F_p$.
 \end{enumerate}
\end{prop}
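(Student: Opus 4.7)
The plan is to construct explicit functions $f \in \mathrm{ind}_{KZ}^G \Sym^r\bar\Q_p^2$ whose $(T-a_p)$-images are integral, and to trace their reductions through the filtration $W_0 \subset W_1 \subset P$ of Section~\ref{sectionP}, extracting the desired vanishing and Hecke relations in $\bar\Theta$.

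For part (i), I would take $f_0 = [\mathrm{Id},\theta X^{r-2p+1}Y^{p-2}/a_p]$. By Lemma~\ref{generator}(ii) for $a=1$, the polynomial $\theta X^{r-2p+1}Y^{p-2}$ projects to the generator $X\in J_1=V_1$, so $-a_p f_0$ contributes a generator of $\mathrm{ind}_{KZ}^G J_1$. An explicit expansion using the formula for $T$ shows $T^-f_0\equiv 0 \pmod p$ (the bound $r>2p$ forces all coefficients to have valuation $\geq 2$), while $T^+f_0\equiv \overline{p/a_p}\sum_{\lambda\in\F_p^\times}[g^0_{1,[\lambda]},(-\lambda)^{p-2}X^{r-1}Y]\pmod p$; here the $j=0$ term vanishes since $(-\lambda)^{p-1}=1$ on $\F_p^\times$, and higher $j\geq 2$ terms acquire extra powers of $p$. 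By Lemma~\ref{generator}(iii) for $a=1$, $X^{r-1}Y$ projects to $X^{p-2}\in J_2$, so the surviving $T^+f_0$ lies in the $\mathrm{ind}_{KZ}^G J_2$-layer. The relation from the proof of Proposition~\ref{F_2=0}, namely $[\alpha,XY^{r-1}]\equiv\overline{a_p/p}\cdot[\mathrm{Id},\theta(X^{p-1}Y^{r-2p}-Y^{r-p-1})]$ in $\bar\Theta$, identifies a generator of $\mathrm{ind}_{KZ}^G J_2$ with an element of $\mathrm{ind}_{KZ}^G W_1$; translating this relation by suitable $G$-elements (the Weyl involution and $g^0_{1,[\lambda]}$-translates) and subtracting from $f_0$ produces an $f$ for which $(T-a_p)f$ reduces into $\mathrm{ind}_{KZ}^G W_1$ with $J_1$-image generating. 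Since $(T-a_p)f\equiv 0$ in $\bar\Theta$, this forces the $J_1$-generator $[\mathrm{Id},X]$ to have image in $\mathrm{ind}_{KZ}^G J_0$ modulo $\bar\Theta$-relations, whence by $G$-equivariance $F_1=0$.

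For part (ii), I would adapt the three-piece construction of Proposition~\ref{red}(ii), using the integers $\alpha_j$ supplied by Lemma~\ref{comb6}, which are tailored precisely to $a=1$, $p\mid r$ and give the crucial $p^{3-n}$-divisibility of $\sum_{j\geq n}\binom{j}{n}\alpha_j$ for $n=0,1,2$. Take $f=f_0+f_1+f_2$ supported on the $KZ$-cosets of $\mathrm{Id}$, $g^0_{1,0}$ and $g^0_{2,p[\lambda]}$: a piece $f_2=\sum_{\lambda}[g^0_{2,p[\lambda]},\frac{1}{p}(Y^r-X^{p-1}Y^{r-p+1})]$-type term, producing via $T^-$ the leading $T^2$-contribution moving support from radius $2$ to radius $1$; the piece $f_1=[g^0_{1,0},\frac{p-1}{pa_p}\sum_{j\equiv 1\,(p-1)}\alpha_j X^{r-j}Y^j]$ whose $T^\pm$ pieces die modulo $p$ by Lemma~\ref{comb6}; and $f_0$ chosen to supply the remaining constants. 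Expanding $(T-a_p)f$ modulo $p$, the sums $\sum j\binom{r}{j}$-type contributions from $T^+$ simplify via Lemma~\ref{comb6}(2) and the assumption $p\mid r$ to produce the coefficient $\overline{(r-p)/a_p}$, while the $T^-$-image of $f_2$ together with the $-a_p f_i$ pieces produce $\overline{a_p/p}$. Combining, the total reduction equals a unit multiple of $(T^2-cT+1)[g^0_{1,0},X^{p-2}]\in\mathrm{ind}_{KZ}^G J_0$ with $c=\overline{a_p/p-(r-p)/a_p}$. Residual contributions land in $\mathrm{ind}_{KZ}^G J_1$ or $\mathrm{ind}_{KZ}^G J_2$, and so are killed by part (i) and Proposition~\ref{F_2=0} respectively, giving the claimed quotient description of $F_0$.

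The main obstacle is the precise matching of $c=\overline{a_p/p-(r-p)/a_p}$ rather than some other nearby expression. The appearance of $(r-p)/a_p$ relies crucially on $p\mid r$: only under this hypothesis do the relevant binomial sums involving the $\alpha_j$'s from Lemma~\ref{comb6} acquire the extra factor that converts a naive $\overline{r/a_p}$ into $\overline{(r-p)/a_p}$ via the combinatorial cancellations. Simultaneously one must verify integrality of all six pieces $T^\pm f_i$ and dispose carefully of the unwanted contributions to $J_1$ and $J_2$ so that what remains in $F_0$ is exactly the stated quadratic in $T$ — this is where the delicate bookkeeping, analogous to but more intricate than Proposition~\ref{red}(ii), is concentrated.
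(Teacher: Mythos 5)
Your overall plan — construct explicit functions supported on balls of small radius, run them through $T-a_p$, and track images through the filtration $W_0\subset W_1\subset P$ using Lemma~\ref{generator} and Lemma~\ref{comb6} — is exactly the paper's strategy. But there are substantive gaps in both parts.

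For part (i): the paper uses a single explicit function $f=f_2+f_1+f_0$ supported on radii $0$, $1$, $2$, with $f_1$ built from the integers $\beta_j$ of Lemma~\ref{comb3}(ii); after a careful three-stage computation (radius $0$: $T^-f_1-a_pf_0$; radius $1$: $T^+f_0+T^-f_2-a_pf_1$; radius $2$: $T^+f_1-a_pf_2$), the image of $(T-a_p)f$ in $\mathrm{ind}_{KZ}^G P$ lands in $\mathrm{ind}_{KZ}^G W_1$ and projects to $[g^0_{2,0},-X]\in\mathrm{ind}_{KZ}^G J_1$, which generates. Your shortcut — take $f_0=[\mathrm{Id},\theta X^{r-2p+1}Y^{p-2}/a_p]$ and cancel its $J_2$-contamination by subtracting $G$-translates of the function from Proposition~\ref{F_2=0} — is appealing, but you have not verified that it is sound. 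The issue is that the correction function $g_0=[\mathrm{Id},(XY^{r-1}-2X^pY^{r-p}+X^{2p-1}Y^{r-2p+1})/p]$ has $-a_pg_0$ lying in $\mathrm{ind}_{KZ}^G V_r^*$, i.e. in the $W_1$ layer, where it contributes $-\overline{a_p/p}[\mathrm{Id},\theta(X^{p-1}Y^{r-2p}-Y^{r-p-1})]$. When you translate $g_0$ by $g^0_{1,[\lambda]}\alpha^{-1}$ to reach the right coset in $J_2$, you also translate this $W_1$ piece, and it is not a priori clear that the $J_1$ components of these translates do not cancel (or swamp) the generator $-X$ coming from $-a_pf_0$. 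You assert that a suitable combination "produces an $f$ ... with $J_1$-image generating," but that is precisely the content that needs proof; nothing in your sketch rules out the possibility that the $J_1$ image vanishes after the correction.

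For part (ii): your proposed $f_0,f_1,f_2$ are closer to those used in Proposition~\ref{red}(ii) (the case $3\leq a\leq p-1$) than to the paper's. Specifically, the paper takes $f_2=\sum_{\lambda,\mu\in\F_p}[g^0_{2,p[\mu]+[\lambda]},\tfrac{1}{a_p}(Y^r-X^{r-p}Y^p)]$ (a sum over the full $p^2$-element second shell, normalized by $1/a_p$, with the polynomial $Y^r-X^{r-p}Y^p$), $f_1=\sum_{\lambda\in\F_p}[g^0_{1,[\lambda]},-\tfrac{1}{p}(X^{r-1}Y-X^{r-p}Y^p)+\tfrac{p-1}{a_p^2}\sum\alpha_jX^{r-j}Y^j]$ (note the sum over $\lambda$, the $1/a_p^2$ normalization, and the extra $-\tfrac1p(X^{r-1}Y-X^{r-p}Y^p)$ term), and $f_0=[\mathrm{Id},\tfrac{1-p}{a_p}(X^{r-1}Y-X^{r-p}Y^p)]$. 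Your $f_2$ is summed over only $p$ vertices, normalized by $1/p$, and uses a different polynomial; your $f_1$ is not summed over $\lambda$ and omits the $-\tfrac{1}{p}(X^{r-1}Y-X^{r-p}Y^p)$ term. These are not cosmetic differences: the terms you have dropped are exactly what produce the $\theta X^{r-p-1}$ contribution in each radius and hence the quadratic $T^2-cT+1$ shape via equations \eqref{eqn1a}, \eqref{eqn2a}, \eqref{eqn3a}. You assert that the coefficient $c=\overline{a_p/p-(r-p)/a_p}$ "emerges," but with your choices you have no computation showing this, and there is good reason to doubt that the naive constants match — the $(r-p)/a_p$ term arises specifically from the interplay between the $(p-1)(r-p)/a_p$ coefficient of $X^{r-1}Y$ and the $V_r^{**}$-correction packaged by Lemma~\ref{comb6}, and that interplay is sensitive to the choice of $f_1$.

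So: right strategy, correct identification of Lemma~\ref{comb6} as the key combinatorial input, but the key computational steps that produce the generator in $J_1$ (part (i)) and the quadratic in $T$ with the precise coefficient $c$ (part (ii)) are missing, and the shortcut in part (i) has an unaddressed potential failure mode.
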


\begin{proof}
 \begin{enumerate}
  \item[(i)] Consider the function $f=f_2+f_1+f_0\in\mathrm{ind}_{KZ}^G\mathrm{Sym}^r\bar{\Q}_p^2$ given by  
  \begin{eqnarray*}
   f_2 &=&\underset{\lambda\in\F_p^*}\sum\left[g_{2,p[\lambda]}^0,\, \dfrac{[\lambda]^{p-2}}{a_p}\cdot(Y^r-X^{r-p}Y^p)\right]
   +\left[g_{2,0}^0,\,\dfrac{1-p}{a_p}\cdot(XY^{r-1}-X^{r-p+1}Y^{p-1})\right],\\
   f_1 &=&\left[g_{1,0}^0,\:\dfrac{X^pY^{r-p}-XY^{r-1}}{p}+ 
   \dfrac{p-1}{a_p^2}\cdot\underset{\substack{0<j<r-1\\j\equiv 0\mod (p-1)}}\sum \beta_j X^{r-j}Y^j\right],\\
   f_0 &=& \left[\mathrm{Id},\dfrac{1-p}{a_p}\cdot(X^r-X^pY^{r-p})\right],
  \end{eqnarray*}
  where $\beta_j$ are the integers from Lemma \ref{comb3} (ii). Using the facts that $p\mid r-p$ and $p\geq 3$, 
  we see that $T^+f_2$ and $T^-f_0$ die 
  mod $p$. We compute that 
  \begin{equation}
   \label{eqn1} 
     T^-f_1-a_pf_0\equiv [\mathrm{Id}, -XY^{r-1}-(X^r-X^pY^{r-p})]\mod p
     \equiv[\mathrm{Id}, \theta Y^{r-p-1}]\mod X_r.
   \end{equation}
   
  Note that $T^+f_0+T^-f_2-a_pf_1$ is congruent mod $p$ to
  $$\left[g_{1,0}^0,\,\underset{\substack{0<j<r-1\\j\equiv 0\mod (p-1)}}\sum
  \dfrac{p-1}{a_p}\left(\binom{r}{j}-\beta_j\right)\cdot X^{r-j}Y^j +\dfrac{p-1}{a_p}(r-p)\cdot XY^{r-1}-\dfrac{a_p}{p}\cdot\theta Y^{r-p-1}\right],$$
  which is integral because $v(a_p)=1$, $p\mid r-p$ and each $\beta_j\equiv \binom{r}{j}\mod p$. Rearranging the terms, we can write 
  \begin{equation*}
    T^+f_0+T^-f_2-a_pf_1\equiv \left[g_{1,0}^0,\,(p-1)\left(F(X,Y)
    +\dfrac{p-r}{a_p}\cdot\theta Y^{r-p-1}\right)-\dfrac{a_p}{p}\cdot\theta Y^{r-p-1}\right],
  \end{equation*}
where $F(X,Y) := \underset{\substack{0<j<r-1\\j\equiv 0\mod (p-1)}}\sum\dfrac{1}{a_p}\cdot\left(\binom{r}{j}-\beta_j\right)\cdot X^{r-j}Y^j -\dfrac{p-r}{a_p} \cdot X^pY^{r-p}\in V_r^{**}$, as it satisfies
  the criteria given in \cite[Lem. 2.3]{Bhattacharya-Ghate}. Thus inside $\mathrm{ind}_{KZ}^G P$, we have
  \begin{equation}
  \label{eqn2}
        T^+f_0+T^-f_2-a_pf_1\equiv \left[g_{1,0}^0,\,\left(-\dfrac{p-r}{a_p}-\dfrac{a_p}{p}\right)\cdot\theta Y^{r-p-1}\right].
  \end{equation}

  Then we compute that $T^+f_1-a_pf_2$ is congruent %mod $p$ 
  to
\begin{eqnarray*}
  \underset{\lambda\in\F_p}\sum\left[g_{2,p[\lambda]}^0,\,(-[\lambda])^{r-p-1}(-p+1)\cdot X^{r-1}Y\right]  & - & 
\underset{\lambda\in\F_p^*}\sum\left[g_{2,p[\lambda]}^0,\,(-[\lambda])^{p-2}(-p+1)\cdot (Y^r-X^{r-p}Y^p)\right] \\
                   & - & [g_{2,0}^0, XY^{r-1}-X^{r-p+1}Y^{p-1}]\mod p.
\end{eqnarray*}
Going modulo $X_r$ and $V_r^{**}$, we get
\begin{equation}
\label{eqn3}
           T^+f_1-a_pf_2 \equiv \underset{\lambda\in\F_p^*}\sum\left[g_{2,p[\lambda]}^0,\,(-[\lambda])^{p-2}\theta X^{r-p-1}\right]
           +\left[g_{2,0}^0,\,\theta Y^{r-p-1}+\frac{(r-2p+1)}{p-1}\cdot \theta X^{r-2p+1}Y^{p-2}\right].   
\end{equation} %in $\mathrm{ind}_{KZ}^G P$.
  
We know $\theta X^{r-p-1}, \theta Y^{r-p-1}$ map to $0 \in J_1=W_1/W_0$
and $\theta X^{r-2p+1}Y^{p-2}$ maps to $X\in J_1=V_1$, by Lemma
\ref{generator} (ii). By the equations \eqref{eqn1}, \eqref{eqn2} and
\eqref{eqn3} above, $(T-a_p)f$ is integral and its image in
$\mathrm{ind}_{KZ}^G P$ lies in $\mathrm{ind}_{KZ}^G W_1$ and maps to
$[g_{2,0}^0,\,-X]\in\mathrm{ind}_{KZ}^G J_1$, which generates
$\mathrm{ind}_{KZ}^G J_1$ over $G$. Hence $F_1=0$. 

\item[(ii)]
 Consider the function $f=f_2+f_1+f_0\in\mathrm{ind}_{KZ}^G\mathrm{Sym}^r\bar{\Q}_p^2$ given by 
\begin{eqnarray*}
   f_2 &=&\underset{\lambda,\mu\in\F_p}\sum\left[g_{2,p[\mu]+[\lambda]}^0,\, \dfrac{Y^r-X^{r-p}Y^p}{a_p}\right],  \\
   f_1 &=&\underset{\lambda\in\F_p}\sum\left[g_{1,[\lambda]}^0,\,-\dfrac{1}{p}\cdot(X^{r-1}Y-X^{r-p}Y^p)+\dfrac{p-1}{a_p^2}\cdot
   \underset{\substack{1<j<r\\j\equiv 1\mod (p-1)}}\sum \alpha_j X^{r-j}Y^j\right],\\
   f_0 &=& \left[\mathrm{Id},\dfrac{1-p}{a_p}\cdot(X^{r-1}Y-X^{r-p}Y^p)\right],
\end{eqnarray*}
 where the $\alpha_j$ are the integers from Lemma \ref{comb6}. We check
that $T^+f_2$ and $T^-f_0$ die mod $p$, since $p\mid r-p$ and $r\geq 2p$.
Next we compute that modulo $p$ and $X_r$, $T^+f_0-a_pf_1+T^-f_2$ is
congruent to $$
\underset{\lambda\in\F_p}\sum\left[g_{1,[\lambda]}^0,\,\dfrac{(p-1)(r-p)}{a_p}X^{r-1}Y
+\underset{\substack{1<j<r\\ j\equiv 1\mod
(p-1)}}\sum\dfrac{p-1}{a_p}\left(\binom{r}{j}-\alpha_j\right)\cdot
X^{r-j}Y^j +\dfrac{a_p}{p}\cdot\theta X^{r-p-1}\right].$$ Rearranging the
terms, we get $$T^+f_0-a_pf_1+T^-f_2\equiv
\underset{\lambda\in\F_p}\sum\left[g_{1,[\lambda]}^0,\,\left(\dfrac{a_p}{p}-\dfrac{r-p}{a_p}\right)\theta
X^{r-p-1}-F(X,Y)\right],$$ where $$F(X,Y)=\dfrac{r-p}{a_p}\cdot
X^{r-p}Y^p+\underset{\substack{1<j<r\\ j\equiv 1\mod
(p-1)}}\sum\dfrac{1}{a_p}\left(\binom{r}{j}-\alpha_j\right)\cdot
X^{r-j}Y^j$$ can be checked to be in $V_r^{**}$, using \cite[Lem.
2.3]{Bhattacharya-Ghate}. Thus in $\mathrm{ind}_{KZ}^G P$, we have
 
\begin{equation} 
\label{eqn1a} 
T^+f_0-a_pf_1+T^-f_2\equiv
 \underset{\lambda\in\F_p}\sum\left[g_{1,[\lambda]}^0,\,\left(\dfrac{a_p}{p}-\dfrac{r-p}{a_p}\right)\theta
X^{r-p-1}\right].  
\end{equation}
 
We check that
$T^+f_1-a_pf_2\equiv\underset{\mu,\lambda\in\F_p}\sum\left[g_{2,p[\mu]+[\lambda]}^0,\,-X^{r-1}Y-(Y^r-Y^pX^{r-p})\right]\mod
p$, so \begin{eqnarray} \label{eqn2a} T^+f_1-a_pf_2\equiv
\underset{\mu,\lambda\in\F_p}\sum\left[g_{2,p[\mu]+[\lambda]}^0,\,-\theta
X^{r-p-1}\right]\mod X_r.  \end{eqnarray}
 
Finally, we compute \begin{equation} \label{eqn3a} T^-f_1-a_pf_0\equiv
      -a_pf_0\equiv\left[\mathrm{Id}, -\theta X^{r-p-1}\right]\mod p.
\end{equation} By Lemma \ref{generator},  the image of $\theta X^{r-p-1}$
(in $ P$) lands in $W_0$ and maps to $X^{p-2}\in J_0$.
  %in $W_1\cong V_r^*/V_r^{**}$. 
  Using the formula for the $T$ operator, and the equations
\eqref{eqn1a}, \eqref{eqn2a}, \eqref{eqn3a}, we obtain that $(T-a_p)f$ is
integral, and in fact it has the same image in $\mathrm{ind}_{KZ}^G P$ as that of $-(T^2-cT+1)[\mathrm{Id},
X^{p-2}]\in \mathrm{ind}_{KZ}^G J_0$, where
$c=\overline{\dfrac{a_p}{p}-\dfrac{r-p}{a_p}}\in\bar\F_p$.
\end{enumerate} 
\end{proof}

\begin{proof}[Proof of Theorem \ref{a=1}]
 Part (i) follows from Proposition \ref{F_2=0} (i) and \cite[Prop. 3.3]{BG09}, at least if $p > 3$.
 If $p\mid r$, then 
  $\bar\Theta$
 is a quotient of $\dfrac{\mathrm{ind}_{KZ}^G V_{p-2}\otimes D}{T^2-cT+1}$, by Propositions \ref{F_2=0} (ii) and  \ref{elimination and}. 
 Now  part (ii) follows by the mod $p$ semisimple Local Langlands Correspondence.
\end{proof}

%%%%%

\section{The case $a=2$}
\label{casea2}
This section deals with the remaining case $r\equiv 2\mod (p-1)$, for $r\geq 2p$ and $p>3$. 
This case is special in the sense that there are further complications with the computation. 
%As in earlier cases ($a \neq 2$), the reduction
%$\bar{V}^{ss}_{k,a_p}$  depends on the congruence class of $r$ mod $p$.
%When $a = 2$, 
We shall show that the reduction $\bar{V}^{ss}_{k,a_p}$ depends on the size of the 
valuation of the quantity $$a_p^2-\binom{r}{2}p^2,$$ which is at least $2$, since the slope $v(a_p)=1$.
In fact, we show the reduction is determined according to the following trichotomy: whether
the valuation of this quantity is equal to, greater than or less than the integer
$2 + v(r-2)$.
%Note that
%$v(a_p^2-\binom{r}{2}p^2) = 2$ if and only if the unit
%$(\frac{a_p}{p})^2$ and ${r \choose 2}$ have distinct reductions in $\bar\F_p$.
%We prove:

\begin{thm}
\label{thma2}

Let $p> 3$, $r\geq 2p$, with $r\equiv 2\mod (p-1)$. Assume that $v(a_p)=1$.

\begin{enumerate}[(i)]

\item
If $v(a_p^2-\binom{r}{2}p^2) = 2+v(r-2)$, then
$$
\bar{V}_{k,a_p}^{ss}\cong\mu_{\lambda} \cdot \omega^2\oplus \mu_{\lambda^{-1}} \cdot \omega
$$
where
$\lambda=\overline{\dfrac{2\left(a_p^2-\binom{r}{2}p^2\right)}{(2-r)pa_p}}\in\bar\F_p^\times$.

\item
If $v(a_p^2-\binom{r}{2}p^2) > 2+v(r-2)$, then
$\bar{V}_{k,a_p}\cong\mathrm{ind}(\omega_2^{p+2})$.

\item
If $v(a_p^2-\binom{r}{2}p^2) < 2+v(r-2)$, then
$\bar{V}_{k,a_p}\cong\mathrm{ind}(\omega_2^{3})$.

\end{enumerate}
\end{thm}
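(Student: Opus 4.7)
The plan is to follow the template of Sections~\ref{case3ap-1} and~\ref{casea=1}: study $\bar\Theta_{k,a_p}$ via its filtration with graded pieces $F_0,F_1,F_2$ arising as quotients of $\mathrm{ind}_{KZ}^G J_i$, where for $a=2$ and $r>2p$ Proposition~\ref{P} gives $J_0 = V_{p-1}\otimes D$, $J_1 = V_0\otimes D$, $J_2 = V_{p-3}\otimes D^2$. Unwinding the mod $p$ Local Langlands correspondence of Section~\ref{subsectionLLC}, the three outcomes claimed in the theorem must come respectively from: (i) elimination of $J_0$, together with a pairing $J_1 \leftrightarrow \pi(0,\lambda,\omega)$ and $J_2 \leftrightarrow \pi(p-3,\lambda^{-1},\omega^2)$ on the other two factors; (ii) elimination of $J_1$ and $J_2$, leaving $F_0$ a quotient of $\pi(p-1,0,\omega)$; (iii) elimination of $J_0$ and $J_1$, leaving $F_2$ a quotient of $\pi(p-3,0,\omega^2)$. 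It then suffices to exhibit, in each situation, explicit test functions $f \in \mathrm{ind}_{KZ}^G \Sym^r\bar\Q_p^2$ such that $(T-a_p)f$ is integral and reduces mod $p$ to a generator of the appropriate submodule of $\mathrm{ind}_{KZ}^G J_i$, or to an element of the form $(T-\lambda)[g,v]$ in $\mathrm{ind}_{KZ}^G J_i$.

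First I would eliminate $J_1$ uniformly in all three cases, using a function supported mod $KZ$ on a ball of radius at most $2$ in the tree, in the spirit of Proposition~\ref{Theta}(i). Here the sums over $j\equiv 1\mod(p-1)$ that appear when applying $T^+$ at level one are controlled by Proposition~\ref{congrbinom12} rather than by Lemmas~\ref{comb1}-\ref{comb3}, and the calculation should give $F_1 = 0$. After this reduction the trichotomy becomes entirely a question about $F_0$ and $F_2$.

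For the main step, I would look for a function of the form $f = \sum_{m=0}^{v(r-2)+1} f_m$ with each $f_m$ supported on the sphere $\{g_{m,\mu}^0 : \mu \in I_m\}$ of the tree, with coefficients built recursively from the integers $\alpha_j$ of Lemma~\ref{comb2} scaled by suitable powers of $p/a_p$. Integrality of $(T-a_p)f$ and the exact shape of its reduction modulo $p$ require congruences of binomial-coefficient sums modulo $p^{v(r-2)+i}$ for $i=0,1,2$, and these are precisely what Proposition~\ref{congrbinom2} provides in the case $r\equiv 2\mod (p-1)$ with arbitrary $t=v(r-2)$; the coarser Lemmas~\ref{comb1}-\ref{comb3} would be insufficient. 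Pushing the computation through, the coefficient of the surviving generator in $\mathrm{ind}_{KZ}^G J_2$ (or in $\mathrm{ind}_{KZ}^G J_0$, via a companion function) should come out to a unit multiple of
$$
\dfrac{2\bigl(a_p^2 - \binom{r}{2}p^2\bigr)}{(2-r)\,p\,a_p}.
$$
Comparing $v(a_p^2 - \binom{r}{2}p^2)$ with $2+v(r-2)$ then splits into the three cases: equality forces this coefficient to be a unit and produces the pairing of $J_1$ with $J_2$ via eigenvalue $\lambda$; a strict inequality in one direction forces the coefficient to vanish mod $p$, eliminating one of $J_0$, $J_2$ and leaving the other as the sole survivor; the opposite inequality is handled symmetrically by the companion function and eliminates the complementary factor. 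In the two irreducible cases, irreducibility of $\bar V_{k,a_p}$ then follows from \cite[Prop.~3.3]{BG09}; in the reducible case, the semisimplification is read off directly from the LL correspondence.

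The hardest part will be the construction of $f$ and the verification of integrality of $(T-a_p)f$ at each of the $v(r-2)+1$ levels of the tree: the cancellations are telescoping but must hold exactly to a uniform power of $p$ relative to $v(r-2)$, which is precisely why the sharp congruences of Proposition~\ref{congrbinom2} are indispensable, and why a function with support bounded independently of $r$ cannot suffice. A secondary item to handle is the degenerate boundary $r = 2p$, where $J_1 = 0$ and only case (i) of the trichotomy occurs, since then $v(r-2)=0$ and $v(a_p^2-\binom{r}{2}p^2)=2$ automatically; this should fall out of the same construction with the top-level piece $f_{v(r-2)+1}=f_1$ unchanged.
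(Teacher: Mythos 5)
Your overall strategy --- study the factors $F_0$, $F_1$, $F_2$ via explicit test functions supported on balls of radius growing with $v(r-2)$, control the binomial sums with Propositions~\ref{congrbinom2}--\ref{congrbinom12} rather than Lemmas~\ref{comb1}--\ref{comb3}, and then invoke the mod $p$ LLC --- is the right one, and you correctly identify that the trichotomy lives in the comparison of $v(a_p^2-\binom{r}{2}p^2)$ with $2+v(r-2)$ and requires a function in the spirit of the paper's $\chi$ (Lemma~\ref{defauxfunc}), whose support grows with $v(r-2)$.

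However, you have the roles of $J_0$ and $J_1$ reversed, and this is a genuine gap rather than a cosmetic one. In case (i) you propose to \emph{eliminate} $J_0$ and pair $J_1$ with $\pi(0,\lambda,\omega)$. What actually happens is the reverse: $J_0=V_{p-1}\otimes D$ is the factor that survives, with $F_0$ a quotient of $\pi(p-1,\lambda,\omega)$ (Proposition~\ref{F0a2}), and it is $J_1$ that is (mostly) eliminated. If you attempt to exhibit an element of $\mathrm{ind}_{KZ}^G J_0$ generating the whole thing, you will find that $(T-a_p)f$ reduces to a $(T-\lambda)$-type element, not a generator --- $F_0$ is provably nonzero in case (i). Moreover, the eigenvalue you would see on the $J_1$-side is not $\lambda$ but $\nu=\overline{rp/2a_p}$ (Proposition~\ref{F1a2}~(ii)), which equals $\lambda$ only in the degenerate situation $\lambda=\pm1$. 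The ``uniform elimination of $J_1$'' in your first reduction step also fails: one can show $F_1=0$ by explicit computation only when $p\mid r$. When $p\nmid r$, only that $F_1$ is a \emph{proper} quotient of $\pi(0,\nu,\omega)$ is accessible by computation, and to force $F_1=0$ one needs the constraint that $\bar\Theta$ lies in the image of the mod $p$ LLC --- and even this fails when $\lambda=\nu=\pm1$, where $F_1\neq 0$ is possible (Remark~\ref{remark a=2}). The correct logic in the conclusion is instead: each $F_i$ is shown to be a quotient of some non-supersingular representation, so no supersingular factor can appear, and then the LLC image constraint pins down $\bar\Theta^{ss}$ uniquely; one does not need (and cannot get) vanishing of one specific $F_i$ in all cases. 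Finally, your remark about the boundary $r=2p$ is wrong: there $v(r-2)=0$ but $v(a_p^2-\binom{r}{2}p^2)\geq 2$ need not equal $2$ (it is $>2$ exactly when $\overline{(a_p/p)^2}=\overline{\binom{2p}{2}}$), so case (ii) of the trichotomy can occur at $r=2p$.
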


\begin{remark}
When $a = 2$,
there are {\it a priori} three possibilities for $\bar{V}_{k,a_p}^{ss}$, 
if one counts the reducible cases as one possibility.
Remarkably, the theorem shows that all three possibilities do indeed occur.
The first case in the theorem is  the generic case: for most $r$, in fact 
for $r \not\equiv 2 \mod p$, we have $v(r-2) = 0$;
furthermore the condition $v(a_p^2-\binom{r}{2}p^2) = 2$ holds
whenever the unit
$(\frac{a_p}{p})^2$ and the binomial coefficient ${r \choose 2}$ have distinct 
reductions in $\bar\F_p$.
\end{remark}

\subsection{Preliminaries}
\label{finfinity}

Let us introduce some convenient notation. For $x \in \Q$, $x$ non-negative, write 
$O(p^x)$ for an integral function multiplied by a constant with valuation  greater 
than or equal to $x$. 
Throughout this section let us denote  
$$c := \frac{a_p^2-\binom{r}{2}p^2}{pa_p},$$ 
and  set $\tau = v(c)$, $t = v(2-r)$
and $t_0 = \min(\tau,t)$.

\begin{lemma}
\label{defauxfunc}
Let $p> 3$, and let $r\geq 2p$, with $r\equiv 2\mod (p-1)$. %Assume that $v(a_p)=1$. 
If
$$
\chi = \sum_{\ell = 0}^{t} % t+1} 
                    \> a_p^{\ell} \> [g^0_{\ell,0}, Y^r-X^{r-2}Y^2],
$$
then 
\begin{eqnarray*}
  (T-a_p)\chi = [\alpha, %\matr 100p,
                                                Y^r] + a_p[1,X^{r-2}Y^2] + p^{t+1} h_\chi +
O(p^{t_0+2}),
\end{eqnarray*}
where $h_\chi$ is an integral linear combination of terms of the form
$[g, X^r]$ and $[g,X^{r-1}Y]$, for $g \in G$.
%In particular, 
%then
%$$(T-a_p)\chi = [\alpha, %\matr 100p,
%                                  Y^r] + a_p[1,X^{r-2}Y^2] + O(p^{t_0+1}).$$
\end{lemma}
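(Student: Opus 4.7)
The plan is to decompose $(T-a_p)\chi = T^+\chi + T^-\chi - a_p\chi$ using the explicit formulas from Section~\ref{Hecke}, identify the two stated leading terms, and bound everything else by either $p^{t+1} h_\chi$ or $O(p^{t_0+2})$. The special feature of $\chi$ is that every summand $[g^0_{\ell,0}, v]$ has $\mu = 0$, which makes $T^-$ act very simply and permits a clean telescoping.

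For $T^-\chi$: since $\mu = 0$, only the $i = j$ terms in the $T^-$ formula survive, giving $T^-[g^0_{\ell,0}, v] = [g^0_{\ell-1,0}, Y^r - p^{r-2}X^{r-2}Y^2]$ for $\ell \geq 1$ and $[\alpha, Y^r - p^{r-2}X^{r-2}Y^2]$ for $\ell = 0$. Because $r \geq 2p$, the $p^{r-2}$ tail is absorbed into $O(p^{t_0+2})$. A straightforward telescoping then yields
\[
T^-\chi - a_p\chi = [\alpha, Y^r] + \sum_{\ell=0}^t a_p^{\ell+1}[g^0_{\ell,0}, X^{r-2}Y^2] - a_p^{t+1}[g^0_{t,0}, Y^r] + O(p^{t_0+2}).
\]

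For $T^+\chi$, expand $P_\lambda(X,Y) := v(X, -\lambda X + pY)$ for $\lambda \in I_1$. The Teichmüller identities $\lambda^r = \lambda^2$ and $\lambda^{r-1} = \lambda$ (valid on all of $I_1$ because $r \equiv 2 \mod (p-1)$) kill the $X^r$ coefficient of $P_\lambda$, collapse the $X^{r-1}Y$ coefficient to $(2-r)\lambda p$, and make the $X^{r-2}Y^2$ coefficient equal $(\binom{r}{2}-1)p^2$ for $\lambda \neq 0$ and $-p^2$ for $\lambda = 0$. Lemma~\ref{congruences2} shows the remaining coefficients $\binom{r}{j}(-\lambda)^{r-j}p^j$ ($3 \leq j \leq r-1$) have valuation $\geq t+3$, and the $j=r$ coefficient is $p^r$; both lie in $O(p^{t_0+2})$.

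Assembly proceeds as follows. The $\lambda = 0$ piece of the $X^{r-2}Y^2$ contribution from $T^+\chi$ merges, after reindexing, with the telescoping sum from the $T^-$ step, producing the main term $a_p[1, X^{r-2}Y^2]$ plus an error of the shape $(a_p^2 - p^2)\sum_k a_p^{k-1}[g^0_{k,0}, X^{r-2}Y^2]$; writing $a_p^2 - p^2 = cpa_p + (\binom{r}{2}-1)p^2$, the two summands have valuations $\tau + 2$ and $\geq t+2$ respectively, both $\geq t_0+2$. The $\lambda \neq 0$ part of the $X^{r-2}Y^2$ contribution already has valuation $\geq t+2$. The $\ell = 0$ piece of the $X^{r-1}Y$ contribution is $(2-r)p\sum_{\lambda \in \F_p^\times}[\lambda][g^0_{1,[\lambda]}, X^{r-1}Y]$, which is exactly $p^{t+1}$ times an integral combination of $[g, X^{r-1}Y]$'s; the $\ell \geq 1$ pieces have valuation $\geq \ell+t+1 \geq t_0+2$. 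Finally, the stray $-a_p^{t+1}[g^0_{t,0}, Y^r]$ term is handled by the $KZ$-equivariance relation $[gk, v] = [g, kv]$: taking $w = \left(\begin{smallmatrix} 0 & 1 \\ 1 & 0 \end{smallmatrix}\right) \in K$, which sends $Y^r$ to $X^r$, gives $[g^0_{t,0}, Y^r] = [g^0_{t,0}\cdot w, X^r]$, and since $a_p^{t+1} = p^{t+1}(a_p/p)^{t+1}$ with $(a_p/p)^{t+1}$ a $p$-adic unit, this contributes a $p^{t+1}$-multiple of an integral $[g, X^r]$. These two kinds of terms constitute $h_\chi$.

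The main obstacle is the valuation bookkeeping: critically, $a_p^2 - p^2$ achieves the required valuation $\geq t_0+2$ only after the decomposition $a_p^2 - p^2 = cpa_p + (\binom{r}{2}-1)p^2$, in which the two summands exploit the bounds $\tau \geq t_0$ and $t \geq t_0$ separately. A secondary subtlety is the $KZ$-equivariance trick needed to reinterpret the unavoidable stray $Y^r$ term as an $X^r$ term of the allowed form.
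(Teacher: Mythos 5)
Your proof is correct and follows essentially the same route as the paper. The paper organizes the computation by grouping $T^-\chi_{\ell+1} - a_p\chi_\ell + T^+\chi_{\ell-1}$ for each $\ell$, while you first telescope $T^-\chi - a_p\chi$ globally and then analyze $T^+\chi$ and merge; these are the same calculation bookkept differently, and both rest on the identical ingredients: the $j=0$ coefficient vanishes by $\lambda^r=\lambda^2$, the $j=1$ coefficient is $(2-r)p\lambda$ giving the $p^{t+1}h_\chi$ piece, Lemma~\ref{congruences2} kills $j\geq 3$, and the decomposition $a_p^2-p^2 = pa_pc + p^2(\binom{r}{2}-1)$ yields the valuation bound $\geq t_0+2$. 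One small point you handle more explicitly than the paper: the stray $-a_p^{t+1}[g^0_{t,0},Y^r]$ term is rewritten as a $p^{t+1}$-multiple of $[g^0_{t,0}w, X^r]$ via $KZ$-equivariance, whereas the paper leaves the identification $[g,Y^r]=[gw,X^r]$ implicit in passing from the proof (which produces $[g,Y^r]$ terms) to the statement (which asserts $[g,X^r]$ terms).
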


\begin{proof} 
Write $\chi=\sum_{l=0}^t\chi_l$, where $\chi_l:=a_p^l\,[g_{l,0}^0, Y^r-X^{r-2}Y^2]$, for $l \geq 0$.
By the formula for the Hecke operator,
$$T^-\chi_0=[\alpha, Y^r]+O(p^{t_0+2}) \quad\text{and} \quad -a_p\chi_0+T^-\chi_1=a_p[1, X^{r-2}Y^2]+O(p^{t_0+2}),$$
as $r-2>t+2\geq t_0+2$. Similarly, for $1\leq l\leq t-1$, one computes 
$$T^-\chi_{l+1}-a_p\chi_l+T^+\chi_{l-1}= p^{t+1}h_{l}+O(p^{t_0+2}),$$
where each $h_{l}$ is an integral linear combination of terms of the form 
$[g, X^{r-1}Y]$. 
Here we use the facts that for $j\geq 3$, $v(p^j{r\choose j})\geq t+3$, by Lemma~\ref{congruences2}, and that $v({r\choose 2}-1)\geq t$, therefore  $v(p^2({r\choose 2}-1))\geq t+2$ and
$v(a_p^2-p^2)=v(pa_pc+p^2({r\choose 2}-1))\geq \min(\tau+2, t+2)=t_0+2$.
Similarly for $l=t$, we compute
$$-a_p\chi_t+T^+\chi_{t-1}=p^{t+1}h_{t}+O(p^{t_0+2}),$$
where $h_{t}$ is an integral linear combination of terms of the form 
$[g, X^{r-1}Y]$ and $[g, Y^r]$. 
Finally %we check that 
$$T^+\chi_t=p^{t+1}h_{t+1}+O(p^{t_0+2}),$$
where $h_{t+1}$ is an integral linear combination of terms of the form $[g, X^{r-1}Y]$.
Combining all the equations above, we obtain the lemma, with 
$h_\chi=\sum_{l=1}^{t+1} h_l$.
\end{proof}

For $g \in G$, set $$\chi_g := g\chi = \sum_{\ell = 0}^{t} % t+1} 
                          \> a_p^{\ell} \> [gg^0_{\ell,0}, Y^r-X^{r-2}Y^2]. $$ 

By Proposition \ref{P} (ii) (b), the JH factors of $P$ are given by
$J_0=V_{p-1}\otimes D$ and $J_2=V_{p-3}\otimes D^2$, and in addition,
$J_1=V_0\otimes D$ when $r \gneq 2p$ (we set $J_1 = 0$ when $r = 2p$).
Recall that the factors $F_i$ of $\bar\Theta$ are quotients of
$\mathrm{ind}_{KZ}^G J_i$, for $i=0$, $1$, $2$, as explained in Diagrams
\eqref{gendiagram1} and \eqref{gendiagram2}. We now study each of the
factors $F_i$ in turn, starting with $F_2$.

\subsection{Study of $F_2$}

The following proposition tells us about  $F_2$, when $t \leq \tau$.

\begin{prop} 
\label{F2a2}
Let $p > 3$, $r \geq 2p$, with $r \equiv 2 \mod (p-1)$ and assume $v(a_p) = 1$.
\begin{enumerate} 
\item [(i)] If $\tau > t$, %$v_p(a_p^2-\binom{r}{2}p^2) > 2 + v_p(r-2)$, 
                then $F_2 = 0$.
\item [(ii)] If $\tau = t$, %$v_p(a_p^2-\binom{r}{2}p^2) = 2 + v_p(r-2)$, 
                then $F_2$ is a quotient
of %$I(V_{p-3}(2))/(T-\lambda^{-1})$
 %${\frac{\mathrm{ind}_{KZ}^G (V_{p-3}\otimes D^2)}{(T-\lambda^{-1}}$ 
 $\pi(p-3,\lambda^{-1},\omega^2)$, where $\lambda = 
                     \overline{\dfrac{2(a_p^2-\binom{r}{2}p^2)}{pa_p(2-r)}} \in \bar\F_p^\times$.
\end{enumerate}
\end{prop}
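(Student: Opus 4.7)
The main tool is Lemma~\ref{defauxfunc} together with its $G$-translates $\chi_g := g\chi$. By Lemma~\ref{generator} in the case $a=2$, the quotient $\mathrm{ind}_{KZ}^G V_r \twoheadrightarrow \mathrm{ind}_{KZ}^G J_2$ kills $[g, X^r]$, $[g, Y^r]$ and $[g, X^{r-1}Y]$ for every $g$ (since $X^r, Y^r \in X_r$ and $X^{r-1}Y \mapsto 0 \in J_2$), while $[g, X^{r-2}Y^2]$ maps to $[g, X^{p-3}]$. Applying this to the formula of Lemma~\ref{defauxfunc}, the image of $(T-a_p)\chi$ in $\mathrm{ind}_{KZ}^G J_2$ is $a_p\,[\mathrm{Id}, X^{p-3}]$ plus an integral error of valuation $\geq t_0+2$. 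Since $v(a_p)=1$, its mod-$p$ reduction vanishes, so one must scale by $1/a_p$ to extract a useful class.

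The obstruction is that $\chi/a_p$, and in particular the piece $[\alpha, Y^r]/a_p$, is not integral. The remedy is to add a correction $\psi$ supported on the tree-ball of radius $t+1$ (as hinted in the Introduction) so that $(T-a_p)(\chi/a_p + \psi)$ is integral. A natural building block is $-[\mathrm{Id}, Y^r]/a_p$, which cancels $[\alpha, Y^r]/a_p$ via $T^-$ (up to a $p^{r-2}$ error); the resulting non-integral $T^+$-pieces at the neighbours $g^0_{1,\lambda}$ take the form $[g^0_{1,\lambda}, [\lambda]^2 X^r/a_p]$ for $\lambda \in \F_p^\times$ (using $[\lambda]^r \equiv [\lambda]^2 \bmod p$ as $r \equiv 2 \bmod (p-1)$), and must be cancelled in turn using further translates of $\chi$ and the exact $p^{t+1}$-order binomial identities of Propositions~\ref{congrbinom2}, \ref{congrbinom1}, \ref{congrbinom12}. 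Once integrality is achieved, one computes the image of $(T-a_p)f$ in $\mathrm{ind}_{KZ}^G J_2$. In case~(i), where $\tau > t$, all remaining contributions either die in $J_2$ or have strictly positive valuation after division by $a_p$, leaving only $\overline{1}\cdot[\mathrm{Id}, X^{p-3}]$, which generates $\mathrm{ind}_{KZ}^G J_2$ as a $G$-module and forces $F_2 = 0$. In case~(ii), where $\tau = t$, the error contributes additionally a boundary term of exact valuation $0$ mod $p$ supported on the neighbour vertices; careful coefficient tracking using $v(c) = t$ shows that together with the central term it assembles into a scalar multiple of
$$
(T - \lambda^{-1})[\mathrm{Id}, X^{p-3}], \qquad \lambda \;=\; \overline{\dfrac{2\,(a_p^2 - \binom{r}{2}p^2)}{(2-r)\,p\,a_p}},
$$
so that $F_2$ is a quotient of $\pi(p-3, \lambda^{-1}, \omega^2)$ by the mod-$p$ Langlands recipe.

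The principal difficulty is the construction of $\psi$ and the verification of integrality: the non-integral boundary pieces must be cancelled by sums over $\lambda\in\F_p^\times$ using $(p-1)$-st root-of-unity identities combined with precise binomial congruences, all valid because the support radius $t+1$ matches the valuation $v(r-2)=t$. The secondary delicate point is the exact computation of the eigenvalue $\lambda$ in case~(ii), which requires isolating the contribution of order $p^{t+1}$ in the $O(p^{t+2})$-error and recognizing it as the claimed scalar via $c = (a_p^2 - \binom{r}{2}p^2)/(pa_p)$.
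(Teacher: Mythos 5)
Your plan correctly identifies the key ingredients — Lemma~\ref{defauxfunc} for the $\chi$-function, Lemma~\ref{generator}(iii) to see that $X^r$, $Y^r$, $X^{r-1}Y$ die in $J_2$ while $X^{r-2}Y^2\mapsto X^{p-3}$, and Propositions~\ref{congrbinom1}--\ref{congrbinom12} to control the binomial sums to order $p^{t+2}$. But the heart of the proposition is the explicit construction of the correcting function and the verification that the resulting class in $\mathrm{ind}_{KZ}^G J_2$ has the claimed form, and your proposal does not carry this out; ``careful coefficient tracking'' is exactly where the proof lives.

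There is also a concrete structural mismatch with the paper's construction that creates a genuine gap. You start from $\chi/a_p$, which immediately produces the term $[\alpha,Y^r]/a_p$. You kill it with $-[\mathrm{Id},Y^r]/a_p$, whose $T^+$ then spawns $-a_p^{-1}[\lambda]^2 X^r$ at each $g^0_{1,[\lambda]}$. You propose to cancel these by ``further translates of $\chi$,'' but you do not say which translates with which coefficients, nor why the resulting correction process terminates with an integral $(T-a_p)f$ and a clean image in $\mathrm{ind}_{KZ}^G J_2$. The paper avoids the $[\alpha,\cdot]$ boundary term altogether: it never divides $\chi$ by $a_p$, and it places the $\chi$-pieces at the $p$ neighbours $g^0_{1,[\lambda]}$ (not at $\mathrm{Id}$), so that the $T^-$-boundary of each piece lands at the origin rather than at $\alpha$; these are then matched against an explicit central auxiliary $f_0$ built from the binomial sum $\sum_j \binom{r}{j}X^{r-j}Y^j + \frac{p(r-2)}{2}X^{r-2}Y^2$, weighted by $\frac{p-1}{pa_p(2-r)}$. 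The outcome of that computation is the relation $\overline{\frac{a_p^2-\binom{r}{2}p^2}{pa_p(2-r)}}\sum_{\lambda\in\F_p}[g^0_{1,[\lambda]},X^{p-3}]-\tfrac12[1,X^{p-3}]$ in $\ker(\mathrm{ind}_{KZ}^G J_2\to F_2)$, which is not of the literal form $(T-\lambda^{-1})[\mathrm{Id},X^{p-3}]$ that you predicted (note the absence of an $[\alpha,\cdot]$ term). So even granting the rest, the target of your computation is mis-anticipated. You would need to either redo the construction along the paper's lines (shift the $\chi$'s to the neighbours and introduce a genuine $f_0$) or carry your recursive cancellation scheme to completion and identify the true shape of the resulting relation.
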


\begin{proof}
Assume $t \leq \tau$, so $t_0 = t$ in Lemma~\ref{defauxfunc}.
Let $f = f_0 + f_\infty  \in \mathrm{ind}_{KZ}^G \> \mathrm{Sym}^r \bar\Q_p^2$, with
\begin{eqnarray*} 
  f_0 &  = & %\frac{p-1}{pa_p(2-r)} \left[1,\sum_{0 < j < r \atop {j \equiv 2 \mod (p-1)}}\binom{r}{j}X^{r-j}Y^j\right] +\frac{1-p}{2a_p} [1,X^{r-2}Y^2] =  \\
        \frac{p-1}{pa_p(2-r)} \left[1, \sum_{0 < j < r \atop {j \equiv 2 \mod (p-1)}}\binom{r}{j}X^{r-j}Y^j + \frac{p(r-2)}{2}X^{r-2}Y^2\right], \\
  f_{\infty} &  = & \frac{1}{p(2-r)} \left(\sum_{\lambda\in\F_p^\times}\chi_{g^0_{1,[\lambda]}}
+ (1-p)\chi_{g^0_{1,0}}\right).
\end{eqnarray*}

We have  $T^-f_0 = O(p)$. Indeed
$$T^-f_0 = \frac{p-1}{pa_p(2-r)} \left[\alpha, % \matr 100p,
\sum_{0 < j < r \atop {j \equiv 2 \mod (p-1)}} \binom{r}{j}p^{r-j}X^{r-j}Y^j\right]  +
O(p),$$ and for all $j$ in the sum, we have $r-j \geq p-1 \geq 3$,
so this is a consequence of Lemma~\ref{congruences2}.
Also,
\begin{eqnarray*}
  T^+f_0  & = & \frac{p-1}{pa_p(2-r)} \Bigg ( \sum_{\lambda\in\F_p^\times} \left[ g_{1,[\lambda]}^0,\sum_{i = 0}^rp^i(-[\lambda])^{2-i} \left(
\sum_{0 < j < r \atop {j \equiv 2 \mod (p-1)}}  \binom{r}{j}\binom{j}{i} + \frac{p(r-2)}{2} \binom{2}{i} \right) X^{r-i}Y^i \right] \\
   &  & \qquad \qquad + \quad \left[g_{1,0}^0,\sum_{0 < j < r \atop {j \equiv 2 \mod (p-1)}}\binom{r}{j}p^jX^{r-j}Y^j +
\frac{p^3(r-2)}{2} X^{r-2}Y^2\right] \Bigg).
\end{eqnarray*}
%Note that $p(r-2)/2 = 0 \mod {p^{t+1}}$ 
By parts (1) and (2) of Proposition~\ref{congrbinom2}, we may 
write $\sum_j\binom{r}{j}+p(r-2)/2 = p^{t+2}a$, and $\sum_j j\binom{r}{j} + p(r-2) = p^{t+1}b$, for
some $a,b\in \Z_p$. Moreover, by part (3) and (4) of the same proposition, we have
$p^2 \sum_j \binom{j}{2} \binom{r}{j} + p^3(r-2)/2 = p^2 \binom{r}{2}/(1-p) \mod p^{t+3}$ and
$p^i\sum_j\binom{j}{i}\binom{r}{j} = 0 \mod {p^{t+3}}$, for $i \geq 3$. Here all four sums are over
$j$ with $0 < j < r$ and $j \equiv 2 \mod (p-1)$. Hence, 
we may write
\begin{eqnarray*}
T^+f_0 & = & \frac{p-1}{pa_p(2-r)} \Bigg(
\sum_{\lambda\in\F_p^\times}
\left[g_{1,[\lambda]}^0, [\lambda]^2 a p^{t+2} X^r - b [\lambda] p^{t+2} X^{r-1}Y + 
                   p^2\dfrac{\binom{r}{2}}{1-p} X^{r-2}Y^2\right] \\
  &   & \qquad \qquad + \quad \left[g_{1,0}^0,\binom{r}{2}p^2X^{r-2}Y^2 \right] \Bigg) + O(p),
\end{eqnarray*}
by Lemma~\ref{congruences2}.

On the other hand, a simple computation shows that
\begin{eqnarray*}
\frac{1}{p(2-r)} \left( \sum_{\lambda \in \F_p^\times} [g^0_{1,[\lambda]} \alpha, Y^r]  +  (1-p) [g^0_{1,0} \alpha, Y^r] \right) = \frac{p-1}{p(2-r)} \left[ 1, \sum_{0 < j < r \atop {j \equiv 2 \mod (p-1)}} \binom{r}{j} X^{r-j} Y^j \right],
\end{eqnarray*}
so, by Lemma~\ref{defauxfunc}, we have
\begin{eqnarray*}
(T-a_p)f_ {\infty} & = & 
\frac{p-1}{p(2-r)} \left[1, \sum_{0 < j < r \atop {j \equiv 2 \mod (p-1)}} \binom{r}{j} X^{r-j} Y^j\right]  \\
 & + & 
\frac{a_p}{p(2-r)}\sum_{\lambda\in\F_p^\times}[g^0_{1,[\lambda]},X^{r-2}Y^2]
+
\frac{a_p(1-p)}{p(2-r)}[g^0_{1,0},X^{r-2}Y^2] + h
+ O(p),
\end{eqnarray*}
where $h$ is an integral linear combination of terms of the form $[g, X^r]$ and
$[g,X^{r-1}Y]$, for $g \in G$.

Putting everything together we obtain:
\begin{eqnarray*}
(T-a_p)f = (T-a_p)(f_0+f_\infty) &  = &  
\frac{a_p^2-\binom{r}{2}p^2}{pa_p(2-r)}\sum_{\lambda\in
\F_p}[g_{1,[\lambda]}^0,X^{r-2}Y^2]
-\frac{1}{2}[1,X^{r-2}Y^2]
+ h' + O(p),
\end{eqnarray*}
where $h'$ is an integral linear combination of elements of the form
$[g,X^r]$ and $[g,X^{r-1}Y]$, for $g \in G$.

By  Lemma~\ref{generator} (iii), for $a=2$, we see $X^{r-1}Y$ and $X^r$ die in $J_2$, and $X^{r-2}Y^2$ maps to $X^{p-3} \in J_2$. Thus the image of $(T-a_p)f$ in $\mathrm{ind}_{KZ}^G J_2$ 
is
$$
\overline{\frac{a_p^2-\binom{r}{2}p^2}{pa_p(2-r)}} \sum_{\lambda\in
\F_p}[g_{1,[\lambda]}^0,X^{p-3}]
-\frac{1}{2}[1,X^{p-3}],
$$
which immediately proves both parts of the proposition.
\end{proof}

\subsection{Study of $F_1$}

Before studying the structure of $F_1$, let us first prove the following useful lemma.

\begin{lemma}
\label{goodcasenew}
 Let $p\geq 3$, $r\geq 2p$ and  $r\equiv 2\mod (p-1)$. 
 \begin{enumerate}
  \item[(i)] The image of $X^{r-1}Y$  in $P$ lands in $W_1$ %\subset P$, 
   and maps to $-\dfrac{r}{2}$, under the map $W_1\twoheadrightarrow J_1$. %=V_0\otimes D$.
  \item[(ii)] If $p\mid r$, then $X^{r-1}Y\equiv\theta X^{r-p-1}\mod X_r+V_r^{**}$.
  \end{enumerate}
\end{lemma}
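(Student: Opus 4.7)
The plan is to start, for both parts, from the identity in $V_r$ over $\bar\F_p$
\[
\sum_{\substack{1 \le j \le r-1 \\ j \equiv 1 \mod (p-1)}} \binom{r}{j}\, X^{r-j}Y^j \equiv 0 \pmod{X_r},
\]
obtained by expanding $\sum_{k \in \F_p^\times} k^{p-2}(kX+Y)^r \in X_r$ and using the standard character-sum vanishing $\sum_{k\in\F_p^\times} k^m \equiv 0 \pmod p$ unless $(p-1) \mid m$, together with $r \equiv 2 \mod (p-1)$.

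For part (i), I iterate the elementary identity $X^a Y^b = X^{a+p-1}Y^{b-p+1} - \theta\, X^{a-1}Y^{b-p}$ (valid for $b \ge p$) to rewrite each $X^{r-j}Y^j = X^{r-1}Y - \theta\, T_j$ with $T_j := \sum_{u=0}^{m_j - 1} X^{r-p-1-u(p-1)}Y^{u(p-1)}$ and $m_j := (j-1)/(p-1)$. Substituting gives $S\, X^{r-1}Y \equiv \theta\, R \pmod{X_r}$, where $R := \sum_j \binom{r}{j}\, T_j$ and $S := \sum_j \binom{r}{j} \equiv 2 \pmod p$ by Proposition~\ref{congrbinom12}(1); hence $X^{r-1}Y \in V_r^* + X_r$, its class in $W_1$ equals that of $\theta\, R/2$, and its image in $J_1 = V_0 \otimes D$ is $\pi(R)/2$, where $\pi \colon V_{r-p-1} \to V_0 \otimes D^{p-1}$ is the $\Gamma$-invariant projection from the splitting of Proposition~\ref{es1}(ii), normalised by $\pi(X^{r-2p}Y^{p-1}) = 1$ via Lemma~\ref{generator}(ii). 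The $\Gamma$-invariance of $\pi$ together with the congruence $\sum_{k=1}^{m-1} \binom{m(p-1)}{k(p-1)} \equiv 0 \pmod p$ for all $m \ge 2$ (a roots-of-unity computation using $(1+\zeta)^{p-1} = 1$ for $\zeta \in \mu_{p-1}\setminus\{-1\}$) will show that $\pi(X^{i(p-1)}Y^{(n-i)(p-1)}) = 1$ for $0 < i < n := (r-p-1)/(p-1)$ and $0$ otherwise. Unwinding the double sum, $\pi(R)$ collapses to $\sum_{k=0}^{n+1}(k-1)B_k$ with $B_k := \binom{r}{1+(p-1)k}$, and Proposition~\ref{congrbinom12}(1),(2) gives $\sum_k B_k \equiv 2$ and $\sum_k k B_k \equiv n$ modulo $p$, so $\pi(R) \equiv n - 2 \equiv -r \pmod p$, giving the image $-r/2$.

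For part (ii), I rearrange the same character-sum identity as
\[
2X^{r-1}Y + rXY^{r-1} \equiv (2-r)\,\theta\, X^{r-p-1} + F'(X,Y) \pmod{X_r + pV_r},
\]
where $F'(X,Y) := (2-r)X^{r-p}Y^p - \sum_{\substack{1 < j < r-1 \\ j \equiv 1 \mod (p-1)}} \binom{r}{j}\, X^{r-j}Y^j$ has every monomial of $X$- and $Y$-degree at least $p$. Membership of $F'$ in $V_r^{**}$ reduces to the order-two vanishing along the lines $Y = \zeta X$, $\zeta \in \mu_{p-1}$, equivalently to $\sum_j c_j \equiv 0 \equiv \sum_j j c_j \pmod p$ on the coefficients $c_j$ of $F'$; by Proposition~\ref{congrbinom12} these evaluate modulo $p$ to $r$ and $r(1-r)$ respectively, both vanishing when $p \mid r$. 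Reducing the identity modulo $X_r + V_r^{**}$ under this hypothesis, and using $rXY^{r-1} \equiv 0 \pmod p$ and $(2-r)/2 \equiv 1 \pmod p$, yields $X^{r-1}Y \equiv \theta\, X^{r-p-1} \pmod{X_r + V_r^{**}}$.

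The principal obstacle is the evaluation of $\pi$ on monomials in part (i): $\Gamma$-invariance alone only pins $\pi$ down up to a scalar, and showing that it takes the uniform value $1$ on $X^{i(p-1)}Y^{(n-i)(p-1)}$ for $0 < i < n$ requires the binomial congruence mentioned above, while the telescoping of $\pi(R)$ to $-r \bmod p$ depends on the fine-grained mod-$p^{t+1}$ estimates recorded in Proposition~\ref{congrbinom12}.
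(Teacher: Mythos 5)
Part (ii) of your proposal is essentially the paper's own proof: the paper works with the same character-sum element of $X_r$ (its $F(X,Y) = \sum_{k\in\F_p}k^{p-2}(kX+Y)^r$), uses $p\mid r$ to discard the $j=1$ and $j = r-1$ terms, and checks that what remains (the paper's $H$, your $F'$ once $p\mid r$) lies in $V_r^{**}$ by exactly the two coefficient congruences you describe, which is the criterion of \cite[Lem.\ 2.3]{Bhattacharya-Ghate}. One small slip: $\sum_j j c_j \equiv r(r-1)$ mod $p$, not $r(1-r)$, though both vanish when $p\mid r$ so the conclusion stands.

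For part (i), you take a genuinely different and considerably longer route, and it has a real gap. The paper instead verifies, again by the same two coefficient congruences, the one explicit identity
\[
X^{r-1}Y - XY^{r-1} \equiv \theta\Bigl(X^{r-p-1} + \tfrac{r-2p}{p-1}\,X^{r-2p}Y^{p-1} + Y^{r-p-1}\Bigr) \pmod{V_r^{**}},
\]
so the left side maps to $\tfrac{r-2p}{p-1}\equiv -r$ in $J_1$ (the $\theta X^{r-p-1}$, $\theta Y^{r-p-1}$ terms die there, while $\theta X^{r-2p}Y^{p-1}\mapsto 1$ by Lemma~\ref{generator}(ii)); then $\left(\begin{smallmatrix}0&1\\1&0\end{smallmatrix}\right)$ acts on $J_1 = V_0\otimes D$ by $-1$, so $X^{r-1}Y$ and $XY^{r-1}$ map to $\mp r/2$. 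This sidesteps any monomial-by-monomial evaluation of $\pi$, which is exactly where your write-up is incomplete: you flag the computation of $\pi$ on the $(p-1)$-power monomials as the principal obstacle and only sketch a route via $\Gamma$-invariance and a binomial congruence, but that route seems to stall, since summing the unipotent action over $\F_p$ produces relations whose own coefficients are killed by your congruence, yielding $0=0$. What actually works cheaply is that $\pi$ kills $V_{r-p-1}^*$: applying $\pi$ to $\theta\,X^{i(p-1)-1}Y^{(n-1-i)(p-1)-1}$ for $1 \le i \le n-2$ gives $\pi_{i+1} = \pi_i$ directly, with the normalization from Lemma~\ref{generator}(ii). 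There are also two compensating off-by-one slips: $\sum_k k B_k \equiv n+1 \equiv 2-r$, not $n$, modulo $p$, and your $n = (r-p-1)/(p-1)$ satisfies $n\equiv 1-r$, not $2-r$; these cancel so the final $\pi(R)\equiv -r$ is correct, but the intermediate equalities as written do not hold.
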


\begin{proof}
   By Lemma \ref{generator} (iii), $X^{r-1}Y$ maps to $0 \in J_2$, so it lands in $W_1$. The polynomial 
    $X^{r-1}Y-XY^{r-1}\equiv \theta\cdot(X^{r-p-1}+\frac{r-2p}{p-1}\cdot X^{r-2p}Y^{p-1} %+\cdots
  +Y^{r-p-1})\mod V_r^{**}$ and so it maps to $-r\in J_1$, by 
    Lemma \ref{generator} (ii).
    As $\Gamma$ acts by determinant on $J_1$, and as $XY^{r-1}=\left(\begin{smallmatrix}
                                                                                             0 & 1\\
                                                                                             1 & 0
                                                                                            \end{smallmatrix}\right)\cdot X^{r-1}Y$, we obtain that $X^{r-1}Y$ and  $Y^{r-1}X$ map to $-r/2$ 
                                                                                            and $r/2$ respectively  in $J_1$, 
proving (i).

   % \item[(ii)] 
             If $p\mid r$, then $X^{r-1}Y$ maps to $0\in J_1$, so in fact it lands in $W_0 \subset P$.
               Now consider the polynomial $F(X,Y)=\underset{k\in\F_p}\sum k^{p-2}(kX+Y)^r\in X_r$. Using the fact 
               $p\mid r$, we obtain $$F(X,Y)\equiv-\underset{\substack{0\leq j\leq r\\j\equiv 1\mod (p-1)}}\sum\binom{r}{j}X^{r-j}Y^j\equiv -2X^{r-p}Y^p+H(X,Y)\mod p,$$
               where $H(X,Y)=2X^{r-p}Y^p-\underset{\substack{1< j<r-1\\j\equiv 1\mod (p-1)}}\sum\binom{r}{j}X^{r-j}Y^j$. One checks that $H(X,Y)\in V_r^{**}$, using the 
               criteria given in \cite[Lem. 2.3]{Bhattacharya-Ghate}. Thus we have  $X^{r-p}Y^p\in X_r+V_r^{**}$, proving (ii).
  %\end{enumerate}
\end{proof}

\begin{prop}
\label{F1a2}
   Let $p\geq 3$, $r \geq 2p$, with $r\equiv 2\mod (p-1)$ and assume $v(a_p)=1$.
   \begin{enumerate}
    \item[(i)] If $p\mid r$, then $F_1=0$.
    \item[(ii)] If $p\nmid r$, then $F_1$ is a proper quotient of
$\pi(0,\nu,\omega)$, with $\nu=\overline{\dfrac{rp}{2a_p}}
\in\bar\F_p^\times$. 
   \end{enumerate}
%In particular, $F_1\not\cong \pi(0,0,\omega)$.
\end{prop}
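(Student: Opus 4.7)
The plan is to construct explicit test functions $f\in\mathrm{ind}_{KZ}^G\Sym^r\bar\Q_p^2$ whose $(T-a_p)$-image is integral and projects cleanly into $\mathrm{ind}_{KZ}^G J_1\cong \mathrm{ind}_{KZ}^G(V_0\otimes D)$ via the surjections $\mathrm{ind}_{KZ}^G P \twoheadleftarrow \mathrm{ind}_{KZ}^G W_1 \twoheadrightarrow \mathrm{ind}_{KZ}^G J_1$. The two cases require fundamentally different choices of test function, driven by whether the ``obvious'' lift $X^{r-1}Y$ of a generator of $J_1$ survives or collapses into $W_0$.

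\textbf{Case (ii), $p\nmid r$.} By Lemma~\ref{goodcasenew}(i), $X^{r-1}Y$ projects to $-r/2\in J_1$, nonzero since $p\nmid r$. I would start with
\[
f_0=\left[\mathrm{Id},\,\frac{\theta X^{r-p-1}}{a_p}\right]=\left[\mathrm{Id},\,\frac{X^{r-1}Y-X^{r-p}Y^p}{a_p}\right].
\]
A direct expansion shows $T^-f_0 = O(p^2)$ for $r\geq 2p$, while
\[
T^+f_0 \equiv \frac{p}{a_p}\sum_{\lambda\in\F_p}\left[g^0_{1,[\lambda]},X^{r-1}Y\right] \mod p,
\]
and $-a_pf_0=-[\mathrm{Id},\theta X^{r-p-1}]$ projects into $\mathrm{ind}_{KZ}^G W_0$ (by Lemma~\ref{generator}(i) for $a=2$). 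Thus modulo $\mathrm{ind}_{KZ}^G W_0$, $(T-a_p)f_0$ projects to $-\nu\, T^+[\mathrm{Id},1]$ in $\mathrm{ind}_{KZ}^G J_1$, with $\nu = \overline{rp/(2a_p)}\in\bar\F_p^\times$. This supplies only the ``$T^+$'' part of the desired relation $(T-\nu)[\mathrm{Id},1]=0$; I would augment $f_0$ with auxiliary functions supported on level-$2$ tree vertices $g^0_{2,\mu}$, tuned so that the full $(T-a_p)f$ projects, modulo $\mathrm{ind}_{KZ}^G W_0$, to a nonzero scalar multiple of $(T-\nu)[\mathrm{Id},1]$. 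This exhibits $F_1$ as a quotient of $\pi(0,\nu,\omega)$. Properness would then follow from the constraint that $\bar\Theta^{ss}$ lie in the image of the mod $p$ Local Langlands Correspondence, together with the fact that the complementary JH factor of $\pi(0,\nu,\omega)$ is already accounted for by $F_2$ through Proposition~\ref{F2a2}.

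\textbf{Case (i), $p\mid r$.} By Lemma~\ref{goodcasenew}(ii), $X^{r-1}Y \equiv \theta X^{r-p-1} \bmod X_r+V_r^{**}$, which collapses into $W_0$, so the $f_0$ above sees nothing of $J_1$. Instead use the higher-order lift $\theta X^{r-2p}Y^{p-1}$, which maps to $1\in J_1$ by Lemma~\ref{generator}(ii) for $a=2$. Mimicking the three-level construction of Proposition~\ref{elimination and}(i), I would take $f=f_0+f_1+f_2$ with $f_0=[\mathrm{Id},*]$, $f_1=[g^0_{1,0},*]$, and $f_2$ a sum over $g^0_{2,p[\mu]+[\lambda]}$, where the polynomial coefficients are carefully tuned so that, after applying the combinatorial congruences from Section~\ref{basics} (especially Propositions~\ref{congrbinom2} and~\ref{congrbinom12}, and Lemma~\ref{comb3}), all spurious contributions at intermediate vertices land in $V_r^{**}+X_r$ (and so die in $P$), while the surviving contribution at one chosen vertex is $\theta X^{r-2p}Y^{p-1}$. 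Since this $G$-generates $\mathrm{ind}_{KZ}^G J_1$, it follows that $F_1=0$.

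\textbf{Main obstacle.} The principal technical hurdle is the explicit $p$-adic bookkeeping: one must simultaneously arrange (a) that $(T-a_p)f$ be integral, (b) that all non-chosen vertex contributions lie in $V_r^{**}+X_r$ and die in $P$, and (c) that the chosen vertex contribute a clean element of $\mathrm{ind}_{KZ}^G J_1$. Case (i) is especially delicate because the natural lift $X^{r-1}Y$ collapses into $W_0$, forcing the use of the next-order lift $\theta X^{r-2p}Y^{p-1}$ and of level-$2$ contributions; these cancellations only work thanks to the combinatorial identities collected in Section~\ref{basics}.
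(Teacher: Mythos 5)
Your overall strategy is right: project $(T-a_p)f$ through the filtration on $P$ into $\mathrm{ind}_{KZ}^G J_1$, and use Lemma~\ref{goodcasenew} and Lemma~\ref{generator}(ii) to identify images. You also correctly identify $\theta X^{r-2p}Y^{p-1}$ as the lift of a generator of $J_1$ in case (i), and $X^{r-1}Y$ as the useful lift when $p\nmid r$. However there are three concrete problems with the plan.

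\textbf{Case (i) is much simpler than you think.} No three-level construction is needed, and no combinatorial identities from Section~\ref{basics} (such as Propositions~\ref{congrbinom2} or~\ref{congrbinom12}) enter at all. The single function $f_0=[\mathrm{Id},\theta X^{r-2p}Y^{p-1}/a_p]$ already works: $T^-f_0$ dies mod $p$ (using $r>2p$, hence $r\geq 3p-1$), $T^+f_0$ produces only $X^{r-1}Y$-terms mod $p$ which, by Lemma~\ref{goodcasenew}(ii), collapse into $W_0$ and so vanish in $J_1$, while $-a_pf_0=-[\mathrm{Id},\theta X^{r-2p}Y^{p-1}]$ maps to $[\mathrm{Id},-1]\in\mathrm{ind}_{KZ}^G J_1$, which $G$-generates. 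This is the paper's argument; your plan would likely work but is much heavier than necessary and attributes to the combinatorial lemmas a role they don't play here.

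\textbf{Case (ii): the target $(T-\nu)[\mathrm{Id},1]$ is the wrong one.} The element $(T-\nu)[\mathrm{Id},1]$ contains the term $T^-[\mathrm{Id},1]=[\alpha,1]$, and $\alpha$ is not in the forward $g^0_{m,\mu}$ tree; auxiliary functions supported on level-2 vertices $g^0_{2,\mu}$ contribute, after applying $T^\pm$ and $-a_p$, only at levels $1,2,3$ in the $g^0$ direction and can never produce $[\alpha,1]$. The paper instead uses $f_0=[\mathrm{Id},-\frac{r}{2a_p}(\theta X^{r-p-1}-\theta X^{r-2p}Y^{p-1})]$ together with $f_1=[g^0_{1,0},\frac{1}{p}\theta X^{r-p-1}]$ and targets $(T-\nu)[g^0_{1,0},1]$, whose entire support lies at $\mathrm{Id}$, $g^0_{1,0}$ and the $g^0_{2,p[\lambda]}$, matching the support of $(T-a_p)(f_0+f_1)$. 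You would need to rebase to $g^0_{1,0}$, or else switch the auxiliary function to level $1$, for your plan to close.

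\textbf{The properness argument is not correct as stated.} The paper does not invoke the LLC to prove properness; instead it observes that $(T-a_p)f_1$ alone is integral and projects to $-\frac{r}{2}\sum_{\lambda}[g^0_{2,p[\lambda]},1]$ in $\mathrm{ind}_{KZ}^G J_1$, and a direct tree-theoretic argument shows this element lies in the kernel of $\mathrm{ind}_{KZ}^G J_1\twoheadrightarrow F_1$ but is not in $(T-\nu)\mathrm{ind}_{KZ}^G J_1$ for any $\nu\in\bar\F_p^\times$, forcing $F_1\subsetneq\pi(0,\nu,\omega)$. Your suggestion that ``the complementary JH factor of $\pi(0,\nu,\omega)$ is accounted for by $F_2$'' does not hold: when $\nu=\pm1$ the factors of $\pi(0,\nu,\omega)$ are $\mu_{\nu}\omega\circ\det$ and $\St\otimes\mu_{\nu}\omega\circ\det$, neither of which is a JH factor of $\pi(p-3,\nu^{-1},\omega^2)$ for $p\geq5$, so $F_2$ accounts for nothing. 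Moreover the properness statement is needed in situations (Theorem~\ref{thma2}(iii)) where one later appeals to the LLC, so using the LLC to prove it risks circularity. The explicit-element argument is essential here.

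Finally, a small inaccuracy: in case (i) you say the $T^+$ contributions ``land in $V_r^{**}+X_r$ (and so die in $P$)''. They do not die in $P$: by Lemma~\ref{goodcasenew}(ii), $X^{r-1}Y\equiv\theta X^{r-p-1}$, which lands in the nonzero submodule $W_0\subset P$. What is true, and what the argument needs, is that they die in the quotient $J_1=W_1/W_0$.
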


\begin{proof} Recall that the JH factor $J_1=V_0\otimes D$ of $P$ occurs only when $r>2p$. 
            Therefore $F_1=0$ for $r=2p$, and we assume $r>2p$ for the rest of the proof. 

If $p\mid r$, we consider the function
         \begin{equation}\label{function2}
         f_0=\left[\mathrm{Id},\,\dfrac{X^{r-p}Y^p-X^{r-2p+1}Y^{2p-1}}{a_p}\right]\in\mathrm{ind}_{KZ}^G\mathrm{Sym}^r  
         \bar{\Q}_p^2,
         \end{equation}
          and check that $T^-f_0$ dies mod $p$, and that $T^+f_0$ 
          %$T^+f_0\equiv\underset{\lambda\in\F_p^*}\sum\left[g_{1,[\lambda]}^0, \dfrac{p}{a_p}X^{r-1}Y\right]$
          maps to $\underset{\lambda\in\F_p^*}\sum\left[g_{1,[\lambda]}^0,
          \dfrac{p}{a_p}\cdot\theta X^{r-p-1}\right]$ in $\mathrm{ind}_{KZ}^G P$,
          by Lemma \ref{goodcasenew} (ii).
          %By Lemma \ref{generator} (ii), we get that 
          Clearly $T^+f_0$
          maps to zero in $\mathrm{ind}_{KZ}^G J_1$. 
          Thus $(T-a_p)f_0$ maps to the
          image of $-a_pf_0\equiv [\mathrm{Id},\,-\theta X^{r-2p}Y^{p-1}]$ in
          $\mathrm{ind}_{KZ}^G J_1$, which is $[\mathrm{Id},-1]$ by Lemma
          \ref{generator} (ii).
          As $[\mathrm{Id},-1]$ generates $\mathrm{ind}_{KZ}^G
          J_1$ over $G$, we have  $F_1=0$.

If $p\nmid r$, we consider the function $f=f_0+f_1\in\mathrm{ind}_{KZ}^G\mathrm{Sym}^r\bar\Q_p^2$, where  
               \begin{eqnarray}\label{function3}
                f_0 &=& \left[\mathrm{Id},\,-\frac{r}{2a_p}\cdot(X^{r-1}Y -2X^{r-p}Y^p + X^{r-2p+1}Y^{2p-1})\right],\\
              \label{function4}  f_1 &=& \left[g_{1,0}^0,\,\frac{1}{p}\cdot(X^{r-1}Y - X^{r-p}Y^p)\right].
               \end{eqnarray}
               As $r>2p$, both $T^-f_0$ and $T^-f_1$ die mod $p$. Then we compute
               \begin{eqnarray*}
                -a_pf_0 &\equiv& \left[\mathrm{Id},\,\frac{r}{2}\cdot(\theta X^{r-p-1}-\theta X^{r-2p}Y^{p-1})\right]\mod p,\\
                T^+f_0-a_pf_1 &\equiv& \left[g_{1,0}^0,\,-\dfrac{rp}{2a_p}\cdot X^{r-1}Y-\frac{a_p}{p}\cdot \theta X^{r-p-1}\right]\mod p,\\
                T^+f_1 &\equiv& \underset{\lambda\in\F_p}\sum\left[g_{2,p[\lambda]}^0,\, X^{r-1}Y\right]\mod p.
               \end{eqnarray*}
               Clearly $(T-a_p)f$ is integral and its image in $\mathrm{ind}_{KZ}^G J_2$ is $0$ by Lemma \ref{generator} (iii). Therefore $(T-a_p)f$ lands in 
               $\mathrm{ind}_{KZ}^G W_1\subset \mathrm{ind}_{KZ}^G P$.
               Applying Lemma \ref{generator} (ii) and Lemma \ref{goodcasenew} (i), we obtain that it maps to 
               $$-\frac{r}{2}\cdot\left(\left[\mathrm{Id},\,1\right]-\left[g_{1,0}^0,\, \frac{rp}{2a_p}\right]+\underset{\lambda\in \F_p}\sum\left[g_{2,p[\lambda]}^0,\,1\right]\right)
               =-\frac{r}{2}\cdot\left(T-\frac{rp}{2a_p}\right)\left[g_{1,0}^0,\,1\right]\in\mathrm{ind}_{KZ}^G J_1.$$
               Since $r\not\equiv 0\mod p$, and as $\left[g_{1,0}^0,\,1\right]$ generates $\mathrm{ind}_{KZ}^G J_1$ over $G$, the map $\mathrm{ind}_{KZ}^G J_1\twoheadrightarrow F_1$ must factor through
               $\pi(0,\overline{\frac{rp}{2a_p}},\omega)$.

Since $\mathrm{ind}_{KZ}^G J_1 \cong (\mathrm{ind}_{KZ}^G \bar\F_p)
\otimes (\omega \circ \det)$, the elements of $\mathrm{ind}_{KZ}^G J_1$
can be thought of as compactly supported $\bar\F_p$-valued functions on
the Bruhat-Tits tree of $\mathrm{SL}_2(\Q_p)$, with the usual right
translation action of $G$ twisted by $\omega \circ \det$.  Now
$(T-a_p)f_1$ is integral and maps to
$-\frac{r}{2}\cdot\underset{\lambda\in\F_p} \sum[
g_{2,p[\lambda]}^0,\,1]$ in $\mathrm{ind}_{KZ}^G J_1$.  Using the
definition of the usual sum-of-neighbours action of the Hecke operator on
functions on the tree, or the formula for $T$ given in (\ref{T}), we can
see that this function cannot lie in the image of $T-\nu$, for any
$\nu\in\bar\F_p^\times$. This shows that $F_1$ must be a proper quotient
of $\pi(0,\overline{\frac{rp}{2a_p}},\omega)$. 
\end{proof}

\subsection{Study of $F_0$}

Recall $c := (a_p^2-\binom{r}{2}p^2)/pa_p$, $\tau = v(c)$, $t = v(2-r)$. We study $F_0$, when $\tau \leq t$.

\begin{prop}
\label{F0a2}
Let $p\geq 3$, $r \geq 2p$, with $r\equiv 2\mod (p-1)$ and assume $v(a_p)=1$.
\begin{enumerate}
\item [(i)] If $\tau < t$, then $F_0 = 0$.
\item [(ii)] If $\tau = t$, then $F_0$ is a quotient
of %$I(V_{p-1}(1))/(T-\lambda)$ 
$\pi(p-1,\lambda,\omega)$, where $\lambda = \overline{\dfrac{2(a_p^2-\binom{r}{2}p^2)}{pa_p(2-r)}} \in \bar\F_p^\times$.
\end{enumerate}
\end{prop}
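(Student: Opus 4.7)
The plan is to mimic the strategy of Proposition~\ref{F2a2}, but now aiming at the bottom piece $J_0 \cong V_{p-1}\otimes D$ of the filtration of $P$ rather than at the top piece $J_2$. Concretely, I would construct a function $f \in \mathrm{ind}_{KZ}^G \Sym^r \bar\Q_p^2$ such that $(T-a_p)f$ is integral, its image in $\mathrm{ind}_{KZ}^G P$ lies in $\mathrm{ind}_{KZ}^G W_0$ (so that projecting to $J_0 = W_0$ makes sense), and this projection has the form
$$\alpha \cdot T[g, X^{p-1}] \; - \; \beta \cdot [g, X^{p-1}] \; \in \; \mathrm{ind}_{KZ}^G J_0$$
for scalars $\alpha$ and $\beta$ depending on $r$, $a_p$, $c$, with $\overline{\alpha}$ vanishing when $\tau < t$ and a unit when $\tau = t$, and $\overline{\beta}$ a unit in both cases.

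By analogy with Proposition~\ref{F2a2}, I would take $f = f_0 + f_\infty$, where $f_0$ is a section supported at the identity whose shape is chosen so that $T^+ f_0$ produces $\theta X^{r-p-1}$ contributions at level one (which map to $X^{p-1} \in J_0$ by Lemma~\ref{generator}(i) for $a=2$), and $f_\infty$ is a linear combination of translates $\chi_g$ of the auxiliary function $\chi$ of Lemma~\ref{defauxfunc}. The role of $f_\infty$ is two-fold: it supplies the $(2-r)$ denominator needed to build the ratio $c/(2-r)$ that controls case separation, and it cancels the unwanted $J_1$-contribution produced by $f_0$, which comes from $X^{r-1}Y$-terms (recall from Lemma~\ref{goodcasenew}(i) that $X^{r-1}Y$ maps to $-r/2 \in J_1$). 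The integrality of $(T-a_p)f$ and the cancellation inside $J_1$ and $J_2$ should reduce, as in the $F_2$ argument, to the binomial-sum identities in Propositions~\ref{congrbinom2} and~\ref{congrbinom12}, together with the error control built into Lemma~\ref{defauxfunc}.

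Granted such an $f$, the two cases of the proposition follow directly. In case (i), $\tau < t$ forces $c/(2-r)$ to have positive valuation, so after the appropriate scaling $\overline{\alpha} = 0$; the $J_0$-image then reduces to a nonzero multiple of $[g, X^{p-1}]$, which generates $\mathrm{ind}_{KZ}^G J_0$ as a $G$-module, forcing $F_0 = 0$. In case (ii), $\tau = t$ makes $\overline{\alpha}$ a unit, and the image factors as $\overline{\alpha}\,(T - \overline{\beta/\alpha})[g, X^{p-1}]$ with $\overline{\beta/\alpha} = \overline{2c/(2-r)} = \lambda$, so $F_0$ is a quotient of $\pi(p-1, \lambda, \omega)$.

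The main obstacle will be the design of $f_\infty$, that is, arranging the simultaneous vanishing of both the $J_2$- and the $J_1$-components of $(T-a_p)f$. Unlike in the proof of Proposition~\ref{F2a2}, where error terms landing in $W_1 \subset P$ could simply be ignored because they die in $J_2$, here the analogous error terms produce nonzero $J_1$-contributions that must be killed explicitly in order to descend into $W_0$. The deeper cancellation required, combined with the rather loose congruences in Lemma~\ref{congruences2} (valid only modulo $p^{t+3}$ after multiplication by $p^i$), is what makes $F_0$ the most delicate of the three factors to analyze.
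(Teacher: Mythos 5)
Your high-level strategy is the right one and matches the paper: produce an explicit $f$ with $(T-a_p)f$ integral, whose image in $\mathrm{ind}_{KZ}^G P$ lands in $\mathrm{ind}_{KZ}^G W_0 = \mathrm{ind}_{KZ}^G J_0$ and has the shape $\alpha \, T[1,X^{p-1}] - \beta\,[1,X^{p-1}]$, and then read off the two cases from whether $\overline{\alpha}$ vanishes. Your deduction of (i) and (ii) from this shape is exactly the paper's, and the identification $\lambda = \overline{2c/(2-r)}$ is correct.

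The gap is in the proposed construction of $f$. You model it on Proposition~\ref{F2a2} and take $f=f_0+f_\infty$ with $f_\infty$ a linear combination of translates $\chi_g$ of the auxiliary function from Lemma~\ref{defauxfunc}. That two-piece template is not enough here. The paper's proof uses a three-piece decomposition $f=f_0+f_1+f_\infty$: the new middle piece $f_1$ is built from radius-one sections $\Phi_g = [g,\sum_{j\equiv 1\bmod (p-1)}\binom{r}{j}X^{r-j}Y^j + (r-2)X^{r-p}Y^p]$, whose defining monomials lie in the congruence class $j\equiv 1 \bmod (p-1)$. These are \emph{not} expressible as combinations of the $\chi_g$, which involve only $Y^r$ and $X^{r-2}Y^2$ (class $j\equiv 2 \bmod (p-1)$), so $f_1$ cannot be absorbed into your $f_\infty$. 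Moreover, your description of the $J_1$-cancellation is off: the operative mechanism is not that $f_\infty$ kills the $X^{r-1}Y$-contribution of $f_0$ (which alone would map to $-r/2\in J_1$ by Lemma~\ref{goodcasenew}(i)), but that $f_1$ supplies matching $-X^{r-p}Y^p$ terms so that the combination $X^{r-1}Y - X^{r-p}Y^p = \theta X^{r-p-1}$ descends all the way to $W_0$ and is recognized as $X^{p-1}\in J_0$ by Lemma~\ref{generator}(i); what $f_\infty$ actually cancels, via Proposition~\ref{congrbinom12}, are the accompanying $\sum_{j\equiv 1}\binom{r}{j}X^{r-j}Y^j$ sums that $T^+f_0$ and $T^-f_1$ generate. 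Without the $f_1$ piece the intermediate computation neither stays integral nor descends below $W_1$, so the argument does not close.
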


\begin{proof}
Let $r \geq 2p$. Write $r = 2+n(p-1)p^t$, with $t \geq 0$ and $n \geq 1$, $n$ co-prime to $p$.
We assume that $\tau \leq t$.

Since the functions we will define below are slightly more complicated than others
that have occurred so far, we first define some basic `building block' functions. Let
$$
A = [1,X^{r-1}Y], \quad B = \left[1,\sum_{1<j<r-1 \atop j \equiv 1 \mod (p-1)}\binom{r-1}{j}X^{r-j}Y^j\right] \quad \text{and} \quad C = [1,X^{r-p}Y^p].
$$
We set $A' = A-C$ and  $B' = B+(r-1)C$. Also, for $g \in G$, let
$$
\Phi_g = [g,\sum_{1<j \leq r-1 \atop  j \equiv 1 \mod (p-1)}\binom{r}{j}X^{r-j}Y^j + (r-2)X^{r-p}Y^p] \quad \text{and} \quad
\Psi_g = \sum_{\mu \in \F_p^\times}[\mu]^{-1}\chi_{gg^0_{1,[\mu]}},
$$
where $\chi_g$ is defined in Section~\ref{finfinity}.

Now consider the function $f = f_0 + f_1 + f_\infty \in \mathrm{ind}_{KZ}^G \> \mathrm{Sym}^r \bar\Q_p^2$, with
\begin{eqnarray*}
  f_0 & = & \frac{1-p}{pc}\left(A' + \frac{rp^2}{2a_p^2}B'\right) =
                  \frac{1-p}{pc}\left(A + \frac{rp^2}{2a_p^2}B\right) + \frac{p-1}{a_p}C, \\
  f_1 & = & \frac{-1}{2a_pc}\left(\sum_{\lambda \in \F_p^\times}\Phi_{g^0_{1,[\lambda]}} 
                  + (1-p)\Phi_{g^0_{1,0}}\right), \\ 
  f_\infty & = & \frac{1}{2c}\left(\frac{1}{1-p}\sum_{\lambda \in \F_p^\times} \Psi_{g^0_{1,[\lambda]}} 
                  + \Psi_{g^0_{1,0}}\right).
\end{eqnarray*}
The equivalent expressions for $f_0$ above come from the identity 
$-1+\frac{rp^2}{2a_p^2}(r-1) = -\frac{cp}{a_p}$.

We now compute $(T-a_p)f$ in several simpler steps.

%Finally, set $f = f_0 + f_1 + f_\infty$.
%\bigskip
In radius `$-1$', we have $T^-A = O(p^{r-1})$ and $T^-B = O(p^{t+3})$, by 
Lemma~\ref{congruences1}, since $r-j \geq p \geq 5$ and $T^-C =
O(p^{r-p})$. Since $r-1 \geq r-p \geq t+3$, we see  $T^-f_0 = O(p^2)$, 
and so dies mod $p$.
We now compute the radius 0 term $-a_pf_0 + T^-f_1$.
We have $T^-\Phi_g = [g \alpha, % \matr 100p,
                       rpXY^{r-1}] +O({p^{t+3}})$, by
Lemma~\ref{congruences2}, 
so, for $\lambda \in \F_p$,
$$
T^-\Phi_{g^0_{1,[\lambda]}} = rp \left[1,\sum_{i=0}^{r-1}\binom{r-1}{i}[\lambda]^{r-1-i}X^{r-i}Y^i \right]
+O({p^{t+3}}),
$$ 
%if $\lambda \neq 0$, and
%$$
%T^-\Phi_{g^0_{1,0}} = rp[1,XY^{r-1}] +O({p^{t+2}}).
%$$
so 
$$
T^-f_1 =
\frac{rp(1-p)}{2a_pc} \left[1,\sum_{1 \leq j<r-1 \atop j \equiv 1 \mod (p-1)}\binom{r-1}{j}X^{r-j}Y^j \right] + O(p^2).
$$
We obtain that $-a_pf_0 + T^-f_1 = -[1,X^{r-1}Y-X^{r-p}Y^p] + O(p)$.

Denote by $h_{\infty,1}$ the part of $(T-a_p)f_\infty$ that lives in
radius $1$, and by $h_{\infty,2}$ the part of $(T-a_p)f_\infty$ that lives in
radius $2$.
Let us now compute the radius 1 term $T^+f_0 - a_pf_1 + h_{\infty,1}$.

We have
\begin{eqnarray*}
T^+A = 
\sum_{\lambda \in \F_p}[g^0_{1,[\lambda]},
-[\lambda]X^r+pX^{r-1}Y], \qquad T^+C=
\sum_{\lambda \in \F_p}[g^0_{1,[\lambda]},-[\lambda]X^r] + O(p^2),
\end{eqnarray*}
and
$$
T^+B = 
\sum_{\lambda \in \F_p^\times} \left[g^0_{1,[\lambda]},
-[\lambda]\sum_{1<j<r-1 \atop j \equiv 1 \mod (p-1)}\binom{r-1}{j}X^r
+p\sum_{1<j<r-1 \atop j \equiv 1 \mod (p-1)}j\binom{r-1}{j}X^{r-1}Y \right]
+O(p^{t+2}),
$$
using the fact that, for $i \geq 2$,
$p^i\sum_{1<j<r-1}\binom{r-1}{j}\binom{j}{i} = O(p^{t+2})$, as follows from
part (3) of Proposition~\ref{congrbinom1} (and Lemma~\ref{congruences1} which
shows that the $j = r-1$ term is of no consequence). 
Using parts (1) and (2) of the same proposition we see that
$\sum_{1<j<r-1}\binom{r-1}{j} = 1-r + np^{t+1} + O(p^{t+2})$
and 
$p\sum_{1<j<r-1} j\binom{r-1}{j} = p^2(r-1)/(1-p) + O(p^{t+2})$. All three sums here are as usual
over $j \equiv 1 \mod (p-1)$.
Substituting, we get
$$
T^+B = 
\sum_{\lambda \in \F_p^\times}[g^0_{1,[\lambda]},
-[\lambda]\left(1-r+np^{t+1}\right)X^r
+\frac{p^2(r-1)}{1-p}X^{r-1}Y] +O(p^{t+2}).
$$
This gives
$$
T^+f_0 =
\frac{1-p}{c}[g^0_{1,0},X^{r-1}Y]
+ \frac{1}{c}\sum_{\lambda \in \F_p^\times}[g^0_{1,[\lambda]},X^{r-1}Y] 
% + \frac{rp^2}{2a_p^2}\frac{2-r}{c}\sum_{\lambda\in\F_p^\times}-[\lambda][g^0_{1,[\lambda]},X^r]
+ h
+ O (p),
$$
where $h$ is an integral linear combination of terms of the form $[g, X^r]$, with $g \in G$.
% this goes to zero in $F_0$ as it is integral.
Rewrite this as
$$
T^+f_0 =
\frac{1-p}{2c} \left[g^0_{1,0},\binom{r}{1}X^{r-1}Y\right]
+ \frac{1}{2c}\sum_{\lambda \in \F_p^\times} \left[g^0_{1,[\lambda]},\binom{r}{1}X^{r-1}Y\right]
+ \frac{2-r}{2c}\sum_{\lambda \in \F_p}[g^0_{1,[\lambda]},X^{r-1}Y]
+ h + O(p).
$$
We also have 
\begin{eqnarray*}
-a_pf_1  & = & 
\frac{1-p}{2c} \left[g^0_{1,0},\sum_{1<j \leq r-1 \atop j \equiv 1 \mod (p-1)}\binom{r}{j}X^{r-j}Y^j \right]
+ \frac{1}{2c}\sum_{\lambda \in \F_p^\times} \left[g^0_{1,[\lambda]},\sum_{1<j \leq r-1 \atop j \equiv 1 \mod (p-1)}\binom{r}{j}X^{r-j}Y^j \right] \\
&   & {\hskip 5cm}  - \frac{2-r}{2c}\sum_{\lambda \in \F_p}[g^0_{1,[\lambda]},X^{r-p}Y^p] + O(p).
%+ O(p).
\end{eqnarray*}
So 
\begin{multline*}
T^+f_0-a_pf_1 =
\frac{1-p}{2c} \left[g^0_{1,0},\sum_{1 \leq j \leq r-1 \atop j \equiv 1 \mod (p-1)}\binom{r}{j}X^{r-j}Y^j \right]
+ \frac{1}{2c}\sum_{\lambda \in \F_p^\times}
\left[g^0_{1,[\lambda]},\sum_{1 \leq j \leq r-1 \atop j \equiv 1 \mod (p-1)}\binom{r}{j}X^{r-j}Y^j \right] \\
+ \frac{2-r}{2c}\sum_{\lambda \in \F_p}[g^0_{1,[\lambda]},X^{r-1}Y-X^{r-p}Y^p]
+ h + O(p).
\end{multline*}
We now observe, by Lemma~\ref{defauxfunc}, that
$$
(T-a_p)\Psi_g =
(p-1) \left[g,\sum_{1 \leq j \leq r-1 \atop j \equiv 1 \mod (p-1)} \binom{r}{j}X^{r-j}Y^j \right] 
+ a_p\sum_{\mu \in \F_p^\times}[\mu]^{-1}[gg^0_{1,[\mu]},X^{r-2}Y^2]
% + o(p^\tau)
+ O(p^{\tau+1}),
$$
since $t_0 = \tau$. We deduce that 
\begin{multline*}
(T-a_p)f_\infty = 
\frac{-1}{2c}\left((1-p) \left[g^0_{1,0},\sum_{1 \leq j \leq r-1 \atop j \equiv 1 \mod (p-1)} \binom{r}{j}X^{r-j}Y^j\right]
+ \sum_{\lambda \in \F_p^\times} \left[g^0_{1,[\lambda]},\sum_{1 \leq j \leq r-1 \atop j \equiv 1 \mod (p-1)} \binom{r}{j}X^{r-j}Y^j \right]\right) \\
+
\frac{a_p}{2c}\left(\sum_{\mu \in \F_p^\times}[\mu]^{-1}[g^0_{1,0}g^0_{1,[\mu]},X^{r-2}Y^2]
+
\frac{1}{1-p}\sum_{\lambda\in\F_p^\times}\sum_{\mu \in \F_p^\times}[\mu]^{-1}[g^0_{1,[\lambda]}g^0_{1,[\mu]},X^{r-2}Y^2]\right)
% + o(1).
+ O(p).
\end{multline*}
%Denote by $f_{\infty,1}$ the part of $(T-a_p)f_\infty$ that lives in
%radius $1$, and by $f_{\infty,2}$ the part of $(T-a_p)f_\infty$ that lives in
%radius $2$.
Putting everything together, in radius 1 we get that
$$
T^+f_0 -a_pf_1 + h_{\infty,1} = 
\frac{2-r}{2c}\sum_{\lambda \in \F_p}[g^0_{1,[\lambda]},X^{r-1}Y-X^{r-p}Y^p]
+ h 
%+ o(1).
+ O(p).
$$

Finally, let us compute the radius 2 term $T^+f_1 + h_{\infty,2}$. 
We have
$$
T^+\Phi_g
= \sum_{\mu \in \F_p} \left[gg^0_{1,[\mu]},\sum_{i\geq
0}p^i\sum_{1<j \leq r-1 \atop j \equiv 1 \mod (p-1)}(-[\mu])^{j-i}\binom{j}{i}\binom{r}{j}X^{r-i}Y^i +
(r-2)\sum_{i=0}^p(-[\mu])^{p-i}p^i\binom{p}{i}X^{r-i}Y^i \right].
%+ O(p^{t+2})
$$
Using all parts of Proposition \ref{congrbinom12}, and Lemma~\ref{congruences2} for the $\mu = 0$ terms, we get 
$$
T^+\Phi_g
= \sum_{\mu\in \F_p^\times}\frac{[\mu]^{-1}}{1-p} \left[gg^0_{1,[\mu]},p^2\binom{r}{2}X^{r-2}Y^2 \right]
+ p^{t+1} h' + O(p^{t+2}),
$$
where
$h'$ is an integral linear combination of terms of the form
$[g,X^r]$, for $g\in G$.
So
$$
\frac{-1}{2a_pc}T^+\Phi_g =
\frac{-p^2\binom{r}{2}}{2a_pc(1-p)}
\sum_{\mu \in \F_p^\times}[\mu]^{-1}[gg^0_{1,[\mu]},X^{r-2}Y^2] + h''
%- p^{t+1}S_0'/2a_pc + o(1)
+O(p),
$$
%and $p^{t+1}S_0'/2a_pc$ goes to $0$ in $F_0$.
where $h''$ is an integral linear combination of terms of the form $[g, X^r]$, for $g \in G$.
Notice that $\frac{-p^2\binom{r}{2}}{2a_pc(1-p)} = -\frac{a_p}{2c(1-p)} + O(p)$, so finally we get
$$
T^+f_1 =
\frac{-a_p}{2c}
\left(\frac{1}{1-p}\sum_{\lambda \in\F_p^\times}
\sum_{\mu \in \F_p^\times}[\mu]^{-1}[g^0_{1,[\lambda]}g^0_{1,[\mu]},X^{r-2}Y^2]
+ 
\sum_{\mu\in \F_p^\times}[\mu]^{-1}[g^0_{1,0}g^0_{1,[\mu]},X^{r-2}Y^2]\right) + h'''
%- p^{t+1}S_0'/2a_pc + o(1)
+O(p),
$$
where $h'''$ is an integral linear combination of terms of the form $[g, X^r]$, for $g \in G$.
Thus in radius 2, we get that $T^+f_1 + h_{\infty,2} = h''' + O(p)$. 
% dies in $\mathrm{ind}_{KZ}^G J_0$. % $F_0$.

So, putting everything together, we see that
\begin{equation}
\label{october}
  (T-a_p)f=\frac{2-r}{2c}\sum_{\lambda \in \F_p}[g^0_{1,[\lambda]},\theta X^{r-p-1}] - [1,\theta X^{r-p-1}]+h''''+O(p),
\end{equation}
where $h''''$ is an integral linear combination of terms of the form $[g,X^r]$, 
for $g\in G$. 

Now the term $h''''$ above dies in $\mathrm{ind}_{KZ}^G P$.
Moreover, by Lemma~\ref{generator} (i),   $\theta X^{r-p-1}$ 
%= X^{r-1}Y - X^{r-p}Y^p$
 can be identified with $X^{p-1}  \in J_0$. So, 
 we see that the image of $(T-a_p)f$ in $\mathrm{ind}_{KZ}^G P$ lies in  $\mathrm{ind}_{KZ}^G J_0$ and is %$F_0$
$$
\overline{\frac{2-r}{2c}} \sum_{\lambda \in \F_p}[g^0_{1,[\lambda]},X^{p-1}] - [1,X^{p-1}].
$$
This immediately implies both parts of the proposition.
\end{proof}

\subsection{Conclusion}
\label{enda2}

\begin{proof}[Proof of Theorem \ref{thma2}] We apply
Proposition~\ref{F2a2}, Proposition~\ref{F1a2} and Proposition \ref{F0a2}
to find the possible structures of $F_2$, $F_1$ and $F_0$ respectively.
In each case we obtain a unique possibility for the semisimplification of
$\bar\Theta$ and we conclude using the (semisimple) mod $p$  Local
Langlands Correspondence (LLC). 

\begin{enumerate}[(i)]

\item 
If $v(a_p^2-\binom{r}{2}p^2) = v(r-2) + 2$, then
let $\lambda$ be the reduction mod $p$ of
$\frac{2(a_p^2-\binom{r}{2}p^2)}{pa_p(2-r)}$, which is in $\bar\F_p^\times$.
Then $F_2$ is a quotient of the principal series representation
$\pi(p-3,\lambda^{-1},\omega^2)$, $F_1=0$ if $p\mid r$ and is otherwise a
quotient of $\pi(0, \nu,\omega)$, with $\nu = \overline{\frac{rp}{2a_p}} \in \bar\F_p^\times$, and $F_0$ is a quotient of
$\pi(p-1,\lambda,\omega)$.  Thus none of $F_0$, $F_1$ or $F_2$ can have a
supersingular JH factor, and $\bar\Theta$ is forced to be reducible with
semisimplification
$\pi(p-1,\lambda,\omega)^{ss}\oplus\pi(p-3,\lambda^{-1},\omega^2)$.  

\item 
If $v(a_p^2-\binom{r}{2}p^2) > v(r-2) + 2$, 
then  $F_2=0$, $F_1=0$ if $p\mid r$ and is otherwise a
quotient of $\pi(0, \nu,\omega)$. 
Again the mod $p$ LLC forces $F_1=0$, and hence
$\bar\Theta\cong F_0\cong\pi(p-1,0,\omega)$. 

\item
If $v(a_p^2-\binom{r}{2}p^2) < v(r-2) + 2$, then necessarily $r \equiv 2 \mod p$, so 
$F_1$ is a proper quotient of $\pi(0, \overline{\frac{p}{a_p}},\omega)$, and $F_0=0$. 
Since $\bar\Theta$ lies in the image of the mod $p$ LLC, 
we must have  $F_1=0$, and hence $\bar\Theta\cong F_2\cong\pi(p-3,0,\omega^2)$.
\end{enumerate}
\vspace{-0.6cm}
\end{proof}

\begin{remark}\label{remark a=2}
In part (i), clearly $F_2 \neq 0$. 
Also, if $\lambda \neq \overline{\frac{rp}{2a_p}}$, then $F_0 = \pi(p-1, \lambda, \omega)$ and  $F_1 = 0$, as we will see in Proposition \ref{F1a2bis}.  
If $\lambda =\overline{ \frac{rp}{2a_p}}$, then
    $\lambda = \pm 1$. 
In this case, an additional possibility  in part (i) is that 
   $F_0 = \mathrm{St} \otimes (\mu_\lambda \cdot \omega \circ \det)$ and 
        $F_1 = \mu_\lambda \cdot \omega \circ \det \neq 0$, by Proposition~\ref{F1a2} (ii). 
This might explain
       why it does not seem possible to show that $F_1 = 0$ uniformly in all three parts.
\end{remark}

\section{Reduction without semisimplification} \label{peu-tres}

In this section we investigate more subtle properties of the reduction $\bar{V}_{k,a_p}$. 
In particular, if the reduction is reducible and non-split, we wish to describe what type
of extension one obtains. 
We shall assume throughout that $p \geq 5$, except for the last subsection where we
allow $p = 3$.

\subsection{Results}

In earlier sections we computed the semisimplification
$\bar{V}_{k,a_p}^{ss}$ of $\bar V_{k,a_p}$, using a particular lattice.
In \cite{Col}, 
Colmez defined functors (see Section \ref{functor} for more details)
that give a correspondence between certain $G$-stable lattices in
$\Pi_{k,a_p}$ or its $p$-adic completion $\hat\Pi_{k,a_p}$ with respect
to the lattice $\Theta_{k,a_p}$, and $G_{\Q_p}$-stable lattices
in $V_{k,a_p}$, in a way that is compatible with reduction modulo $p$. 
So by choosing a suitable lattice $\Theta'$ inside $\hat\Pi_{k,a_p}$ we get a
lattice inside $V_{k,a_p}$, which we denote by $V(\Theta')$, and the
reduction of $V(\Theta')$ can be computed from $\bar\Theta'$, 
it is simply $V(\bar\Theta')$.
%
%This is interesting 

In this section we apply this in order to primarily investigate the following question.  
Assume that $\overline{V}_{k,a_p}^{ss}$ is isomorphic to
$\omega \oplus 1$ (up to a twist).
It is well known, by work of Ribet, that there is a
lattice inside $V_{k,a_p}$  that reduces to a non-split extension of $1$ by
$\omega$ (up to a twist). One can  ask whether this extension is ``peu ramifi\'ee" or
``tr\`es ramifi\'ee" in the sense of \cite{Serre87}. 
The answer does not depend on the choice of the lattice, which is unique up to homothety. In fact, 
the extension is given by a (non-zero) class in 
$\mathrm{H}^1(G_{\Q_p}, \bar\F_p(\omega)) = \Q_p^\times / (\Q_p^\times)^p \otimes \bar\F_p$,
which has dimension 2 over $\bar\F_p$, and changing the basis used to
define this cohomology class only changes the 
class by a (non-zero) constant. The extensions corresponding to the (non-zero) elements in the line 
$\Z_p^\times/(\Z_p^\times)^p \otimes \bar\F_p$ are called  ``peu ramifi\'ee'', whereas the extensions
corresponding to the remaining (non-zero) classes are (by definition) called ``tr\`es ramifi\'ee''.  
In the context of the results proved so far in this paper (see Sections \ref{case3ap-1} through \ref{casea2} and Theorem~\ref{maintheoslopeone}), this 
question arises in exactly %three cases:
two cases:
\begin{enumerate}
\item $a=p-1$, $p$ does not divide $r+1$ and $\lambda = \overline{\frac{a_p}{p(r+1)}} = \pm 1$, so $\overline{\frac{a_p}{p}} = \pm \overline{(r+1)}$.
\item $a=2$, $v(a_p^2-\binom{r}{2}p^2) = 2 + v(r-2)$ and 
%$p$ does not divide $r-2$, 
$\overline{u(a_p)} = \bar\eps$, with $\eps = \pm1$, so
$\overline{\frac{a_p}{p}} = \eps \overline{\frac{r}{2}}$ or $\eps\overline{(1-r)}$,
where 
$$u(a_p):={\frac{2(a_p^2-{r\choose2}p^2)}{(2-r)pa_p}}$$
is a $p$-adic unit.
%This condition  implies  $\overline{a_p/p} = \epsilon (r/2)$ or $\epsilon(1-r)$.
%\item $a=2$, $p$ divides $r-2$,
%$v(a_p^2-\binom{r}{2}p^2) = 2 + v(r-2)$
%and $\overline{\frac{2(a_p^2-\binom{r}{2}p^2}{(2-r)pa_p}} = \pm 1$.
\end{enumerate}
In these cases, the standard lattice $\Theta = \Theta_{k,a_p}$  studied in 
Section~\ref{case3ap-1} (when $a=p-1$) and Section~\ref{casea2} (when $a = 2$) 
does not always allow us to
answer the question. For instance, in the first case %the reduction 
$V(\bar\Theta)$ is a twist of an 
extension of $\omega$ by $1$,
and in the second case %the reduction 
$V(\bar\Theta)$ is a split extension when $F_1 =0$ (see Corollary \ref{computedsplit}).

We can however, study the reduction of certain non-standard lattices, and prove the following result.
To simplify the statement, we introduce the following
terminology. Let $\eps \in \{\pm 1\}$. Assume we are in case (2) above, so that $r\equiv 2\mod (p-1)$.  
%, so that the two roots $\overline{\frac{a_p}{p}}$ in case (2) above are distinct.  
The question of  ``peu'' or ``tr\`es ramifi\'ee'' arises when %$a_p/p$ lying in the set 
$$\dfrac{a_p}{p} \in \left\{\alpha\in\bar\Q_p \> \Big| \> v(\alpha) =
0, \>  v\left(\alpha-\binom{r}{2}\alpha^{-1}\right) = v(r-2), \> 
\overline{\frac{2(\alpha-\binom{r}{2}\alpha^{-1})}{2-r}} =  \bar\eps\right\}.$$
Assume further that $r \not\equiv 2/3 \mod p$. It's easy to see then that this (big) set is a disjoint union 
of two sets 
$$\left\{\alpha\in\bar\Q_p: v(\alpha) = 0, v(\alpha-\eps r/2) > v(r-2)\right\} \,
\sqcup\, \left\{\alpha\in\bar\Q_p: v(\alpha) = 0, v(\alpha-\eps(1-r)) > v(r-2)\right\}.$$ 
The first set is empty if $r \equiv 0 \mod p$ and the second is
empty if $r \equiv 1 \mod p$. 
\begin{definition}
We say that `$a_p$ is close to $\eps pr/2$'
if $a_p/p$ is in the first set, and that `$a_p$ is close to $\eps
p(1-r)$' if $a_p/p$ is in the second set. 
\end{definition}
\noindent For $a_p/p$ in the big set above, 
 $a_p$ is close to $\eps pr/2$ if and only if  $\overline{a_p/p} = \overline{\eps r/2}$, and similarly,  $a_p$ is close to
$\eps p(1-r)$ if and only if $\overline{a_p/p} = \overline{\eps (1-r)}$.

\begin{thm}[Theorems \ref{ap-1ramifiethm}, \ref{a2peuramifiethm} and \ref{a2ramifiethm}]
\label{peuram}
Let $p \geq 5$, let $r \geq p+1$ and assume that $v(a_p)=1$.

\begin{enumerate}

\item
Suppose $r \equiv p-1 \mod (p-1)$, $p$ does not divide $r+1$ and
$\overline{{\frac{a_p}{p(r+1)}}} = \bar\eps$, with $\eps \in\{ \pm 1\}$.
Then there exists a lattice $L$ 
in $V_{k,a_p}$ such that
$\bar{L}$ is a non-split, ``peu ramifi\'ee" extension of
$\mu_{\eps} $ by $\mu_{\eps} \cdot \omega$.
%$\mu_{\overline{a_p/p(r+1)}}$ by $\mu_{\overline{a_p/p(r+1)}} \cdot \omega$.

\item 
Suppose that $r\equiv 2\mod (p-1)$  and $r \not\equiv 2/3 \mod p$. %and let
%$\overline{u(a_p)}=\bar\eps$ for some $\eps\in\{\pm 1\}$.
\begin{enumerate}
\item 
%Suppose that 
Suppose $a_p$ is close to $\eps pr/2$, for some $\eps\in\{\pm 1\}$.
Then there exists a lattice $L$ 
in $V_{k,a_p}$ such that
$\bar{L}$ is a non-split, ``peu ramifi\'ee" extension of
$\mu_{\eps} \cdot \omega$ by $\mu_{\eps} \cdot \omega^2$.

\item 
Suppose that $a_p$ is close to $\eps p(1-r)$, for some $\eps\in\{\pm1\}$.
%Let $u(a_p) = 2(a_p^2-\binom{r}{2}p^2)/(2-r)pa_p$. 
If $r\equiv 2 \mod p$, further assume that $u(a_p)-\eps$ is a
uniformizer of $E=\Q_p(a_p)$ or that $E$ is unramified over $\Q_p$.
\begin{enumerate}
\item If $v(u(a_p)-\eps) < 1$, then 
there exists a lattice $L$ 
in $V_{k,a_p}$ such that
$\bar{L}$ is a non-split, ``peu ramifi\'ee" extension of
$\mu_{\eps} \cdot \omega$ by $\mu_{\eps} \cdot \omega^2$.

\item If $v(u(a_p)-\eps) \geq 1$, then
there exists a lattice $L$ 
in $V_{k,a_p}$ such that
$\bar{L}$ is a non-split, ``tr\`es ramifi\'ee" extension of
$\mu_{\eps} \cdot \omega$ by $\mu_{\eps} \cdot \omega^2$.

\item Moreover, %when $r \not\equiv 2 \mod p$, 
depending on the choice of
$a_p$ satisfying $\overline{a_p/p} = \overline{\eps(1-r)}$, all isomorphism
classes of ``tr\`es ramifi\'ee" extensions appear.
\end{enumerate}
\end{enumerate}
\end{enumerate}
\end{thm}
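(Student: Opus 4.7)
The plan is to follow the framework sketched in the introduction: invoke Colmez's functor $V$ to translate the ``peu versus très ramifiée'' question into one about the existence and structure of a particular non-standard $G$-stable lattice $\Theta'$ inside $\hat\Pi_{k,a_p}$, chosen so that the reduction $\bar\Theta'$ is isomorphic to $\Pi_\tau \otimes \omega^m$ (with $m=1$ when $b=2$ and an appropriate twist when $b=p-1$). Because Colmez's functor is compatible with reduction, $V(\bar\Theta') = \bar{V}_{k,a_p}$ up to the twist, and it is known that this reduction is peu ramifiée precisely when the class $[\tau]\in\P^1(\bar\F_p)$ equals $(0:1)$. So the whole problem reduces to (i) producing $\Theta'$ and (ii) computing $[\tau]$ via the linear form $\mu$ applied to $\bigl(\smallmatrix 1+p & 0 \\ 0 & 1\endsmallmatrix\bigr)\cdot e - e$ and $\bigl(\smallmatrix p & 0 \\ 0 & 1\endsmallmatrix\bigr)\cdot e - e$ for a chosen lift $e$ of a generator of the quotient $1$ in $E_\tau\otimes\omega^m$.

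To build $\Theta'$ in each case, I would begin from the explicit test functions $f$ constructed in Sections~\ref{case3ap-1} and \ref{casea2} (in particular the function $f=f_0+f_1+f_\infty$ appearing in the proof of Proposition~\ref{F0a2}), and modify them so that the new lattice sits strictly between the standard $\Theta_{k,a_p}$ and an appropriate homothety, in a way dictated by the extra $p$-adic information contained in the quantity $u(a_p)-\eps$ (respectively $a_p/p - \eps(r+1)$ in case (1)). Concretely, one rescales certain components of $f$ by a chosen power of $p$, uses the auxiliary sums $\chi_g$, $\Psi_g$, $\Phi_g$ from Section~\ref{finfinity}, and verifies that the resulting $G$-submodule is stable, $p$-adically separated, and has reduction admitting the desired three-step composition series $\mathrm{St}\otimes\omega^m - \omega^m - \pi(p-3,1,\omega)\otimes\omega^m$ corresponding to $\Pi_\tau\otimes\omega^m$. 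This is the structural heart of the argument and amounts to refining the ``spectral'' computations already carried out, now keeping track of one more $p$-adic digit than was needed for the semisimplified statement.

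With $\Theta'$ in hand, the class $[\tau]$ is computed directly: one picks a preimage $e$ in $\bar\Theta'$ of a generator of the quotient $1$, expresses $\bigl(\smallmatrix 1+p & 0 \\ 0 & 1\endsmallmatrix\bigr)\cdot e - e$ and $\bigl(\smallmatrix p & 0 \\ 0 & 1\endsmallmatrix\bigr)\cdot e - e$ as explicit elements of $\mathrm{St}\otimes\omega^m$ inside $\bar\Theta'$, and evaluates $\mu$ against them. In cases (1) and (2)(a), I expect the computation to yield $[\tau]=(0:1)$ essentially because the defining congruence ``$\overline{a_p/p}=\eps r/2$'' (respectively ``$\overline{a_p/p}=\pm\overline{r+1}$'') causes the $\bigl(\smallmatrix 1+p & 0 \\ 0 & 1\endsmallmatrix\bigr)$-coordinate to vanish after reduction, so these cases are peu ramifiée unconditionally. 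In case (2)(b), the same computation carried out one more digit deep should produce a ratio proportional to $(v(u-\eps)\text{-valuation correction}: \overline{3r-2})$, yielding $(0:1)$ when $v(u-\eps)<1$ and $(\overline{3r-2}:y)$ with $\overline{3r-2}\neq 0$ (since $r\not\equiv 2/3\bmod p$) when $v(u-\eps)\geq 1$, giving the dichotomy.

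The main obstacle will be the construction and verification of the non-standard lattice $\Theta'$ in case (2)(b), because one has to track the reduction precisely enough to distinguish between peu and très ramifiée --- this requires pushing the delicate congruences from Propositions~\ref{congrbinom2}, \ref{congrbinom1}, \ref{congrbinom12} one order beyond what was needed for Theorem~\ref{thma2}, and it is where the auxiliary hypothesis on $\Q_p(a_p)$ when $r\equiv 2\bmod p$ enters, to ensure the rescaled functions actually define a $G$-stable lattice over the ring of integers of $E$. The final claim (2)(b)(iii) that \emph{all} très ramifiée extensions arise follows by observing that as $a_p$ varies through the infinite family with $\overline{a_p/p}=\overline{\eps(1-r)}$ and $v(u-\eps)\geq 1$, the ratio $(\overline{3r-2}:y)$ sweeps out all of $\P^1(\bar\F_p)\setminus\{(0:1)\}$, hence all non-peu classes in $\mathrm{H}^1(G_{\Q_p},\bar\F_p(\omega))$.
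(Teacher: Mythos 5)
Your high-level strategy is essentially the paper's --- invoke Colmez's functor, build a non-standard $G$-stable lattice whose reduction is $\Pi_\tau\otimes\omega^m$, compute $[\tau]$ via the $A$-equivariant linear form $\mu$ --- but there are two places where your plan falls short of an argument. For cases (1) and (2)(a), you do not need (and the paper does not use) a direct $\mu$-evaluation: the paper constructs a concrete lattice (in case (1), $\Sym^r\Zbar_p^2 + (\theta/p)\Sym^{r-(p+1)}\Zbar_p^2$) and argues structurally that the sub-extension in question is a quotient of $\ind_{KZ}^G V_0$ with Jordan--H\"older factors $\St\oplus 1$ (up to twist), hence is $\pi(0,1,1)$ by a rigidity lemma, and then shows once and for all --- via a harmonic function on the Bruhat--Tits tree, not via the congruence $\overline{a_p/p} = \eps r/2$ --- that $\pi(0,1,1)$ corresponds to $[\tau]=(0:1)$. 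Your heuristic that the congruence makes the $(1+p)$-coordinate of $\mu$ vanish does not by itself constitute a proof, and you would still need to produce a candidate lattice and generator $e$ for it to even be a computation.

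The genuine gap is in case (2)(b). You propose to build $\Theta'$ by rescaling test functions $f$ from Section~\ref{casea2}, but that is not a lattice construction and does not address the actual difficulty, which is proving $G$-stability of the new lattice. The paper's construction is entirely different: it lifts the composition series of $\bar\Theta$ to an $R$-module decomposition $\hat\Theta = S_1\oplus S_2\oplus S_3$ (with $\bar S_1=\omega$, $\bar S_2=\St\otimes\omega$, $\bar S_3=\pi(p-3,1,\omega^2)$) and sets $\Theta' = \delta S_2\oplus S_1\oplus S_3$ with $\delta=u-\eps$ or $p$, which flips the first two factors so that the relevant sub-extension has $\St\otimes\omega$ as submodule. $G$-stability of $\Theta'$ requires that the off-diagonal transition blocks $a_{1,2}(g)$ and $a_{3,2}(g)$ be divisible by $\delta$ (Proposition~\ref{S1S2}); when $\delta$ is not a uniformizer of $E$, this is proved by a delicate comparison of $\Theta_{k,a_p}$ with the standard lattice for the reference value $a_p^0=p(1-r)$, working modulo $\delta$ (Section~\ref{compalat}), and it is there that the extra hypothesis on $E$ when $r\equiv 2\bmod p$ enters. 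None of this apparatus appears in your plan, and without it you cannot produce a $G$-stable $\Theta'$ in $\RepOE(G)$ on which to run the $\mu$-computation you describe; the final computation of $[\tau]$ and the surjectivity onto all tr\`es ramifi\'ee classes are roughly as you sketched, but only make sense once $\Theta'$ exists.
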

\noindent The condition  $r\not\equiv2/3\mod p$ in part (2) of
the theorem ensures that %we 
%are not at the same time in 
once we fix $\eps$,
cases (a) and (b) above are mutually exclusive.
We do not know 
%whether the extension is ``peu" or ``tr\`es
%ramifi\'ee" when $r$ is $2/3$ modulo $p$.
what happens in case $r \equiv 2/3 \mod p$, although
this is also a case where the distinction between ``peu" and ``tr\`es
ramifi\'ee" could be made.
The technical conditions imposed  %that $u(a_p)-\eps$ is a uniformizer
 when $r\equiv 2 \mod p$ in part (2) (b) are used
in the proof of Proposition \ref{S1S2} (see also Remark
\ref{F1zerorembis}) and could possibly be removed if one finds a
%better
 more direct proof of Proposition \ref{F1a2bis}.

\subsection{Colmez's Montr\'eal functor}
\label{functor}

Let $E$ be a finite extension of $\Q_p$, with ring of integers
$\mathcal{O}_E$ and residue field $k_E$.

We introduce the categories on which Colmez's functors are defined (see
\cite[Intro., 4.] {Col}).
We denote by $\Reptors(G)$ the category of $\O_E[G]$-modules that are
smooth, admissible, of finite length and have a central character. We
denote by $\RepOE(G)$ the category of $\O_E[G]$-modules $\Theta$ that are complete
and separated for the $p$-adic topology, without $p$-torsion and such
that for all $n$, $\Theta/p^n\Theta$ is in $\Reptors(G)$. Finally we
denote by $\RepE(G)$ the category of $E[G]$-modules $\Pi$ with a lattice
which is in $\RepOE(G)$. Hence objects of $\RepE(G)$ are $p$-adic Banach
spaces, and the lattice as in the definition defines the topology of the
Banach space.

Let $\Pi$ be a $p$-adic Banach space. Two $\O_E$-lattices $\Theta$
and $\Theta'$ in $\Pi$ are said to be commensurable if there exists $n$,
$m$ such that $p^n\Theta \subset \Theta' \subset p^m\Theta$. In this case
$\Theta$ defines the topology of $\Pi$ if and only if $\Theta'$ does.
Let $\Pi$ be in $\RepE(G)$ and $\Theta\subset\Pi$ be a lattice which is
in $\RepOE(G)$, then any lattice $\Theta'$ that is commensurable with
$\Theta$ and stable by $G$ is also in $\RepOE(G)$.

In \cite{Col}, Colmez has defined exact functors, which we all denote by
$V$, that go from these categories to categories of continuous representations of
$G_{\Q_p}$ with coefficients in $E$ or $\O_E$. More precisely $V$ goes
from $\Reptors(G)$ to torsion $\O_E$-representations of finite length,
from $\RepOE(G)$ to representations on free $\O_E$-modules of finite
rank, and from $\RepE(G)$ to finite-dimensional $E$-representations.

These various functors are compatible: if $\Pi\in \RepE(G)$ and $\Theta$
is a lattice in $\Pi$ as in the definition, then $V(\Theta)$ is a lattice
in $V(\Pi)$, and $V(\bar\Theta) = \overline{V(\Theta)}$
(see \cite[Th\'eoreme IV.2.14]{Col} for the properties of these functors).
Moreover $V(\hat\Pi_{k,a_p}) = V_{k,a_p}$, and $V(\bar\Theta_{k,a_p})^{ss} =
\bar{V}_{k,a_p}^{ss}$. We will always consider lattices in
$\hat\Pi_{k,a_p}$ that are commensurable with $\hat\Theta_{k,a_p}$, and
all the lattices $L \subset V_{k,a_p}$ as in Theorem \ref{peuram}
are of the form $V(\Theta')$ for
such a lattice $\Theta' \subset \hat\Pi_{k,a_p}$.

\subsection{Preliminaries in characteristic $p$}

We fix a finite field $k_E$ of characteristic $p$.
Let $\St$ be the Steinberg representation with coefficients in $k_E$. 
The definition of $\St$ is recalled in \S \ref{defmu} below.

\subsubsection{Extensions of $\pi(p-3,1,\omega)$ by $\St$}

\begin{prop}[Proposition VII.4.22 of \cite{Col}]
\label{split}
Any extension of $\pi(p-3,1,\omega)$ by $\St$ is split.
\end{prop}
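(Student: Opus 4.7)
The plan is to exploit that $\pi(p-3,1,\omega)$ is generated as a $G$-representation by the $KZ$-type $W := V_{p-3}\otimes(\omega\circ\det)$, which does not occur in the $K$-socle of $\St$: the latter is the Steinberg $K$-type $V_{p-1}$, and $V_{p-1} \not\cong W$ for $p \geq 5$. In particular, $\mathrm{Hom}_K(W, \St) = 0$.

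Given an extension $0 \to \St \to E \to \pi(p-3,1,\omega) \to 0$, let $F \subset E$ denote the preimage of the distinguished copy of $W \subset \pi(p-3,1,\omega)$ (coming from the image of $[\mathrm{Id},v]$ for $v \in V_{p-3}$ under the quotient $\mathrm{ind}_{KZ}^G W \twoheadrightarrow \pi(p-3,1,\omega)$). Then $F$ fits in a $KZ$-extension
\[
0 \to \St|_{KZ} \to F \to W \to 0.
\]
I would first show this splits, i.e., $\mathrm{Ext}^1_{KZ}(W, \St|_{KZ}) = 0$. Granting a $KZ$-section $\sigma : W \to F$, Frobenius reciprocity promotes $\sigma$ to a $G$-morphism $\phi : \mathrm{ind}_{KZ}^G W \to E$. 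To see that $\phi$ factors through $\pi(p-3,1,\omega) = \mathrm{ind}_{KZ}^G W/(T-1)$, it suffices to verify $(T-1)\sigma(v) = 0$ in $E$ for every $v \in V_{p-3}$. Now $(T-1)\sigma(v)$ maps to zero in $E/\St = \pi(p-3,1,\omega)$ (since $T$ acts as $1$ there), so it lies in $\St$; the assignment $v \mapsto (T-1)\sigma(v)$ is $K$-equivariant because $T$ commutes with the $G$-action, defining a $K$-map $W \to \St$. This map vanishes by $\mathrm{Hom}_K(W, \St) = 0$, so $\phi$ descends to a $G$-section of the extension, which is therefore split.

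The hard part will be the vanishing of $\mathrm{Ext}^1_{KZ}(W, \St|_{KZ})$. The socle computation only handles $\mathrm{Hom}_K$, and since $\St|_K$ is infinite-dimensional, $K$-extensions could a priori be built using higher congruence subgroups. The cleanest route is via pro-$p$ Iwahori invariants: let $\mathcal{I}$ denote the pro-$p$ Iwahori subgroup of $K$. Then $\pi \mapsto \pi^{\mathcal{I}}$ is exact on finite-length admissible smooth mod $p$ representations of $G$ and realizes an equivalence, on the subcategory of representations generated by their $\mathcal{I}$-invariants, with modules over the pro-$p$ Iwahori--Hecke algebra $\mathcal{H}$. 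Both $\St$ and $\pi(p-3,1,\omega)$ lie in this subcategory, and hence so does any extension $E$ between them. Thus it suffices to show $\mathrm{Ext}^1_{\mathcal{H}}(\pi(p-3,1,\omega)^{\mathcal{I}}, \St^{\mathcal{I}}) = 0$, which follows from the explicit description of these small-dimensional $\mathcal{H}$-modules: $\St^{\mathcal{I}}$ is a $1$-dimensional character distinct from either of the two characters appearing in the $2$-dimensional standard module $\pi(p-3,1,\omega)^{\mathcal{I}}$, so the two modules lie in different blocks of $\mathcal{H}$ and admit no nontrivial extensions.
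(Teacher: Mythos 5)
The paper does not give its own proof of this statement; it cites it directly as Proposition VII.4.22 of Colmez~\cite{Col}. So the proposal has to be evaluated on its own merits.

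Your first paragraph gives a valid reduction: given the extension $E$, a $KZ$-section $\sigma$ of $F \to W$, Frobenius reciprocity produces a $G$-map $\phi : \mathrm{ind}_{KZ}^G W \to E$, and $\phi \circ (T-1)$ is then a $G$-map $\mathrm{ind}_{KZ}^G W \to \St$ which vanishes because $\Hom_{KZ}(W,\St|_{KZ}) = 0$. (The phrase ``$(T-1)\sigma(v)$'' is imprecise since $T$ does not act on $E$, but the Frobenius-reciprocity formulation is correct.) This reduces the problem to $\mathrm{Ext}^1_{KZ}(W, \St|_{KZ}) = 0$, which you rightly flag as the hard step and then do not prove.

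The second paragraph, which switches to the pro-$p$ Iwahori--Hecke algebra $\mathcal{H}$, is indeed close in spirit to what Colmez actually does, but it contains a genuine error at the decisive step. The assertion that $\St^{\mathcal{I}}$ and the constituents of $\pi(p-3,1,\omega)^{\mathcal{I}}$ ``lie in different blocks of $\mathcal{H}$'' is false. For $p \geq 5$ the three representations $1$, $\St$, and $\pi(p-3,1,\omega)$ (up to a common twist) lie in a \emph{single} Paskunas block, the one corresponding to the reducible Galois representation $\omega \oplus 1$; this is visible already from the material in the paper, since $\pi(0,1,1)$ is a non-split extension of $1$ by $\St$, $\pi(p-1,1,1)$ is a non-split extension of $\St$ by $1$, and under the mod $p$ LLC $\pi(p-1,1,1)$ pairs with $\pi(p-3,1,\omega)$. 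Under Ollivier's equivalence the corresponding $\mathcal{H}$-modules therefore lie in the same block as well. ``The simple modules are distinct, hence different blocks'' is the wrong inference: distinct simples can and do lie in one block. The vanishing of $\mathrm{Ext}^1_{\mathcal{H}}\bigl(\pi(p-3,1,\omega)^{\mathcal{I}}, \St^{\mathcal{I}}\bigr)$ is true, but it must be established by an explicit computation inside this block (which is in effect what Colmez does); a central-character separation argument cannot work here. A secondary, smaller issue is that the exactness of $\pi \mapsto \pi^{\mathcal{I}}$ and the fact that any such extension $E$ is generated by $E^{\mathcal{I}}$ are asserted without justification; neither is formal in characteristic $p$, though both are known for $\GL_2(\Q_p)$.
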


\begin{cor}
\label{computedsplit}
If $a = 2$, $\overline{u(a_p)} = \pm 1$ 
and $F_1 = 0$, then $V(\bar{\Theta})$ is a
split extension of $\mu_{\overline{u(a_p)}} \cdot \omega$ by
$\mu_{\overline{u(a_p)}} \cdot \omega^2$.
\end{cor}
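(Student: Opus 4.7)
The plan is to realize $\bar\Theta$ explicitly as a three-step extension of $G$-representations, split off a Steinberg piece via a twisted form of Proposition~\ref{split}, and then apply Colmez's Montr\'eal functor $V$, exploiting that $V$ kills every smooth one-dimensional representation of $G$.

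Set $\eps := \overline{u(a_p)} \in \{\pm 1\}$. By Theorem~\ref{thma2}(i), $\bar V_{k,a_p}^{ss} \cong \mu_{\eps}\omega^2 \oplus \mu_{\eps}\omega$, which under the semisimple mod-$p$ Local Langlands Correspondence corresponds to $\pi(p-1,\eps,\omega)^{ss} \oplus \pi(p-3,\eps,\omega^2)$. Combining this with Propositions~\ref{F0a2}(ii) and~\ref{F2a2}(ii), which show that $F_0$ and $F_2$ are quotients of these respective principal series, and using the hypothesis $F_1 = 0$ together with Jordan--H\"older counting, I would conclude that $F_0 = \pi(p-1,\eps,\omega)$ and $F_2 = \pi(p-3,\eps,\omega^2)$, so $\bar\Theta$ sits in a short exact sequence
\[
0 \to \pi(p-1,\eps,\omega) \to \bar\Theta \to \pi(p-3,\eps,\omega^2) \to 0.
\]

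Next, I would invoke the standard fact that for $\eps = \pm 1$ the reducible principal series $\pi(p-1,\eps,\omega)$ is the non-split extension
\[
0 \to \mu_{\eps}\omega\circ\det \to \pi(p-1,\eps,\omega) \to \St\otimes\mu_{\eps}\omega\circ\det \to 0,
\]
with the character as socle. Quotienting $\bar\Theta$ by this character produces $\bar\Theta' := \bar\Theta/(\mu_{\eps}\omega\circ\det)$, which fits into
\[
0 \to \St\otimes\mu_{\eps}\omega\circ\det \to \bar\Theta' \to \pi(p-3,\eps,\omega^2) \to 0.
\]
The appropriate twist of Proposition~\ref{split} forces this extension to be split, so $\bar\Theta' \cong (\St\otimes\mu_{\eps}\omega\circ\det) \oplus \pi(p-3,\eps,\omega^2)$.

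Finally, I would apply the exact functor $V$. Since $V$ annihilates the one-dimensional representation $\mu_{\eps}\omega\circ\det$, the surjection $\bar\Theta \twoheadrightarrow \bar\Theta'$ induces an isomorphism $V(\bar\Theta) \overset{\sim}{\to} V(\bar\Theta')$; combining with the direct-sum decomposition of $\bar\Theta'$ and the identification $V(\bar\Theta)^{ss} = \bar V_{k,a_p}^{ss} \cong \mu_{\eps}\omega^2 \oplus \mu_{\eps}\omega$, this yields the desired splitting $V(\bar\Theta) \cong \mu_{\eps}\omega^2 \oplus \mu_{\eps}\omega$. The main obstacle will be justifying the socle direction of $\pi(p-1,\eps,\omega)$ (character socle, Steinberg cosocle) and the applicability of Proposition~\ref{split} in the exact twist required, particularly for $\eps = -1$; both should reduce to well-known facts about mod-$p$ principal series for $\GL_2(\Q_p)$ and the Ext-vanishing results in~\cite{Col}.
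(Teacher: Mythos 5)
Your proposal is correct and follows essentially the same route as the paper: identify $\bar\Theta$ as an extension of $\pi(p-3,\eps,\omega^2)$ by $\pi(p-1,\eps,\omega)$, quotient out the character socle $\mu_\eps\omega$ of the latter, invoke Proposition~\ref{split} (twisted) to split the resulting extension of $\pi(p-3,\eps,\omega^2)$ by $\St\otimes\mu_\eps\omega$, and then use that $V$ kills the one-dimensional representation so that $V(\bar\Theta)\cong V(\bar\Theta')$ is split. The paper states the step producing the short exact sequence more tersely (``follows from the computations of Section 6'') and treats only $\eps=1$ ``for simplicity,'' but the underlying argument is identical to yours.
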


\begin{proof} Let us assume for simplicity that $\overline{u(a_p)} =  1$.
It follows from the computations of Section 6 that in this case
$\bar{\Theta}$ is an extension of $\pi(p-3,1,\omega^2)$ by
$\pi(p-1,1,\omega)$.
Hence $\bar{\Theta}$ contains a line $L$ on
which $G$ acts by $\omega$, and the quotient is an extension of
$\pi(p-3,1,\omega^2)$ by $\St\otimes\,\omega$, hence is split. 
%When we apply 
The functor $V$ takes the line $L$  to zero (\cite[Th\'eor\`eme 0.10]{Col}), 
and so $V(\bar{\Theta}) = V(\bar{\Theta}/L)$ is split.
\end{proof}

\begin{remark}
The conditions of Corollary \ref{computedsplit} are fulfilled for example 
when $\overline{a_p/p} = \pm (1-r)$ and $r$ is not $1$ or $2$ or $2/3$
modulo $p$.
\end{remark}

\begin{remark}
Note that the proof of Proposition \ref{split} uses as an
intermediate step the fact that if $\Pi$ is an extension of
$\pi(p-3,1,\omega)$ by $\St$, then $V(\Pi)$ is a split extension of $1$
by $\omega$.
\end{remark}

\subsubsection{Linear forms on $\St$}
\label{defmu}

Let $\mathcal{C}^0(\P^1(\Q_p),k_E)$ be the smooth $G$-representation of
locally constant functions on $\P^1(\Q_p)$ with values in $k_E$. Then 
the Steinberg representation $\St$ 
is the quotient $\mathcal{C}^0(\P^1(\Q_p),k_E)/k_E$ 
of this space by the space of constant functions.

Let $A$ be the subgroup of $G$ that is isomorphic to $\Q_p^\times$, given by
the set of
$\matr a001 \in G$, $a\in \Q_p^\times$, and let $i : \Q_p^\times \to A$, $a \mapsto 
\matr a001$. Let $w \in G$ be the element $\matr 0110$. 

Let $I(1) \subset G$ be the subgroup of $K$ of matrices that are
upper triangular and unipotent modulo $p$.
It is well known that $\St^{I(1)}$ is a line. 
If we view $\St$ as $\mathcal{C}^0(\P^1(\Q_p),k_E)/k_E$, then 
$\St^{I(1)}$ is generated by the image of the
characteristic function of $\Z_p$.

\begin{lemma}
\label{linearSt}
There exists a non-zero linear form $\mu : \St \to k_E$ which is invariant under
the action of $A$. The form $\mu$ is unique up to multiplication by a
scalar. Moreover, $\mu \circ w = -\mu$, and $\mu$ is non-zero on
$\St^{I(1)}$.
\end{lemma}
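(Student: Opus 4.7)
The plan is to establish existence by an explicit construction and uniqueness by bounding $\dim_{k_E}\St_A\leq 1$, where $\St_A$ denotes the $A$-coinvariants of $\St$.

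For existence, identify $\St$ with $\mathcal{C}^0(\P^1(\Q_p),k_E)/k_E$ and define $\tilde\mu\colon\mathcal{C}^0(\P^1(\Q_p),k_E)\to k_E$ by $\tilde\mu(f):=f(0)-f(\infty)$. This vanishes on constants, hence descends to $\mu\colon\St\to k_E$. Since $\matr a001\in A$ acts on $\P^1(\Q_p)$ by $[x:y]\mapsto[ax:y]$ and fixes both $0$ and $\infty$, $\mu$ is $A$-invariant; since $w$ swaps $0$ and $\infty$, we obtain $\mu\circ w=-\mu$; and $\tilde\mu(\mathbf{1}_{\Z_p})=1-0=1$ verifies that $\mu$ is non-zero on the line $\St^{I(1)}$.

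For uniqueness, I first reduce to a single claim about balls. Given any class in $\St_A$, a representative $f\in\mathcal{C}^0(\P^1(\Q_p),k_E)$ can be normalised as follows: subtracting the constant $f(\infty)\cdot\mathbf{1}_{\P^1}$ leaves its class in $\St$ unchanged and puts $f$ in $\mathcal{C}^0_c(\Q_p,k_E)$; then choosing $m$ large enough that $f$ is constant on $p^m\Z_p$ and subtracting $f(0)\cdot\mathbf{1}_{p^m\Z_p}$ puts the remainder in $\mathcal{C}^0_c(\Q_p^\times,k_E)$. Since $\mathbf{1}_{p^m\Z_p}-\mathbf{1}_{\Z_p}=\sum_{j=0}^{m-1}\mathbf{1}_{p^j\Z_p^\times}$ lies in $\mathcal{C}^0_c(\Q_p^\times,k_E)$, it suffices to show that $[\mathbf{1}_B]=0$ in $\St_A$ for every ball $B\subset\Q_p^\times$.

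The main obstacle is this vanishing. Writing $B=y+p^m\Z_p$ with $v(y)<m$ and $d:=m-v(y)\geq 1$, one partitions
\[
B=\bigsqcup_{j\in\F_p}B_j,\qquad B_j:=y+jp^m+p^{m+1}\Z_p,
\]
and observes that the subgroup $1+p^d\Z_p\subset\Z_p^\times\subset A$ acts transitively on $\{B_j\}_{j\in\F_p}$. Indeed, a direct computation (using that $v(sp^d\cdot jp^m)\geq m+1$ for $d\geq 1$) gives $(1+sp^d)\cdot B_j=B_{j+s(y/p^{v(y)})\bmod p}$, and $y/p^{v(y)}$ is a unit. Hence all the $[\mathbf{1}_{B_j}]$ coincide in $\St_A$, and summing yields $[\mathbf{1}_B]=p\cdot[\mathbf{1}_{B_0}]=0$ since $k_E$ has characteristic $p$. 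Therefore $\St_A$ is spanned by the image of $\mathbf{1}_{\Z_p}$; combined with the existence of $\mu$ with $\mu(\mathbf{1}_{\Z_p})\neq 0$, this forces $\dim_{k_E}\St_A=1$, and uniqueness up to scalar follows.
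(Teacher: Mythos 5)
Your proof is correct and, for the existence and the two supplementary properties ($\mu\circ w=-\mu$ and non-vanishing on $\St^{I(1)}$), it is exactly what the paper records: the paper simply quotes the explicit formula $f\mapsto f(0)-f(\infty)$ from Colmez and declares those two properties ``clear'', which is what you verify. Where you diverge is that the paper outsources both existence and uniqueness to \cite[Lemme VII.4.16]{Col}, whereas you give a self-contained proof of uniqueness. Your argument — reduce to showing $\dim_{k_E}\St_A\leq 1$, normalise a representative so that it becomes an element of $\mathcal C^0_c(\Q_p^\times,k_E)$ plus a multiple of $\mathbf 1_{\Z_p}$, and then kill each $[\mathbf 1_B]$ for a ball $B\subset\Q_p^\times$ by exhibiting a $p$-fold partition of $B$ permuted transitively by $1+p^d\Z_p\subset A$ and invoking characteristic $p$ — is a genuine standalone computation of the $A$-coinvariants of $\St$, and it is a pleasant concrete alternative to simply citing Colmez. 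The transitivity computation is correct: with $d=m-v(y)\geq 1$, multiplication by $1+sp^d$ shifts $y+jp^m+p^{m+1}\Z_p$ to $y+(j+s\,y_0)p^m+p^{m+1}\Z_p$ with $y_0=y/p^{v(y)}\in\Z_p^\times$, and the extra terms land in $p^{m+1}\Z_p$, so the $B_j$ are permuted and $[\mathbf 1_B]=p\,[\mathbf 1_{B_0}]=0$. The only blemish is a sign slip in the identity $\mathbf 1_{p^m\Z_p}-\mathbf 1_{\Z_p}=\sum_{j=0}^{m-1}\mathbf 1_{p^j\Z_p^\times}$ (the left side should be $\mathbf 1_{\Z_p}-\mathbf 1_{p^m\Z_p}$); this has no effect on the argument since you only need the difference to lie in $\mathcal C^0_c(\Q_p^\times,k_E)$.
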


\begin{proof}
The existence and uniqueness of a non-zero $\mu$ which 
is invariant under the action of $A$
come from \cite[Lemme VII.4.16]{Col}.
Colmez also gives a description of such a linear form:
it is a multiple of the map
$f \mapsto f(0) -f(\infty)$, for $f \in \mathcal{C}^0(\P^1(\Q_p),k_E)/k_E$,
so the other two properties are clear.
\end{proof}

In fact we will use the following variant:
\begin{lemma}
\label{linearStomega}
Let $m\in \Z$.
There exists a non-zero linear form $\mu : \St\otimes\,\omega^m \to \omega^m$ which is
$A$-equivariant. It is unique up to multiplication by a scalar. It
satisfies $\mu \circ w = (-1)^{m+1}\mu$, and is non-zero on
$(\St\otimes\,\omega^m)^{I(1)}$.
\end{lemma}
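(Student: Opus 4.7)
The plan is to deduce this lemma directly from Lemma \ref{linearSt} by observing that $\St \otimes \omega^m$ shares its underlying $k_E$-vector space with $\St$, with the $G$-action differing only by the central twist $\omega^m \circ \det$. Since the target $\omega^m$ is the same one-dimensional character, the twists will cancel in any equivariance computation restricted to a subgroup of $G$, so I expect to simply recycle the linear form $\mu_0$ from Lemma \ref{linearSt}.

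My first step is to set $\mu := \mu_0$ under the identification of underlying vector spaces and verify $A$-equivariance: the action of $i(x) \in A$ on $\St \otimes \omega^m$ picks up a factor $\omega^m(x)$ compared with its action on $\St$, matching the scalar action on $\omega^m$, so the condition $\mu(a \cdot v) = a \cdot \mu(v)$ reduces to the $A$-invariance of $\mu_0$ already proved in Lemma \ref{linearSt}. Uniqueness follows symmetrically: any candidate $\mu'$ on $\St \otimes \omega^m$ corresponds, under the same identification, to an $A$-invariant form on $\St$, hence is a scalar multiple of $\mu_0$.

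The sign $\mu \circ w = (-1)^{m+1}\mu$ drops out of a short computation: $w$ acts on $\St \otimes \omega^m$ with an extra factor $\omega^m(\det w) = \omega^m(-1) = (-1)^m$ (using $\omega(-1) = -1$ in $k_E$, which requires $p$ odd, satisfied since $p \geq 5$), and combining with $\mu_0 \circ w = -\mu_0$ yields the claimed sign. For non-vanishing on $\St^{I(1)}$, I would observe that $\det(I(1)) \subset 1 + p\Z_p$, so $\omega^m \circ \det$ is trivial on $I(1)$; hence the $I(1)$-fixed subspaces of $\St$ and $\St \otimes \omega^m$ coincide as subspaces of the common underlying vector space, and $\mu = \mu_0$ remains non-zero on this line.

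There is no serious obstacle here: the lemma is essentially a formal twist of Lemma \ref{linearSt}, and every verification reduces to an already-established property of $\mu_0$ together with the elementary identity $\omega(-1) = -1$ in $k_E$.
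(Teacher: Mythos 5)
Your proof is correct and follows exactly the argument the paper intends, which is left implicit since Lemma~\ref{linearStomega} is presented as a ``variant'' of Lemma~\ref{linearSt} without a written proof. Each of your four verifications (the $A$-equivariance, uniqueness, the sign $(-1)^{m+1}$ from $\omega^m(\det w) = \omega^m(-1) = (-1)^m$ with $p$ odd, and the observation that $\det(I(1)) \subset 1 + p\Z_p$ makes the twist trivial on $I(1)$ so the $I(1)$-fixed line is unchanged) is sound and reduces cleanly to the corresponding statement about $\mu_0$ in Lemma~\ref{linearSt}.
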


\subsubsection{Extensions of $1$ by $\St$}
\label{tau}

Let $H = \Hom_{cont}(\Q_p^\times,k_E)$. 
In \cite[Para. VII.4.4]{Col}, Colmez attaches to any $\tau$ in $H$ a representation
$E_{\tau}$ of $\GL_2(\Q_p)$ with coefficients in $k_E$ 
which is an extension of
$1$ by the Steinberg representation $\St$ 
and which is non-split if and only if 
$\tau$ is non-zero.
(Note that in what follows we take $E_{\tau}$ to be the representation
that Colmez calls $E_{-\tau}$.)
Any object of $\Reptors(G)$ which is a 
non-split extension of $1$ by $\St$ with coefficients in $k_E$
is isomorphic to some $E_{\tau}$
(Th\'eor\`eme VII.4.18 in \cite{Col}).

Note that $\Q_p^\times = \mu_{p-1} \times (1+p\Z_p) \times p^{\Z}$.
As $k_E$ is killed by $p$, we see that for all $\tau\in H$,
$\tau(\mu_{p-1}) = 0$. Hence $\tau$ is entirely determined by
$\tau(1+p)$ and $\tau(p)$. Let $[\tau] \in \P^1(k_E)$ be the class of
$(\tau(1+p),\tau(p))$, when the extension is not split. 
Then the isomorphism class of $E_{\tau}$ depends
only on $[\tau]$. 

Let $\Pi$ be an extension of $1$ by $\St$. Here is a method for computing
$[\tau]$, following \cite[VII.4.4]{Col}:

\begin{lemma}
\label{computetau}
Let $\Pi$ be an extension of $1$ by $\St$. 
Let $\mu$ be as in Lemma \ref{linearSt} and $e$ an element of $\Pi$ that
is not contained in the subrepresentation $\St$.
Then either $\mu(i(1+p)e-e) = \mu(i(p)e-e) = 0$ and  the extension is
split, or $\Pi$ is
isomorphic to $E_{\tau}$, for $\tau$ such that
$[\tau] = (\mu(i(1+p)e-e):\mu(i(p)e-e))$.
\end{lemma}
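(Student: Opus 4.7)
The plan is to define $\tau_e : \Q_p^\times \to k_E$ by $\tau_e(a) := \mu(i(a)e - e)$ and to show that its projective class recovers the parameter attached to $\Pi$ in Colmez's classification recalled in \S\ref{tau}. First, $i(a)e - e$ lies in $\St$ because the quotient $\Pi/\St$ is the trivial representation, so $\tau_e$ is well-defined. From the cocycle identity
\begin{equation*}
i(ab)e - e \;=\; i(a)\bigl(i(b)e - e\bigr) + \bigl(i(a)e - e\bigr),
\end{equation*}
and the $A$-invariance of $\mu$, which gives $\mu\bigl(i(a)v\bigr) = \mu(v)$ for every $v \in \St$, one reads off $\tau_e(ab) = \tau_e(a) + \tau_e(b)$. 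Since $k_E$ has characteristic $p$, $\tau_e$ automatically kills $\mu_{p-1}$ and is thus determined by its values at $1+p$ and $p$; in particular it lies in $H$.

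Next I would verify that the projective class $[\tau_e] \in \P^1(k_E)$ (with the convention that the zero homomorphism corresponds to the split case) is independent of the choice of lift $e$. Any other lift has the form $e' = \lambda e + s$ with $\lambda \in k_E^\times$ and $s \in \St$, and $A$-invariance of $\mu$ forces $\mu(i(a)s - s) = 0$, so $\tau_{e'}(a) = \lambda\,\tau_e(a)$. Hence $[\tau_e]$ depends only on the isomorphism class of $\Pi$.

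The last and main step is to match $[\tau_e]$ with the parameter classifying $\Pi$. By Th\'eor\`eme VII.4.18 of \cite{Col}, every non-split extension of $1$ by $\St$ is isomorphic to $E_\sigma$ for some non-zero $\sigma \in H$, and two such extensions are isomorphic precisely when the projective classes $[\sigma]$ coincide. So it suffices to compute $\tau_{e_\sigma}$ on the distinguished lift $e_\sigma$ furnished by Colmez's construction of $E_\sigma$ in \cite[VII.4.4]{Col}: there $i(a) e_\sigma - e_\sigma \in \St$ is given by an explicit formula whose image under $\mu$ (described by Colmez as a multiple of $f \mapsto f(0) - f(\infty)$, as used in the proof of Lemma~\ref{linearSt}) equals $\sigma(a)$, up to the scalar normalising $\mu$. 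This direct computation is the technical heart; once it is done, the previous paragraph shows $[\tau_e] = [\sigma]$. Consequently, either $\mu(i(1+p)e - e) = \mu(i(p)e - e) = 0$, forcing $[\sigma] = 0$ and the extension to split, or $\Pi \cong E_\tau$ with $[\tau] = \bigl(\mu(i(1+p)e - e) : \mu(i(p)e - e)\bigr)$, as claimed.
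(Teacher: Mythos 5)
The paper gives no proof here, citing only [Col, VII.4.4], and your proposal correctly fleshes out that reference: showing via the cocycle identity and $A$-invariance of $\mu$ that $\tau_e(a) := \mu(i(a)e-e)$ defines a continuous homomorphism whose projective class does not depend on the choice of lift $e$ is sound, and it cleanly reduces the claim to Colmez's explicit construction of $E_\sigma$. The one step you assert without verifying --- that for the distinguished lift $e_\sigma$ the quantity $\mu(i(a)e_\sigma - e_\sigma)$ recovers $\sigma(a)$ up to a fixed nonzero scalar (the paper's sign convention relating its $E_\tau$ to Colmez's $E_{-\tau}$ being harmless projectively) --- is precisely what the paper also defers to Colmez, so your treatment matches the paper's in both approach and level of detail.
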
 

Note that for all $x\in \Pi$ and $g\in G$, $gx-x$ is in the
subrepresentation of $\Pi$ isomorphic to the Steinberg representation, 
so we can indeed apply $\mu$ to the given elements.

We use also the following variant:
\begin{lemma}
\label{computetauomega}
Let $\Pi$ be an extension of $\omega^m$ by $\St\otimes\,\omega^m$, for some
$m\in\Z$. 
Let $\mu$ be as in Lemma \ref{linearStomega} and $e$ an element of $\Pi$ that
is not contained in the subrepresentation $\St \otimes \omega^m$.
Then either $\mu(i(1+p)e-e) = \mu(i(p)e-e) = 0$ and  the extension is
split, or $\Pi$ is
isomorphic to $E_{\tau}\otimes\omega^m$, for $\tau$ such that
$[\tau] = (\mu(i(1+p)e-e):\mu(i(p)e-e))$.
\end{lemma}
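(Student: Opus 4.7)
The plan is to reduce Lemma \ref{computetauomega} to Lemma \ref{computetau} by untwisting by $\omega^{-m}\circ\det$. Set $\Pi' := \Pi \otimes (\omega^{-m}\circ\det)$, which is an extension of $1$ by $\St$, and let $e' \in \Pi'$ correspond to $e \in \Pi$ under the evident isomorphism of underlying $k_E$-vector spaces. Then $e'$ does not lie in the subrepresentation $\St \subset \Pi'$, so Lemma \ref{computetau} applies to the triple $(\Pi', e', \mu')$, where $\mu'$ is the form on $\St$ induced by untwisting $\mu$. The goal will be to show that the output of Lemma \ref{computetau} transfers back to give the claimed statement for $\Pi$.

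The crucial observation is that $\omega$, viewed as a character of $\Q_p^\times$ by the conventions of the paper, is trivial on both $1+p$ and $p$: indeed $\omega$ sends $a \in \Z_p^\times$ to its Teichm\"uller reduction, so $\omega(1+p) = \overline{1+p} = 1$, and by definition $\omega(p)=1$. Consequently the twist $\omega^{-m}\circ\det$ acts as the identity on $i(1+p)$ and $i(p)$, so under the identification $\Pi = \Pi' \otimes (\omega^m\circ\det)$ the differences $i(1+p)e - e$ and $i(p)e - e$ in $\St\otimes\omega^m$ correspond to $i(1+p)e' - e'$ and $i(p)e' - e'$ in $\St$. Moreover, untwisting $\mu : \St\otimes\omega^m \to \omega^m$ produces a non-zero $A$-equivariant linear form $\mu' : \St \to k_E$, which by the uniqueness clause of Lemma \ref{linearSt} is a non-zero scalar multiple of Colmez's form. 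The equality of projective classes
\[
(\mu(i(1+p)e-e) : \mu(i(p)e-e)) = (\mu'(i(1+p)e'-e') : \mu'(i(p)e'-e'))
\]
in $\P^1(k_E)$ then follows.

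Now apply Lemma \ref{computetau} to the triple $(\Pi', e', \mu')$: either both $\mu'(i(1+p)e'-e')$ and $\mu'(i(p)e'-e')$ vanish, in which case $\Pi'$ is split and hence so is $\Pi \cong \Pi' \otimes (\omega^m \circ \det)$; or $\Pi' \cong E_\tau$ with $[\tau]$ equal to the above projective class, in which case $\Pi \cong E_\tau \otimes \omega^m$. I do not expect any genuine obstacle: the argument is a formal twist, and the only point requiring verification is the triviality of $\omega$ on $1+p$ and on $p$, which is immediate from the normalization.
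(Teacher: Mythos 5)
Your proposal is correct and is the natural argument for this lemma. The paper states Lemma \ref{computetauomega} as a ``variant'' of Lemma \ref{computetau} without an explicit proof (referring to Colmez's \cite[VII.4.4]{Col} for the method), and the untwisting argument you give is exactly what that omission implicitly asks the reader to supply. The only point of substance is the one you flag: under the paper's convention (geometric Frobenius, so under local class field theory $\omega$ sends $u \in \Z_p^\times$ to $\bar u^{\pm 1}$ and $p$ to $1$), the character $\omega$ is trivial on both $1+p$ and $p$; hence $\omega^{-m}\circ\det$ is trivial on $i(1+p)$ and $i(p)$, so the differences $i(1+p)e-e$ and $i(p)e-e$ are unchanged as elements of the underlying vector space when passing from $\Pi$ to $\Pi' = \Pi\otimes(\omega^{-m}\circ\det)$, and the $A$-equivariant form $\mu$ on $\St\otimes\omega^m$ untwists to an $A$-invariant form on $\St$, unique up to scalar by Lemma \ref{linearSt}. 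This is all that is needed, and your writeup handles it cleanly.
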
 

\subsubsection{Criterion for the extension to be ``peu ramifi\'ee"}

Colmez shows that if $\tau \neq 0$, then
there exists a unique non-split extension class of 
$\pi(p-3,1,\omega)$ by $E_{\tau}$ which we denote by $\Pi_{\tau}$ 
(\cite[Proposition VII.4.25]{Col}). 

Then Colmez's functor attaches to $\Pi_{\tau}$ a Galois representation
$V(\Pi_{\tau})$ which is a non-split extension of 
$1$ by $\omega$ (\cite[Proposition VII.4.24]{Col}).
The description of $V(\Pi_{\tau})$  in \cite[Para. VII.3]{Col} 
gives:

\begin{prop}
$V(\Pi_{\tau})$ is a 
non-split extension of $1$ by $\omega$ and it is a
``peu ramifi\'ee" extension if and only if
$\tau$ is zero on $\Z_p^\times$, that is, $[\tau] = (0:1)$.
\end{prop}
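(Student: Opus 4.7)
The plan is to combine Colmez's explicit description of the Montr\'eal functor on the $(\varphi, \Gamma)$-module attached to $\Pi_\tau$ with Kummer theory, and read off whether the resulting extension class lies in the peu ramifi\'ee line. The first assertion — that $V(\Pi_\tau)$ is a non-split extension of $1$ by $\omega$ — is already Proposition VII.4.24 of \cite{Col} quoted in the excerpt, so essentially all the work is in the ``peu vs.\ tr\`es ramifi\'ee'' dichotomy.

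First I would recall the two natural identifications
\[
H^1(G_{\Q_p}, \bar\F_p(\omega)) \iso \Q_p^\times/(\Q_p^\times)^p \otimes_{\F_p} \bar\F_p, \qquad H^1(G_{\Q_p}, \bar\F_p) \iso \Hom_{\mathrm{cont}}(\Q_p^\times, \bar\F_p),
\]
coming from Kummer theory and local class field theory respectively. The first is $2$-dimensional with distinguished basis $[p], [1+p]$, and by definition the peu ramifi\'ee line is $\bar\F_p \cdot [1+p]$. The second has the dual basis $\tau_p, \tau_{1+p}$ (satisfying $\tau_p(p)=1$, $\tau_p(1+p)=0$, and similarly with indices swapped), and Tate local duality pairs these two cohomology groups perfectly into $\bar\F_p$ in such a way that the annihilator of the peu ramifi\'ee line $\bar\F_p \cdot [1+p]$ is $\bar\F_p \cdot \tau_{1+p}$, that is, those $\tau$ vanishing on $p$.

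The heart of the proof is then to identify the class $[V(\Pi_\tau)] \in H^1(G_{\Q_p}, \bar\F_p(\omega))$ as being (up to a fixed non-zero scalar) the Tate dual of the class $\tau$ itself. I would do this via Colmez's construction in \cite[Para.~VII.3]{Col}: lift the short exact sequence $0 \to E_\tau \to \Pi_\tau \to \pi(p-3,1,\omega) \to 0$ to the level of $(\varphi, \Gamma)$-modules, apply $V$, and use the six-term exact sequence to express the cocycle of $[V(\Pi_\tau)]$ in terms of the defining cocycle of $\tau$ via the explicit formulas of \emph{loc.\ cit}. Reading the result through the Tate pairing above then shows that $[V(\Pi_\tau)]$ lies in the peu ramifi\'ee line $\bar\F_p \cdot [1+p]$ if and only if the Tate-dual $\tau$ is annihilated by $[p]$, i.e.\ $\tau_{1+p}$ is the only non-zero component, which is exactly $\tau(1+p)=0$ with $\tau(p)\neq 0$, that is $[\tau]=(0:1)$. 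Combined with the already-known non-splitness, this yields the proposition.

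The main obstacle is the bookkeeping: one must carefully track normalization conventions in \cite{Col} — notably the sign convention $E_\tau$ vs.\ $E_{-\tau}$ flagged in Section~\ref{tau}, the direction of the local reciprocity map, and the identification of $V$ on the building blocks $\St$ and $\pi(p-3,1,\omega)$ — to ensure that the parity of the Tate pairing is correct and that the final answer really is the class $(0:1)$ rather than its swap $(1:0)$ in the peu ramifi\'ee case. Once these are pinned down the argument is a direct consequence of Colmez's formalism.
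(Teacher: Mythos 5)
The paper's proof is a citation, not a computation: it reads off the proposition from Colmez's explicit description of $V(\Pi_\tau)$ in \cite[Para.~VII.3]{Col}, where the Kummer class of the extension is exhibited in closed form in terms of $\tau$. Your plan — set up Tate local duality, compute the cocycle of $V(\Pi_\tau)$ through the $(\varphi,\Gamma)$-module machinery, and compare with the ``peu ramifi\'ee'' line — is a genuinely different and in principle viable route, but as sketched it has two problems, both located at exactly the step that carries the content of the proposition.

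First, the key claim that $[V(\Pi_\tau)]$ is ``the Tate dual of $\tau$'' is not a well-posed statement: Tate duality furnishes a perfect \emph{pairing} between $H^1(G_{\Q_p},\bar\F_p(\omega))$ and $H^1(G_{\Q_p},\bar\F_p)$, not a canonical isomorphism between the two spaces, so ``the Tate dual of $\tau$'' requires a choice you never specify. You then explicitly defer the normalization check you yourself flag as decisive (ensuring the answer is $(0:1)$ rather than $(1:0)$). That check \emph{is} the content of the proposition; without it there is no argument — which is why the paper simply cites Colmez, who has already performed the $(\varphi,\Gamma)$-module computation.

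Second, the normalizations you do state are internally inconsistent. Under the standard Kummer/Artin/cup-product identifications the Tate pairing becomes evaluation, $\langle [a], \tau'\rangle = \tau'(a)$ (up to sign), so the annihilator of $\bar\F_p\cdot[1+p]$ is $\{\tau':\tau'(1+p)=0\}=\bar\F_p\cdot\tau_p$, not $\bar\F_p\cdot\tau_{1+p}$ as you assert. You then write that ``$\tau_{1+p}$ is the only non-zero component'' is the same as ``$\tau(1+p)=0$ with $\tau(p)\neq 0$''; but if $\tau$ is proportional to $\tau_{1+p}$ then $\tau(p)=0$ and $\tau(1+p)\neq 0$, i.e.\ $[\tau]=(1:0)$, the opposite of what you wrote. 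A reader following your chain literally would conclude $[\tau]=(1:0)$; you land on the correct $(0:1)$ only because these two slips cancel. Until the normalizations are pinned down consistently and the relation between $[V(\Pi_\tau)]$ and $\tau$ is actually derived rather than asserted, this is a plan, not a proof.
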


Hence:

\begin{prop}
\label{criterion}
Suppose that $\Pi$ is a non-split extension of 
$\pi(p-3,1,\omega^{m+1})$ by $E_{\tau}\otimes\omega^m$ for some $m\in\Z$ and
some non-zero $\tau$. Then $V(\Pi)$ is a non-split extension of $\omega^m$ by
$\omega^{m+1}$, which is ``peu ramifi\'ee" if $[\tau] = (0:1)$ and ``tr\`es
ramifi\'ee" otherwise.
\end{prop}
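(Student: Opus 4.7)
The plan is to reduce Proposition~\ref{criterion} to the immediately preceding proposition (the case $m=0$) via a twisting argument, using the compatibility of Colmez's functor $V$ with twists by smooth characters, together with the fact that the peu/tr\`es ramifi\'ee dichotomy is itself invariant under twisting.

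First, I would tensor the given extension $\Pi$ by $\omega^{-m}\circ\det$ to obtain a non-split extension of $\pi(p-3,1,\omega)$ by $E_\tau$. The uniqueness statement recalled just above (Colmez's Proposition VII.4.25, which characterizes $\Pi_\tau$) forces $\Pi \cong \Pi_\tau \otimes(\omega^m\circ\det)$. Next, applying $V$ and invoking its compatibility with smooth twists, namely $V(\Pi'\otimes(\chi\circ\det)) \cong V(\Pi')\otimes\chi$, yields $V(\Pi)\cong V(\Pi_\tau)\otimes\omega^m$. By the preceding proposition, $V(\Pi_\tau)$ is a non-split extension of $1$ by $\omega$ which is peu ramifi\'ee exactly when $[\tau]=(0:1)$. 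Tensoring by $\omega^m$ then identifies $V(\Pi)$ as a non-split extension of $\omega^m$ by $\omega^{m+1}$.

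Finally, I would check that tensoring with $\omega^m$ preserves the peu/tr\`es ramifi\'ee distinction. Via $\Hom(\omega^m,\omega^{m+1})\cong\bar\F_p(\omega)$, extensions of $\omega^m$ by $\omega^{m+1}$ are classified by the same group $\HH^1(G_{\Q_p},\bar\F_p(\omega))$ as extensions of $1$ by $\omega$, and under this identification tensoring with $\omega^m$ acts as the identity on cohomology classes; the peu ramifi\'ee subspace --- the image of $\Z_p^\times$ under Kummer theory --- is intrinsically defined in terms of $\Q_p^\times/(\Q_p^\times)^p\otimes\bar\F_p$ and thus is mapped to itself. Hence $V(\Pi)$ is peu ramifi\'ee iff $V(\Pi_\tau)$ is, iff $[\tau]=(0:1)$. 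There is essentially no obstacle here: once the base case $\Pi_\tau$ is in hand from the preceding proposition, the general statement is a twisting formality, so the main thing to verify carefully is simply the normalization in the compatibility of $V$ with twists by $\chi\circ\det$.
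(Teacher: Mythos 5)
Your proposal is correct and reconstructs exactly what the paper leaves implicit: the paper derives Proposition~\ref{criterion} from the preceding ($m=0$) proposition with nothing more than the word ``Hence:'', and the intended content is precisely the twisting argument you spell out (tensor by $\omega^{-m}\circ\det$, invoke the uniqueness of $\Pi_\tau$ from Colmez's Proposition~VII.4.25 to identify $\Pi\cong\Pi_\tau\otimes\omega^m$, apply $V$ and its compatibility with smooth twists, then note that the peu/tr\`es ramifi\'ee dichotomy is intrinsic to the class in $\mathrm{H}^1(G_{\Q_p},\bar\F_p(\omega))$ and hence twist-invariant). Nothing is missing; the one thing you flag as worth double-checking --- the normalization of $V$ under $\chi\circ\det$-twists --- is indeed the only point deserving care, and it is used with the same convention elsewhere in the paper.
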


\subsubsection{A special extension}

The representation $\pi(0,1,1)$ is a non-split extension of $1$ by
$\St$. We want to know to which $\tau \in H$ it corresponds.

\begin{prop}
\label{pi011}
The representation $\pi(0,1,1)$ is isomorphic to the representation
$E_{\tau}$,  for $\tau\in H$ defined by $\tau(\Z_p^\times) = 0$ and
$\tau(p)=1$, that is, $[\tau] = (0:1)$.
\end{prop}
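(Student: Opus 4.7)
The plan is to exploit the explicit presentation of $Q = \pi(0,1,1)$ as $I/(T-1)$ together with the harmonic-function machinery built up immediately before the proposition, and then apply Lemma~\ref{computetau} to the linear form $\mu$ constructed from the function $\delta$.

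First, I would recall from \cite[Lem.~30]{BL95} (already cited in the excerpt) that the degree function $\deg \in Q^{\vee}$ induces a short exact sequence
\begin{equation*}
0 \longrightarrow \St \longrightarrow Q \xrightarrow{\ \deg\ } 1 \longrightarrow 0.
\end{equation*}
Since $\tau \neq 0$ will come out of the computation below, this extension is automatically non-split, so by \cite[Th.~VII.4.18]{Col} we have $Q \cong E_{\tau'}$ for some non-zero $\tau' \in H$, and it remains only to identify $[\tau'] \in \P^1(k_E)$.

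Second, as observed in the excerpt, the harmonic function $\mu \in Q^{\vee}$ defined by $\mu(v)=\delta(v)$ descends to a linear form $\mu' \in \St^{\vee}$ that is $A$-invariant (by the relation $\mu \circ i(a) = \mu + v_p(a)\deg$ coming from Lemma~\ref{Aontree}). The form $\mu'$ is moreover non-zero: for example $\mu$ is not a scalar multiple of $\deg$ since $\mu(v_0) = 0$ while $\mu(v_1) = 1$. Hence by the uniqueness part of Lemma~\ref{linearSt}, $\mu'$ is a non-zero scalar multiple of the distinguished $A$-invariant form; this rescaling does not affect the class $[\tau']$.

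Third, I would pick $e \in Q$ to be the image of the characteristic function of a vertex $w \in \Delta$ (for concreteness $w = v_0$). Since $\deg(e) = 1$, $e$ lies outside $\St \subset Q$, so Lemma~\ref{computetau} applies and gives
\begin{equation*}
[\tau'] = \bigl( \mu'(i(1+p)e - e) : \mu'(i(p)e - e) \bigr) \in \P^1(k_E).
\end{equation*}
Both arguments of $\mu'$ lie in $\St$ because $\deg$ is $G$-invariant, so I can compute them by lifting to $\mu$: writing the action on compact induction as left translation, $i(a)e$ is the characteristic function of $i(a)w$, so $\mu(i(a)e - e) = \delta(i(a)w) - \delta(w)$. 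By Lemma~\ref{Aontree} this equals $0$ for $a = 1+p \in \Z_p^{\times}$ and equals $1$ for $a = p$. This yields $[\tau'] = (0:1)$, which matches the claimed $\tau$ with $\tau(\Z_p^\times) = 0$ and $\tau(p) = 1$.

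The only genuinely delicate point is the compatibility of conventions: ensuring that Colmez's parameterization $\tau \mapsto E_\tau$ (with the sign convention announced in Section~\ref{tau}, where we use what Colmez writes as $E_{-\tau}$) matches the formula in Lemma~\ref{computetau}. Once this bookkeeping is in place, everything else is a direct application of Lemma~\ref{Aontree} and Lemma~\ref{computetau}, so I do not expect any substantial obstacle.
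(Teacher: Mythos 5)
Your proof is correct and follows essentially the same route as the paper: both identify $\mu'$ in $\St^\vee$ as the $A$-invariant linear form via the harmonic function $\delta$, take $e$ to be (the image of) the characteristic function of a vertex, and read off $[\tau]=(0:1)$ from Lemma~\ref{Aontree}. The only differences are cosmetic — you explicitly check $\mu' \neq 0$ and $e\notin\St$, which the paper leaves implicit.
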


\begin{proof}
This is the first part of the statement of \cite[Lemma 6.18]{Pas}.
\end{proof}

\subsubsection{A preliminary lemma}

\begin{lemma}
\label{quotientlemma}
Let $F$ be a quotient of $\mathrm{ind}_{KZ}^G V_0$, with $F^{ss} \cong \St\oplus 1$. 
Then $F \cong \pi(0,1,1)$.
\end{lemma}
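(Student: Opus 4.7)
The plan is to exploit two facts: $I := \mathrm{ind}_{KZ}^G V_0$ is cyclic, generated by $e = [\mathrm{Id},1]$, and by Frobenius reciprocity, $G$-equivariant maps out of $I$ through $e$ correspond to $KZ$-fixed vectors in the target (since $V_0$ is the trivial $KZ$-representation). Writing $\bar e$ for the image of $e$ in $F$, the strategy is first to pin down the $G$-module structure of $F$, and then to show that $T\bar e = \bar e$, which will force the surjection $I \twoheadrightarrow F$ to factor through $\pi(0,1,1) = I/(T-1)I$.

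For the structural step, since $\St^{KZ} = 0$ while $1^{KZ}$ is one-dimensional, taking $KZ$-invariants in any filtration of $F$ with graded pieces $\St$ and $1$ yields $\dim_{k_E} F^{KZ} \leq 1$; moreover, in each ``bad'' possibility for $F$ (split, or with $1$ as subrepresentation and $\St$ as quotient), any $KZ$-invariant vector in $F$ must sit inside a $G$-stable copy of $1 \subset F$. Now $\bar e$ is nonzero, lies in $F^{KZ}$, and generates $F$ over $G$. In the bad cases the $G$-subrepresentation generated by $\bar e$ would be contained in a copy of $1$, contradicting that it equals all of $F$. Hence the only remaining possibility is that $F$ is a non-split extension $0 \to \St \to F \to 1 \to 0$, with $F^{KZ} = k_E \bar e$.

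Since $T$ commutes with $G$, it preserves $F^{KZ}$, so $T\bar e = \lambda \bar e$ for some $\lambda \in k_E$. To identify $\lambda$, I would push $T\bar e$ forward along the trivial quotient $F \twoheadrightarrow F/\St = 1$. Using the explicit Hecke formula
\[
Te = \sum_{\lambda \in \F_p} \left[\matr{p}{[\lambda]}{0}{1},\, 1\right] + \left[\matr{1}{0}{0}{p},\, 1\right],
\]
which is a sum of $p+1$ elementary functions each mapping to $1$ in the trivial quotient, the image of $T\bar e$ is $(p+1)\cdot 1 = 1$ in $k_E$, matching the image of $\bar e$. Hence $\lambda = 1$, so $(T-1)\bar e = 0$.

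Finally, since $(T-1)I$ is the $G$-module generated by $(T-1)e$, it maps to zero in $F$, and the surjection factors as $I \twoheadrightarrow \pi(0,1,1) \twoheadrightarrow F$. Both sides of the latter surjection are non-split length-$2$ extensions of $1$ by $\St$, forcing it to be an isomorphism and giving $F \cong \pi(0,1,1)$. The main obstacle in this plan is the structural step in the second paragraph: ruling out the split and wrong-order extensions by invoking the rigidity of the cyclic vector $\bar e$; once $F$ is cornered into the form $0 \to \St \to F \to 1 \to 0$, the Hecke computation and the final identification are essentially formal.
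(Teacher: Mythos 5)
Your argument is correct and reaches the right conclusion, but it is organized differently from the proof in the paper, and the comparison is worth spelling out. The paper's proof hinges on the fact that $\St$ is not a quotient of $I := \mathrm{ind}_{KZ}^G V_0$: this is used to show that any nonzero $G$-map $I\to F$ is surjective, then the paper picks an irreducible quotient $Q$ of $F$ and invokes an abstract Hecke-eigenvector argument in $\Hom_G(I,Q)$, citing \cite{BL94} and \cite{Breuil03a}, to build a composite $\alpha'=\alpha\circ P(T)$ that kills $(T-1)I$; the extension structure of $F$ is only pinned down at the very end via a Jordan--H\"older count. You instead observe that $\St^{KZ}=0$ while $1^{KZ}$ is a line, deduce $\dim_{k_E}F^{KZ}\le 1$, and use the cyclic $KZ$-fixed generator $\bar e$ to rule out the split extension and the extension with $1$ as a subrepresentation (in both of those cases $\bar e$ would lie in a proper $G$-stable copy of $1$, contradicting that it generates $F$). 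This corners $F$ into the form $0\to\St\to F\to 1\to 0$, after which the eigenvalue $1$ is read off directly from the explicit formula for $T$, pushed forward along the trivial quotient of $F$. Note that by Frobenius reciprocity applied to the irreducible $\St$, your ingredient $\St^{KZ}=0$ is precisely equivalent to the paper's ingredient that $\St$ is not a quotient of $I$; so the two proofs rest on the same input but deploy it differently, with your route trading the cited Hecke-eigenvector existence statement for an explicit computation, and determining the structure of $F$ before, rather than after, the factorization through $\pi(0,1,1)$.

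One small caution on wording: the phrase ``$T$ commutes with $G$, so it preserves $F^{KZ}$'' suggests that $T$ acts on $F$ directly, which is not a priori the case since $T$ is only defined on $I$ and $\ker\alpha$ need not be known $T$-stable in advance. The precise statement, which is what your computation actually uses, is that precomposition with $T$ induces an endomorphism of $\Hom_G(I,F)\cong F^{KZ}$; equivalently, since $Te\in I^{KZ}$ and $\alpha$ is $G$-equivariant, $\alpha(Te)$ is a well-defined element of the line $F^{KZ}$, hence a scalar multiple of $\bar e$. With that phrasing the argument goes through exactly as you describe.
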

    
\begin{proof}
The Steinberg representation $\St$ is known not to be a
quotient of $\mathrm{ind}_{KZ}^G V_0$. So $F$ is a non-split extension of the trivial
representation by the Steinberg representation, hence is isomorphic to
$E_\tau$ for some $\tau: \Q_p^\times \to k$.

As before, let $I(1) \subset G$ be the subgroup of $K$ of matrices that
are upper triangular and unipotent modulo $p$. If 
$\mathrm{ind}_{KZ}^G V_0 \to 1$ is a non-zero $G$-equivariant map, then it induces a surjection
$(\mathrm{ind}_{KZ}^G V_0)^{I(1)} \to 1$. Indeed, $(\mathrm{ind}_{KZ}^G
V_0)^{I(1)}$ contains the line generated by $[1,1]$, which generates
$\mathrm{ind}_{KZ}^G V_0$ over $G$, hence has non-zero image by any non-zero
$G$-equivariant map.

Consider now the map $\mathrm{ind}_{KZ}^G V_0 \to 1$ given by the
composition of the maps $\mathrm{ind}_{KZ}^G V_0 \to F$ and $F \to 1$.
Taking $I(1)$-invariants, we get a map $(\mathrm{ind}_{KZ}^G V_0)^{I(1)} \to 1$ that factors
through $F^{I(1)}$, so in particular the map $F^{I(1)} \to 1$ is surjective. 
It follows
from the proof of \cite[Lemma 6.18]{Pas} that the natural map
$(E_\tau)^{I(1)} \to 1$ is surjective if and only if $[\tau] = (0:1)$. So
$F$ is isomorphic to %$E_{(0:1)}$, hence to 
$\pi(0,1,1)$ by Proposition
\ref{pi011}.  
\end{proof}

\subsection{A lemma on changing lattices}

We denote by $R = \O_E$ the ring of integers of the finite extension $E$
of $\Q_p$.
The following lemma is very
similar to Proposition VII 4.5 of \cite{Col}, but it is important for us
that $M$ is allowed to be reducible. %, and does not change during the process.

\begin{lemma}
\label{changelattice}
Let $\Theta \in \RepOE(G)$ and $\Pi = \Theta \otimes_{\O_E} E \in
\RepE(G)$, with $\Pi$ irreducible.
Suppose that $\bar\Theta$ has a subrepresentation $M$ that is
indecomposable. Then there exists an $\O_E[G]$-lattice $\Theta'$ in $\Pi$, commensurable to
$\Theta$, such that $\bar\Theta'$ is indecomposable and contains $M$ as a
subrepresentation.
\end{lemma}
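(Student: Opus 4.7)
The plan is to construct $\Theta'$ by a maximality argument within a carefully chosen family of lattices. First, let $\tilde M \subset \Theta$ denote the preimage of $M$ under the reduction map $\Theta \twoheadrightarrow \bar\Theta$, so that $p\Theta \subset \tilde M \subset \Theta$. Consider the family $\mathcal{L}$ of all $G$-stable $\O_E$-lattices $L \subset \Pi$, commensurable with $\Theta$, satisfying $\Theta \subset L$ together with $\tilde M \cap pL = p\Theta$. The second condition ensures that the induced map $M = \tilde M/p\Theta \hookrightarrow L/pL = \bar L$ is injective, so $M$ embeds as a subrepresentation of $\bar L$ for each $L \in \mathcal{L}$. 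The family is non-empty since $\Theta \in \mathcal{L}$.

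Next, I would produce a maximal element $\Theta' \in \mathcal{L}$ with respect to inclusion. Using the admissibility of $\Pi \in \RepE(G)$ and the finite length of the reductions $\bar L$, one shows that lattices in $\mathcal{L}$ are uniformly bounded above: enlarging $L$ beyond a certain threshold would force $pL$ to contain additional lifts of $M$, violating $\tilde M \cap pL = p\Theta$. Within this uniform upper bound the $\O_E[G]$-lattices satisfying the defining conditions form a finite poset, yielding a maximal $\Theta' \in \mathcal{L}$.

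Finally, I would prove $\bar{\Theta'}$ is indecomposable. Suppose for contradiction that $\bar{\Theta'} = A \oplus B$ with both summands nonzero, and consider the embedded copy $M \hookrightarrow \bar{\Theta'}$, projected onto $A$ and $B$. If the projection to $B$ vanishes (so $M \subset A$), set $L' := p^{-1}\tilde B$, where $\tilde B \subset \Theta'$ is the preimage of $B$; then $L' \supsetneq \Theta'$ with $L'/\Theta' \cong B$, and $\tilde M \cap pL' = \tilde M \cap \tilde B = p\Theta$ follows from $M \cap B = 0$ in $\bar{\Theta'}$, so $L' \in \mathcal{L}$, contradicting the maximality of $\Theta'$; the symmetric case $M \subset B$ is analogous. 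The main obstacle is the remaining ``diagonal'' case where both projections of $M$ are nonzero, in which the direct enlargement destroys $M$ in the reduction. To handle this case, I would exploit the indecomposability of $M$, which forces $(M \cap A) \oplus (M \cap B) \subsetneq M$ and makes $\mathrm{End}_G(M)$ local by Fitting's lemma, and then use the graph of the diagonal embedding to re-decompose $\bar{\Theta'} = A'' \oplus B''$ into a new pair of summands relative to which $M$ is contained in a single factor, thereby reducing to the previous case. Handling this diagonal case cleanly is the crux of the argument.
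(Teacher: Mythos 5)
Your overall strategy --- maximality in a family $\mathcal{L}$ of lattices and deriving a contradiction from a decomposition of $\bar\Theta'$ --- is plausible, and your "easy case" computation is correct, indeed slightly better than you state: the exact condition for $L' = p^{-1}\tilde B$ to lie in $\mathcal{L}$ is $\tilde M \cap \tilde B = p\Theta$, which (since $\tilde M$ maps onto $M \subset \bar\Theta'$) is equivalent to $M \cap B = 0$, i.e.\ to $\pi_A|_M$ being \emph{injective}, not to $M \subset A$. So the "purely diagonal" case where both $\pi_A|_M$ and $\pi_B|_M$ are injective is in fact already handled. The genuinely problematic case is $M \cap A \neq 0 \neq M \cap B$. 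Here your proposed fix --- using the graph of the embedding to re-decompose $\bar\Theta' = A'' \oplus B''$ with $M \subset A''$ --- does not work in general. The graph trick requires one projection $\pi_A|_M$ to be injective with $\pi_B\circ(\pi_A|_M)^{-1}$ extending to a map $A \to B$; but in the problematic case neither projection is injective. One can in fact exhibit (over a non-algebraically-closed residue field $k_E$, which is the relevant situation) an indecomposable module $M$ of length $4$ over $k_E[x,y]/(x^2,y^2)$ embedded in $R \oplus R$, $R = k_E[x,y]/(x^2,y^2)$, for which $M \cap B'' \neq 0$ for \emph{every} nonzero proper direct summand $B''$; so no re-decomposition rescues the argument, and the maximality argument as you set it up genuinely gets stuck.

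A secondary issue is that the boundedness of $\mathcal{L}$ (so that a maximal element exists) is asserted but not justified: the fact that $L \in \mathcal{L}$ forbids $L \supset p^{-1}\Theta$ does not by itself bound $L$, and one needs the nontrivial finiteness theorem that for an admissible topologically irreducible $\Pi$ the commensurability classes of $G$-stable lattices form a bounded set (this is the type of result invoked via \cite{BC14} and Colmez's book). The paper itself does not give an explicit argument --- it cites the second paragraph of Colmez's Proposition VII.4.5 verbatim, observing that the argument there uses only indecomposability of the submodule. You would need to look at that proof (or supply a genuinely new idea for the case $M \cap A \neq 0 \neq M \cap B$) to close the gap; as written, your Step~3 is incomplete.
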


\begin{proof}
The second paragraph of the proof of 
Proposition VII 4.5 of \cite{Col} applies without change, with $M$
playing the role of $W_1$: note that at this point of the proof the fact
that $W_1$ is irreducible does not play a role anymore, only the fact
that it is indecomposable.
\end{proof}

\begin{cor}
\label{peuortres}
Let $\Theta \subset \hat{\Pi}_{k,a_p}$ be an $R[G]$-lattice commensurable
to $\hat{\Theta}_{k,a_p}$. Suppose that $\bar\Theta$ is an extension of 
$\pi(p-3,1,\omega^{m+1})$ by $M$, where $M$ is an extension of
$\omega^m$ by $\St\otimes\,\omega^m$, for some $m\in \Z$. 

If $M$ is non-split and 
isomorphic to $E_{\tau}\otimes \omega^m$ for $[\tau] = (0:1)$,
then there exists a lattice $\Theta'$ commensurable to $\Theta$ such that
$V(\bar\Theta')$ is a non-split, ``peu ramifi\'ee" extension of $\omega^m$
by $\omega^{m+1}$.

If $M$ is non-split and 
isomorphic to $E_{\tau}\otimes \omega^m$ for $[\tau] = (1:x)$, for some
$x\in k_E$, 
then there exists a lattice $\Theta'$ commensurable to $\Theta$ such that
$V(\bar\Theta')$ is a non-split, ``tr\`es ramifi\'ee" extension of $\omega^m$
by $\omega^{m+1}$.
\end{cor}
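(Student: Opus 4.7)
The plan is to apply Lemma~\ref{changelattice} to replace $\Theta$ by a commensurable lattice $\Theta'$ whose reduction is isomorphic to $\Pi_\tau \otimes \omega^m$, and then invoke Proposition~\ref{criterion} to identify $V(\bar\Theta')$ explicitly. First I would observe that $M \cong E_\tau \otimes \omega^m$ (with $\tau \neq 0$) is indecomposable: its two Jordan--H\"older factors $\omega^m$ and $\St \otimes \omega^m$ are distinct irreducible $G$-modules, so being non-split is equivalent to being indecomposable. Hence the hypothesis of Lemma~\ref{changelattice} is satisfied, and the lemma produces a lattice $\Theta'$, commensurable to $\Theta$, such that $\bar\Theta'$ is indecomposable and contains $M$ as a subrepresentation.

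Next, because $\Theta$ and $\Theta'$ are commensurable, their reductions have the same semisimplification. By hypothesis, the Jordan--H\"older factors of $\bar\Theta$ are $\pi(p-3,1,\omega^{m+1})$, $\St \otimes \omega^m$, and $\omega^m$, each with multiplicity one. Since the subrepresentation $M \subset \bar\Theta'$ already accounts for the latter two, the quotient $\bar\Theta'/M$ is irreducible and isomorphic to $\pi(p-3,1,\omega^{m+1})$. Thus $\bar\Theta'$ is an extension of $\pi(p-3,1,\omega^{m+1})$ by $E_\tau \otimes \omega^m$, and indecomposability forces this extension to be non-split. By \cite[Proposition~VII.4.25]{Col}, the non-split extension of $\pi(p-3,1,\omega^{m+1})$ by $E_\tau \otimes \omega^m$ is unique up to isomorphism, namely $\Pi_\tau \otimes \omega^m$; hence $\bar\Theta' \cong \Pi_\tau \otimes \omega^m$.

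Finally, Proposition~\ref{criterion} applied to $\bar\Theta'$ shows that $V(\bar\Theta')$ is a non-split extension of $\omega^m$ by $\omega^{m+1}$ which is ``peu ramifi\'ee'' when $[\tau] = (0:1)$ and ``tr\`es ramifi\'ee'' when $[\tau] = (1:x)$ for some $x \in k_E$, treating the two cases of the corollary uniformly. Since $\Theta' \subset \hat\Pi_{k,a_p}$ is commensurable to $\hat\Theta_{k,a_p}$, the image $L := V(\Theta')$ is a $G_{\Q_p}$-stable lattice inside $V(\hat\Pi_{k,a_p}) = V_{k,a_p}$, and by the compatibility of $V$ with reduction modulo $p$ we have $\bar L = V(\bar\Theta')$, as desired. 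The main (modest) obstacle is simply the bookkeeping of Jordan--H\"older factors after the lattice change; once Lemma~\ref{changelattice} is in hand and Colmez's uniqueness result is invoked, the conclusion is forced without any delicate extra computation.
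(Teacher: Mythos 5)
Your proof is correct and takes the same route as the paper: the paper's proof is simply the one-line remark that one applies Lemma~\ref{changelattice} to $(\Theta,M)$ and then Proposition~\ref{criterion}; your write-up spells out the intermediate bookkeeping — indecomposability of $M$, preservation of semisimplification under commensurability, irreducibility of $\pi(p-3,1,\omega^{m+1})$ for $p\geq 5$, and non-splitness from indecomposability of $\bar\Theta'$ — that the paper leaves implicit. The only superfluous step is the appeal to Colmez's uniqueness result for $\Pi_\tau$: Proposition~\ref{criterion} applies directly to any non-split extension of $\pi(p-3,1,\omega^{m+1})$ by $E_\tau\otimes\omega^m$, so identifying $\bar\Theta'$ with $\Pi_\tau\otimes\omega^m$ is not required.
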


\begin{proof}
It suffices to apply Lemma \ref{changelattice} to $\Theta$, $M$ and then
apply Proposition \ref{criterion}.
\end{proof}

\subsection{The case $a=p-1$ and $r \geq 2p-2$ and $p$ does not divide $r+1$ and
$\overline{\frac{a_p}{p(r+1)}} = \pm 1$}

Let $\eps = \overline{\frac{a_p}{p(r+1)}}$, so that $\eps = \pm 1$.

\begin{thm}
\label{ap-1ramifiethm}
Let $p \geq 5$ and $r \geq 2p-2$. 
Suppose $p-1$ divides $r$, $p$ does not divide $r+1$ and
$\overline{a_p/p(r+1)} =  \pm 1$.
Then, there exists a $\GL_2(\Q_p)$-stable lattice $\Theta'$ 
in $\Pi_{k,a_p}$ such that
$V(\bar{\Theta}')$ is a non-split, ``peu ramifi\'ee" extension of
$\mu_{\overline{a_p/p(r+1)}}$ by $\mu_{\overline{a_p/p(r+1)}} \cdot \omega$.
\end{thm}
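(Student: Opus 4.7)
The strategy is to take a well-chosen sublattice of the standard lattice to reverse the filtration on its reduction, so that the automatically ``peu ramifi\'ee'' extension appears as a sub-representation, and then apply Colmez's functor directly.

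Starting from the standard lattice $\hat\Theta = \hat\Theta_{k,a_p}$, Propositions~\ref{Theta}(i) and~\ref{red} specialised to $a = p-1$ (so $\bar u = \eps$ and $\omega^a = 1$) give
\[
0 \to F_0 \to \bar{\hat\Theta} \to F_2 \to 0,
\]
with $F_0$ a quotient of $\pi(p-3, \eps, \omega)$ and $F_2$ a quotient of $\pi(0, \eps, 1)$. Compatibility with the mod $p$ Local Langlands Correspondence gives $\bar{\hat\Theta}^{\mathrm{ss}} \cong \pi(p-3, \eps, \omega)^{\mathrm{ss}} \oplus \pi(0, \eps, 1)^{\mathrm{ss}}$, of total Jordan--H\"older length three (as $\pi(p-3, \eps, \omega)$ is an irreducible principal series for $p \geq 5$, while $\pi(0, \eps, 1)$ has length two). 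A length count forces $F_0 = \pi(p-3, \eps, \omega)$, and, via Lemma~\ref{quotientlemma} together with the twist identification $\pi(0, \eps, 1) \cong \pi(0, 1, 1) \otimes \mu_\eps$, Proposition~\ref{pi011} gives $F_2 \cong E_\tau \otimes \mu_\eps$ with $[\tau] = (0:1)$, the peu ramifi\'ee class.

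Now define the $G$-stable $\O_E$-lattice
\[
\Theta' := \{\, x \in \hat\Theta : x \bmod p\hat\Theta \in F_0 \,\} \subset \hat\Pi_{k,a_p},
\]
which contains $p\hat\Theta$ and is therefore commensurable with $\hat\Theta$. The inclusions $p\hat\Theta \subset \Theta' \subset \hat\Theta$, together with the $G$-equivariant isomorphism $\hat\Theta/\Theta' \xrightarrow{\sim} p\hat\Theta/p\Theta'$ given by multiplication by $p$, yield
\[
0 \to F_2 \to \bar\Theta' \to F_0 \to 0,
\]
so that $F_2$ now appears as a sub-representation of $\bar\Theta'$.

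Finally apply Colmez's functor $V$. It is exact on $\Reptors(G)$ and annihilates the irreducible principal series $\pi(p-3, \eps, \omega) = F_0$: applying $V$ to the defining sequence of $\Pi_\tau$ and comparing with Proposition~\ref{criterion} (which says $V(\Pi_\tau)$ is two-dimensional) forces $V(\pi(p-3, 1, \omega)) = 0$. Hence $V(\bar\Theta') \cong V(F_2) \cong V(E_\tau) \otimes \mu_\eps$, and by Proposition~\ref{criterion} with $m = 0$ combined again with the vanishing above, $V(E_\tau)$ is a non-split peu ramifi\'ee extension of $1$ by $\omega$. Thus $V(\bar\Theta')$ is a non-split peu ramifi\'ee extension of $\mu_\eps$ by $\mu_\eps \cdot \omega$, and setting $L := V(\Theta')$ together with $V(\bar\Theta') = \overline{V(\Theta')}$ yields the required lattice. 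The delicate points are the vanishing $V(\pi(p-3,1,\omega)) = 0$ and the compatibility of $V$ with unramified twists, both standard consequences of Colmez's construction in \cite{Col}.
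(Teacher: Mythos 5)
Your filtration analysis of $\bar{\hat\Theta}_{k,a_p}$ is correct, and the naive preimage lattice $\Theta' = \{x \in \hat\Theta : x \bmod p\hat\Theta \in F_0\}$ does yield a $G$-stable lattice commensurable with $\hat\Theta$ with reversed reduction
\[
0 \to F_2 \to \bar\Theta' \to F_0 \to 0, \qquad F_2 = \pi(0,\eps,1),\ F_0 = \pi(p-3,\eps,\omega).
\]
This is a genuinely different (and simpler) lattice from the paper's explicit $\mathcal{V}_r = \Sym^r\Zbar_p^2 + \eta\Sym^{r-(p+1)}\Zbar_p^2$, whose image $\Theta'$ contains $\hat\Theta$ rather than being contained in it. However, your final paragraph breaks down in two places.

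First, the claim that $V(\pi(p-3,1,\omega)) = 0$ is false. Colmez's functor takes an irreducible principal series to a nonzero character of $G_{\Q_p}$, not to zero; only one-dimensional $G$-representations are killed (this is exactly what the paper uses in the proof of Corollary~\ref{computedsplit}, where the line $L$ is killed but the principal series and Steinberg pieces are not). Your length count on the defining sequence of $\Pi_\tau$ forgets that $V(E_\tau)$ is only one-dimensional — $V$ already kills the trivial line inside $E_\tau$, so $V(E_\tau) = V(\St)$ is a character, not two-dimensional. Indeed, $V(\bar\Theta')$ must be two-dimensional (being the reduction of a lattice in the two-dimensional $V_{k,a_p}$), and since $V(F_2) = V(E_\tau\otimes\mu_\eps)$ is one-dimensional, $V(F_0) = V(\pi(p-3,\eps,\omega))$ must also be one-dimensional. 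Consequently $V(\bar\Theta') \not\cong V(F_2)$, and the conclusion you draw does not follow.

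Second, and independently, to apply Proposition~\ref{criterion} you need $\bar\Theta'$ to be a \emph{non-split} extension of $\pi(p-3,1,\omega)$ by $E_\tau\otimes\mu_\eps$, so as to identify it with Colmez's $\Pi_\tau$. Nothing in the preimage construction guarantees this: if the standard lattice's extension $0 \to F_0 \to \bar{\hat\Theta} \to F_2 \to 0$ were split, the reversed extension would be split as well, and $V(\bar\Theta')$ would then be the split extension $1 \oplus \omega$ (twisted), not the peu ramifi\'ee one. The paper handles this in two ways: explicitly, via the lattice $\mathcal{V}_r$ and a direct non-splitness computation (Lemma~\ref{thetaprimenonsplit}); and non-constructively, via Lemma~\ref{changelattice}/Corollary~\ref{peuortres}, which packages Colmez's lattice-changing argument so that one only needs the indecomposable subrepresentation $M = F_2 = \pi(0,\eps,1) = E_\tau\otimes\mu_\eps$ with $[\tau] = (0:1)$ — which you do establish via Lemma~\ref{quotientlemma} and Proposition~\ref{pi011}. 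So the repair is straightforward: replace the last paragraph by an invocation of Corollary~\ref{peuortres} applied to $\Theta'$ with $M = F_2$ and $m=0$ (twisting by $\mu_{-1}$ when $\eps = -1$). The resulting proof is genuinely shorter than the paper's, but it no longer produces an explicit lattice, which is the extra content the paper's approach buys (see the remark following Lemma~\ref{thetaprimenonsplit}).
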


\begin{proof}
Without loss of generality, assume that $\overline{a_p/p(r+1)} = 1$.
Let $\Theta$ be the standard lattice, so it is an $\O_E$ module for $E = \Q_p(a_p)$.  
We proved in Section
\ref{case3ap-1} (see Propositions~\ref{Theta} and \ref{red}, with $a = p-1$, $\bar u = 1$, 
and the proof of Theorem~\ref{3ap-1}) that the reduction of $\Theta$ is an extension
$$
0 \to F_0 \to \bar\Theta \to F_2 \to 0,
$$
where $F_0$ is isomorphic to $\pi(p-3,1,\omega)$
and $F_2$ is isomorphic to $\pi(0,1,1)$.
Let $\pi$ be the projection $\Theta \to \bar\Theta$ and $\varpi$ be a
uniformizer of $E$. We define a new $G$-invariant lattice by
setting $\Theta'' = \Theta + (1/\varpi)\pi^{-1}(F_0)$. Then the inclusion
$\Theta \to \Theta''$ induces a map $\bar\Theta \to \bar\Theta''$, and the
kernel of this map is $F_0$. 
So $\bar\Theta''$ has a subrepresentation isomorphic to
$F_2$, that is, $\pi(0,1,1)$, and the quotient is necessarily
isomorphic to $\pi(p-3,1,\omega)$. So we can apply Corollary
\ref{peuortres} to the lattice $\Theta''$, to deduce the existence of 
the lattice $\Theta'$ with the desired properties.
\end{proof}

\begin{remark}
Actually in this case the lattice $\Theta'$ can be made entirely explicit. 
Consider the lattice $\mathcal{V}_r = \Sym^r\Zbar_p^2 + \eta\Sym^{r-(p+1)}\Zbar_p^2$ 
in $\Sym^r\Qbar_p^2$, where $\eta = \theta/p$.
This lattice is stable under the action of $K$,
as for all $\gamma\in K$,
we have $\gamma \eta \in \Zbar_p\eta + \Sym^{p+1}\Zbar_p^2$.
Let $\Theta'$ be the image of $\mathrm{ind}_{KZ}^G \mathcal{V}_r$ inside
$\mathrm{ind}_{KZ}^G \Sym^r\Qbar_p^2/((T-a_p)\mathrm{ind}_{KZ}^G\Sym^r\Qbar_p^2)$
and let $\bar{\Theta}'$ be its reduction modulo 
$\mathfrak{m}_{\Qbar_p}$. Then easy but tedious computations show that
$V(\bar{\Theta}')$ is a non-split, ``peu ramifi\'ee" extension of
$\mu_{\eps}$ by $\mu_{\eps} \cdot \omega$.
\end{remark}

\subsection{The case $a=2$ and $a_p$ is close to $\eps pr/2$ and $r$ is not $2/3 \mod p$}
\label{a2r2}
%Note that the condition that $a_p$ is close to $\eps pr/2$ implies
%that $r \not\equiv 0 \mod p$.

Let $E = \Q_p(a_p)$, let $R = \O_E$ and let $\m_E$ be its maximal ideal, with
uniformizer $\varpi$.

%If $\overline{a_p/p}=\pm {r/2} \in k_E$, then $\overline {u(a_p)}\equiv \pm {1}$ 
%and one may ask if $\bar V_{k,a_p}$ is ``peu''  `or ``tr\`es ramifi\'ee''. 
%We shall show that the former case always occurs.

\subsubsection{Statement}

\begin{thm}
\label{a2peuramifiethm}
Let $p \geq 5$ and $r \geq p+1$. Suppose that $r\equiv 2\mod (p-1)$, 
$r\not\equiv 2/3\mod p$, and that
$a_p$ is close to $\eps pr/2$, for $\eps\in \{\pm 1\}$.
Then there is a lattice $\Theta'$ in $\hat\Pi_{k,a_p}$ such that
$V(\bar\Theta')$ is a ``peu ramifi\'ee" extension of $\mu_{\eps} \cdot
\omega$ by $\mu_{\eps} \cdot \omega^2$.  
\end{thm}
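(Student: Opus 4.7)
The plan is to construct a non-standard $G$-stable lattice $\Theta' \subset \hat\Pi_{k,a_p}$ whose reduction $\bar\Theta'$ is an indecomposable, non-split extension of $\pi(p-3,\eps,\omega^2)$ by $\pi(0,\eps,\omega)$. Since $\pi(0,1,1) \cong E_\tau$ with $[\tau]=(0:1)$ by Proposition~\ref{pi011}, twisting by $\mu_\eps\omega$ gives $\pi(0,\eps,\omega) \cong E_\tau \otimes \mu_\eps\omega$, so Proposition~\ref{criterion} (in its $\mu_\eps$-twisted form) will then imply that $V(\bar\Theta')$ is a non-split ``peu ramifi\'ee'' extension of $\mu_\eps\omega$ by $\mu_\eps\omega^2$, as required.

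To produce $\Theta'$, I would adapt the explicit approach of Section~\ref{ap-1ramifiethm}: take
$$ \mathcal{V}_r \;=\; \Sym^r \bar\Z_p^2 \;+\; \eta\,\Sym^{r-(p+1)}\bar\Z_p^2, \qquad \eta = \theta/p, $$
possibly modified further to reflect the relation $\overline{a_p/p}=\eps r/2$, and set $\Theta'$ to be the image of $\mathrm{ind}_{KZ}^G\mathcal{V}_r$ in $\hat\Pi_{k,a_p}$. Filtering $\bar{\mathcal{V}}_r$ by $K$-stable submodules $M_0\subset M_1 \subset M_2$ analogous to those of Section~\ref{ap-1ramifiethm}, and carrying out Hecke computations $(T-a_p)f=\cdots$ in the same spirit as those in Section~\ref{casea2}, one would show that $\bar\Theta'$ fits in a short exact sequence
$$ 0 \to N \to \bar\Theta' \to \pi(p-3,\eps,\omega^2) \to 0, $$
where $N$ is a quotient of $\mathrm{ind}_{KZ}^G V_0 \otimes (\mu_\eps\omega\circ\det)$ whose semisimplification is $\St\otimes\mu_\eps\omega \oplus \mu_\eps\omega$. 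By Lemma~\ref{quotientlemma} (twisted by $\mu_\eps\omega$), this forces $N \cong \pi(0,\eps,\omega)$.

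Two further items must then be verified. Non-splitness of the extension $0 \to N \to \bar\Theta' \to \pi(p-3,\eps,\omega^2) \to 0$ would be proved by contradiction exactly as in Lemma~\ref{thetaprimenonsplit}: assume a $G$-equivariant splitting onto the character quotient $\mu_\eps\omega$ of $N$, and derive a contradiction by computing the action of $\matr{1}{0}{1}{1}$ on the non-standard generator $\eta Y^{r-p-1}$ of $\bar{\mathcal V}_r$, using the identity
$$ \matr{1}{0}{1}{1}\cdot\eta Y^{r-p-1} \;\equiv\; \eta Y^{r-p-1} + \sum_{i=1}^{p-1}\delta_i X^iY^{r-i} \pmod{\mathfrak m_{\bar\Q_p}}, \qquad \delta_i = \tbinom{p}{i}/p. $$
Identification of $[\tau]$ is done via Lemma~\ref{computetauomega}: for a chosen lift $e\in\bar\Theta'$ of a generator of $\pi(p-3,\eps,\omega^2)$, one computes $\mu(i(1+p)e - e)$ and $\mu(i(p)e - e)$, where $\mu$ is the $A$-invariant linear form of Lemma~\ref{linearStomega}, and shows that $\mu(i(1+p)e-e)=0$ while $\mu(i(p)e-e)\neq 0$, forcing $[\tau]=(0:1)$.

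The main obstacle is this last identification of $[\tau]$. Carrying it out requires very precise Hecke computations for a specific lift $e$, with the hypothesis $\overline{a_p/p}=\eps r/2$ entering essentially to produce the vanishing $\mu(i(1+p)e-e)=0$. The assumption $r\not\equiv 2/3 \bmod p$ ensures that this is genuinely the case ``$a_p$ close to $\eps pr/2$'' and not the companion case ``$a_p$ close to $\eps p(1-r)$'' treated in Theorem~\ref{a2ramifiethm}; algebraically, if $r\equiv 2/3 \bmod p$, then $\eps r/2$ and $\eps(1-r)$ coincide in $\bar\F_p$ and the dichotomy used throughout Section~\ref{a2r2} collapses.
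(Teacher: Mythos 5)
Your high-level goal is right, but the paper reaches it by a genuinely different and much shorter route that constructs no explicit non-standard lattice. The paper works with the standard lattice $\Theta_{k,a_p}$ and its submodule $U_1$ (the image of $\mathrm{ind}_{KZ}^G W_1$). The key input is Proposition~\ref{F'0}, which shows that $F_0' = U_1/F_1'$ (a quotient of $\mathrm{ind}_{KZ}^G J_0 = \mathrm{ind}_{KZ}^G (V_{p-1}\otimes D)$) vanishes precisely when $\overline{a_p/p}\neq \lambda\overline{(1-r)}$; this holds here because $a_p$ is close to $\eps pr/2$ and $r\not\equiv 2/3 \pmod p$. That forces $U_1$ to be a quotient of $\mathrm{ind}_{KZ}^G (V_0\otimes D)$, so Lemma~\ref{quotientlemma} gives $U_1 \cong \pi(0,1,\omega)$, and then Corollary~\ref{peuortres} (built on Lemma~\ref{changelattice}) abstractly produces the desired lattice $\Theta'$ with indecomposable reduction. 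In particular no explicit $\Theta'$ is written down, no non-splitness check \`a la Lemma~\ref{thetaprimenonsplit} is needed (Lemma~\ref{changelattice} delivers indecomposability for free), and no $\mu$-computation of $[\tau]$ is required: Proposition~\ref{pi011} already determines $[\tau]$ once $U_1\cong\pi(0,1,\omega)$, so your last paragraph is redundant.

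Your plan to transplant the lattice $\mathcal{V}_r = \Sym^r\bar\Z_p^2 + (\theta/p)\Sym^{r-(p+1)}\bar\Z_p^2$ from the $a=p-1$ case into $a=2$ has a real gap. That lattice works for $a=p-1$ because $J_2 = V_0$ sits at the \emph{top} of the filtration of $P$ and $V_r/V_r^*$ splits as $V_{p-1}\oplus V_0$ (Proposition~\ref{es1}), so $\bar{\mathcal V}_r/\bar M_0$ isolates the $V_0$ piece. For $a=2$ the factor you need is $J_1 = V_0\otimes D$, which sits in the \emph{middle} of the filtration (inside $V_r^*/V_r^{**}$, not in $V_r/V_r^*$), and $V_r/V_r^*$ is a non-split extension of $V_{p-3}\otimes D^2$ by $V_2$. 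So $\mathcal{V}_r$ as written does not isolate the right factor; ``possibly modified further'' hides the main difficulty and no workable replacement is given. You also do not address \emph{why} the $J_0 = V_{p-1}\otimes D$ contribution dies, which is exactly where the hypothesis ``$a_p$ close to $\eps pr/2$'' is used in the paper (via Proposition~\ref{F'0}), not in a $\mu$-computation as you suggest at the end.
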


Note that the condition that $a_p$ is close to $\eps pr/2$ implies that
$\overline{a_p/p} \neq \overline{\eps (1-r)}$, as $r \not\equiv 2/3 \mod
p$.
It also implies
that $r \not\equiv 0 \mod p$.

\subsubsection{The case $r = p+1$}

Let us begin with the case $r = p+1$. 
The semisimplification of the reduction modulo $p$ of $V_{k,a_p}$ 
is already known in this case by \cite{Br03}, \cite{B10}, and is 
$\bar{V}_{k,a_p}^{ss} \cong \mu_\lambda \cdot\omega^2 \oplus
\mu_{\lambda^{-1}} \cdot\omega$, with 
$\lambda = \overline{2 a_p / p} \in \bar\F_p^\times$. 
The distinction between ``peu ramifi\'ee" and ``tr\`es ramifi\'ee"
%in the relevant cases 
was also studied in an unpublished paper of Berger-Breuil \cite{BB} using $(\varphi,\Gamma)$-modules 
(see also \cite{Vie} for a more general result).
We give a proof using our methods as this is the smallest weight for which the 
distinction between ``peu ramifi\'ee" and ``tr\`es ramifi\'ee" arises when $v(a_p) = 1$. We keep
the same notation as in Section \ref{casea2}.

\begin{prop}
\label{casep+1}
Let $\lambda = \overline{2a_p/p}$. Then $F_0 = 0$,
$F_1$ is a quotient of $\pi(0,\lambda,\omega)$ %$(\ind_{KZ}^GV_0\otimes D)/(T-\lambda)$
and $F_2$ is a quotient of $\pi(p-3,\lambda^{-1},\omega^2)$. 
%$(\ind_{KZ}^GV_{p-3}\otimes D^2)/(T-\lambda^{-1})$.
\end{prop}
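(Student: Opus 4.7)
The plan is to verify the proposition as the small-weight boundary case of Theorem \ref{thma2} (i), adapting the machinery of Section \ref{casea2} to $r = p+1$.

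First, I would specialize the invariants of Theorem \ref{thma2} to $r = p+1$. Since $\binom{r}{2} = p(p+1)/2$ has valuation $1$, we have $v(\binom{r}{2}p^2) = 3 > 2 = v(a_p^2)$, so $v(a_p^2 - \binom{r}{2}p^2) = 2 = 2 + v(r-2)$ (using $v(r-2) = v(p-1) = 0$), placing us in case (i) of Theorem \ref{thma2}. The formula for $\lambda$ there simplifies, using $\binom{r}{2}p^2 \equiv 0 \mod {p^3}$ and $\overline{(1-p)^{-1}} = 1$, to $\lambda = \overline{2a_p/p}$.

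Second, the claim $F_0 = 0$ is structural. Since $r = p+1 < 2p+1$, we have $V_r^{**} = 0$ and $V_r^* = \bar\F_p\theta \cong V_0 \otimes D$ is one-dimensional; in particular, the would-be JH factor $V_{p-1} \otimes D$ of $V_r^*/V_r^{**}$, which gives rise to $F_0$ when $r \geq 2p+1$, is absent. Hence $P = V_{p+1}/X_{p+1}$ has only the two JH factors $V_0 \otimes D$ and $V_{p-3} \otimes D^2$, and $F_0 = 0$ trivially.

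Third, for $F_1$ and $F_2$, I would adapt the functions constructed in Propositions \ref{F0a2} and \ref{F2a2}. For $r = p+1$, the sums over $j$ with $0 < j < r$ and $j \equiv 2 \mod (p-1)$ collapse to a single term ($j = 2$), dramatically simplifying the auxiliary functions. For instance, $f_0$ in Proposition \ref{F2a2} becomes proportional to $[\mathrm{Id}, X^{p-1}Y^2]$, and the function $\chi$ of Lemma \ref{defauxfunc} has only its $\ell = 0$ term since $t = 0$. A direct computation of $(T - a_p)f$, using the Hecke formulas of Section \ref{Hecke} and the simplified $\lambda$, should show that its image in $\mathrm{ind}_{KZ}^G P$ equals $(T - \lambda)[g, v]$ in the $V_0 \otimes D$-weight (establishing $F_1$ as a quotient of $\pi(0, \lambda, \omega)$) and $(T - \lambda^{-1})[g', v']$ in the $V_{p-3} \otimes D^2$-weight (establishing $F_2$ as a quotient of $\pi(p-3, \lambda^{-1}, \omega^2)$).

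The main technical obstacle is verifying integrality of $(T - a_p)f$ after these small-weight simplifications: the combinatorial identities of Section \ref{basics} become almost vacuous when $r = p+1$, so the cancellations that drive the general proof must be checked by direct valuation estimates. However, the computations are tractable as the simplified functions involve only a few terms, and Breuil's prior determination of $\bar{V}_{k,a_p}^{ss}$ for $r = p+1$ provides a consistency check throughout.
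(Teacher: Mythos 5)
Your structural argument for $F_0 = 0$ is exactly the paper's: for $r = p+1 < 2p+2$ we have $V_r^{**} = 0$, and $V_r^*$ is the one-dimensional module spanned by $\theta$, so $J_0 = V_{p-1} \otimes D$ simply does not appear in $P = V_{p+1}/X_{p+1}$. Your derivation of the simplified $\lambda = \overline{2a_p/p}$ is also correct.

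For $F_1$ and $F_2$, however, your plan of ``adapting the functions of Propositions~\ref{F0a2} and~\ref{F2a2}'' is both vaguer and more fraught than the paper's actual approach, and you should be more careful before asserting it goes through. The paper deliberately abandons the heavy machinery (the building-block functions $\Phi_g$, $\Psi_g$, the auxiliary $\chi$, and the congruence propositions of \S 2.4) and instead uses two short ad hoc functions: for $F_1$, it computes $(T-a_p)[\mathrm{Id},(1/p)\theta]$, exploiting the identity $\theta \equiv 2X^pY \equiv -2XY^p \bmod X_{p+1}$; for $F_2$, it computes $(T-a_p)\bigl(\sum_{\lambda}[g_{1,[\lambda]}^0,(1/p)(X^{p-1}Y^2-Y^{p+1})]\bigr)$. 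Note that the function from Proposition~\ref{F1a2} that you would need to specialize contains the monomial $X^{r-2p+1}Y^{2p-1}$, which for $r = p+1$ has a negative exponent of $X$; this is an instance where the general-$r$ constructions do \emph{not} specialize to $r=p+1$, and your proposal does not flag it. Also, your claim that a single function $f$ should simultaneously exhibit $(T-\lambda)[g,v]$ in the $J_1$ component and $(T-\lambda^{-1})[g',v']$ in the $J_2$ component is not how the argument runs: the paper (and the general method) uses one function per Jordan--H\"older factor. So while the high-level strategy, the formula for $\lambda$, and the $F_0$ argument are all right, the $F_1$ and $F_2$ computations in your proposal would need to be replaced by explicit and different functions before the proof is complete.
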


\begin{proof}
As mentioned, this was computed by Breuil in \cite{Br03}. We sketch a proof
by the methods used in Section \ref{casea2}, but the computation is
simpler for $r = p+1$ than what was done there.  
For $F_0$, we already have $J_0 = 0$ in the filtration
of $P$, for $r = {p+1}$. For $F_1$, compute $(T-a_p)[\Id,(1/p)\theta]$ and
observe that %modulo $X_{p+1}$ we have 
$\theta \equiv 2X^pY \equiv - 2XY^p \mod X_{p+1}$. For $F_2$, 
compute $(T-a_p)(\sum_{\lambda}[g_{1,[\lambda]}^0,(1/p)(X^{p-1}Y^2-Y^{p+1}])$.
\end{proof}

\begin{cor}
$\bar{\Theta}_{p+3,a_p}$ is an extension of
$\pi(p-3,\lambda^{-1},\omega^2)$ by $\pi(0,\lambda,\omega)$ and 
$\bar{V}^{ss}_{p+3,a_p} \iso \mu_{\lambda} \cdot \omega^2 \oplus \mu_{\lambda^{-1}} \cdot \omega$,
for $\lambda=\overline{2a_p/p}$.
\end{cor}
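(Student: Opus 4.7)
The plan is to deduce the corollary directly from the preceding proposition by invoking the compatibility $\bar\Theta^{ss} \iso LL(\bar V^{ss})$ recalled in Section~\ref{subsectionLLC}. Since $F_0 = 0$ by the proposition, the analogues of Diagrams~\eqref{gendiagram1}--\eqref{gendiagram2} collapse to a single short exact sequence $0 \to F_1 \to \bar\Theta_{p+3,a_p} \to F_2 \to 0$, where $F_1$ is a quotient of $\pi(0,\lambda,\omega)$ and $F_2$ a quotient of $\pi(p-3,\lambda^{-1},\omega^2)$. Because $v(a_p) = 1$, we have $\lambda \in \bar\F_p^\times$, so neither of these two principal series lies in the supersingular branch of the LLC.

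Next, applying the recipe of Section~\ref{subsectionLLC} with $r=0$, $\eta = \omega$, and the given $\lambda$ shows that the candidate Galois representation $\mu_\lambda \cdot \omega^2 \oplus \mu_{\lambda^{-1}} \cdot \omega$ maps under $LL$ precisely to $\pi(0,\lambda,\omega)^{ss} \oplus \pi(p-3,\lambda^{-1},\omega^2)^{ss}$. In other words, the two principal series that bound $F_1$ and $F_2$ from above already ``pair up'' correctly under $LL$, so there is an obvious candidate for $\bar V^{ss}$.

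Since $LL$ is injective, the semisimple isomorphism class of $\bar V_{p+3,a_p}^{ss}$ is uniquely determined by the JH support of $\bar\Theta^{ss}$. The supersingular option is excluded by $\lambda \neq 0$, and any reducible alternative would either have JH factors outside of those of the two principal series above (contradicting the quotient bounds on $F_1, F_2$) or force one of $F_1, F_2$ to vanish (which, given that $\bar V^{ss}$ is two-dimensional, is incompatible with the pairing dictated by $LL$). This forces $\bar V_{p+3,a_p}^{ss} \iso \mu_\lambda \cdot \omega^2 \oplus \mu_{\lambda^{-1}} \cdot \omega$. Comparing semisimplifications on both sides of $\bar\Theta^{ss} \iso LL(\bar V^{ss})$ then forces $F_1 = \pi(0,\lambda,\omega)$ and $F_2 = \pi(p-3,\lambda^{-1},\omega^2)$ as full representations (not proper quotients), yielding the claimed extension structure on $\bar\Theta_{p+3,a_p}$.

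The only potential subtlety is that for $\lambda = \pm 1$ the principal series $\pi(0,\lambda,\omega)$ and $\pi(p-3,\lambda^{-1},\omega^2)$ are each reducible of length $2$, so a priori one might worry that $F_1$ or $F_2$ could degenerate to a proper subquotient. However, the JH-counting argument above is insensitive to this phenomenon and still pins down $\bar V^{ss}$ uniquely; the matching of semisimplifications then upgrades the quotient bounds to equalities automatically, so no additional work is required. This makes the corollary essentially a bookkeeping consequence of the proposition together with the injectivity of $LL$.
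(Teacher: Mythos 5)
Your argument is correct and is essentially the one the paper leaves implicit: with $F_0=0$, Diagram~\eqref{gendiagram2} degenerates, the two quotient bounds from the proposition pin down the JH support of $\bar\Theta^{ss}$, and the injectivity of $LL$ together with the observation that $\pi(0,\lambda,\omega)^{ss}\oplus\pi(p-3,\lambda^{-1},\omega^2)^{ss}$ is the unique element of the LLC image with that support forces equality. One small inaccuracy in your final paragraph: for $p\geq 5$ and $\lambda\neq 0$, the representation $\pi(p-3,\lambda^{-1},\omega^2)$ is in fact irreducible (reducibility of $\pi(r,\pm1,\eta)$ only happens for $r=0$ or $r=p-1$); only $\pi(0,\lambda,\omega)$ can be reducible when $\lambda=\pm1$, but your JH-counting dismissal applies just as well, so this does not affect the conclusion.
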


\begin{thm}
\label{p+1thm} 
Let $p \geq 5$ and $r = p+1$.
Suppose that $\overline{a_p/p} = \pm 1/2$, so that $\lambda = \pm 1$. Then there exists a lattice
$\Theta'$ in $\hat{\Pi}_{p+3,a_p}$ such that $V(\bar{\Theta}')$ is a
``peu ramifi\'ee" extension of $\mu_{\lambda^{-1}} \cdot \omega$ by
$\mu_{\lambda} \cdot \omega^2$.
\end{thm}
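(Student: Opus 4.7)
The plan is to adapt the construction of Theorem~\ref{ap-1ramifiethm} to the case $r=p+1$, $a=2$. Set $\eta := \theta/p$ and introduce the non-standard $KZ$-stable lattice
\[
\mathcal{V} := \Sym^{p+1}\bar\Z_p^2 + \bar\Z_p\cdot\eta \;\subset\; \Sym^{p+1}\bar\Q_p^2,
\]
whose $K$-stability follows from $\gamma\cdot\theta = (\det\gamma)\theta$. Let $\Theta'$ be the image of $\mathrm{ind}_{KZ}^G\mathcal{V}$ inside $\hat\Pi_{p+3,a_p}$. Exactly as in Lemma~\ref{M_1}, the reduction $\bar{\mathcal{V}}$ fits in a short exact sequence of $K$-modules
\[
0 \to V_{p+1}/V_{p+1}^* \to \bar{\mathcal{V}} \to V_0\otimes D \to 0,
\]
with the quotient generated by the image of $\bar\eta$ (on which $K$ acts by the determinant); and by Proposition~\ref{es1}(i) applied with $a=2$, the submodule $V_{p+1}/V_{p+1}^*$ is itself a non-split extension of $V_{p-3}\otimes D^2$ by $V_2$.

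Next, following the strategy of Lemma~\ref{thetaprime}, I would show that $\bar\Theta'$ sits in an exact sequence
\[
0 \to \pi(0,\lambda,\omega) \to \bar\Theta' \to \pi(p-3,\lambda^{-1},\omega^2) \to 0.
\]
The top quotient $\pi(p-3,\lambda^{-1},\omega^2)$ comes from the $V_{p-3}\otimes D^2$ part of $\bar{\mathcal{V}}$, arguing as in the standard-lattice computation for $r=p+1$ above. For the sub piece: the factors $\mu_\lambda\omega^2$ (coming from $\bar\eta$) and $\mathrm{St}\otimes\mu_\lambda\omega$ (coming from $V_2$) must combine into a single quotient of $\mathrm{ind}_{KZ}^G V_0$ that carries the $\mu_\lambda$-twist. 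Applying Lemma~\ref{quotientlemma} after twisting by $\mu_\lambda\omega$, this quotient can only be $\pi(0,\lambda,\omega)$, which since $\lambda=\pm1$ is the unique non-split extension available. Non-splitness of the outer extension is verified by adapting Lemma~\ref{thetaprimenonsplit}: a hypothetical $G$-equivariant projection $\bar\Theta' \twoheadrightarrow \mu_\lambda\omega^2$ pulls back to a non-zero $\phi: \mathrm{ind}_{KZ}^G\bar{\mathcal{V}} \to \mu_\lambda\omega^2$, and evaluating $\phi$ on $\matr{1}{0}{1}{1}\cdot[\mathrm{Id},\bar\eta] - [\mathrm{Id},\bar\eta]$ together with the expansion $\matr{1}{0}{1}{1}\cdot\eta = \eta + \sum_{i=1}^{p-1}(\binom{p}{i}/p)\, X^iY^{p+1-i}$ (and $\det\matr{1}{0}{1}{1}=1$) forces $\phi$ to vanish on the generator of the quotient, a contradiction.

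To conclude, observe that since $\lambda=\pm 1$ one has $\pi(0,\lambda,\omega) \cong E_\tau \otimes (\mu_\lambda\omega)$ with $[\tau]=(0:1)$ by Proposition~\ref{pi011} and the LLC compatibility under twist. Hence Corollary~\ref{peuortres} applies with $m=1$ after twisting by $\mu_\lambda$, producing a lattice $\Theta''$ commensurable with $\Theta'$ such that $V(\bar\Theta'')$ is a non-split, ``peu ramifi\'ee'' extension of $\mu_{\lambda^{-1}}\omega$ by $\mu_\lambda\omega^2$, as required. The main obstacle is the identification of the subrepresentation with the non-split $\pi(0,\lambda,\omega)$ carrying the correct character twist; this is where the hypothesis $\overline{a_p/p}=\pm 1/2$, equivalently $\lambda = \pm 1$, enters essentially, since it is precisely the condition that makes $\pi(0,\lambda,\omega)$ reducible non-split and thereby allows a suitable $E_\tau$ with $[\tau]=(0:1)$ to appear.
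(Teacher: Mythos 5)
Your proposal takes a genuinely different route from the paper, and it does not work. The paper's proof is two lines: the Corollary immediately preceding the theorem already shows that for the \emph{standard} lattice, $\bar\Theta_{p+3,a_p}$ is an extension of $\pi(p-3,\lambda^{-1},\omega^2)$ by $\pi(0,\lambda,\omega)$; after twisting so that $\lambda=1$, Proposition~\ref{pi011} identifies $\pi(0,1,\omega)\cong E_\tau\otimes\omega$ with $[\tau]=(0:1)$, and Corollary~\ref{peuortres} (with $m=1$) then manufactures the required commensurable lattice $\Theta'$. No explicit non-standard lattice needs to be built; the lattice change is absorbed into Corollary~\ref{peuortres} via Lemma~\ref{changelattice}.

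Your alternative via the non-standard lattice $\mathcal{V}=\Sym^{p+1}\bar\Z_p^2 + \bar\Z_p\eta$ breaks down at the filtration step, because the $a=2$ structure is the opposite of the $a=p-1$ one you are imitating. For $r=p+1$ one has $X_{p+1}\cong V_2$ and $X_{p+1}$ is exactly the socle of the non-split extension $V_{p+1}/V_{p+1}^*$ (Proposition~\ref{es1}(i) with $a=2$). Hence in the analogue of Lemma~\ref{M_1}'s filtration, $\bar{M}_1/\bar{M}_0 \cong V_{p-3}\otimes D^2$, \emph{not} $V_0$, and $\bar{\mathcal{V}}/\bar{M}_1\cong V_0\otimes D$ sits on top. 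Following the strategy of Lemma~\ref{thetaprime} (killing $\ind_{KZ}^G \bar M_0$ and reading off subquotients), one would obtain $\bar\Theta'$ as an extension of a quotient of $\ind_{KZ}^G(V_0\otimes D)$ by a quotient of $\ind_{KZ}^G(V_{p-3}\otimes D^2)$, which has the extension in the wrong order for Corollary~\ref{peuortres}: the $\pi(0,\lambda,\omega)$-piece needs to be the \emph{sub}representation, not the quotient. In addition, two of your identifications are off: $\St\otimes\mu_\lambda\omega$ cannot ``come from $V_2$'', since $\pi(2,\nu,1)$ is irreducible for $p\geq 5$ and $\ind_{KZ}^G V_2$ has no $\St$-type subquotient, and $\mu_\lambda\omega^2$ is the Galois character paired with $\pi(0,\lambda,\omega)$ by the LLC rather than a $G$-module Jordan--H\"older factor of $\bar\Theta'$ — the one-dimensional factor of $\pi(0,\lambda,\omega)$ is $\mu_\lambda\omega\circ\det$. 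So the sub piece does not have the factors you claim, and the intended application of Lemma~\ref{quotientlemma} does not apply to a quotient of $\ind_{KZ}^G V_0$.
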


\begin{proof}
By twisting, we may assume $\lambda = 1$. The theorem then 
follows from Corollary \ref{peuortres}, Proposition \ref{pi011} and
the previous corollary.
\end{proof}

\begin{remark}
Actually, as the referee pointed out to us, in Theorem \ref{p+1thm} we can simply take for
$\Theta'$ the standard lattice $\Theta$.
Indeed, we already know that $\bar\Theta$ is an extension of 
$\pi(p-3,\lambda^{-1},\omega^2)$ by $\pi(0,\lambda,\omega)$ and we need only see that the
extension is non-split. This can be deduced from the fact that 
$\bar\Theta$ is a quotient of $\ind_{KZ}^G V_{p+1}/V_2$, and $V_{p+1}/V_2$
is a non-split extension of $V_{p-3}\otimes D^2$ by $V_0 \otimes D$.
\end{remark}

\subsubsection{The case $r > p+1$}

Now we will generalize the theorem above to the cases $r>p+1$.  For
simplicity, we shall assume $\overline{a_p/p}=+ {r/2}\in k_E$ since the
proof in the case $\overline{a_p/p}=- {r/2}$ is identical.  Recall that the
conditions $v(a_p)=1$ and $\overline{a_p/p}= {r/2}$ together imply that
$r\not\equiv 0\mod p$. 

Recall that when $a = 2$, the submodule $W_1\cong
V_r^*/V_r^{**}=J_0\oplus J_1\subset  P$, where $J_0=V_{p-1}\otimes D$
and $J_1=V_0\otimes D$.  
%This is a direct sum since $J_0$ is a projective
%(hence injective) $\Gamma$-module.  
The image of $\mathrm{ind}_{KZ}^G
W_1$ in $\bar\Theta_{k,a_p}$ was denoted by $U_1$.  

%We know that the
%semisimplification of $U_1$ is $(\St\oplus 1)\otimes\, \omega\circ\det$
%(see Remark~\ref{remark a=2}) and $\bar\Theta_{k,a_p}/U_1$ is isomorphic
%to the principal series $\pi(p-3, 1, \omega^2)$.  

Let $F_1'$ be the image
in $U_1$ of the submodule $\mathrm{ind}_{KZ}^G J_1$ of $\ind_{KZ}^G W_1$.
Then $F_0':=U_1/F_1'$ is a quotient of $\mathrm{ind}_{KZ}^G J_0$ and we have a 
diagram similar to Diagram \eqref{gendiagram2}:

\begin{eqnarray}\label{gendiagram3}
\begin{gathered}
    \xymatrix{
    0 \ar[r] & \mathrm{ind}_{KZ}^G J_1\ar@{->>}[d]\ar[r] & \mathrm{ind}_{KZ}^G W_1 \ar@{->>}[d] \ar[r] & \mathrm{ind}_{KZ}^G J_0 \ar@{->>}[d]\ar[r] & 0 \\
    0 \ar[r] & F_1' \>\ar[r]  & U_1 \ar[r] & F_0' \ar[r] & 0. }
\end{gathered}
\end{eqnarray}

%Note
%that $F_0'$ does not necessarily coincide with $F_0$, the image  of %the
%submodule $\mathrm{ind}_{KZ}^G J_0  \subset \mathrm{ind}_{KZ}^G W_1$ in
%$U_1$. 

% In fact, we will show that $F_0'=0$ under the present hypotheses,
%whereas by Remark~\ref{remark a=2}, $F_0$ is non-zero.

\begin{prop}
\label{F'0}
Suppose that $r \equiv 2 \mod (p-1)$ and that $v(a_p^2-\binom{r}{2}p^2)
= 2 + v(r-2)$. Let $\lambda \in \bar\F_p^\times$ be the reduction mod $p$
of $\frac{2(a_p^2-\binom{r}{2}p^2)}{pa_p(2-r)}$. Suppose that $\overline{a_p/p}$ is not equal to 
$\lambda \overline{(1-r)}$. Then $F_0' = 0$.
\end{prop}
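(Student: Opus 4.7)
The plan is to supplement the relation $(T-\lambda)[\mathrm{Id}, X^{p-1}] \equiv 0$ in $F_0$ from the proof of Proposition~\ref{F0a2} with a second independent relation on $[\mathrm{Id}, X^{p-1}]$ in the quotient $F_0' = U_1/F_1'$. The two together will force $[\mathrm{Id}, X^{p-1}] \equiv 0$ in $F_0'$, and since this element generates $\mathrm{ind}_{KZ}^G J_0$ as a $G$-module and $F_0'$ is a quotient of $\mathrm{ind}_{KZ}^G J_0$, we conclude $F_0' = 0$.

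For the second relation I would use the rescaled test function $f := [\mathrm{Id},\, \theta X^{r-p-1}/a_p]$. Applying formula~(\ref{T}), with the cancellation of the $j=0$ contribution coming from $[\lambda]^p = [\lambda]$, and with $v(p^j\binom{p}{j}/a_p) \geq 1$ for $2 \leq j \leq p$ ensuring the integrality of the higher--$j$ contributions, a short computation shows that $(T-a_p)f$ is integral for $r \geq p+2$ with
$$(T-a_p)f \equiv \overline{p/a_p}\sum_{\lambda\in \F_p}[g^0_{1,[\lambda]},\, X^{r-1}Y] - [\mathrm{Id},\, \theta X^{r-p-1}] \pmod{p}.$$
Both summands land in $\mathrm{ind}_{KZ}^G W_1$ by Lemmas~\ref{goodcasenew}(i) and~\ref{generator}(i). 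Passing to $F_0' = U_1/F_1'$ kills the $\mathrm{ind}_{KZ}^G J_1$-component of the image (via the $KZ$-splitting $W_1 = J_0 \oplus J_1$ of Proposition~\ref{es1}(ii)), so only the $J_0$-projection matters. By Lemma~\ref{generator}(i), $\theta X^{r-p-1}$ maps to $X^{p-1}$; for $X^{r-1}Y$, the relevant weight space of $J_0 = V_{p-1}\otimes D$ (once fixed-ness under $\binom{1\ 1}{0\ 1}$ in $P$ is imposed, using $X^r \in X_r$) is spanned by $X^{p-1}$, so the $J_0$-projection of $X^{r-1}Y$ takes the form $\alpha\,X^{p-1}$ for some $\alpha \in \bar\F_p$.

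The main technical obstacle is to identify $\alpha$ explicitly as $\overline{1-r}$. I would carry this out by applying the Weyl swap $w$ to obtain $\pi_0(XY^{r-1}) = -\alpha\, Y^{p-1}$ (using $w\cdot X^{p-1} = -Y^{p-1}$ in $V_{p-1}\otimes D$), substituting into the congruence
$$X^{r-1}Y - XY^{r-1} \equiv \theta\bigl(X^{r-p-1} + \tfrac{r-2p}{p-1}X^{r-2p}Y^{p-1} + Y^{r-p-1}\bigr) \pmod{V_r^{**}}$$
from the proof of Lemma~\ref{goodcasenew}(i), and determining the $J_0$-part of $\theta X^{r-2p}Y^{p-1}$ via the action of $\binom{1\ 1}{0\ 1}$, which pins down its $Y^{p-1}$-coefficient to be $1$. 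The degenerate case $r = 2p$ (where $J_1 = 0$, the splitting is trivial, and the displayed congruence reduces to $X^{r-1}Y - XY^{r-1} \equiv \theta(X^{p-1}+Y^{p-1})$, forcing $\alpha = 1 = \overline{1-2p}$) serves as an anchor for the formula. Granting $\alpha = \overline{1-r}$, the relation in $F_0'$ becomes
$$\overline{p/a_p}\cdot\overline{(1-r)}\cdot T[\mathrm{Id},\, X^{p-1}] \equiv [\mathrm{Id},\, X^{p-1}] \quad\text{in } F_0'.$$
Combining with $T[\mathrm{Id}, X^{p-1}] = \lambda\,[\mathrm{Id}, X^{p-1}]$ from Proposition~\ref{F0a2} yields $\bigl(\overline{p/a_p}\cdot\overline{(1-r)}\cdot\lambda - 1\bigr)[\mathrm{Id}, X^{p-1}] \equiv 0$ in $F_0'$. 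Under the hypothesis $\overline{a_p/p} \neq \lambda\,\overline{(1-r)}$, the scalar is a unit of $\bar\F_p$, so $[\mathrm{Id}, X^{p-1}] \equiv 0$ in $F_0'$, completing the proof.
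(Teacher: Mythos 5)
Your test function $[\mathrm{Id},\theta X^{r-p-1}/a_p]$ is, up to the unit $\overline{a_p/p}$, identical to the function $[\mathrm{Id},\theta X^{r-p-1}/p]$ used in the paper, and the resulting relation in $\mathrm{ind}_{KZ}^G J_0$ (via the projection $\pi_0(X^{r-1}Y) = (1-r)X^{p-1}$, which the paper has already packaged as Lemma~\ref{projection}) is the same. Where you genuinely diverge from the paper is in the endgame. The paper reads the relation as saying that $F_0'$ is a quotient of $\pi(p-1,\overline{a_p/p(1-r)},\omega)$ (after splitting off the trivial case $r\equiv 1 \bmod p$) and then concludes $F_0'=0$ by a Jordan--H\"older-factor mismatch argument: Theorem~\ref{thma2}(i), which invokes the mod $p$ LLC, tells us that $\mathrm{JH}(U_1) = \mathrm{JH}(\pi(p-1,\lambda,\omega))$, and these two principal series have disjoint constituents since $\lambda \neq \overline{a_p/p(1-r)}$. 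You instead combine the new relation with the relation $(T-\lambda)[\mathrm{Id},X^{p-1}]\equiv 0$ inherited from Proposition~\ref{F0a2}, obtaining $\bigl(\overline{p/a_p}\cdot\overline{(1-r)}\cdot\lambda-1\bigr)[\mathrm{Id},X^{p-1}]\equiv 0$ in $F_0'$ and concluding directly. This is more elementary (no appeal to the mod $p$ LLC or to Theorem~\ref{thma2}) and handles $r\equiv 1\bmod p$ uniformly.

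One thing you should make explicit: the relation $(T-\lambda)[\mathrm{Id},X^{p-1}]\equiv 0$ from Proposition~\ref{F0a2} is a priori a relation in $F_0$, and you need it in $F_0'$. This does hold, because the element $(T-a_p)f$ from that proof has image in $\mathrm{ind}_{KZ}^G P$ contained in $\mathrm{ind}_{KZ}^G W_0 \subset \mathrm{ind}_{KZ}^G W_1$ and vanishes in $\bar\Theta$, hence vanishes in $U_1$, hence in $F_0'=U_1/F_1'$. Equivalently, since $W_1\cong J_0\oplus J_1$ and $U_1 = F_0 + F_1'$, the space $F_0'$ is a quotient of $F_0$, so relations transfer. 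You gesture at this but never justify it, and the paper's own remark directly after the Proposition ("$F_0$ and $F_0'$ do not always coincide") signals that the relationship between the two is not automatic and warrants a line of argument. With that sentence added, the proof is complete.
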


Note that %it is a consequence of Proposition~\ref{F1a2} that 
$F_0 \neq 0$, by Remark~\ref{remark a=2},
so  $F_0$ and $F'_0$ do not always coincide, 
although each occurs as a common factor in both
$\mathrm{ind}_{KZ}^GJ_0$ and $U_1$.

In order to prove the proposition, we need a few lemmas.

\begin{lemma}
\label{lemmanew} 
Let $p\geq 3$, $r\geq 2p$, with $ r\equiv 2\mod (p-1)$.  Then the image
of $F(X,Y)=X^{r-1}Y +\dfrac{r}{2}\cdot\theta X^{r-2p}Y^{p-1}
-\dfrac{r}{2}\cdot\theta Y^{r-p-1}$ in $P$ lands in $W_0 (\cong J_0\subset P)$ and it maps to
$\dfrac{2-r}{2}\cdot X^{p-1} \in J_0$.
\end{lemma}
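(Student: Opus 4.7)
The plan is to proceed in two stages. First I would verify that $F(X,Y)$ lies in $W_0\subset P$ by computing the images of its three summands in the successive quotients $J_2$ and $J_1$ of the filtration of $P$. The image of $F$ in $J_2$ vanishes: $X^{r-1}Y$ dies by Lemma~\ref{generator}~(iii), while the other two summands already lie in $V_r^*$, hence in $W_1$. For the image in $J_1=V_0\otimes D$, Lemma~\ref{goodcasenew}~(i) gives $-r/2$ from $X^{r-1}Y$, Lemma~\ref{generator}~(ii) with $a=2$ gives $1$ from $\theta X^{r-2p}Y^{p-1}$, and the $J_1$-image of $\theta Y^{r-p-1}$ can be extracted by projecting the auxiliary congruence
\[
X^{r-1}Y - XY^{r-1} \equiv \theta X^{r-p-1} + \tfrac{r-2p}{p-1}\,\theta X^{r-2p}Y^{p-1} + \theta Y^{r-p-1} \mod V_r^{**}
\]
(from the proof of Lemma~\ref{goodcasenew}~(i)) onto $J_1$, forcing it to equal $\tfrac{p(2-r)}{p-1}\equiv 0\mod p$. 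Summing the three contributions gives $-r/2+(r/2)\cdot 1-(r/2)\cdot 0 = 0$, so $F\in W_0$.

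The second, harder, stage is to compute the image of $F$ in $J_0\cong V_{p-1}\otimes D$. My plan is to establish the polynomial congruence
\[
F(X,Y) \equiv \tfrac{2-r}{2}\cdot\theta X^{r-p-1} \mod X_r + V_r^{**}
\]
in $V_r\otimes \bar\F_p$; combined with Lemma~\ref{generator}~(i), which identifies $\theta X^{r-p-1}$ with $X^{p-1}\in J_0$, this yields the claimed image $\tfrac{2-r}{2}X^{p-1}$. To produce the congruence I would imitate the proof of Lemma~\ref{a mod p}: starting from $2X^{r-1}Y+\sum_{k\in\F_p}k^{p-2}(kX+Y)^r\in X_r$ and expanding the sum (using $\sum_{k\in\F_p^\times}k^{p-2+r-j}\equiv -1\mod p$ precisely when $j\equiv 1\mod(p-1)$), one obtains
\[
(2-r)\,X^{r-1}Y \equiv \sum_{\substack{p\le j\le r-1\\ j\equiv 1\mod(p-1)}}\binom{r}{j}X^{r-j}Y^j \mod X_r + pV_r.
\]
Moving $(2-r)X^{r-p}Y^p$ to the left reconstructs $(2-r)\theta X^{r-p-1}$, and the extremal contributions at $j=p$, $2p-1$, $r-p$, $r-1$ recombine, after dividing by $2-r$, into exactly the correction terms $\tfrac{r}{2}\theta X^{r-2p}Y^{p-1}$ and $-\tfrac{r}{2}\theta Y^{r-p-1}$ of $F$.

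The main obstacle will be handling the remaining intermediate block of monomials, those with $j\equiv 1\mod(p-1)$ and $2p-1<j<r-p$. My plan is to invoke Proposition~\ref{congrbinom12}: the vanishing relations $\sum_j\binom{j}{n}\binom{r}{j}\equiv 0\mod p^{3-n}$ for $n=0$, $1$, $2$ let one replace the $\binom{r}{j}$ in this block by adjusted integers $\beta_j$ satisfying the hypotheses of \cite[Lem.~2.3]{Bhattacharya-Ghate}, so that the adjusted polynomial sum lies in $V_r^{**}$; the compensating corrections at the boundary are absorbed into the extremal summands at $j=p$, $2p-1$, $r-p$, $r-1$. To sidestep the issue that $2-r$ may fail to be a unit modulo $p$, I would keep the identity integral modulo $V_r^{**}$ until the end and reduce modulo $p$ only at that point. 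Once the polynomial congruence is in hand, the image of $F$ in $J_0$ follows immediately.
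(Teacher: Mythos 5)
Your first stage essentially reproduces the paper's argument: both check that each summand of $F$ has vanishing image in $J_2$ and that the $J_1$-images cancel (the $J_1$-image of $\theta Y^{r-p-1}$ is $0$, either by your auxiliary congruence or simply by applying $w$ to Lemma~\ref{generator}(i)). In the second stage you diverge, and here the proposal has a genuine gap. You aim to prove the stronger polynomial congruence $F\equiv\tfrac{2-r}{2}\theta X^{r-p-1}\bmod X_r+V_r^{**}$ directly, which requires showing that the middle block $\sum_{2p-1<j<r-p,\,j\equiv 1}\binom{r}{j}X^{r-j}Y^j$ can be pushed into $V_r^{**}$. You cite ``vanishing relations $\sum_j\binom{j}{n}\binom{r}{j}\equiv 0\bmod p^{3-n}$ for $n=0,1,2$'' from Proposition~\ref{congrbinom12}, but that proposition gives no such vanishing: its parts~(1)--(3) produce explicit \emph{non-zero} values (e.g.\ part~(1) gives $2-r+2np^{t+1}\bmod p^{t+2}$, which is not $0\bmod p$ unless $r\equiv 2\bmod p$). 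So the construction of adjusted integers $\beta_j$ landing in $V_r^{**}$ is not automatic, and the ``recombination'' of the resulting discrepancies with the extremal terms at $j=p,2p-1,r-p,r-1$ into the $\theta$-corrections of $F$ is asserted but not checked. The case $r\equiv 2\bmod p$, where $2-r$ is not invertible, is dismissed with an informal remark rather than an argument. (Also note that $2X^{r-1}Y+\sum_k k^{p-2}(kX+Y)^r$ is \emph{not} in $X_r$; only the second summand is.)

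The paper avoids all of this. After adding the $X_r$-element $\tfrac12\sum_k k^{p-2}(kX+Y)^r$ to $F$, it checks $\theta$-divisibility via Lemma~\ref{comb3}(i) (not Proposition~\ref{congrbinom12}), and then writes $F\equiv\theta\bigl((1-\tfrac r2)X^{r-p-1}+0\cdot Y^{r-p-1}+c\,X^{r-2p}Y^{p-1}\bigr)\bmod X_r+V_r^{**}$ with the coefficient $c$ left \emph{undetermined}. This works because all interior monomials $X^{r-p-1-i}Y^i$ with $i\equiv 0\bmod(p-1)$ collapse to a single class in $V_{r-p-1}/V_{r-p-1}^*$, whose total contribution is the one scalar $c$; and $c$ is then forced to be $0$ by the already-proven vanishing of the $J_1$-image of $F$, since $\theta X^{r-2p}Y^{p-1}\mapsto 1\in J_1$ while $\theta X^{r-p-1},\theta Y^{r-p-1}\mapsto 0$. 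You establish that $J_1$-vanishing in your first stage but never feed it back in, which would close the argument cleanly and uniformly in $r$. As it stands, the absorption of the middle block is the missing step.
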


\begin{proof}
Using Lemma \ref{generator} and Lemma \ref{goodcasenew} (i), we get that
$F(X,Y)$ maps to $0\in J_2$, and it maps to
$-\frac{r}{2}+\frac{r}{2}+0=0\in J_1$. Therefore its image in $P$ lands
in $W_0$, as claimed. To find its image in $J_0=V_{p-1}\otimes D$, we
rewrite $F(X,Y)$ modulo $X_r+V_r^{**}$ as follows:  
%such that its image in $P$ remains the same.
\begin{eqnarray*}
 F(X,Y) &\equiv& X^{r-1}Y +\dfrac{r}{2}\cdot\theta X^{r-2p}Y^{p-1} -\dfrac{r}{2}\cdot\theta Y^{r-p-1}+\frac{1}{2}\cdot\underset{k\in\F_p}\sum k^{p-2}(kX+Y)^r\mod X_r\\
 &\equiv& \dfrac{r}{2}\cdot\theta X^{r-2p}Y^{p-1} -\dfrac{r}{2}\cdot\theta Y^{r-p-1}+X^{r-1}Y -\frac{1}{2}\cdot\underset{\substack{0\leq j\leq r\\j\equiv 1\mod (p-1)}}\sum\binom{r}{j}X^{r-j}Y^j\mod p.
\end{eqnarray*}
The last polynomial is divisible by $\theta$, since, by Lemma
\ref{comb3} (i), the sum of its coefficients is $0$. %in $\F_p$. 
Writing it as an element in $\theta V_{r-p-1}=V_r^*$, we must have $$F(X,Y)\equiv
\theta\cdot\left(\left(-\frac{r}{2}+\frac{r}{2}\right)Y^{r-p-1}+\left(1-\frac{r}{2}\right)X^{r-p-1}+
c\cdot X^{r-2p}Y^{p-1}\right)\mod X_r +V_r^{**},$$ for some $c\in\F_p$.
We already know that $F(X,Y)$ maps to $0\in J_1$, so $c=0\in\F_p$, by
Lemma \ref{generator} (ii). Now the result follows by 
Lemma~\ref{generator} (i) %of the same lemma 
applied to the right hand side of the congruence above.  
\end{proof}

\begin{lemma}
\label{projection}
For $r>2p$, the projection $\mathrm{Pr} : W_1\cong V_r^*/V_r^{**}
\twoheadrightarrow J_0=V_{p-1}\otimes D$ takes the polynomials $\theta
X^s,\,\theta Y^s \text{ and } \,\theta X^{r-2p}Y^{p-1}$ to $X^{p-1},\,
Y^{p-1}$ and $X^{p-1}+Y^{p-1}$  respectively, where $s=r-p-1$.  Moreover,
the image of $X^{r-1}Y$ in $P$ lands in $W_1$ and it projects to
$(1-r)\cdot X^{p-1}\in J_0$.
\end{lemma}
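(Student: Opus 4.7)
The plan is to construct an explicit $\Gamma$-equivariant splitting of the short exact sequence $0 \to J_0 \to W_1 \to J_1 \to 0$ (which splits by Proposition \ref{es1} (ii) for $a=2$) and then read off the four values of $\mathrm{Pr}$. The first formula $\mathrm{Pr}(\theta X^s) = X^{p-1}$ is immediate from Lemma \ref{generator} (i), which says $\theta X^s \in W_0 = J_0$ has image $X^{p-1}$; the second formula then follows by $w$-equivariance of $\mathrm{Pr}$, using that $w \cdot \theta X^s = -\theta Y^s$ (as $\theta$ transforms by $\det$) while $w$ acts on $J_0 = V_{p-1} \otimes D$ by sending $X^{p-1} \mapsto -Y^{p-1}$ (owing to the $\det$-twist). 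The ``moreover'' claim that $X^{r-1}Y \in W_1$ is already part of Lemma \ref{generator} (iii) for $a=2$.

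The key construction is the element
$$e_0 := X^{r-2p}Y^{p-1} - X^{r-p-1} - Y^{r-p-1} \in V_{r-p-1},$$
whose class in $V_{r-p-1}/V_{r-p-1}^*$ I claim is $\Gamma$-invariant. Granting this, $\theta e_0 \in V_r^*/V_r^{**}$ transforms by $\det$ and so spans a one-dimensional $\Gamma$-submodule isomorphic to $V_0 \otimes D = J_1$. Moreover, its image in $J_1$ is $1$, because $\theta X^{r-p-1}$ and $\theta Y^{r-p-1}$ both lie in $W_0 \subset W_1$ and so die in $J_1 = W_1/W_0$, whereas $\theta X^{r-2p}Y^{p-1} \mapsto 1$ by Lemma \ref{generator} (ii). Thus $W_1 = W_0 \oplus \bar\F_p \cdot \theta e_0$, so $\mathrm{Pr}$ is the identity on $W_0$ and vanishes on $\theta e_0$.

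With the splitting in hand, $\mathrm{Pr}(\theta X^{r-2p}Y^{p-1}) = X^{p-1} + Y^{p-1}$ follows from the decomposition $\theta X^{r-2p}Y^{p-1} = \theta e_0 + \theta X^{r-p-1} + \theta Y^{r-p-1}$ together with the first two formulas. For $\mathrm{Pr}(X^{r-1}Y)$, I invoke Lemma \ref{lemmanew}: the function $F(X,Y) := X^{r-1}Y + (r/2)\theta X^{r-2p}Y^{p-1} - (r/2)\theta Y^{r-p-1}$ lies in $W_0$ with image $((2-r)/2) X^{p-1}$ in $J_0$. Rewriting $F = X^{r-1}Y + (r/2)\theta e_0 + (r/2)\theta X^{r-p-1}$ and applying $\mathrm{Pr}$ gives $\mathrm{Pr}(X^{r-1}Y) + (r/2) X^{p-1} = ((2-r)/2) X^{p-1}$, hence $\mathrm{Pr}(X^{r-1}Y) = (1-r) X^{p-1}$.

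The main obstacle is verifying the $\Gamma$-invariance of $e_0$ modulo $V_N^*$, where $N = r-p-1$. Since $\GL_2(\F_p)$ is generated by the diagonal torus, the unipotent $u = \matr{1}{0}{1}{1}$, and the involution $w$ (the other unipotent $\matr{1}{1}{0}{1}$ being $wuw^{-1}$), it suffices to check these three cases. Diagonal invariance is immediate since $N$, $p-1$, and $N-p+1 = r-2p$ are all divisible by $p-1$. For $w$, the required congruence $X^{p-1} Y^{N-p+1} \equiv X^{N-p+1} Y^{p-1} \pmod{V_N^*}$ follows from iterating the basic relation $X^{N-i} Y^i \equiv X^{N-i-(p-1)} Y^{i+p-1} \pmod{V_N^*}$ (valid for $1 \leq i \leq N-p$) through the ``middle'' monomials $X^{N-j(p-1)}Y^{j(p-1)}$. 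For $u$, the required congruence reduces to the identity
$$(X+Y)^N - X^N \equiv \bigl((X+Y)^{N-p+1} - X^{N-p+1}\bigr) Y^{p-1} \pmod{V_N^*},$$
which I verify as follows: using the polynomial identity $(X+Y)^{p-1} - Y^{p-1} = X(X^{p-1} - Y^{p-1})/(X+Y)$ (valid since $p-1$ is even, so $X+Y$ divides $X^{p-1} - Y^{p-1}$), the difference of the two sides factors as $X(X^{p-1} - Y^{p-1})\bigl[(X+Y)^{N-p} - X^{N-p}\bigr]$; since $(X+Y)^{N-p} - X^{N-p}$ vanishes at $Y=0$, it is divisible by $Y$, and hence the whole expression is divisible by $\theta = XY(X^{p-1} - Y^{p-1})$ and lies in $V_N^* = \theta V_{N-p-1}$.
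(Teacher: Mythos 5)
Your proof is correct and follows the same framework as the paper: the values $\mathrm{Pr}(\theta X^s) = X^{p-1}$ and $\mathrm{Pr}(\theta Y^s) = Y^{p-1}$ come from Lemma \ref{generator}(i) together with $w$-equivariance, and $\mathrm{Pr}(X^{r-1}Y)$ from Lemma \ref{lemmanew} plus the previously computed values. The one place where the paper is terse is $\mathrm{Pr}(\theta X^{r-2p}Y^{p-1})$, which it says ``can be calculated using the action of $w$, the diagonal matrices and the unipotent matrices of $\Gamma$'' and leaves as an exercise. Your construction of the class of $e_0 = X^{r-2p}Y^{p-1} - X^{r-p-1} - Y^{r-p-1}$ in $V_{r-p-1}/V_{r-p-1}^*$, verified $\Gamma$-invariant, makes the splitting $W_1 = W_0 \oplus \bar\F_p\cdot\theta e_0$ explicit, and the desired value then falls out of $\theta X^{r-2p}Y^{p-1} = \theta e_0 + \theta X^s + \theta Y^s$ together with $\mathrm{Pr}(\theta e_0) = 0$. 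This is a clean way to settle the exercise. The invariance checks for the torus, $w$, and the lower unipotent (the last via the characteristic-$p$ identity $(X+Y)^{p-1} - Y^{p-1} = X(X^{p-1}-Y^{p-1})/(X+Y)$) are all correct, and the degree bound implicitly needed to iterate the congruence $X^{N-i}Y^i \equiv X^{N-i-(p-1)}Y^{i+p-1} \pmod{V_N^*}$ does hold, since $r > 2p$ and $r\equiv 2 \bmod (p-1)$ force $N = r - p - 1 \geq 2p-2$.
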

 
\begin{proof}
The image of $\theta X^{s}$ is $X^{p-1}$, %and $Y^{p-1}$ respectively by
Lemma \ref{generator} (i). The image of $\theta Y^s$ is then obtained by
applying the matrix $w=\left(\begin{smallmatrix}0 & 1\\ 1 & 0
\end{smallmatrix}\right)$ to it and by using $\Gamma$-linearity of
$\mathrm{Pr}$.  The image  of $\theta X^{r-2p}Y^{p-1}$ under
$\mathrm{Pr}$ can be calculated  using the action of $w$, the diagonal
matrices and the unipotent matrices of $\Gamma$, on the polynomial
$\theta X^{r-2p}Y^{p-1}$, and is left as an exercise.
   
The polynomial 
$H(X,Y):=X^{r-1}Y+\dfrac{r}{2}\cdot\theta
X^{r-2p}Y^{p-1}-\dfrac{r}{2}\cdot \theta Y^s-\dfrac{2-r}{2}\cdot \theta
X^s\in V_r$ maps to $0$ in $ P$,  by Lemma \ref{lemmanew} and Lemma
\ref{generator} (i).  Therefore using the previous calculations we obtain
$$ \mathrm{Pr}(X^{r-1}Y)=
-\frac{r}{2}\cdot(X^{p-1}+Y^{p-1})+\frac{r}{2}\cdot
Y^{p-1}+\frac{2-r}{2}\cdot X^{p-1}=(1-r)\cdot X^{p-1}.$$     
\end{proof}

\begin{proof}[Proof of Proposition \ref{F'0}]
We consider the function $f=\left[\mathrm{Id},\,\frac{1}{p}\cdot \theta
X^s\right]\in\mathrm{ind}_{KZ}^G \> \Sym^r \bar\Q_p^2$ and compute that $(T-a_p)f$
is integral and reduces to $-\left[\mathrm{Id},\,\frac{a_p}{p}\cdot
\theta X^s\right]
+\underset{\lambda\in\F_p}\sum\left[g_{1,[\lambda]}^0,\,X^{r-1}Y\right]$
mod $p$, which further maps to $-\left[\mathrm{Id},\,\frac{a_p}{p}\cdot
X^{p-1}\right]
+\underset{\lambda\in\F_p}\sum\left[g_{1,[\lambda]}^0,\,(1-r)\cdot
X^{p-1}\right]$ in $\mathrm{ind}_{KZ}^G J_0$, by Lemma \ref{projection}.

This shows that $F_0'=0$, when $r\equiv 1\mod p$. Suppose now that $r
\not\equiv 1 \mod p$. Then $F_0'$ is
a quotient of $\pi(p-1,\frac{a_p}{p(1-r)},\,\omega)$.
It follows from %the hypotheses and 
the proof of Theorem~\ref{thma2} (i) that the JH
factors of $U_1$ are the same as the JH factors of
$\pi(p-1,\lambda,\omega)$. But we have assumed that $\lambda$ is not
equal to the reduction mod $p$ of $\frac{a_p}{p(1-r)}$, so
$\pi(p-1,\lambda,\omega)$ and $\pi(p-1,\overline{\frac{a_p}{p(1-r)}},\,\omega)$ have
no common JH factor. So $F'_0 = 0$.
\end{proof}

\begin{proof}[Proof of Theorem \ref{a2peuramifiethm}]
Consider the subrepresentation $U_1$ of $\bar\Theta_{k,a_p}$. 
Its JH factors are those of $\pi(p-1,1,\omega)$, that is $\omega$ and
$\St\circ\,\omega$. 
Consider its subrepresentation $F_1'$ and quotient
$F_0' = U_1/F_1'$ as above. We are in a
situation where Proposition \ref{F'0} applies, as we remarked after the
statement of Theorem \ref{a2peuramifiethm}.
So $U_1 = F'_1$ is a quotient of $\mathrm{ind}_{KZ}^G (V_0\otimes D)$.
 %$I(V_0(1))$. 
This forces $U_1$ to be $\pi(0,1,\omega)$, by Lemma \ref{quotientlemma}.
The theorem now follows from Corollary \ref{peuortres} and Proposition
\ref{pi011}, since $\bar\Theta_{k,a_p}$ is an   extension of $\pi(p-3,
1,\omega^2)$ by $U_1$.
\end{proof}

\subsection{The case $a=2$ and $r \geq 2p$ and $a_p$ is close to $\eps
p(1-r)$ and $r$ is not $2/3$ modulo $p$}
\label{a2ramifie}
Let $E = \Q_p(a_p)$, let $R = \O_E$ and let $\m_E$ be its maximal ideal, with
uniformizer $\varpi$.

\subsubsection{Statement}

\begin{thm}
\label{a2ramifiethm}

Let $p \geq 5$ and $r \geq 2p$. Suppose that $r\equiv 2\mod(p-1)$, %that $r$ is not equal to %$1$ or 
$r\not\equiv2/3$ modulo $p$,
and that
$a_p$ is close to $\eps p(1-r)$, for 
$\eps \in \{\pm 1\}$.
Let $u = u(a_p) = \frac{2(a_p^2-\binom{r}{2}p^2)}{pa_p(2-r)}$. 
If $r \equiv 2 \mod p$, assume further that either $E$ is unramified over $\Q_p$ or $u-\eps$ is a
uniformizer of $E$. 

\begin{enumerate}

\item
If $v(u-\eps) < 1$, then 
there exists a lattice $\Theta'$ in $\hat\Pi_{k,a_p}$ such that
$V(\bar\Theta')$ is a ``peu ramifi\'ee" extension of
$\mu_{\eps} \cdot \omega$ by $\mu_{\eps} \cdot \omega^2$.

\item
If $v(u-\eps) \geq 1$, then
there exists a lattice $\Theta'$ in $\hat\Pi_{k,a_p}$ such that
$V(\bar\Theta')$ is a ``tr\`es ramifi\'ee" extension of
$\mu_{\eps} \cdot \omega$ by $\mu_{\eps} \cdot \omega^2$.

\item
Moreover, for a fixed $r\not\equiv 1\mod p$, %if $r \not\equiv 2 \mod p$,
all isomorphism classes of ``tr\`es ramifi\'ee" extensions appear
depending on the choice of $a_p$. 
\end{enumerate}
\end{thm}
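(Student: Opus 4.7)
The strategy is the one developed in Sections \ref{functor} through \ref{peu-tres}: construct a $G$-stable $R$-lattice $\Theta \subset \hat{\Pi}_{k,a_p}$, commensurable with $\hat{\Theta}_{k,a_p}$, whose reduction $\bar{\Theta}$ is isomorphic to $\Pi_\tau \otimes \omega$ for some nonzero $\tau \in \mathrm{Hom}_\mathrm{cont}(\Q_p^\times,\bar\F_p)$, compute $[\tau] \in \P^1(\bar\F_p)$ explicitly, and then invoke Corollary \ref{peuortres} to produce the lattice $\Theta'$ asserted in the theorem. Recall that $\Pi_\tau \otimes \omega$ is an indecomposable representation of length three, with socle $\St \otimes \omega$ and cosocle $\pi(p-3,1,\omega^2)$. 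By twisting by the unramified character of order two we may assume $\eps = +1$ throughout.

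The construction of $\Theta$ proceeds in the spirit of Sections \ref{case3ap-1} and \ref{casea2}, by adjoining to $\mathrm{ind}_{KZ}^G\,\Sym^r\bar\Z_p^2$ carefully chosen fractional sections $[g,F/p^N]$ and passing to the image in $\hat{\Pi}_{k,a_p}$. The key feature, foreshadowed in the introduction, is that these sections are supported on balls of radius up to $v(r-2)+1$ around the origin of the Bruhat--Tits tree and are divided by suitable powers of $u-\eps$; the divisions arise naturally from the Hecke relations $T-a_p$ and ensure that the lower two graded pieces of the filtration on $\bar{\Theta}$ are glued into an indecomposable extension of $\omega$ by $\St\otimes\omega$. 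Verification of $G$-stability and of the asserted isomorphism $\bar{\Theta} \cong \Pi_\tau \otimes \omega$ will rely on Lemma \ref{changelattice}, on the structure of the quotient $P$ from Section \ref{filtrationP}, and on the binomial identities of Propositions \ref{congrbinom2} and \ref{congrbinom12}. The technical hypotheses imposed when $r \equiv 2 \bmod p$ (that $E/\Q_p$ be unramified or that $u-\eps$ be a uniformizer of $E$) will be used to align the $\varpi$-adic and $p$-adic expansions to the accuracy needed for the Hecke calculation to correctly identify $\bar{\Theta}$.

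With $\bar{\Theta}$ in hand, I would compute $[\tau]$ via Lemma \ref{computetauomega}: pick a lift $e \in \bar{\Theta}$ of a nonzero element of the cosocle $\omega$, and apply the $A$-equivariant linear form $\mu : \St\otimes\omega \to \omega$ from Lemma \ref{linearStomega} to the two elements $i(1+p)e-e$ and $i(p)e-e$. Realizing $\mu$ concretely via the harmonic function $\delta$ on the tree introduced in Section \ref{tau}, one extracts an explicit formula for $[\tau]$ in terms of $a_p$. The expected outcome, announced in the introduction, is
\begin{equation*}
[\tau] =
\begin{cases}
(0:1), & \text{if } v(u-\eps) < 1, \\
(\overline{3r-2} : y), & \text{if } v(u-\eps) \geq 1,
\end{cases}
\end{equation*}
for some $y \in \bar\F_p$ depending on $a_p$. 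Since $r \not\equiv 2/3 \bmod p$ by hypothesis, $\overline{3r-2} \neq 0$ and the second case gives $[\tau] \neq (0:1)$. Proposition \ref{criterion} then yields parts (1) and (2).

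For part (3), I would examine how $y$ depends on $a_p$ in the regime $v(u-\eps)\geq 1$: the second coordinate of $[\tau]$ turns out to be the reduction of an expression of the form $c\cdot(u-\eps)/\varpi^{\ell}$ for some unit $c$ and integer $\ell$ making the quotient a $\varpi$-adic unit. Varying $a_p$ within the set $\{a_p : \overline{a_p/p} = \overline{\eps(1-r)}\}$ then forces $y$ to range over all of $\bar\F_p$, realizing every isomorphism class of très ramifiée extension. The main technical difficulty lies squarely in the construction of $\Theta$ and the verification of its reduction: the radius $v(r-2)+1$ of the supporting balls can be arbitrarily large, and the Hecke contributions must be tracked modulo $\varpi^{t+2}$ with $t=v(r-2)$, which is precisely where the hypothesis on $E$ plays its role in the borderline case $r \equiv 2 \bmod p$.
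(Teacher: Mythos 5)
Your high-level plan is correct: find a lattice whose reduction has the right filtration, compute $[\tau]$ via the $A$-equivariant linear form $\mu$ on $\St\otimes\omega$ (Lemmas~\ref{linearStomega}, \ref{computetauomega}), and invoke Corollary~\ref{peuortres}. The expected values of $[\tau]$ you announce match the paper. However, the mechanism you propose for constructing the lattice is not what the paper does, and as described it leaves the central step unaddressed.

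You propose to build $\Theta'$ ``by adjoining to $\mathrm{ind}_{KZ}^G\,\Sym^r\bar\Z_p^2$ carefully chosen fractional sections $[g,F/p^N]$... supported on balls of radius up to $v(r-2)+1$... divided by suitable powers of $u-\eps$.'' That is a conflation: the sections of large support (the $\chi_g$ of Lemma~\ref{defauxfunc}) are used only to \emph{compute} the image of elements of the standard lattice $\Theta = \Theta_{k,a_p}$ under $\psi$ (cf.\ the inputs \eqref{one}, \eqref{two} to Propositions~\ref{gamma1delta} and \ref{existsc}); they are not used to construct a new lattice. The paper's $\Theta'$ is built purely algebraically from the \emph{standard} lattice: one lifts a filtration of $\bar\Theta$ with graded pieces $\omega$, $\St\otimes\omega$, $\pi(p-3,1,\omega^2)$ to a direct-sum decomposition $\hat\Theta = S_1\oplus S_2\oplus S_3$ of $R$-modules (Section~\ref{standard}), proves that the off-diagonal matrix entries $a_{1,2}(g)$ and $a_{3,2}(g)$ of the $G$-action lie in $\delta\Hom(S_i,S_j)$ (Proposition~\ref{S1S2}), and then sets $\Theta' = \delta S_2 \oplus S_1 \oplus S_3$. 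Proposition~\ref{S1S2} is where the extra hypothesis on $E$ enters: either $\delta$ is a uniformizer (immediate), or $r\not\equiv 2 \bmod p$ and one compares $\Theta$ with the reference lattice $\Theta^0_R$ for $a_p^0 = p(1-r)$ via Propositions~\ref{compquotient}, \ref{compsublattice} and Lemma~\ref{complattice} --- arguments that break down when $r\equiv 2\bmod p$ because the mod-$p$ Local Langlands input to Lemma~\ref{equalmodvarpi} fails there (Remark~\ref{F1zerorembis}). Your proposal gives no analogous argument for $G$-stability or for the identification of $\bar\Theta'$, which is precisely the difficult part.

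A secondary inaccuracy: in part (3), the second coordinate of $[\tau]$ is not of the form $c\cdot(u-\eps)/\varpi^\ell$ with the quotient a unit. Writing $u = 1+tp$, the paper obtains $\mu(\gamma_1 e - e) = 2\bar t + c$ for a constant $c\in\F_p$ independent of $a_p$ (Proposition~\ref{existsc}); the point is that $\bar t$ can be any element of $\bar\F_p$ (including $0$, when $v(u-\eps)>1$), and the affine map $\bar t\mapsto 2\bar t + c$ is surjective. Your phrasing would force the coordinate to be a unit and also omits the additive constant. Also note that $\omega$ is the \emph{middle} graded piece of $\Pi_\tau\otimes\omega$, not the cosocle; $e$ is a lift of a generator of $M/N\cong\omega$ inside the subrepresentation $M\subset\bar\Theta'$, not a lift from the cosocle.
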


Note that the condition that $a_p$ is close to $\eps p(1-r)$ implies that
$\overline{u(a_p)} =\bar\eps \neq \overline{ rp/2a_p}$, as $r \not\equiv 2/3
\mod p$. It also implies $r \not\equiv 1 \mod p$.

Under the conditions of the theorem, we have that $F_1 = 0$,
by the following proposition. 

\begin{prop}
\label{F1a2bis}
Suppose that $v(a_p^2-\binom{r}{2}p^2) = 2 + v(r-2)$. If
$\lambda = \overline{\frac{2(a_p^2-\binom{r}{2}p^2)}{pa_p(2-r)}} \neq
\overline{rp/2a_p}$, then $F_1 = 0$.
\end{prop}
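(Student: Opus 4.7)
The plan is to combine the restrictive description of $F_1$ from Proposition \ref{F1a2} (ii) with the constraint that $\bar\Theta_{k,a_p}^{ss}$ must lie in the image of the mod $p$ Local Langlands Correspondence, as already computed via Theorem \ref{thma2} (i). The point is that the hypothesis $\lambda \neq \overline{rp/(2a_p)}$ will rule out, on the Galois side, the only possible one-dimensional JH factor $F_1$ could contribute on the automorphic side.

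First I would dispose of the case $p \mid r$ quickly: here $\overline{rp/(2a_p)} = 0$, whereas $\lambda = \overline{u(a_p)} \in \bar\F_p^\times$ under the assumption $v(a_p^2-\binom{r}{2}p^2)=2+v(r-2)$, so the hypothesis $\lambda \neq \overline{rp/(2a_p)}$ is automatic, and Proposition \ref{F1a2} (i) directly gives $F_1=0$. So I may assume $p \nmid r$ and set $\nu := \overline{rp/(2a_p)} \in \bar\F_p^\times$. By Proposition \ref{F1a2} (ii), $F_1$ is a proper quotient of $\pi(0,\nu,\omega)$. Recalling the standard structure (see \cite{Br03, BL95}): if $\nu \neq \pm 1$, then $\pi(0,\nu,\omega)$ is irreducible and $F_1=0$ automatically; if $\nu = \pm 1$, then $\pi(0,\nu,\omega)$ is a non-split extension of $\mu_\nu \cdot \omega \circ \det$ by $\St \otimes \mu_\nu \cdot \omega \circ \det$ (as established, e.g., in Section \ref{peu-tres} for $\nu=1$), so the only non-zero proper quotient is $\mu_\nu \cdot \omega \circ \det$. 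In either case, if $F_1 \neq 0$ we must have $\nu = \pm 1$ and $F_1 \iso \mu_\nu \cdot \omega \circ \det$, so in particular $\mu_\nu \cdot \omega \circ \det$ appears as a JH factor of $\bar\Theta^{ss}$.

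Now I would invoke Theorem \ref{thma2} (i): the valuation hypothesis forces $\bar V_{k,a_p}^{ss} \iso \mu_\lambda \cdot \omega^2 \oplus \mu_{\lambda^{-1}}\cdot\omega$, and the mod $p$ LLC then yields
$$\bar\Theta_{k,a_p}^{ss} \iso \pi(p-1,\lambda,\omega)^{ss} \oplus \pi(p-3,\lambda^{-1},\omega^2)^{ss}.$$
Since $p \geq 5$, we have $p-3 \in \{2,\dots,p-2\}$, so $\pi(p-3,\lambda^{-1},\omega^2)$ is an irreducible infinite-dimensional principal series and contributes no one-dimensional JH factor. On the other hand, $\pi(p-1,\lambda,\omega)^{ss}$ and $\pi(0,\lambda,\omega)^{ss}$ correspond under the mod $p$ LLC to the same Galois representation and therefore have the same JH factors; this semisimplification contains a one-dimensional character exactly when $\lambda = \pm 1$, and in that case the character in question is $\mu_\lambda \cdot \omega \circ \det$. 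Consequently, if the character $\mu_\nu \cdot \omega \circ \det$ is to appear as a JH factor of $\bar\Theta^{ss}$, we must have $\lambda = \pm 1$ and $\nu = \lambda$. The contrapositive says: when $\lambda \neq \nu$, we are forced to $F_1 = 0$, as claimed.

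The main obstacle is the bookkeeping in the JH analysis of $\pi(p-1,\lambda,\omega)^{ss}$: identifying the one-dimensional JH factor precisely as $\mu_\lambda \cdot \omega \circ \det$ (and not some other twist) requires careful tracking of the twists on both sides of Breuil's semisimple mod $p$ LLC. Beyond that, the argument is essentially a formal consequence of Proposition \ref{F1a2} (ii), Theorem \ref{thma2} (i), and the injectivity of the mod $p$ LLC.
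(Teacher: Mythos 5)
Your proposal is correct and follows essentially the same route as the paper: both first dispose of the generic case where $\pi(0,\nu,\omega)$ is irreducible, and then use the constraint (from Theorem~\ref{thma2}~(i), i.e.\ the mod $p$ LLC) that every JH factor of $\bar\Theta^{ss}$ must lie among those of $\pi(p-1,\lambda,\omega)\oplus\pi(p-3,\lambda^{-1},\omega^2)$, concluding that the sole candidate one-dimensional factor $\mu_{\nu}\cdot\omega\circ\det$ cannot appear when $\lambda\neq\nu$. The paper states the LLC step more tersely ("the JH factors of $\pi(0,\overline{rp/2a_p},\omega)$ are not contained in this set") while you spell out the bookkeeping -- the identification $\pi(p-1,\lambda,\omega)^{ss}\cong\pi(0,\lambda,\omega)^{ss}$, the fact that $\pi(p-3,\lambda^{-1},\omega^2)$ is irreducible for $p\geq 5$, and the precise twist of the one-dimensional character -- but these are just details the paper leaves implicit; there is no difference in strategy.
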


\begin{proof}
Recall that $F_1$ is a proper quotient of $\pi(0,\overline{rp/2a_p},\omega)$,
by Proposition~\ref{F1a2}, and we can assume that $p$ does not divide $r$.

If
$\overline{rp/2a_p} \neq \pm 1$, then $\pi(0,\overline{rp/2a_p},\omega)$ is
irreducible, and so $F_1 = 0$.

In general (allowing $\overline{rp/2a_p}$ to be $ \pm 1$), by the mod $p$ LLC, we know that the JH factors of $F_0$, $F_1$ and $F_2$
are contained in the JH factors of $\pi(p-1,\lambda,\omega)$ and
$\pi(p-3,\lambda^{-1},\omega^2)$. As $\lambda \neq \overline{rp/2a_p}$,
the JH factors of $\pi(0,\overline{rp/2a_p},\omega)$ are not contained in this set
and so $F_1 = 0$.
\end{proof}

\begin{remark}
\label{F1zerorem}
The final paragraph of the proof of Proposition~\ref{F1a2bis} 
does not make any distinction between whether 
$\overline{rp/2a_p} = \pm 1$ or not.
However, it
will be useful later to know that if
$\overline{rp/2a_p} \not= \pm 1$, then we can prove $F_1 = 0$ without
using the mod $p$ LLC and solely by computing explicit elements
in the kernel of the map $\ind_{KZ}^GV_r \to \bar\Theta_{k,a_p}$; see
Remark \ref{F1zerorembis}.  This is why we have separately mentioned the proof 
given in the second paragraph above.
\end{remark} 

For the rest of this section,
fix $a_p$, $r$ and $u = u(a_p)$, 
as in the hypotheses of Theorem
\ref{a2ramifiethm}. We suppose also that $\eps = 1$ 
(and so $\overline{a_p/p} = 1-r$) 
to simplify notation, since 
the proof in the case that $\eps = -1$ is identical.

We define an element $\delta\in R$ as follows:
\begin{eqnarray*}
  \delta = 
  \begin{cases}
     u-1 & \text{if $v(u-1)<1$,} \\
       p & \text{if $v(u-1) \geq 1$.}
  \end{cases}
\end{eqnarray*}
Note that $\delta$ is a uniformizer of $E$
if and only if either $u-1$ is a uniformizer of $E $ or $E$ is unramified.

\subsubsection{Comparing lattices for different values of $a_p$}
\label{compalat}

In this subsection (and only in this subsection), we make the assumption that $r \not\equiv 2 \mod
p$.

Let $\Lambda = \ind_{KZ}^G\Sym^rR^2$. Let $\Theta$ be the standard
lattice for $a_p$ and let % $\Theta_0$ 
$\Theta^0_R$ be the standard lattice when we take
$a_p^0 = p(1-r) \in \Z_p \subset R$ for the value of $a_p$, thought of as an $R$-lattice.
Then:

\begin{prop}
\label{compquotient}
$\Theta\otimes_R \frac{R}{(\delta)}$ and $\Theta^0_R \otimes_R \frac{R}{(\delta)}$ are
isomorphic as $R[G]$-modules.
\end{prop}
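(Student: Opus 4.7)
The strategy is to show that the identity map on $\Lambda$ induces the desired $R[G]$-equivariant isomorphism $\Theta/\delta \cong \Theta^0_R/\delta$, by establishing the equality of submodules $K+\delta\Lambda = K^0+\delta\Lambda$, where $K$ and $K^0$ denote the kernels of the surjections $\Lambda\twoheadrightarrow\Theta$ and $\Lambda\twoheadrightarrow\Theta^0_R$ respectively.

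The starting point is a valuation computation for $a_p - a_p^0 = a_p - p(1-r)$. Since $a_p^0$ is a root of the numerator of $u-1$, one obtains the factorization
$$
u - 1 \;=\; \frac{(a_p - p(1-r))(2a_p - rp)}{pa_p(2-r)}.
$$
The assumption $r\not\equiv 2\mod p$ gives $v(2-r) = 0$, while $r\not\equiv 2/3\mod p$ together with $\overline{a_p/p} = 1-r$ forces $\overline{2a_p/p - r} = 2-3r \neq 0$, so $v(2a_p - rp) = 1$. Hence $v(a_p - a_p^0) = v(u-1) + 1 \geq v(\delta)+1$ in both cases of the definition of $\delta$, which implies $(a_p - a_p^0)\Lambda \subseteq \delta\m_E\Lambda$. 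In particular, the operators $T-a_p$ and $T-a_p^0$ on $\Lambda$ agree modulo $\delta\Lambda$, yielding the easy inclusions $(T-a_p)\Lambda \subseteq K^0+\delta\Lambda$ and $(T-a_p^0)\Lambda \subseteq K+\delta\Lambda$.

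The remaining inclusion $K\subseteq K^0+\delta\Lambda$ is more delicate, because elements of $K$ are of the form $h = (T-a_p)f$ with $f\in\Lambda_E$ possibly having large denominators. Writing $f = g/\varpi^n$ with $g\in\Lambda$, the identity $(T-a_p^0)f = h + (a_p - a_p^0)f$ shows that if $n$ is at most the ramification index $e$ of $E$ over $\Q_p$, then the valuation bound of step one forces $(a_p-a_p^0)f \in \delta\Lambda$, so $(T-a_p^0)f \in \Lambda\cap (T-a_p^0)\Lambda_E = K^0$; the decomposition $h = (T-a_p^0)f - (a_p - a_p^0)f$ then puts $h$ in $K^0 + \delta\Lambda$. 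A symmetric argument handles the reverse inclusion $K^0 \subseteq K + \delta\Lambda$.

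The main obstacle is therefore bounding the denominator $n$ by $e$, i.e., showing that every $h\in K$ admits a representation $h = (T-a_p)f$ with $\varpi^e f\in\Lambda$, or equivalently that the $R$-torsion in $\Lambda/(T-a_p)\Lambda$ is killed by $\varpi^e$ (so by $p$ up to units). I expect this to come from the explicit formula for $T$ recalled in Section~\ref{Hecke} combined with the local-system structure of $\Lambda = \mathrm{ind}_{KZ}^G\Sym^rR^2$ on the Bruhat--Tits tree of $G$: since $T$ shifts support by at most one step on the tree, any torsion representative can be reduced inductively to one of bounded radius, where direct computation with the $T^\pm$ formulas gives a uniform denominator bound independent of $a_p$.
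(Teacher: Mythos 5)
Your valuation computation at the start is correct and matches the identity the paper uses in the proof of Lemma~\ref{compcomputed}, and the reduction of the problem to the inclusion $K \subseteq K^0 + \delta\Lambda$ (with a symmetric argument for the reverse) is also a reasonable framing. The conditional argument — given a representative $f$ with denominator $\varpi^n$, $n \leq e$, conclude $h \in K^0 + \delta\Lambda$ — is logically sound. However, the entire proof hinges on the claim that the $R$-torsion of $\Lambda/(T-a_p)\Lambda$ is killed by $\varpi^e$ (equivalently, that for each $h \in K$, the unique $f$ with $h = (T-a_p)f$ has $\varpi^e f \in \Lambda$), and this is asserted, not proved. The sketch you give (``$T$ shifts support by one step on the tree, so one can reduce to bounded radius'') does not establish a uniform bound on denominators, and I see no reason the bound $e$ should hold a priori: the torsion of $\Lambda/(T-a_p)\Lambda$ is a delicate invariant precisely in the positive-slope regime, and controlling it would likely require exactly the kind of explicit spectral analysis the paper already carries out. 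As written, the ``main obstacle'' you flag is in fact the whole content of the proposition.

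The paper's proof avoids needing any torsion bound at all. It introduces a submodule $M' \subseteq M := K$ generated over $R[G]$ by the specific integral functions $(T-a_p)f$ written down in the proofs of Propositions~\ref{F2a2}, \ref{F1a2}, \ref{F0a2}, together with the analogous $M'_0 \subseteq M_0$. It then (i) verifies directly that $M' + \delta\Lambda = M'_0 + \delta\Lambda$ by comparing the explicit test functions term by term (Lemma~\ref{compcomputed}); (ii) shows $M' + \varpi\Lambda = M + \varpi\Lambda$ and $M'_0 + \varpi\Lambda = M_0 + \varpi\Lambda$ by a Jordan--H\"older comparison using the mod~$p$ LLC (Lemma~\ref{equalmodvarpi}); and (iii) uses the saturatedness of $M$ and $M_0$ together with the elementary bootstrap Lemma~\ref{complattice} to upgrade the mod-$\varpi$ equality to a mod-$\varpi^n$ equality for all $n$, in particular mod $\delta$. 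Steps (ii)--(iii) are the crucial replacement for your denominator bound: instead of controlling how deep the denominators of $(T-a_p)^{-1}h$ can be, one shows the explicitly-constructed relations already generate $M$ modulo $\varpi$, and then the saturation argument propagates this to all depths automatically. If you want to salvage your approach, you would need to supply an actual proof of the torsion bound, which in practice would require recomputing exactly the test functions the paper already exhibits.
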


Let $M = (T-a_p)(\Lambda\otimes_R E) \cap \Lambda$ and 
$M_0 = (T-a_p^0)(\Lambda\otimes_R E) \cap \Lambda$.
We have that 
$\Theta\otimes_RR/(\delta) = \Lambda / (M + \delta \Lambda)$
and 
$\Theta^0_R \otimes_RR/(\delta) = \Lambda / (M_0 + \delta \Lambda)$
as representations of $G$, so
to prove Proposition \ref{compquotient} it is enough to prove the
following result:

\begin{prop}
\label{compsublattice}
$M + \delta \Lambda = M_0 + \delta\Lambda$.
\end{prop}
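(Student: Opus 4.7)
The plan is to prove the equality as a double inclusion, $M+\delta\Lambda\subseteq M_0+\delta\Lambda$ and $M_0+\delta\Lambda\subseteq M+\delta\Lambda$. Because $a_p$ and $a_p^0=p(1-r)$ play formally symmetric roles (both yielding irreducible crystalline slope-$1$ representations, so that both $T-a_p$ and $T-a_p^0$ are injective on $\Lambda\otimes_R E$), a single argument, applied with the two parameters interchanged, will handle both directions. I will focus on $M_0\subseteq M+\delta\Lambda$.

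The arithmetic heart is a sharp valuation estimate: $v(a_p-a_p^0)\geq v(\delta)+1$, equivalently $\gamma:=(a_p-a_p^0)/\delta\in pR$. From the defining quadratic $2a_p^2-u(2-r)p\,a_p-r(r-1)p^2=0$ (with $u=u(a_p)$) and the corresponding identity at $(a_p^0,1)$, subtraction and factorisation give
\[
(a_p-a_p^0)\bigl(2(a_p+a_p^0)-(2-r)p\bigr)=(2-r)\,p\,a_p(u-1).
\]
Since $a_p\equiv a_p^0\bmod\m_E R$, the left-hand bracket has valuation $v\bigl((2-3r)p\bigr)=1$ under the hypothesis $r\not\equiv 2/3\bmod p$, yielding $v(a_p-a_p^0)=v(u-1)+1+v(r-2)$. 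Combined with $v(\delta)=\min(v(u-1),1)$ this gives the bound; the technical hypothesis on $\Q_p(a_p)$ when $r\equiv 2\bmod p$ is what ensures the estimate retains sufficient slack in the ramified regime.

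The case of integral preimages is then immediate: for $f=(T-a_p^0)g$ with $g\in\Lambda$, the identity $T-a_p=(T-a_p^0)-\delta\gamma\cdot\mathrm{id}$ gives $(T-a_p)g=f-\delta\gamma g\in\Lambda$, so $(T-a_p)g\in M$, while $\delta\gamma g\in\delta\Lambda$; hence $f\in M+\delta\Lambda$. The hard part will be to extend this to $f\in M_0$ whose (unique) preimage $g\in\Lambda\otimes_R E$ has positive $\delta$-adic denominator $k$. My approach is induction on $k$: writing $g=\delta^{-k}g'$ with $g'\in\Lambda\setminus\delta\Lambda$, the relation $(T-a_p^0)g'=\delta^k f$ together with the identity applied at level $g'$ produces $(T-a_p)g'=\delta^k f-\delta\gamma g'\in\delta\Lambda$ for $k\geq 1$. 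Dividing by $\delta$, one obtains a preimage $g'/\delta$ of $\delta^{k-1}f$ under $T-a_p^0$ of $\delta$-denominator at most one, reducing the study of $f$ to that of $\delta^{k-1}f$; the $\delta$-saturation $M\cap\delta\Lambda=\delta M$ (equivalent to $R$-torsion-freeness of $\Theta$) should then allow one to strip off powers of $\delta$.

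I expect the main obstacle to be closing this induction cleanly: the denominator-reduction by one step replaces $f$ with $\delta^{k-1}f$ rather than $f$ itself, and translating back uses $\delta$-saturation together with the lower bound $v(\gamma)\geq 1+v(r-2)$. In the most delicate regime $v(u-1)\approx 1$ or $r\equiv 2\bmod p$, it is precisely the extra divisibility contributed by $v(r-2)$ (secured by the assumption on $\Q_p(a_p)$) that makes the bookkeeping balance. The parallel argument for the reverse inclusion $M\subseteq M_0+\delta\Lambda$ proceeds identically after exchanging the roles of $a_p$ and $a_p^0$.
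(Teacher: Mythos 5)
Your arithmetic heart is exactly right and matches the paper: the identity
\[
u(a_p)-1 \;=\; \frac{2}{2-r}\cdot\frac{p}{a_p}\left(\frac{a_p}{p}-\frac{r}{2}\right)\left(\frac{a_p}{p}-(1-r)\right)
\]
together with $r\not\equiv 2/3\bmod p$ yields $v(a_p-a_p^0)=v(u-1)+1+v(2-r)\geq v(\delta)+1$, so $\gamma:=(a_p-a_p^0)/\delta\in pR\subseteq\delta R$. This is precisely the valuation estimate used in the paper's Lemma \ref{compcomputed}. But where the paper applies it only to a finite list of explicitly-constructed rational functions, you try to apply it to arbitrary elements of $M_0$, and there the argument breaks.

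The gap is in closing your denominator induction, and it is not a technicality. Fix $f\in M_0$ with $(T-a_p^0)g=f$, $g=\delta^{-k}g'$, $g'\in\Lambda\setminus\delta\Lambda$, $k\geq 1$. Your identity gives $(T-a_p)g'=\delta^kf-\delta\gamma g'\in M\cap\delta\Lambda$, and after $\delta$-saturation $\delta^{k-1}f-\gamma g'\in M$, hence $\delta^{k-1}f\in M+\delta\Lambda$. When $k=1$ this gives the desired $f\in M+\delta\Lambda$, but for $k\geq 2$ the conclusion $\delta^{k-1}f\in M+\delta\Lambda$ is vacuous, since already $\delta^{k-1}f\in\delta\Lambda$. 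The ``dividing by $\delta$'' step reassigns the target from $f$ to $\delta^{k-1}f$, which goes the wrong way; stripping powers of $\delta$ using saturation of $M$ then only recovers statements about $\delta$-multiples of $f$, never about $f$ itself. Moreover, the denominator $k$ is not bounded a priori — $T-a_p^0$ has no bounded inverse on $\Lambda\otimes_R E$ — so there is no base of the induction to anchor on. The paper circumvents exactly this by a different strategy: it introduces the intermediary modules $M'\subseteq M$ and $M'_0\subseteq M_0$ generated by the explicit integral functions $(T-a_p)f$ from Sections \ref{casea2} and \cite[Rem.~4.4]{BG09}. Lemma \ref{compcomputed} compares $M'$ and $M'_0$ modulo $\delta$ directly (this is where your arithmetic estimate lives, applied to finitely many concrete functions with bounded denominator). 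The genuinely non-formal input, missing from your proposal, is Lemma \ref{equalmodvarpi}: $M'+\varpi\Lambda=M+\varpi\Lambda$, proved via the mod $p$ Local Langlands Correspondence and a comparison of Jordan--H\"older factors, which guarantees that the explicit generators are ``full'' modulo $\varpi$. Only then does the saturation bootstrap (Lemma \ref{complattice}) upgrade this to $M'+\delta\Lambda=M+\delta\Lambda$, and the proposition follows. Without a substitute for that representation-theoretic input, the direct element-by-element comparison cannot be made to work.
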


In order to prove this proposition, we introduce the submodule $M'$ of
 $M$ generated over $R[G]$ by all the  functions of the form $(T-a_p)f$
that we used to compute $\bar{V}^{ss}_{k,a_p}$ in the proofs of
Propositions~\ref{F0a2}, \ref{F1a2} %\ref{F1a2bis}
 and \ref{F2a2}.  To be more precise,
$M'$ is generated by the integral functions  $(T-a_p)f$ introduced in the
proofs of these propositions, and also  the functions $f$ used in
\cite[Rem. 4.4]{BG09} to show that $X_r$ and $V_r^{**}$ do not contribute
to $\bar{\Theta}$.

Let $M'_0$ be the analogous $R[G]$-submodule of $M_0$ generated by the
integral functions $(T-a^0_p)f^0$, for the rational functions $f^0$ as
above defined using $a_p^0 = p(1-r)$. 
We have:

\begin{lemma}
\label{compcomputed}
$M' + \delta \Lambda = M_0' + \delta \Lambda$.
\end{lemma}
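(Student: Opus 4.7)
The plan is to go through the rational functions $f$ used to generate $M'$ and, for each, to define $f^0$ by substituting $a_p^0 = p(1-r)$ for $a_p$ in the expression for $f$; I will then show $(T-a_p^0)f^0 \in M_0'$ and $(T-a_p)f \equiv (T-a_p^0)f^0 \pmod{\delta\Lambda}$. By symmetry, this will give the equality $M' + \delta\Lambda = M_0' + \delta\Lambda$.

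The first preliminary step is a valuation estimate showing $a_p \equiv a_p^0 \pmod{\delta R}$. Setting $N(x) := 2x^2 - px(2-r) - r(r-1)p^2$ so that $u(a_p) - 1 = N(a_p)/(pa_p(2-r))$, a direct computation gives $u(a_p^0) = 1$, i.e.\ $N(a_p^0) = 0$, and factoring yields
\[
N(a_p) - N(a_p^0) \;=\; (a_p - a_p^0)\bigl(2(a_p + a_p^0) - p(2-r)\bigr).
\]
Using $a_p^0 = p(1-r)$, the second factor is $\equiv p(2-3r) \pmod{a_p - a_p^0}$, which has valuation exactly $1$ thanks to the hypothesis $r\not\equiv 2/3\pmod p$. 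Since the denominator $pa_p(2-r)$ has valuation $t+2$ with $t := v(r-2)$, this gives $v(a_p - a_p^0) = v(u-1) + t + 1$. A case check against the definition of $\delta$ then shows $v(a_p - a_p^0) \geq v(\delta)$ in both Case $v(u-1)<1$ and Case $v(u-1)\geq 1$.

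Next, I would go through the generators of $M'$. The functions $f$ used in [Rem.~4.4, BG09] to show $X_r$ and $V_r^{**}$ do not contribute are integral and do not involve $a_p$, so taking $f^0 = f$ we immediately get $(T-a_p)f - (T-a_p^0)f = (a_p^0-a_p)f \in \delta\Lambda$. For the rational functions $f$ appearing in the proofs of Propositions~\ref{F0a2}, \ref{F1a2} and \ref{F2a2}, the integrality of $(T-a_p)f$ is already established, and the proofs produce explicit expressions for $(T-a_p)f$ modulo $p\Lambda$ (which suffices, since $p\Lambda \subseteq \delta\Lambda$ in both cases). These expressions depend on $a_p$ only through $\overline{a_p/p}$, $1/u$, $p/a_p$ and integer coefficients coming from the Hecke formula; since $\overline{a_p/p} = \overline{a_p^0/p} = 1-r$ and the identity $1/u - 1 = -(u-1)/u$ combined with the preliminary estimate gives $1/u - 1 \in \delta R$, the final explicit expressions for $(T-a_p)f$ and $(T-a_p^0)f^0$ agree modulo $\delta\Lambda$.

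The main obstacle is purely the bookkeeping: one has to match up, term by term, the computations in the proofs of Propositions~\ref{F0a2}--\ref{F2a2} with their $a_p^0$-analogues, and verify that the intermediate integral combinations denoted $h, h', h'', h''', h''''$ (and the analogous terms $h_\chi$ coming from Lemma~\ref{defauxfunc}) have the same reduction modulo $\delta\Lambda$ when $a_p$ is replaced by $a_p^0$. This reduces to the single conceptual point that the only genuinely $a_p$-dependent quantities entering the final formulas are units congruent to their $a_p^0$-values modulo $\delta R$; everything else is either a rational function of $r$ and $p$, or an integer coefficient from the Hecke formula, which does not change under $a_p \mapsto a_p^0$.
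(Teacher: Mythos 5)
Your proposal is correct and follows essentially the same path as the paper's proof: establish that $v(a_p - a_p^0)$ is controlled by $v(u-1)$ via an algebraic identity (you use the quadratic $N(x)$ while the paper uses the equivalent factorization $u-1 = \frac{2}{2-r}\frac{p}{a_p}(\frac{a_p}{p}-\frac{r}{2})(\frac{a_p}{p}-(1-r))$, whose two linear factors are exactly your $(a_p-a_p^0)$ and one-half your ``second factor''), and then invoke a case-by-case comparison of the explicit generator functions modulo $\delta\Lambda$. One expositional slip: you state the conclusion as $v(a_p - a_p^0)\ge v(\delta)$, whereas what is actually needed (and what your derivation $v(a_p-a_p^0) = v(u-1)+t+1$ gives) is the sharper $v(a_p/p - a_p^0/p) = v(u-1) \ge v(\delta)$; you do use the sharper form implicitly when concluding $1/u - 1 \in \delta R$.
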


\begin{proof}
It is enough to show that  the various functions $(T-a_p)f$ and $(T-a_p^0)f^0$ used to define
$M'$ and $M'_0$ are same modulo $\delta\Lambda$. 
%Consider the various elements $(T-a_p)f$ and $(T-a_p^0)f$ used to compute
%$M'$ and $M'_0$. 

The functions $(T-a_p)f$ and $(T-a_p^0)f^0$ used to show that $V_r^{**}$ and $X_r$ do
not contribute to $\bar{\Theta}$ and $\bar{\Theta}^0_R$ respectively, are the same even up to
$p\Lambda$. 
%The functions of the form $(T-a_p)f$ used to show that $V_r^{**}$ and $X_r$ do
%not contribute to $\bar{\Theta}$ and $\bar{\Theta}_0$ are the same even up to
%$p\Lambda$. 
Next, the identity 
$$u-1=\frac{2}{2-r}\cdot\frac{p}{a_p}\left(\frac{a_p}{p}-\frac{r}{2}\right)\left(\frac{a_p}{p}-(1-r)\right)$$ % and
implies the equality of valuations $v(u-1)=v\left(\frac{a_p}{p}-(1-r)\right)$, 
as the other factors are units by the hypotheses $v(a_p)=1$, $r\not\equiv 2\mod p$ and $\overline{a_p/p}\neq \pm{r/2}\in\bar\F_p$. 
Using the definition of $\delta$, it follows that
$$v\left(\frac{a_p}{p}-\frac{a_p^0}{p}\right)=v\left(\frac{p}{a_p}-\frac{p}{a_p^0}\right)=v(u-1)\geq v(\delta).$$
Using this, one checks case by case that for each of the functions $f$ and $f^0$ % from  Section \ref{casea2}, 
used to define 
$M'$ and $M_0'$ respectively, the integral functions
$(T-a_p)f$ and $(T-a_p^0)f^0$ are the same modulo $\delta\Lambda$.
%
%Turn now to Section \ref{casea2}.
%The functions  used to study $F_0$ and $F_2$ are the same up to
%$\delta \Lambda$.
%
%If $p$ divides $r$, the functions used to show that $F_1=0$ are the same
%up to $\delta\Lambda$. Otherwise, the functions used to show that $F_1$ is a
%strict quotient of $\pi(0,\overline{2a_p/pr},\omega)$ are the same up to
%$\delta\Lambda$. Here we use that if $v(u(a_p)-u(a_p^0)) \geq v(\delta)$,
%and $v(a_p/p - (1-r)) > 0$, then $v(a_p/p-(1-r)) \geq v(\delta)$ (we
%are computing the root of a polynomial of degree $2$ with simple roots
%that has a given reduction).
\end{proof}

\begin{remark}
  The lemma also holds for $r \equiv 2 \mod p$, since  $a_p/p$ is close to 
  $1-r$ implies that we may replace the inequality above
  with $v(\frac{a_p}{p} - \frac{a_p^0}{p}) = v(u-1) + v(2-r) \geq v(\delta)$. 
\end{remark}

\begin{lemma}
\label{equalmodvarpi}
$M' + \varpi \Lambda = M + \varpi \Lambda$ and
$M'_0 + \varpi \Lambda = M_0 + \varpi \Lambda$.
\end{lemma}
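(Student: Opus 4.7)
The containment $M' + \varpi\Lambda \subseteq M + \varpi\Lambda$ is immediate from $M' \subseteq M$; the content is in the reverse direction, or equivalently in showing that the tautological $R[G]$-equivariant surjection
\[
\Lambda/(M' + \varpi\Lambda) \twoheadrightarrow \Lambda/(M + \varpi\Lambda) = \bar\Theta_{k,a_p}
\]
is an isomorphism, and similarly for $M_0'$ and $M_0$. The plan is a length argument: identify enough explicit generators of $M'$ so that the left-hand side has the same finite Jordan--H\"older length as $\bar\Theta_{k,a_p}$, forcing the surjection to be bijective.

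The generators of $M'$ imported from \cite[Rem.~4.4]{BG09} reduce modulo $\varpi$ to elements generating $\ind_{KZ}^G(X_r + V_r^{**}) \subset \ind_{KZ}^G V_r = \Lambda/\varpi\Lambda$, so $\Lambda/(M' + \varpi\Lambda)$ is already a quotient of $\ind_{KZ}^G P$, just as $\bar\Theta_{k,a_p}$ is. The remaining generators of $M'$, namely the integral functions $(T-a_p)f$ appearing in the proofs of Propositions~\ref{F0a2}, \ref{F1a2} and \ref{F2a2}, reduce modulo $\varpi$ to explicit Hecke relations in $\ind_{KZ}^G J_i$ for $i=0,1,2$; by the same computations these relations impose on the three graded pieces of the induced filtration on $\Lambda/(M' + \varpi\Lambda)$ precisely the upper bounds (quotients of specific principal series, or zero) that they impose on $F_0$, $F_1$, $F_2$ in $\bar\Theta_{k,a_p}$. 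This upper-bounds the composition length of $\Lambda/(M' + \varpi\Lambda)$ by that of $\bar\Theta_{k,a_p}^{ss}$; and since $\bar\Theta_{k,a_p}^{ss}$ is already pinned down by Theorem~\ref{thma2} via the mod $p$ Local Langlands Correspondence and saturates this upper bound, the surjection, being length non-decreasing, must be an isomorphism. Replacing $a_p$ throughout by $a_p^0 = p(1-r)$ (which has valuation $1$ under our standing hypotheses) and the functions $f$ by their analogues $f^0$ yields the second equality $M_0' + \varpi\Lambda = M_0 + \varpi\Lambda$.

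The main obstacle is verifying, in the $a_p^0$ case, that the vanishing $F_1 = 0$ given by Proposition~\ref{F1a2bis} is genuinely witnessed by an element of $M_0'$ and not merely deduced a posteriori via the mod $p$ LLC. Under the standing hypotheses $r \not\equiv 2/3 \bmod p$ (from Section~\ref{a2r2}) and $r \not\equiv 2 \bmod p$ (from Section~\ref{compalat}), one computes $\overline{rp/(2a_p^0)} = \overline{r/(2(1-r))} \neq \pm 1$, so Remark~\ref{F1zerorem} furnishes the required explicit generator inside $M_0'$. For $M'$ itself, a parallel check is needed to handle the ``special'' case of Remark~\ref{remark a=2}, where $\lambda = \overline{rp/(2a_p)}$ and $F_1$ may be nonzero: the same explicit generators must be verified to realize the alternative decomposition ($F_0 \cong \mathrm{St} \otimes \mu_\eps \omega$, $F_1 \cong \mu_\eps \omega$) inside $\Lambda/(M' + \varpi\Lambda)$.
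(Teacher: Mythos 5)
Your proof is essentially the paper's argument: pass to the surjection $\Lambda/(M'+\varpi\Lambda)\twoheadrightarrow\Lambda/(M+\varpi\Lambda)$, bound the Jordan--H\"older multiset of the source using the explicit elements of $M'$ computed in Section~\ref{casea2}, and invoke the mod~$p$ LLC to pin down the multiset of the target, concluding by a length comparison. You also correctly identify the single delicate point, namely that the vanishing of the $F_1$-piece must be witnessed by an element of $M'$ (resp.~$M'_0$) rather than inferred only through LLC, and that under $r\not\equiv 2, 2/3\bmod p$ the quantity $\overline{rp/2a_p^0}=\overline{r/(2(1-r))}$ is not $\pm1$, so the second paragraph of the proof of Proposition~\ref{F1a2bis} (as flagged in Remark~\ref{F1zerorem}) supplies exactly such a witness.

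The only misstep is the last sentence. You raise the ``special case'' of Remark~\ref{remark a=2} for $M'$ as a genuinely open concern, but it cannot occur here: since $\overline{a_p/p}=\overline{a_p^0/p}=\overline{1-r}$, the same computation gives $\overline{rp/2a_p}\neq\pm1$, whereas $\lambda=\overline{u(a_p)}=\bar\eps=\pm1$. Hence $\lambda\neq\overline{rp/2a_p}$, the alternative decomposition of Remark~\ref{remark a=2} is excluded, and the irreducibility argument of Proposition~\ref{F1a2bis}'s second paragraph applies verbatim to $M'$ as well as to $M_0'$; no separate ``parallel check'' is needed. You should simply observe this symmetry between $a_p$ and $a_p^0$ instead of leaving the point in doubt.
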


\begin{proof}
Since $M' \subset M$, there is a $G$-equivariant surjection $\Lambda / (M' + \varpi \Lambda) \to 
\Lambda / (M + \varpi\Lambda)$. This map is in fact an isomorphism.
Indeed, looking back at Section \ref{casea2}, we see that we effectively
computed a list of possible JH factors  in $\Lambda / (M' + \varpi \Lambda)$, that is,
we computed the set $S=\JH(\Lambda / (M' + \varpi \Lambda))$.
More precisely, Propositions~\ref{F2a2}, \ref{F0a2} and (part of) 
\ref{F1a2bis} show that $S = \{\omega, \St\otimes\, \omega, \pi(p-3,1,\omega^2)\}$.
The surjection above implies that
$\JH(\Lambda / (M + \varpi \Lambda)) \subset S$. 
However, considerations about the mod $p$ LLC
show that $\JH(\Lambda / (M + \varpi \Lambda)) = S$, and so we
have $\JH(\Lambda / (M' + \varpi \Lambda)) = S = \JH(\Lambda / (M +
\varpi \Lambda))$.
Hence the kernel of the map above is zero, so $M' + \varpi \Lambda = M + \varpi \Lambda$.

The same reasoning applies to $M_0$ and $M'_0$.
\end{proof}

\begin{remark}
\label{F1zerorembis}
It is in the proof of this lemma that we use the additional condition  
$r \not\equiv 2 \mod p$. Indeed, as $r \not\equiv 2/3 \mod p$, we have 
$\overline{rp/2a_p} \neq \pm 1$ if and only if $r \not\equiv 2 \mod p$,
and we can use the second paragraph in the proof of Proposition~\ref{F1a2bis}.
When $r \equiv 2 \mod p$, %we have
$\JH(\Lambda / (M + \varpi \Lambda)) \subset S$, but we do not have
equality.  Indeed, in this case we have $\overline{\frac{rp}{2a_p}}=\overline{\frac{1}{1-r}}=-1$ 
and Proposition~\ref{F1a2bis} is proved using the mod $p$ LLC and so does not give information about $M'$. 
Propositions~\ref{F2a2}, \ref{F0a2} and \ref{F1a2}
do show that the set
$S = \JH(\Lambda / (M' + \varpi \Lambda))=\{\omega,
\St\otimes\,\omega,\omega\otimes\mu_{-1},\pi(p-3,1,\omega^2)\}$, 
as $\lambda=\bar\eps=1$.
However, considerations about the image of the mod $p$ LLC show that necessarily 
$\JH(\Lambda / (M + \varpi \Lambda)) =
\{\St\otimes\,\omega, \omega,\pi(p-3,1,\omega^2)\} \neq S$.
\end{remark}

We now remark that $M$ and $M_0$ are saturated, that is, if $z\in
\Lambda$ is such that $\varpi z\in M$, then $z \in M$.

We then deduce from Lemma \ref{complattice} below,
applied to $(\Lambda,M,M')$ and $(\Lambda,M_0,M'_0)$, that
$M' + \delta \Lambda = M + \delta \Lambda$
and
$M'_0 + \delta \Lambda = M_0 + \delta \Lambda$,
hence
$M + \delta \Lambda = M_0 + \delta \Lambda$, which completes %concludes 
the proof 
of Proposition \ref{compsublattice}.

\begin{lemma}
\label{complattice}
Let $\Lambda$ be a free $R$-module, $M \subset \Lambda$ a saturated
submodule, and $M' \subset M$ a submodule such that 
$M + \varpi\Lambda = M' + \varpi \Lambda$. 
Then $M + \varpi^n\Lambda = M' + \varpi^n \Lambda$, for all $n \geq 1$. 
\end{lemma}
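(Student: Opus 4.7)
The plan is to prove by induction on $n$ that $M \subset M' + \varpi^n\Lambda$; the reverse inclusion $M' + \varpi^n\Lambda \subset M + \varpi^n\Lambda$ is automatic from $M' \subset M$, so this suffices. The base case $n=1$ is the hypothesis.

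For the inductive step, suppose $M \subset M' + \varpi^n\Lambda$, and let $m \in M$. Write $m = m' + \varpi^n \lambda$ with $m' \in M'$ and $\lambda \in \Lambda$. The key observation is that $\varpi^n \lambda = m - m' \in M$, so by saturation of $M$ inside $\Lambda$ we get $\lambda \in M$. Applying the base case $M \subset M' + \varpi\Lambda$ to $\lambda$, write $\lambda = m_1 + \varpi\mu$ with $m_1 \in M'$ and $\mu \in \Lambda$. Substituting gives
\begin{equation*}
m = m' + \varpi^n m_1 + \varpi^{n+1}\mu \in M' + \varpi^{n+1}\Lambda,
\end{equation*}
which completes the induction.

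The argument is completely routine; there is no real obstacle, since the two hypotheses (the case $n=1$ and saturation of $M$) combine in the most direct way possible. The only point to watch is that saturation is needed precisely to extract $\lambda$ from the equation $\varpi^n\lambda \in M$, which is what lets the inductive hypothesis be reapplied to $\lambda$; without saturation the argument would collapse.
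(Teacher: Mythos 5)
Your proof is correct and is essentially identical to the paper's: both proceed by induction, use saturation to deduce $\lambda \in M$ from $\varpi^n\lambda \in M$, and then apply the $n=1$ case to $\lambda$ to descend one more power of $\varpi$.
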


\begin{proof}
We reason by induction on $n$. Suppose that
$M + \varpi^n\Lambda = M' + \varpi^n \Lambda$. Let $x \in M$.
There exist $y \in M'$ and $z \in \Lambda$ with $x = y + \varpi^n z$.
Then $\varpi^n z \in M$, hence $z \in M$, as $M$ is saturated. So there
exist $y'\in M'$ and $z'\in \Lambda$, with $z = y' + \varpi z'$.
Then $x = (y+\varpi^n y') + \varpi^{n+1}z' \in M'+\varpi^{n+1}\Lambda$.
\end{proof}

Note that in the course of proving Proposition~\ref{compsublattice} we
have also proved:
\begin{cor}
\label{compresrepr}
$M + \varpi \Lambda = M_0 + \varpi \Lambda$, and hence
$\bar{\Theta}$ is isomorphic to $\bar{\Theta}^0_R$.
\end{cor}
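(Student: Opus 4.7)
The plan is to deduce the corollary directly from Lemmas~\ref{equalmodvarpi} and~\ref{compcomputed} by chaining the relevant mod-$\varpi$ equalities. The only additional input needed is the observation that $\delta \in R$ always lies in $\m_E$ under the hypotheses in force, so that the inclusion $\delta\Lambda \subset \varpi\Lambda$ holds.

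First I would verify $v(\delta) > 0$. When $\delta = p$ this is immediate. In the remaining case $\delta = u - 1$, I would invoke the factorization
$$u - 1 \;=\; \frac{2}{2-r}\cdot\frac{p}{a_p}\left(\frac{a_p}{p} - \frac{r}{2}\right)\left(\frac{a_p}{p} - (1-r)\right)$$
used in the proof of Lemma~\ref{compcomputed}. The standing assumption $r \not\equiv 2 \mod p$ of this subsection gives $v(2-r) = 0$; the hypothesis $r \not\equiv 2/3 \mod p$ together with $\overline{a_p/p} = 1-r$ ensures $\overline{a_p/p} \neq \overline{r/2}$, hence $v(\frac{a_p}{p} - \frac{r}{2}) = 0$; and the closeness of $a_p$ to $p(1-r)$ gives $v(\frac{a_p}{p} - (1-r)) > 0$. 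So $v(u - 1) > 0$, and therefore $\delta \in \m_E$ in either case, giving $\delta\Lambda \subset \varpi\Lambda$.

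With this inclusion in hand, I would chain together
$$M + \varpi\Lambda \;=\; M' + \varpi\Lambda \;=\; (M' + \delta\Lambda) + \varpi\Lambda \;=\; (M'_0 + \delta\Lambda) + \varpi\Lambda \;=\; M'_0 + \varpi\Lambda \;=\; M_0 + \varpi\Lambda,$$
where the outermost equalities are Lemma~\ref{equalmodvarpi}, the central equality is Lemma~\ref{compcomputed}, and the two intermediate equalities use $\delta\Lambda \subset \varpi\Lambda$. This proves the first assertion. The isomorphism $\bar\Theta \cong \bar\Theta^0_R$ then follows at once by dividing $\Lambda$ by both sides, using the identifications $\bar\Theta = \Lambda/(M + \varpi\Lambda)$ and $\bar\Theta^0_R = \Lambda/(M_0 + \varpi\Lambda)$ recorded just before Proposition~\ref{compsublattice}. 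I do not foresee a genuine obstacle: the argument is essentially bookkeeping with the lemmas already established. The only mildly delicate point is ruling out the degenerate case in which $\delta$ is a unit (which would make Lemma~\ref{compcomputed} vacuous), and it is precisely the assumption $r \not\equiv 2 \mod p$ of this subsection that rules this out when $\delta = u - 1$.
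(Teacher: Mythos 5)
Your proof is correct and follows essentially the same route as the paper, which simply records the corollary as having been established ``in the course of proving Proposition~\ref{compsublattice}'': the chain $M + \varpi\Lambda = M' + \varpi\Lambda = M'_0 + \varpi\Lambda = M_0 + \varpi\Lambda$ you spell out is exactly what is implicit there, using Lemma~\ref{equalmodvarpi} on the outside, Lemma~\ref{compcomputed} in the middle, and $\delta\Lambda \subset \varpi\Lambda$ to absorb the $\delta$-terms. Your explicit check that $v(\delta)>0$ (via the factorization of $u-1$ and the standing hypotheses $r\not\equiv 2, 2/3\bmod p$ and $\overline{a_p/p}=\overline{1-r}$) is the right justification for that inclusion; and your route has the minor virtue of bypassing Lemma~\ref{complattice} entirely, since the mod-$\varpi$ statement does not require the upgrade to mod-$\varpi^n$ that the Proposition itself needs.
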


\subsubsection{Study of the standard lattice}
\label{standard}

We allow again the case $r \equiv 2 \mod p$.

Let $\Theta = \Theta_{k,a_p} \subset \Pi_{k,a_p}$ be the standard lattice, and let $\hat{\Theta}
\subset \hat{\Pi}_{k,a_p}$ be its completion.
Let $\Lambda = \ind_{KZ}^G\Sym^rR^2$.

As $F_1=0$, by Proposition~\ref{F1a2bis},  we know that
$\bar{\Theta}$ is an extension of 
$\pi(p-3,1,\omega^2)$ by
$\pi(p-1,1,\omega)$.
Hence we have a filtration of $\bar{\Theta}$ with successive quotients
$\omega$, $\St\otimes\,\omega$ and $\pi(p-3,1,\omega^2)$. We want to lift 
this filtration to $\hat{\Theta}$.

Let  $\Lambda^0 = \text{ind}_{KZ}^G\Sym^r\Z_p^2$ and let ${\Theta}^0$ be the standard lattice defined over $\Z_p$, 
for $a_p^0 = p(1-r) \in \Z_p$. Thus $\Theta^0_R = \Theta^0 \otimes_{\Z_p} R$. 
Let $\mathcal{B}_1$ be the set with one element which is the image of 
$[\Id,\theta X^{r-p-1}] + [\alpha,\theta Y^{r-p-1}]$ in $\bar\Theta^0$, so 
that the line $L$ 
in $\bar{\Theta}^0$
generated by $\mathcal{B}_1$ is the line on which $G$ acts by $\omega$.
Let $\mathcal{B}_2$ be a free family of
elements of $\bar{\Theta}^0$ such that $\mathcal{B}_1 \cup \mathcal{B}_2$
generate the subrepresentation $\pi(p-1,1,\omega)$. %$\ind_{KZ}^G (V_{p-1}\otimes D)/(T-1)$.
Let $\mathcal{B}_3$ be a free family of
elements of $\bar{\Theta}^0$ such that 
$\mathcal{B}_1 \cup \mathcal{B}_2 \cup \mathcal{B}_3$ is a basis of
$\bar{\Theta}^0$,
and moreover the image of $\mathcal{B}_3$ in $\bar{\Theta}^0/L$ % $\bar{\Theta}^0/\bar{S}_1$
generates the subrepresentation of $\bar{\Theta}^0/L$ % $\bar{\Theta}^0/\bar{S}_1$ 
isomorphic to $\pi(p-3,1,\omega^2)$, %$\ind_{KZ}^G (V_{p-3}\otimes D^2)/(T-1)$, 
which is 
possible as $\bar{\Theta}^0/L$ % $\bar{\Theta}^0/\bar{S}_1$ 
is a split extension of $\pi(p-3,1,\omega^2)$ %$\ind_{KZ}^G (V_{p-3}\otimes D^2)/(T-1)$ 
by $\St\otimes\,\omega$, by
Proposition~\ref{split}.
We lift each $\mathcal{B}_i$ to a free family
$\mathcal{B}'_i$ in $\Lambda^0$, % = \text{ind}_{KZ}^G\Sym^r\Z_p^2$, 
taking $\mathcal{B}'_1$ to be the singleton set with the 
element $[\Id,\theta X^{r-p-1}]+[\alpha,\theta Y^{r-p-1}]$. 
%and $\mathcal{B}'_2$ contained in the set of $[g,\theta f]$
%for $f \in \Z_p[X,Y]_{r-p-1}$.

For $\hat{\Theta}$ coming from any $a_p$, we now take $S_i$ to be 
the closure of the free
$R$-submodule generated by the image of $\mathcal{B}'_i$ inside
$\hat{\Theta}$, under the composition of maps $\Lambda^0 \to \Lambda
\to \hat{\Theta}$. By construction, we have a decomposition of
$k_E$-vector spaces
$\bar{\Theta} \cong (\bar{\Theta}^0\otimes_{\F_p}k_E) = 
\bar{S}_1 \oplus \bar{S}_2 \oplus \bar{S}_3$ 
(the first equality is Corollary \ref{compresrepr}),
with
$\bar{S}_1 = L \otimes_{\F_p} k_E$ the line on which $G$ acts by $\omega$ and $\bar{S}_1 \oplus
\bar{S}_2$ is the subrepresentation isomorphic to  
$\pi(p-1,1,\omega)$. %$\ind_{KZ}^G (V_{p-1}\otimes D)/(T-1)$.
In particular, $\hat{\Theta} = S_1 \oplus S_2 \oplus S_3$.
We set $M_1 = S_1$ and $M_2 = S_1 \oplus S_2$. Let 
$\bar{M}_i$ be their reductions modulo $\m_E$, for $i = 1$, $2$. The sequence of 
$R$-modules $0 \subset M_1 \subset M_2 \subset \hat\Theta$ lifts the filtration of $\bar\Theta$. 

For $1 \leq i,j \leq 3$, we define functions $a_{i,j}: G \to \Hom(S_i,S_j)$ so that, for all
$g\in G$ and $x\in S_i$, we have $gx = a_{i,1}(g)(x)  + a_{i,2}(g)(x) +
a_{i,3}(g)(x)$.
If $j > i$, then we have $a_{i,j}(g)\in \m_E\Hom(S_i,S_j)$, for all $g \in G$. 
Moreover, by the choice of $\mathcal{B}_3$, we have
$a_{3,2}(g) \in \m_E\Hom(S_3,S_2)$, for all $g \in G$.

\begin{prop}
\label{S1S2}
We have:
\begin{enumerate}
\item
$a_{1,2}(g) \in \delta\Hom(S_1,S_2)$, for all $g \in G$.

\item
$a_{3,2}(g) \in \delta\Hom(S_3,S_2)$, for all $g \in G$.

\end{enumerate}
\end{prop}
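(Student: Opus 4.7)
\smallskip
\noindent\textbf{Proof proposal.}
The plan is to use Proposition~\ref{compquotient}, extended to the case $r \equiv 2 \mod p$ under the stated hypotheses, to transfer both claims to the standard lattice $\hat\Theta^0_R$ attached to $a_p^0 = p(1-r) \in \Z_p$. There the $G$-action is $\Z_p$-linear and the integral structure is directly controlled by the reduction $\bar\Theta^0$, making the required estimates transparent.

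Under the isomorphism $\hat\Theta \otimes_R R/(\delta) \iso \hat\Theta^0_R \otimes_R R/(\delta)$ of $R[G]$-modules, induced by the common maps from $\Lambda^0$, the images of $\mathcal{B}'_i$ are identified on both sides; hence $S_i \otimes R/(\delta)$ and $T_i^0 \otimes R/(\delta)$ correspond, where $T_i^0$ denotes the closure of the $R$-span of $\mathcal{B}'_i$ in $\hat\Theta^0_R$. It therefore suffices to prove the analogous vanishings $a^0_{1,2}(g) \in \delta\Hom(T_1^0, T_2^0)$ and $a^0_{3,2}(g) \in \delta\Hom(T_3^0, T_2^0)$ in $\hat\Theta^0_R$.

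Since $a_p^0 \in \Z_p$, we have $\hat\Theta^0_R = \hat\Theta^0 \otimes_{\Z_p} R$ with $\Z_p$-linear $G$-action on $\hat\Theta^0$, and the same Nakayama-type argument that gives $\hat\Theta = S_1 \oplus S_2 \oplus S_3$ produces a $\Z_p$-module decomposition $\hat\Theta^0 = T_1^{0,\Z_p} \oplus T_2^{0,\Z_p} \oplus T_3^{0,\Z_p}$ with $T_i^0 = T_i^{0,\Z_p} \otimes_{\Z_p} R$. For part (1), let $z_0 \in \hat\Theta^0$ be the unique element of $\mathcal{B}'_1$: since $\bar z_0$ spans the $G$-stable line $L \subset \bar\Theta^0$ on which $G$ acts by $\omega$, one has $gz_0 - \omega(g)z_0 \in p\hat\Theta^0$ for every $g \in G$, and projecting onto the $T_2^{0,\Z_p}$-factor yields $a^0_{1,2}(g)(z_0) \in pT_2^{0,\Z_p}$. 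The definition of $\delta$ forces $v(\delta) \leq 1$, hence $p \in \delta R$ and $pT_2^{0,\Z_p} \subseteq pT_2^0 \subseteq \delta T_2^0$, proving (1) after transferring back. Part (2) is analogous: by Proposition~\ref{split}, $\bar\Theta^0/L \iso \pi(p-3,1,\omega^2) \oplus (\St\otimes\omega)$, and by construction the image of $\bar{\mathcal{B}}_3$ generates the $\pi(p-3,1,\omega^2)$-summand, so for $x \in T_3^{0,\Z_p}$ the $T_2^{0,\Z_p}$-component of $gx$ vanishes in $\bar\Theta^0$ and hence lies in $pT_2^{0,\Z_p} \subseteq \delta T_2^0$.

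The main obstacle is extending Proposition~\ref{compquotient} to the case $r \equiv 2 \mod p$: the naive proof of Lemma~\ref{equalmodvarpi} fails because the explicit generators defining $M'$ leave an extra Jordan-H\"older factor $\omega \otimes \mu_{-1}$ in $\Lambda/(M' + \varpi\Lambda)$ that is only killed in $\bar\Theta$ via the mod $p$ LLC (see Remark~\ref{F1zerorembis}). The hypotheses ``$E$ unramified over $\Q_p$'' or ``$u-\eps$ is a uniformizer of $E$'' however force $\delta$ to be a uniformizer of $R$ (respectively $\delta = p$ or $\delta = u-\eps$ is one), so $(\delta) = \m_E$ and the required isomorphism reduces to $\bar\Theta \iso \bar\Theta^0 \otimes_{\F_p} k_E$ as $k_E[G]$-modules. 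This in turn follows from Proposition~\ref{F1a2bis} applied to both $a_p$ and $a_p^0$: both reductions are extensions of $\pi(p-3,1,\omega^2)$ by $\pi(p-1,1,\omega)$ with the same semisimplification, and the image of the mod $p$ LLC pins the extension class down uniquely.
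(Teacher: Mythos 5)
Your proposal misses the crucial case split that drives the paper's proof, and the detour you take to handle $r\equiv 2\bmod p$ has a genuine gap.

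The paper's argument begins with the observation that if $\delta$ is a uniformizer of $E$, then $(\delta)=\m_E$, so both claims are immediate from the remarks just before the proposition (namely $a_{1,2}(g)\in\m_E\Hom(S_1,S_2)$ always holds because $j>i$, and $a_{3,2}(g)\in\m_E\Hom(S_3,S_2)$ holds by the choice of $\mathcal{B}_3$). This case includes $r\equiv 2\bmod p$ automatically, because the extra hypotheses of Theorem~\ref{a2ramifiethm} were designed precisely to force $\delta$ to be a uniformizer then. Thus no comparison with $a_p^0$ is needed at all in this case. Only when $\delta$ is \emph{not} a uniformizer — i.e.\ when $(\delta)\subsetneq\m_E$, so the na\"ive bound is insufficient — does one invoke the comparison with $a_p^0$; and in that situation the hypotheses already guarantee $r\not\equiv2\bmod p$, so Section~\ref{compalat} applies as stated. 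Your proposal instead routes everything through Proposition~\ref{compquotient}, which both overcomplicates the uniformizer case and forces you to try to extend that proposition beyond its proved range.

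The attempted extension is where the real gap lies. You claim $\bar\Theta\iso\bar\Theta^0\otimes_{\F_p}k_E$ when $r\equiv 2\bmod p$ because both are extensions of $\pi(p-3,1,\omega^2)$ by $\pi(p-1,1,\omega)$ ``and the image of the mod $p$ LLC pins the extension class down uniquely.'' That assertion is not justified, and prima facie it contradicts the spirit of Section~\ref{peu-tres}, whose whole point is that such extension data \emph{can} vary (it is exactly what distinguishes ``peu'' from ``tr\`es ramifi\'ee''). Moreover, even granting an abstract isomorphism $\bar\Theta\iso\bar\Theta^0_R$, this would not be enough: what the construction of the $S_i$ and the later comparison actually require is that the kernels of the two maps $\Lambda^0\to\bar\Theta$ and $\Lambda^0\to\bar\Theta^0_R$ coincide, i.e.\ $M+\varpi\Lambda=M_0+\varpi\Lambda$ as in Lemma~\ref{equalmodvarpi} and Corollary~\ref{compresrepr}. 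An abstract isomorphism of $G$-representations does not identify the images of $\mathcal{B}'_i$ on the two sides, so your transfer step is not licensed. The calculation you give over $\Z_p$ for $a_p^0$ (part (1) via $g z_0-\chi_L(g)z_0\in p\hat\Theta^0$, part (2) via $\bar\Theta^0/L$ being split) is fine as far as it goes — it essentially re-derives the remarks before the proposition — but the structure of your argument around it is both unnecessarily heavy and, at the point where $r\equiv 2\bmod p$ is addressed, incorrect.
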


\begin{proof}
If $\delta$ is a uniformizer of $E$, both results follow from the remarks just 
before the statement of the proposition.

Now %we do not 
suppose that $\delta$ is not a uniformizer of $E$. 
In particular, by the  hypotheses of Theorem~\ref{a2ramifiethm}, we have 
 $r \not\equiv 2 \mod p$ and we can
 use the results from Section~\ref{compalat}.

Let us denote by a superscript $0$ all the analogous constructions with
$a^0_p = p(1-r)$. So $E^0 = \Q_p$, $\delta^0 = p$ is a uniformizer of
$E^0$, $S_i^0$ is the subspace of $\hat{\Theta}^0$ defined as before. 
Hence we get that $a_{1,2}^0(g) \in p\Hom(S_1^0,S_2^0)$, for all
$g \in G$, and $a_{3,2}(g) \in p\Hom(S_3^0,S_2^0)$, for all $g \in G$.

We consider now $\hat\Theta^0_R = \hat\Theta^0\otimes_{\Z_p}R$
and let $S^0_{i,R}$ be the closure of the
image of $S^0_i$ in $\hat\Theta^0_R$. Then $S^0_{i,R}$ is the closure of
the $R$-submodule generated by
the image of the elements of $\mathcal{B}'_i$ inside $\hat\Theta^0_R$.

Then we can also define elements $a_{i,j,R}^0(g) \in
\Hom(S^0_{i,R},S^0_{j,R})$ which come from the elements 
$a_{i,j}^0(g) \in \Hom(S^0_i,S^0_j)$ by $R$-linearity.
In particular, $a_{i,j,R}^0(g) \in p\Hom(S^0_{i,R},S^0_{j,R})$ 
if $j = 2$ and $i=1$ or $i=3$.

%Return now to $a_p$, and $\Lambda = \ind_{KZ}^G\Sym^r R^2$.
%Let $M = ((T-a_p)\ind_{KZ}^G\Sym^rE^2 \cap \Lambda) + \delta\Lambda$.
%Then by Proposition \ref{compquotient}, we have that 
%$M$ is also equal to 
%$M_0 = ((T-a^0_p)\ind_{KZ}^G\Sym^rE^2 \cap \Lambda) + \delta\Lambda$.
%This equality gives the isomorphism of $R[G]$-modules
%$\psi:
%(\Theta^0\otimes_{\Z_p}R)/(\delta) \to
%\Theta \otimes_RR/(\delta)$
%of Proposition \ref{compquotient}, which is the identification of
%$\Lambda/M_0$ with $\Lambda/M$
%(here $\Theta^0\otimes_{\Z_p}R$ is the $\Theta_0$ of Proposition
%\ref{compquotient}). 

By Proposition \ref{compquotient}, there is an  
isomorphism of $R[G]$-modules $\psi: \hat\Theta^0_R/(\delta) 
%(\hat\Theta^0\otimes_{\Z_p}R)/(\delta) 
\to \hat\Theta /(\delta)$.
By the construction of the submodules $S_i$ of $\hat\Theta$, we have that 
$\psi$ induces an isomorphism between $S^0_{i,R}/(\delta)$ and $S_i/(\delta)$. 
%and $\psi$ induces an isomorphism between the two spaces. 
Indeed, if $M$ and $M_0$ are as defined just after the statement of
Proposition~\ref{compquotient}, then both spaces arise from the $R$-module 
generated by the image of the elements in $\mathcal{B}'_i$ in 
$\Lambda/(M_0 + \delta \Lambda) = \Lambda / (M + \delta \Lambda)$.

In particular, if $b'$ is an element of $\mathcal{B}'_i$, $b$ is its image in $\hat\Theta$
and $b^0$ is its image in $\hat\Theta^0_R$, then
$a_{i,j}(g)(b)$ modulo $\delta$ is the image by $\psi$ of
$a_{i,j,R}^0(g)(b^0)$ modulo $\delta$.
When $j = 2$ and $i = 1$ or $3$, 
we know that $a_{i,j,R}^0(g)(b^0)$ is in $pS^0_{j,R}$, so it is also in
$\delta S^0_{j,R}$, so 
$a_{i,j,R}^0(g)(b^0) = 0$ modulo $\delta$, so $a_{i,j}(g)(b) = 0$ modulo $\delta$.
Thus $a_{1,2}(g)$ and $a_{3,2}(g)$ are the zero functions modulo $\delta$.
\end{proof}

\subsubsection{Changing the lattice}

Let $S'_1 = \delta S_2$, $S'_2 = S_1$, $S'_3 = S_3$. 
Let $\Theta' = S'_1 \oplus S'_2 \oplus S'_3$, so $\Theta'$ is complete, and 
let $M'_1 = S'_1$ and $M'_2 = S'_1 \oplus S'_2$, so that $0 \subset M_1' \subset M_2' \subset \Theta'$ is
a filtration of $R$-modules. Note that $M'_2$ does not depend on the choice of $S_2$, since 
$M'_2 = \delta M_2 + S_1$.

\begin{prop}
The lattice $\Theta'$ is stable under the action of $G$. Moreover, 
$\bar{\Theta}'$ has a filtration 
%$0 \subset N \subset M\subset \bar\Theta'$
$0 \subset \bar M_1' \subset \bar M_2' \subset \bar\Theta'$ 
with successive JH factors isomorphic to
$\St\otimes\,\omega$,
$\omega$ and $\pi(p-3, 1, \omega^2)$. %respectively.
%$\bar{M}_2/\bar{M}_1 \cong \St\otimes\omega$,
%$\bar{M}_1 \cong \omega$ and $\bar{\Theta}/\bar{M}_2 \cong \pi(p-3, 1, \omega^2)$. %\ind_{KZ}^G (V_{p-3}\otimes D^2)/(T-1)$.
\end{prop}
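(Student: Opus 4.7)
The plan is to deduce the proposition from the block-matrix description of the $G$-action on the decomposition $\hat\Theta = S_1 \oplus S_2 \oplus S_3$, namely $g\cdot x = \sum_j a_{i,j}(g)(x)$ for $x \in S_i$, combined with the divisibility properties of the $a_{i,j}$'s established in Section~\ref{standard} and sharpened in Proposition~\ref{S1S2}. The two assertions of the proposition then split into: (a) a stability check for $\Theta' = \delta S_2 \oplus S_1 \oplus S_3$; and (b) an identification of the successive subquotients of $0 \subset \bar M_1' \subset \bar M_2' \subset \bar\Theta'$ as $G$-modules.

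For (a) I would run through the three cases $x \in S_1', S_2', S_3'$ separately. The case $x = \delta y \in S_1'$ is immediate: $gx = \delta a_{2,1}(g)y + \delta a_{2,2}(g)y + \delta a_{2,3}(g)y$, whose summands lie respectively in $\delta S_1 \subset S_1 = S_2'$, in $\delta S_2 = S_1'$, and in $\delta S_3 \subset S_3 = S_3'$; no use of Proposition~\ref{S1S2} is needed here. The case $x \in S_2' = S_1$ requires precisely $a_{1,2}(g)(x) \in \delta S_2$, which is Proposition~\ref{S1S2}~(1); the remaining off-diagonal summand $a_{1,3}(g)(x)$ lies in $\m_E S_3 \subset S_3'$ by the general upper-triangularity. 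The case $x \in S_3' = S_3$ requires $a_{3,2}(g)(x) \in \delta S_2$, which is Proposition~\ref{S1S2}~(2), while $a_{3,1}(g)(x) \in S_1 = S_2'$ and $a_{3,3}(g)(x) \in S_3'$.

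For (b), the same case analysis, now interpreted modulo $\m_E\Theta'$, shows that each $M_i'$ is $G$-stable in $\bar\Theta'$: any component of $gx$ that would escape $M_i'$ either carries a factor of $\delta \in \m_E$ and so dies, or is already in $M_i'$ thanks to Proposition~\ref{S1S2}. Next, I would identify the successive subquotients as $k_E$-vector spaces with $\bar S_2$, $\bar S_1$, $\bar S_3$, using the isomorphism $\delta\cdot : \bar S_2 \xrightarrow{\sim} \bar M_1'$ for the bottom piece, and check that the induced $G$-action on each reduces to the diagonal $a_{i,i}$: for $\bar M_1'$, in $gx = \delta a_{2,2}(g)y + \delta a_{2,1}(g)y + \delta a_{2,3}(g)y$ the last two terms lie in $\m_E\Theta'$; for $\bar M_2'/\bar M_1'$, with $x \in S_1$, the $a_{1,2}$-summand is absorbed in $M_1'$ and $a_{1,3}$ in $\m_E\Theta'$; for $\bar\Theta'/\bar M_2'$, with $x \in S_3$, both $a_{3,1}$ and $a_{3,2}$ land in $M_2'$. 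This yields $\bar M_1' \cong \bar S_2 \cong \St\otimes\omega$, $\bar M_2'/\bar M_1' \cong \bar S_1 \cong \omega$, and $\bar\Theta'/\bar M_2' \cong \bar S_3 \cong \pi(p-3,1,\omega^2)$, as claimed. The only genuine substance here lies in the already-proved Proposition~\ref{S1S2}; the main conceptual obstacle is simply seeing why the construction is the right one, namely that scaling the middle block $S_2$ by $\delta$ swaps the $\omega$ and $\St\otimes\omega$ subquotients in the reduction, putting $\bar\Theta'$ in the form required to apply Corollary~\ref{peuortres}.
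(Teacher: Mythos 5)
Your proof is correct and follows essentially the same approach as the paper: the $G$-stability of $\Theta'$ is deduced directly from Proposition~\ref{S1S2} and the upper-triangularity of the $a_{i,j}$, and the three graded pieces of the filtration are identified with $\bar S_2$, $\bar S_1$, $\bar S_3$ by the same block-matrix case analysis. The paper's proof is terser (it simply notes $\bar M_1' = \delta S_2/\varpi\delta S_2 \cong \bar S_2$, $M_2'/M_1' \cong S_1$, and $\Theta'/M_2' \cong S_3$), but your more explicit verification of why the off-diagonal $a_{i,j}$ terms die in each subquotient is the same argument carried out in full.
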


\begin{proof}
The first statement follows from Proposition~\ref{S1S2}. 
%For the second statement, take $N:=\bar{M_1}'$ and $M:=\bar{M_2}'$.
Note that $M_1'$ is not $G$-stable, but that $\bar{M}_1'$ is, since
$\bar{M}'_1 = \delta S_2 / \varpi \delta S_2  \cong \bar{S_2} = 
\overline{M_2/M_1} \cong \bar{M}_2/\bar{M}_1 \cong \St \otimes \,\omega$.  
Now $M'_2 / M'_1 \cong S_1 \cong M_1$, so its reduction is 
$\bar S_1=\omega$. Finally, $\Theta' / M'_2 \cong S_3 \cong \Theta/M_2$ 
and its reduction must be the remaining JH factor $\pi(p-3, 1, \omega^2)$. 
\end{proof}

Using Corollary \ref{peuortres}, we now have to study 
$\bar{M}'_2\subset \bar\Theta'$
%$M\subset \bar\Theta'$
which is an extension of $\omega$ by $\St\otimes\,\omega$: we must see if
it is non-split and compute a $\tau$ such that it is isomorphic to
$E_{\tau}\otimes\omega$ (note that the fact that it is non-split will
follow from the computation of $\tau$).

\subsubsection{Computations in the new lattice}

Let $\Lambda = \ind_{KZ}^G\Sym^rR^2$ as before
and let $\pi : \Lambda \to \hat{\Pi}_{k,a_p}$, so that
$\pi(\Lambda) = \Theta$. 
 We also denote by $\bar\pi$ the usual
map $\Lambda \to \bar\Theta$.
If $x\in \Lambda$ satisfies $\pi(x)\in \Theta'$, denote
by $\psi(x)$ its image in $\bar{\Theta}'$.
Let  $N \cong \St \otimes \,\omega$ be 
the image of $M'_1 = S'_1$, or equivalently of $\delta M_2$, in $\bar{\Theta}'$,
and let $M$ be the image of $M'_2$ in  $\bar{\Theta}'$.

\begin{lemma}
\label{kerpsi}
We have $p^2\Lambda$ and $p\ker\bar\pi$ are contained in $\ker \psi$.  If $v(\delta)<1$, then so is $p\Lambda$.
Moreover,  $\delta S_1$ and $\delta S_3$ also die in $\bar\Theta'$. 
\end{lemma}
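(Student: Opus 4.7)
My plan is to unpack the definitions of $\Theta'$ and $\psi$ and reduce each assertion to an elementary valuation inequality in the direct-sum decomposition $\Theta = S_1 \oplus S_2 \oplus S_3$. Recall that $\Theta' = \delta S_2 \oplus S_1 \oplus S_3$, so $\psi(x)$ makes sense when $\pi(x) \in \Theta'$, and $\pi(x) \in \ker \psi$ if and only if $\pi(x) \in \m_E\Theta' = \varpi\delta S_2 \oplus \varpi S_1 \oplus \varpi S_3$. Throughout, write $e$ for the ramification index of $E = \Q_p(a_p)$, so $v(\varpi) = 1/e$, and note that by construction $v(\delta) \leq 1$.

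For $p^2\Lambda \subset \ker\psi$, one has $\pi(p^2\Lambda) = p^2\Theta$, and in each direct summand I need to check $v(p^2) \geq v(\varpi)$ (for the $S_1$, $S_3$ components) or $v(p^2) \geq v(\varpi\delta)$ (for the $S_2$ component), both of which are immediate from $v(\delta) \leq 1$. For $p\ker\bar\pi \subset \ker\psi$, I use $\pi(p\ker\bar\pi) \subset p\m_E\Theta$ and check the analogous inequalities; the only nontrivial one reduces to $v(p) \geq v(\delta)$, which again follows from $v(\delta) \leq 1$.

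The case $v(\delta) < 1$ requires a little extra care. Here $\delta = u - 1$ with $v(u-1) < 1$, and to obtain $p\Theta \subset \m_E\Theta'$ I need $v(p) \geq v(\varpi\delta)$, i.e., $v(\delta) \leq 1 - 1/e$. This follows because the normalized $\varpi$-valuation of $\delta$ is a non-negative integer strictly less than $e$, hence at most $e-1$, which translates to $v(\delta) \leq (e-1)/e$. This step, converting the strict inequality $v(\delta) < 1$ into the sharper bound $v(\delta) \leq 1 - 1/e$, is the one genuine point in the proof.

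Finally, $\delta S_1 \subset S_1 \subset \Theta'$, so $\psi$ is defined on $\delta S_1$, and $\delta S_1 \subset \varpi S_1 \subset \m_E\Theta'$ because $\bar u = 1$ forces $v(\delta) > 0$ and hence $v(\delta) \geq 1/e = v(\varpi)$. The same argument handles $\delta S_3$. Overall, the lemma is essentially bookkeeping: each claim is a direct valuation check in the appropriate component of the decomposition, and there is no substantive obstacle beyond keeping track of which summand and which uniformizer appears where.
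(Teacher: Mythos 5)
Your proof is correct and is essentially the same argument as the paper's, just unpacked: the paper first records the containment $\delta\Theta \subset \Theta'$ and then invokes the single criterion ``$v(t)>v(\delta)$ implies $t\Lambda\subset\ker\psi$,'' which, when you chase through the direct sum $\Theta' = \delta S_2 \oplus S_1 \oplus S_3$, is exactly the component-by-component valuation check you carry out. The discreteness step (upgrading $v(\delta)<1$ to $v(\delta)\le 1-1/e$, equivalently $v(t)>v(\delta)\Rightarrow v(t)\ge v(\varpi\delta)$) is likewise the key observation in both versions, and your note that $\bar u = \eps$ forces $v(\delta)>0$ is the reason the paper's ``follows from the construction of $\Theta'$'' applies to the $\delta S_1$, $\delta S_3$ claims.
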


\begin{proof}
By the construction of $\Theta'$, we have $\pi(\delta\Lambda)= \delta\Theta \subset \Theta'$.
%We use the fact that $\pi(\delta\Lambda) \subset \Theta'$,
So if $t\in R$, with $v(t) > v(\delta)$, then $t\Lambda \subset \ker\psi$.
The statement
about $\delta S_1$ and $\delta S_3$ follows from the construction of
$\Theta'$.
\end{proof}

In particular:
\begin{lemma}
\label{sameimage}
Let $F$, $F'$ be elements of $\Sym^rR^2$ such that the image of $F-F'$ in
$V_r$ is in $X_r + V_r^{**}$. Then $\psi([g,pF]) = \psi([g,pF'])$, for all
$g\in G$.
\end{lemma}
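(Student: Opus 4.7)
The plan is to lift the decomposition in $V_r$ to $\Sym^r R^2$ and then apply both clauses of Lemma~\ref{kerpsi}.

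First I would write $\overline{F-F'} = \bar{G}_0 + \bar{G}_1$ in $V_r$, with $\bar{G}_0 \in X_r$ and $\bar{G}_1 \in V_r^{**}$, and lift to elements $G_0, G_1 \in \Sym^r R^2$ whose reductions are $\bar{G}_0$ and $\bar{G}_1$ respectively. Then $F - F' - G_0 - G_1$ lies in $\m_E \Sym^r R^2$, so we may write
\[
F - F' = G_0 + G_1 + \varpi H
\]
for some $H \in \Sym^r R^2$. Multiplying by $p$ yields
\[
[g, pF] - [g, pF'] \;=\; [g, p G_0] \;+\; [g, p G_1] \;+\; [g, p\varpi H]
\]
in $\Lambda$.

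Next I would apply $\psi$ term by term. By the factorization of the surjection $\ind_{KZ}^G V_r \twoheadrightarrow \bar{\Theta}$ through $\ind_{KZ}^G P$ recalled in Section~\ref{sectionP} (\cite[Rem.~4.4]{BG09}), the elements $[g, G_0]$ and $[g, G_1]$ lie in $\ker \bar{\pi}$, since the reductions $\bar{G}_0$ and $\bar{G}_1$ map to zero in $P = V_r/(X_r + V_r^{**})$. By Lemma~\ref{kerpsi}, $p \cdot \ker \bar{\pi} \subset \ker \psi$, so $\psi([g, p G_0]) = \psi([g, p G_1]) = 0$. For the last term, $[g, p \varpi H] \in p \m_E \Lambda \subset p \cdot \ker \bar{\pi}$ (since $\m_E \Lambda \subset \ker \bar{\pi}$), and so its image under $\psi$ is also zero. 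Summing, $\psi([g,pF]) = \psi([g,pF'])$.

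There is no real obstacle here: this lemma is essentially a bookkeeping consequence of Lemma~\ref{kerpsi} together with the earlier factorization of $\bar{\pi}$ through $\ind_{KZ}^G P$. The only point requiring care is that the lift of $\overline{F-F'}$ lands in $\Sym^r R^2$ only up to $\m_E$ (and not $pR$) when $E$ is ramified, which is precisely why Lemma~\ref{kerpsi} is stated for $p \cdot \ker \bar{\pi}$ rather than for $p \Lambda$ alone.
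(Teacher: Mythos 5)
Your proof is correct and relies on exactly the same two facts as the paper's one-line proof: the factorization of $\bar\pi$ through $\ind_{KZ}^G P$, and the inclusion $p\ker\bar\pi \subset \ker\psi$ from Lemma~\ref{kerpsi}. The only difference is that your decomposition $F-F' = G_0 + G_1 + \varpi H$ is an unnecessary detour: the whole element $[g, F-F']$ already lies in $\ker\bar\pi$ in one step, since its reduction mod $\m_E$ lies in $\ind_{KZ}^G(X_r + V_r^{**})$ and therefore vanishes under $\ind_{KZ}^G V_r \to \ind_{KZ}^G P \to \bar\Theta$; one can then apply $p\ker\bar\pi \subset \ker\psi$ to the single element $p[g,F-F']$ without splitting the sum.
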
 

\begin{proof}
The hypothesis implies that $[g,F-F'] \in \ker\bar\pi$. Now apply the previous lemma. 
\end{proof}

\begin{lemma}
\label{imageXr}
Let $F \in \Sym^r R^2$ be such that its image in $V_r$ lies in $X_r +
V_r^{**}$. Then, for all $g\in G$, $[g,F] \in \ker \pi + p\Lambda$. 
In particular, $\pi([g,F]) \in
\Theta'$ and 
if $v(\delta) < 1$, then $\psi([g,F]) = 0$.
%Let $F \in \Sym^r\Z_p^2$ be such that its image in $V_r$ lies in $X_r +
%V_r^{**}$. Then for all $g\in G$, $[g,F]$ is in  
%$\ker \pi + p\Lambda^0$. 
%In particular, $\pi([g,F]) \in
%\Theta'$ and 
%if $v(\delta) < 1$, then $\psi([g,F]) = 0$.
\end{lemma}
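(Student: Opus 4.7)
The plan is to reduce everything to the decomposition $\hat{\Theta} = S_1\oplus S_2\oplus S_3$ of Section~\ref{standard}, the description $\Theta' = S_1 \oplus \delta S_2 \oplus S_3$, and the prior Lemma~\ref{kerpsi}. The hypothesis on $F$ is used only to produce the first membership $[g,F]\in\ker\pi+p\Lambda$; after that, the remaining assertions follow formally from comparing $\hat{\Theta}$ and $\Theta'$.

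First, I would establish $[g,F]\in\ker\pi+p\Lambda$, which is essentially \cite[Rem.~4.4]{BG09}. That remark constructs, for any $F'\in X_r+V_r^{**}$ and any $g\in G$, an explicit rational function $h$ on which $(T-a_p)h$ is integral and congruent to $[g,F']$ modulo $p\Lambda$. Since $(T-a_p)h\in\ker\pi$, this shows that every element of $\ind_{KZ}^G(X_r+V_r^{**})$ lies in $\ker\pi+p\Lambda$. Applying this to our $F$ (whose image in $V_r$ lies in $X_r+V_r^{**}$) gives the claim; equivalently, $\pi([g,F])\in p\Theta$.

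Next, I would verify the inclusion $p\Theta\subset\Theta'$. Using the direct-sum decomposition $\hat{\Theta} = S_1\oplus S_2\oplus S_3$ and $\Theta' = S_1\oplus \delta S_2\oplus S_3$, the inclusions $pS_1\subset S_1\subset\Theta'$ and $pS_3\subset S_3\subset\Theta'$ are automatic, so everything reduces to $pS_2\subset \delta S_2$, i.e. to $p/\delta\in R$. But by the definition of $\delta$ we have $v(\delta)\leq 1=v(p)$: either $\delta=p$, and $p/\delta=1$, or $\delta=u-1$ with $v(u-1)<1$, and $p/\delta$ has strictly positive valuation. In either case $p\pi(y)\in\Theta'$ for every $y\in\Lambda$, so $\pi([g,F])\in\Theta'$.

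For the final assertion, assume $v(\delta)<1$. Write $[g,F]=x+py$ with $x\in\ker\pi$ and $y\in\Lambda$. Then $\pi(x)=0\in\Theta'$, so $\psi(x)=0$, and by Lemma~\ref{kerpsi} we have $p\Lambda\subset\ker\psi$, hence $\psi(py)=0$. By additivity of $\psi$ on elements whose $\pi$-images already lie in $\Theta'$, we conclude $\psi([g,F])=\psi(x)+\psi(py)=0$.

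I do not expect a serious obstacle here: step one is a direct appeal to \cite[Rem.~4.4]{BG09}, step two is a check of valuations using the case-by-case definition of $\delta$, and step three is an immediate application of Lemma~\ref{kerpsi}. The real content of the argument has already been packaged into the construction of the decomposition $\hat{\Theta}=S_1\oplus S_2\oplus S_3$ and into the compatibility statement of Lemma~\ref{kerpsi}.
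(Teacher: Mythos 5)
Your proof is correct and follows essentially the same route as the paper: the first containment $[g,F]\in\ker\pi+p\Lambda$ is exactly the content of \cite[Rem.~4.4]{BG09} (and is spelled out in the paper's proof of Lemma~\ref{imageVr**}), and the remaining assertions then follow from the inclusion $\pi(p\Lambda)\subset\delta\Theta\subset\Theta'$ (your $p\Theta\subset\Theta'$, valid since $v(\delta)\leq 1$) together with Lemma~\ref{kerpsi}. The paper itself just cites \cite[Rem.~4.4]{BG09} and defers to the next proof, so your more explicit unpacking of the `in particular' clauses is a reasonable elaboration of the same argument.
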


\begin{proof}
This follows from the computations described in \cite[Rem. 4.4]{BG09}, showing that the map 
$\ind_{KZ}^G V_r \rightarrow \bar\Theta$ factors through $\ind_{KZ}^G P$. These are recalled in
the proof of the next lemma, so we omit the proof. 
%This follows from the computations showing that for all $g$, $[g,X^r]$ is in $\ker\pi + p\Lambda$. 
%Similarly, the computations show that for all $H \in \Sym^{r-2(p+1)}\Z_p^2$, 
%for all $g\in G$, $[g, \theta^2H]$ is in $\ker\pi + p\Lambda$.
%
%The set of elements $F$ satisfying the conditions of the Lemma is
%contained in the sum of $p\Lambda$ and the $\Z_p$-module generated by the action of
%$G$ on elements of the previous form.
\end{proof}

\begin{lemma}
\label{imageVr**}
Let $F \in \Sym^r\Z_p^2$ be such that its image in $V_r$ lies in $X_r + V_r^{**}$. 
Then, for all $g\in G$, $[g,F]$ is in $\ker \psi + p\Lambda^0$. More precisely,
there is a $z \in \Lambda^0$, independent of $a_p$, such that $[g,F] + p z \in \ker \psi$.  
\end{lemma}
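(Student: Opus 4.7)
The plan is to apply the explicit computations underlying Lemma~\ref{imageXr} (drawn from \cite[Rem.~4.4]{BG09}) with the substitute parameter $a_p^0 = p(1-r) \in \Z_p$, and then to transfer the output back to our $a_p$ using the comparison results of Section~\ref{compalat}.

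First I carry out the construction of \cite[Rem.~4.4]{BG09} for $[g, F]$ using $a_p^0 \in \Z_p$ in place of $a_p$. Since $F \in \Sym^r\Z_p^2$ and $a_p^0 \in \Z_p$, every polynomial identity in the construction lives in $\Z_p$ and the entire argument stays inside $\Lambda^0$, producing a decomposition
\[
[g,F] \;=\; k_0 + p\,z,
\]
with $k_0 \in \ker\pi^0 \cap \Lambda^0$ and $z \in \Lambda^0$, both depending only on $g$ and $F$. In particular $z$ is independent of $a_p$; it is the candidate of the statement. It remains to show that $k_0 \in \ker\psi$, i.e.\ that $\pi(k_0) \in \varpi\Theta'$.

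By Corollary~\ref{compresrepr}, the isomorphism $\bar\Theta \cong \bar\Theta^0 \otimes_{\F_p} k_E$ identifies the image of $k_0$ in $\bar\Theta$ with its image in $\bar\Theta^0 \otimes k_E$, which is zero because $k_0 \in \ker\pi^0$; hence $\pi(k_0) \in \varpi\Theta$. Writing $\pi(k_0) = \varpi\,y$ with $y = y_1 + y_2 + y_3 \in S_1 \oplus S_2 \oplus S_3 = \hat\Theta$, the components $\varpi y_1 \in \varpi S_1$ and $\varpi y_3 \in \varpi S_3$ lie in $\varpi\Theta' = \varpi S_1 + \varpi\delta S_2 + \varpi S_3$ automatically, so the problem reduces to showing $y_2 \in \delta S_2$. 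Under the technical hypothesis of the theorem (either $E$ unramified over $\Q_p$, or $u-1$ a uniformizer of $E$), the ideals $(\delta)$ and $(\varpi)$ of $R$ coincide, and the condition becomes the vanishing of the $\bar S_2$-component of $y \bmod \varpi \in \bar\Theta$.

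The main obstacle is precisely this last vanishing. One argues by inspecting the explicit pieces of $k_0$ produced by \cite[Rem.~4.4]{BG09} --- separating the $X_r$- and $V_r^{**}$-contributions to $F$ --- and checking case by case that each such piece, after division by $\varpi$ and reduction, lands in the $\bar S_1 \oplus \bar S_3$ summand of $\bar\Theta$. This is consistent with (and can be viewed as a refinement of) the vanishing $F_1 = 0$ of Proposition~\ref{F1a2bis}, which precludes the Steinberg-twist $\St\otimes\omega$ from appearing as the middle layer of the filtration on $\bar\Theta$. The verification is of the same flavour as the explicit computations carried out in Section~\ref{casea2}, now transported from $\bar\Theta$ to the refined lattice $\Theta'$.
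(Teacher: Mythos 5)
Your approach diverges from the paper's in a way that leaves the central step of the lemma unproven. You run the construction of \cite[Rem.~4.4]{BG09} with the fixed parameter $a_p^0 = p(1-r)$, getting a decomposition $[g,F] = k_0 + pz$ with $k_0 \in \ker\pi^0$, and then try to transfer $k_0 \in \ker\psi$ from the $a_p^0$-world. But the only available transfer tool is Corollary~\ref{compresrepr}, which yields $M + \varpi\Lambda = M_0 + \varpi\Lambda$, hence at best $\pi(k_0) \in \varpi\Theta$ — not $\pi(k_0)\in \varpi\Theta'$. Since $\Theta' = S_1 \oplus \delta S_2 \oplus S_3 \subsetneq \Theta$, this is strictly weaker than what you need. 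You correctly identify the missing ingredient as ``$y_2 \in \delta S_2$'', and you correctly observe that this is ``the main obstacle'' — and then you do not supply an argument. The appeal to $F_1 = 0$ (Proposition~\ref{F1a2bis}) does not give it: $F_1 = 0$ is a statement about a certain quotient of $\mathrm{ind}_{KZ}^G J_1$ vanishing in $\bar\Theta$, not about the $\bar S_2$-component of the particular element $y$ you have written down. The plan to ``inspect the explicit pieces \dots case by case'' is not carried out, and there is no reason to expect it to succeed without a new idea.

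There are two further problems with the route you chose. First, Corollary~\ref{compresrepr} is proved under the standing hypothesis of Section~\ref{compalat} that $r\not\equiv 2 \bmod p$ (see Remark~\ref{F1zerorembis}), whereas the lemma must also cover $r \equiv 2 \bmod p$. Second, your reduction ``under the technical hypothesis the ideals $(\delta)$ and $(\varpi)$ coincide'' is only valid when that hypothesis is actually imposed, i.e.\ when $r \equiv 2 \bmod p$; for $r \not\equiv 2 \bmod p$ the theorem makes no assumption on $E$, and one may well have $\delta = p$ with $E$ ramified, so $(\delta) \neq (\varpi)$. Your case split is thus mis-aligned with the case split in the theorem.

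The paper's proof sidesteps all of this by computing $(T-a_p)f$ for the \emph{actual} $a_p$ (with $f = [\Id,(\theta/X)Y^{r-p}]$ and $f = [\Id,(1/a_p)\theta^2 F']$). Since $(T-a_p)f$ is integral and lies in $(T-a_p)(\Lambda\otimes E)$, it is automatically in $\ker\pi \subset \ker\psi$ — no lattice comparison needed. The expansion of $(T-a_p)f$ then splits the target element as (something in $\ker\pi$) $+$ (something in $p\Lambda^0$) $+$ (something in $a_p\Lambda^0$ or $p(p/a_p)\Lambda^0$). The last summand is absorbed into $p\Lambda^0 + p\m_E\Lambda$ using $a_p \equiv p(1-r) \bmod p\m_E$, and $p\m_E\Lambda \subset \ker\psi$ by Lemma~\ref{kerpsi} since $v(p\varpi) > v(\delta)$ in all cases. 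The $p\Lambda^0$ part gives the required $z$, independent of $a_p$ because it only depends on $a_p/p \bmod \m_E$, which is fixed. If you want a correct proof along your lines you would have to replace the reduction to $\bar\Theta$ by an argument directly in $\Theta'$, which in effect recovers the paper's direct computation.
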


\begin{proof}
Let $M$ be the $\Z_p[K]$-submodule of $\Sym^r\Z_p^2$ generated by $X^r$ and 
$\theta^2 \Sym^{r-2(p+1)} \Z_p^2$. Then the reduction of $F$ modulo $p$ 
lies in $X_r + V_r^{**}$, the image of $M$ in $V_r$. So $F$ lies in the sum $M + p \Sym^r\Z_p^2$
and $[g,F]$ lies in the sum of the $\Z_p[G]$-submodule generated by 
$[\Id,M]$ and  $p\Lambda^0$.

Hence, it is enough to show that $[\alpha,Y^r]$ and $[1,\theta^2 F'(X,Y)]$, for $F'(X,Y) \in \Sym^{r-2p-2} \Z_p^2$, 
are in $\ker\psi + p\Lambda^0$, as this last module is $G$-stable. To do this we 
recall the computations described in \cite[Rem. 4.4]{BG09} of $(T-a_p)f$ used to 
show that $\ind_{KZ}^G X_r$ and $\ind_{KZ}^G V_r^{**}$ go to 
zero under the map $\ind_{KZ}^G V_r \rightarrow \bar\Theta$. Indeed, 
computing $(T-a_p)f \in \ker \pi \subset \ker \psi$, for 
$f = [\Id, (\theta/X)Y^{r-p}]$, 
we see that $[\alpha, Y^r] \in \ker \pi + p \Lambda^0 + a_p \Lambda^0 \subset \ker \pi +  p \Lambda^0 
+ p {\mathfrak m}_E \Lambda \subset p \Lambda^0 + \ker \psi$,
since $a_p \equiv p(1-r) \mod p {\mathfrak m}_E$, and both $\ker \pi$ and $p \ker \bar \pi$ are in  
$\ker \psi$ (cf. Lemma~\ref{kerpsi}). Similarly, computing $(T-a_p)f$, for $f = [\Id, (1/a_p) \theta^2 F']$, we get
$[\Id, \theta^2 F'] \in \ker \pi + p (p/a_p) \Lambda^0 \subset \ker \pi + p \Lambda^0 + p {\mathfrak m}_E \Lambda
\subset \ker \psi + p \Lambda^0$, since $p/a_p \equiv 1/(1-r) \mod {\mathfrak m}_E$. 

The argument above shows that, up to $\ker \psi$, 
each of the functions $[\alpha, Y^r]$ and $[\Id, \theta^2 F']$ is of the form $-p z$, for 
some $z \in \Lambda^0$, and that $z$ is  independent of $a_p$ (it only depends on $a_p/p\mod {\mathfrak m}_E$, which is fixed to be $\overline{(1-r)}$ in our case). 
%This follows from the computations showing that for all $g$, $[g,X^r]$ is in $\ker\pi + p\Lambda$. 
%Similarly, the computations show that for all $H \in \Sym^{r-2(p+1)}\Z_p^2$, 
%for all $g\in G$, $[g, \theta^2H]$ is in $\ker\pi + p\Lambda$.
%
%The set of elements $F$ satisfying the conditions of the Lemma is
%contained in the sum of $p\Lambda$ and the $\Z_p$-module generated by the action of
%$G$ on elements of the previous form.
\end{proof}

We also have:
\begin{lemma}
\label{imageinSt}
$\psi(\delta \Lambda) \subset N$.
\end{lemma}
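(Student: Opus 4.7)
The plan is to exploit the direct sum decomposition $\hat{\Theta} = S_1 \oplus S_2 \oplus S_3$ set up in Section~\ref{standard}, together with the definition $\Theta' = S'_1 \oplus S'_2 \oplus S'_3 = \delta S_2 \oplus S_1 \oplus S_3$, and then read off the result from Lemma~\ref{kerpsi}. Concretely, given $x \in \Lambda$, I will decompose $\pi(x) = s_1 + s_2 + s_3$ with $s_i \in S_i$, so that
\[
\pi(\delta x) \;=\; \delta s_1 + \delta s_2 + \delta s_3 \;\in\; \delta S_1 \oplus \delta S_2 \oplus \delta S_3.
\]

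Next I will check that $\pi(\delta x) \in \Theta'$, so that $\psi(\delta x)$ is defined. This is immediate since $\delta S_1 \subset S_1 = S'_2$, $\delta S_2 = S'_1$, and $\delta S_3 \subset S_3 = S'_3$, all of which lie inside $\Theta'$. With this decomposition in hand, the final sentence of Lemma~\ref{kerpsi} tells us that $\delta S_1$ and $\delta S_3$ both die in $\bar{\Theta}'$; consequently $\psi(\delta s_1) = 0$ and $\psi(\delta s_3) = 0$.

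Therefore $\psi(\delta x) = \psi(\delta s_2)$, and since $\delta s_2 \in \delta S_2 = S'_1 = M'_1$, whose image in $\bar{\Theta}'$ is by definition the subrepresentation $N$, we conclude $\psi(\delta x) \in N$, as desired. There is no genuine obstacle here: the entire argument is a formal bookkeeping consequence of the direct sum decomposition of $\hat{\Theta}$, the definition of $\Theta'$ by rescaling the $S_2$ component by $\delta$, and the vanishing of $\delta S_1, \delta S_3$ in $\bar{\Theta}'$ already recorded in Lemma~\ref{kerpsi}; no computations with Hecke operators or explicit functions on the tree are needed.
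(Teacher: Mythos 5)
Your proof is correct and follows exactly the same strategy as the paper's: decompose $\pi(x) = s_1 + s_2 + s_3$ according to $\hat{\Theta} = S_1 \oplus S_2 \oplus S_3$, note that each $\delta s_i$ lies in $\Theta'$, invoke the last statement of Lemma~\ref{kerpsi} to kill $\delta S_1$ and $\delta S_3$, and conclude that $\psi(\delta x) = \psi(\delta s_2) \in N$.
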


\begin{proof}
Let $z$ be an element of $\Lambda$. Then
$\pi(z)$ is an element of $\hat{\Theta}$. Write it as
$\pi(z) = s_1 + s_2 + s_3$, with $s_i \in S_i$. Then the $\delta s_i$ and 
hence $\pi(\delta z)$ are in
$\Theta'$, and $\psi(\delta z)$ is the sum of the reductions of the $\delta s_i$,
for $i = 1$, $2$, $3$. %$ = \psi(\delta s_1) + \psi(\delta s_2) + \psi(\delta s_3)$. 
But $\delta S_1$ and $\delta S_3$ die in $\bar\Theta'$, by Lemma~\ref{kerpsi}.
Thus $\psi(\delta z)$ is equal to the reduction of $\delta s_2 \in
S'_1$, which by definition, lies in $N$. 
\end{proof}

\subsubsection{A linear form on $N$}

$M$ is an extension
of $\omega$ by $N = \St\otimes \,\omega$, so it is of the form $E_{\tau}\otimes
\omega$ for some $\tau$. We need to compute $\tau$ in order to see if the
extension given by $V(\bar{\Theta}')$
is ``peu ramifi\'ee" or ``tr\`es ramifi\'ee".

We fix $ \mu$ a linear form on $N$ as in Lemma \ref{linearStomega}.
Recall that the element $X^{p-1} \in J_0 = V_{p-1} \otimes D$ corresponds to the image
of $\theta X^s$ in $P$, for $s = r-p-1$, by Lemma~\ref{generator} (i).  
Thus the image of $[\Id, \theta X^s]$ generates 
$F_0 = \bar{M}_2 = \pi(p-1,1, \omega)$ inside $\bar\Theta$. By Lemma~\ref{imageinSt},
the image of $\delta \Lambda$ in $\bar\Theta'$ lies in the subrepresentation $N$ of $M$. 
In fact, $N$ is the image of $\delta M_2$ in $\bar\Theta'$, so it is generated 
as a $G$-representation by the image of $[\Id,\delta\theta X^s]$.
The image of $[\Id,\delta \theta X^s]$ actually lies in $N^{I(1)}$, as we can easily check, 
and it is non-zero since it generates $N$.
 So by the properties of
$\mu$, we know that $\mu([\Id,\delta\theta X^s]) \neq 0$. We normalize
$\mu$ by setting $\mu([\Id,\delta\theta X^s]) = 1$. As $w[\Id,\delta\theta
X^s] = -[\Id,\delta \theta Y^s]$ we see that $\mu([\Id,\delta\theta Y^s]) =
-1$. Also, since $\alpha$ lies in $A$ up to an element of $Z$, we have 
$\mu(\psi(\alpha\delta z)) = \mu(\psi(\delta z))$, for all $z\in \Lambda$, 
by the equivariance of $\mu$ under the action of $A$. These last 
three properties of $\mu$ will be enough for the computations.

For later use in Section \ref{deltap} we prove the following lemma. As in 
Section \ref{standard} and
the proof of Proposition \ref{S1S2}, we decorate by a superscript
$\vphantom{x}^0$ all objects for $a_p^0 = p(1-r)$.
\begin{lemma}
\label{muindepap}
Let $a_p$ be an element for which $\delta = p$. 
Then  $\mu(\psi(pz)) = \mu^0(\psi^0(pz))$, for all $z \in \Lambda^0$.
\end{lemma}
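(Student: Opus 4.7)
The plan is to trace the element $pz$ through both constructions using the comparison between $\hat{\Theta}$ and $\hat{\Theta}^0_R$ modulo $p$ provided by Proposition~\ref{compquotient}, and to reduce the identity to the fact that the normalizations of $\mu$ and $\mu^0$ match up under the induced isomorphism.

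Since $\delta = p$, Proposition~\ref{compquotient} furnishes a $G$-equivariant isomorphism $\phi : \hat{\Theta}^0_R/(p) \xrightarrow{\sim} \hat{\Theta}/(p)$, and the argument in the proof of Proposition~\ref{S1S2} shows that $\phi$ respects the $S_i$-decomposition, i.e.\ it restricts to isomorphisms $\phi_i : S^0_{i,R}/(p) \xrightarrow{\sim} S_i/(p)$ for $i = 1, 2, 3$. Reducing further modulo $\varpi$ (permissible since $p \in (\varpi)$), I obtain a $G$-equivariant isomorphism $\bar{\phi}_2 : \bar{S}^0_{2,R} \xrightarrow{\sim} \bar{S}_2 = N$. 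Since $\bar{S}^0_{2,R} = \bar{S}^0_2 \otimes_{\F_p} k_E = N^0 \otimes_{\F_p} k_E$, composing with the natural inclusion yields a $G$-equivariant embedding $\iota : N^0 \hookrightarrow N$.

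Next I would identify $\psi(pz)$ and $\psi^0(pz)$ concretely. For $z \in \Lambda^0 \subset \Lambda$, decompose $\pi(z) = s_1 + s_2 + s_3$ with $s_i \in S_i$. Then $\pi(pz) = ps_1 + ps_2 + ps_3 \in \Theta'$; since $\delta = p$, Lemma~\ref{kerpsi} kills $ps_1 \in \delta S_1$ and $ps_3 \in \delta S_3$, so $\psi(pz) \in N$ is the class of $ps_2 \in S'_1 = pS_2$ modulo $\varpi p S_2$. Under the canonical isomorphism $pS_2/\varpi p S_2 \xrightarrow{\sim} S_2/\varpi S_2 = N$, this corresponds to $\bar{s}_2$. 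The same argument gives $\psi^0(pz) \leftrightarrow \bar{s}^0_2 \in N^0$. Moreover, because $\phi$ comes from the equality $\Lambda/(M + p\Lambda) = \Lambda/(M_0 + p\Lambda)$ of Proposition~\ref{compsublattice}, $\phi$ sends the class of $z$ to the class of $z$, so projecting onto the $S_2$-component yields $\phi_2(s^0_2 \bmod p) = s_2 \bmod p$; reducing further mod $\varpi$ gives $\iota(\bar{s}^0_2) = \bar{s}_2$, i.e.\ $\iota(\psi^0(pz)) = \psi(pz)$ in $N$.

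Finally, I would verify that $\mu$ and $\mu^0$ are compatible under $\iota$. The form $\mu^0 : N^0 \to \bar{\F}_p$ extends by $k_E$-linearity to an $A$-equivariant form $\tilde{\mu}^0 : N \to k_E$, and by Lemma~\ref{linearStomega} the space of such forms on $N$ is one-dimensional, so $\mu = c \cdot \tilde{\mu}^0$ for some $c \in k_E$. Applying the previous step with the particular element $z = [\Id, \theta X^s] \in \Lambda^0$ shows that $\iota(\psi^0([\Id, p\theta X^s])) = \psi([\Id, p\theta X^s])$; evaluating and invoking the common normalization $\mu(\psi([\Id, p\theta X^s])) = \mu^0(\psi^0([\Id, p\theta X^s])) = 1$ forces $c = 1$. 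Combining, $\mu(\psi(pz)) = \tilde{\mu}^0(\iota(\psi^0(pz))) = \mu^0(\psi^0(pz))$, as required. The main obstacle is the compatibility of $\phi$ with the $S_i$-decomposition together with the explicit identification $\psi(pz) \leftrightarrow \bar{s}_2$; once these are in hand, everything else is bookkeeping.
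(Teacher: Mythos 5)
Your proof is correct and lands on the same conclusion, but the route differs from the paper's in one key step. The paper defines $\lambda : N^0 \to N$ abstractly (the unique $G$-equivariant map taking the $I(1)$-invariant generator $\psi^0([\Id,p\theta X^s])$ to $\psi([\Id,p\theta X^s])$), deduces $\mu^0 = \mu\circ\lambda$ by uniqueness of the $A$-equivariant form, observes $\psi(pz)=\lambda(\psi^0(pz))$ on the $\Z_p[G]$-orbit of $z_0 = [\Id,\theta X^s]$ by $G$-equivariance alone, and then extends to arbitrary $z\in\Lambda^0$ by decomposing $z = z_1 + z'$ with $z_1\in\Z_p[G]z_0$ and $pz'\in\ker\psi\cap\ker\psi^0$, a step that invokes the $\mathcal{B}'_i$-basis and Corollary~\ref{compresrepr} only in the weak form $\ker\bar\pi^0\subset\ker\bar\pi$. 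You instead construct the map $\iota : N^0\to N$ concretely from the isomorphism $\hat\Theta^0_R/(p)\cong\hat\Theta/(p)$ of Proposition~\ref{compquotient}, using the fact (recorded in the proof of Proposition~\ref{S1S2}) that this isomorphism matches $S^0_{i,R}/(\delta)$ with $S_i/(\delta)$ for each $i$; since $\hat\Theta/(p) = \bigoplus_i S_i/(p)$ and likewise for the $0$-side, uniqueness of the direct-sum components immediately gives $\phi_2(s^0_2\bmod p)=s_2\bmod p$, hence $\iota(\psi^0(pz))=\psi(pz)$ for every $z$ in one stroke, with no need for the $\Z_p[G]z_0$ / remainder dichotomy. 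Both versions then finish by the same uniqueness-plus-normalization argument to identify $\mu$ with the extension of $\mu^0$. Your argument is somewhat more economical once the $S_i$-compatibility of the comparison map is in hand; the paper's two-step version uses only the coarser consequence of that comparison. Note that both arguments rely equally on the lattice-comparison results of Section~\ref{compalat}, so the same caveat (the auxiliary hypotheses when $r\equiv 2\bmod p$) applies in either presentation.
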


\begin{proof}
This is true by construction of $\mu$ and $\mu^0$, for $z = z_0 := [\Id,\theta
X^s] \in \Lambda^0$. 

Since $N \cong N^0 \otimes k_E$, there is a $G$-equivariant map from $N^0$ to $N$. 
Any such map takes $I(1)$-invariants to $I(1)$-invariants.
Let $\lambda : N^0 \to N$ be the unique $G$-equivariant map sending $\psi^0([\Id,p\theta X^s])$ to
$\psi([\Id,p\theta X^s])$. Then $\mu^0 = \mu \circ \lambda$, by the
uniqueness of the linear form $\mu$ (see Lemma \ref{linearStomega}). 
As $\psi(pz_0) = \lambda(\psi^0(pz_0))$, and $\psi$, $\psi^0$ and $\lambda$
are $G$-equivariant, we get that
$\psi(pz) = \lambda(\psi^0(pz))$, for all $z \in \Z_p[G]z_0$, hence by
applying $\mu$, 
$\mu(\psi(pz)) = \mu^0(\psi^0(pz))$, for all $z \in \Z_p[G]z_0$.

For an arbitrary $z \in \Lambda^0$,
we can write  $z = b_1 + b_2 +
b_3 + x$, for $b_i$ in the subspace of $\Lambda^0$ generated by
$\mathcal{B}'_i$ and for some $x \in \ker\bar\pi^0$.
Note that $\bar\pi^0(\Z_p[G]z_0)$ is %exactly  
the subspace $F_0$ of $\bar\Theta^0$, % from Section \ref{casea2}, 
generated by $\bar\pi^0(\mathcal{B}'_1)$ and $\bar\pi^0(\mathcal{B}'_2)$. 
So $\bar\pi^0(b_1+b_2)\in\bar\pi^0(\Z_p[G]z_0)$
and we can
write $b_1 + b_2=z_1+ x'$, for some $z_1\in \Z_p[G]z_0$ and
$x'\in \ker\bar\pi^0$.
Hence  we have $z=z_1+z'$, where $z_1\in \Z_p[G]z_0$ and
$z':=b_3+x+x'$. 

Note that $\pi^0(b_3)\in S_3^0$, $\pi(b_3)\in S_3$ and $x,x'\in\ker\bar\pi^0\subset\ker\bar\pi$ (considering $\ker\bar\pi^0$ as a part of $\Lambda$ via
the inclusion $\Lambda^0 \subset \Lambda$), by Corollary~\ref{compresrepr}. Hence $pz'$ lies in both $\ker\psi$ and  $\ker\psi^0$, by Lemma
\ref{kerpsi}. So $\mu(\psi(pz)) = \mu(\psi(pz_1))$ and $\mu^0(\psi^0(pz)) = \mu^0(\psi^0(pz_1))$.
But we have  already shown  $\mu(\psi(pz_1) )=
\mu^0(\psi^0(pz_1))$ and thus we are done.
\end{proof}

Let $e' = [\Id,\theta X^s] + [\alpha,\theta Y^s] \in \Lambda$, so that
$\pi(e')\in S_1 \subset \Theta'$, and let $e = \psi(e')\in \bar{\Theta}'$. 
Then $e$ is an element of $M$ which is not in $N$.
Let $\gamma_0 = \matr {1+p}001$ and $\gamma_1 = \matr p001$. Then
according to Section~\ref{tau}, to know
$\tau$ it suffices to compute
$\mu(\gamma_0e-e)$ and $\mu(\gamma_1e-e)$.

\subsubsection{The case $v(\delta) < 1$}

\begin{prop}
  $\mu(\gamma_0e-e) = 0$.
\end{prop}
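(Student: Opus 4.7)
The plan is to show that the element $\gamma_0 e' - e'$ already lies in $p\Lambda$ in $\Lambda$ itself, after which Lemma~\ref{kerpsi} finishes the job: in the regime $v(\delta)<1$ that lemma guarantees $p\Lambda \subset \ker\psi$, so $\gamma_0 e - e = \psi(\gamma_0 e'-e') = 0$ in $\bar\Theta'$ and hence $\mu(\gamma_0 e - e)=0$ trivially.

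The first step is to recognize that $\gamma_0 = \matr{1+p}001$ has entries in $\Z_p$ with unit determinant $1+p$, so $\gamma_0\in K\subset KZ$; moreover, $\gamma_0$ and $\alpha = \matr 100p$ are diagonal and commute. Combined with the standard identity $[gk,v]=[g,kv]$ for $k\in KZ$, this lets me move $\gamma_0$ past the matrix in the first argument of $[\mathrm{Id},\theta X^s]$ and $[\alpha,\theta Y^s]$:
\begin{align*}
\gamma_0\cdot[\mathrm{Id},\theta X^s] &= [\gamma_0,\theta X^s] = [\mathrm{Id},\gamma_0\cdot \theta X^s],\\
\gamma_0\cdot[\alpha,\theta Y^s] &= [\gamma_0\alpha,\theta Y^s] = [\alpha\gamma_0,\theta Y^s] = [\alpha,\gamma_0\cdot\theta Y^s].
\end{align*}
Therefore
$$
\gamma_0 e' - e' = [\mathrm{Id},(\gamma_0-1)\theta X^s] + [\alpha,(\gamma_0-1)\theta Y^s].
$$

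The second step is an explicit computation. Since $\gamma_0$ acts on $\Sym^r$ by $(X,Y)\mapsto ((1+p)X,Y)$, and since $\theta X^s = X^{r-1}Y - X^{r-p}Y^p$ and $\theta Y^s = X^pY^{r-p} - XY^{r-1}$, I get
\begin{align*}
(\gamma_0-1)\theta X^s &= \bigl((1+p)^{r-1}-1\bigr)X^{r-1}Y - \bigl((1+p)^{r-p}-1\bigr)X^{r-p}Y^p,\\
(\gamma_0-1)\theta Y^s &= \bigl((1+p)^p-1\bigr)X^pY^{r-p} - p\,XY^{r-1}.
\end{align*}
Each coefficient on the right-hand sides is of the form $(1+p)^n-1$ for some $n\geq 1$ (or is $p$), and hence lies in $p\Z_p\subset p R$. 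Thus $(\gamma_0-1)\theta X^s$ and $(\gamma_0-1)\theta Y^s$ both lie in $p\,\Sym^r R^2$, so $\gamma_0 e'-e' \in p\Lambda$, which is the desired conclusion.

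There is no real obstacle here: the assumption $v(\delta)<1$ is doing all the work through Lemma~\ref{kerpsi}, and the only subtlety is to verify that no term of $(\gamma_0-1)\theta X^s$ or $(\gamma_0-1)\theta Y^s$ fails to be divisible by $p$; the observation $(1+p)^n\equiv 1\pmod p$ takes care of this uniformly.
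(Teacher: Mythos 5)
Your proof is correct and follows exactly the same route as the paper's: show $\gamma_0 e' - e' \in p\Lambda$, then invoke Lemma~\ref{kerpsi} (which gives $p\Lambda \subset \ker\psi$ when $v(\delta)<1$) to conclude $\psi(\gamma_0 e'-e')=0$. The paper states the membership $\gamma_0 e'-e'\in p\Lambda$ without elaboration; you fill in the short verification via the action of $KZ$ on elementary functions and the congruence $(1+p)^n\equiv 1\pmod p$, which is exactly the intended reasoning.
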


\begin{proof}
We have $\gamma_0e'-e' \in p\Lambda$, so $\psi(\gamma_0e'-e') = 0$, by Lemma~\ref{kerpsi}.
So $\gamma_0e = e$ and $\mu(\gamma_0e-e)=0$.
\end{proof}

\begin{prop}
\label{gamma1delta}
$\mu(\gamma_1e-e) = 2$.
\end{prop}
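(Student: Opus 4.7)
The plan is to use a symmetry reduction. I observe that $\gamma_1 e' - e' = (1+w)\xi$, where $\xi := [\alpha^{-1},\theta X^s] - [\Id,\theta X^s]$ and $w = \matr 0110$. Indeed, $w\cdot[\Id,\theta X^s] = [\Id, w\theta X^s] = -[\Id,\theta Y^s]$, and a direct matrix computation shows $w\alpha^{-1}$ equals $\alpha\cdot(pw)$, so $w\cdot[\alpha^{-1},\theta X^s] = [\alpha, w\theta X^s] = -[\alpha,\theta Y^s]$ (using the trivial action of $p\Id$ on $V$ coming from the central-character normalization that also gives $\gamma_1\cdot[\alpha,\theta Y^s] = [p\Id,\theta Y^s] = [\Id,\theta Y^s]$ and $[\gamma_1,\theta X^s] = [\alpha^{-1},\theta X^s]$). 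Adding these yields $\xi + w\xi = \gamma_1 e' - e'$.

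By Lemma~\ref{linearStomega} applied with $m=1$, we have $\mu\circ w = \mu$ on $N \cong \St\otimes\omega$; hence, if $\psi(\xi)$ lies in $N$, then
$$\mu(\gamma_1 e - e) = \mu\bigl(\psi(\xi)\bigr) + \mu\bigl(w\psi(\xi)\bigr) = 2\,\mu\bigl(\psi(\xi)\bigr).$$
Thus it suffices to establish the Hecke-type congruence
$$[\alpha^{-1},\theta X^s] \equiv u\,[\Id,\theta X^s] \pmod{\ker\psi},$$
where $u = 1+\delta$. Granted this, $\xi \equiv \delta[\Id,\theta X^s] \pmod{\ker\psi}$, so $\psi(\xi) = \psi([\Id,\delta\theta X^s])$ lies in $N$ by Lemma~\ref{imageinSt} and has $\mu$-value $1$ by the chosen normalization, giving $\mu(\gamma_1 e - e) = 2$.

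To prove this congruence I would mimic the construction of the test function in the proof of Proposition~\ref{F0a2}: an $f \in \Lambda \otimes_R E$ built as a linear combination of a term $[\Id,\text{(polynomial)}]$, terms $[g^0_{1,[\lambda]},\text{(polynomial)}]$ indexed by $\lambda\in\F_p$, and the auxiliary functions $\chi_g$ introduced in Lemma~\ref{defauxfunc}, chosen so that $(T-a_p)f \in \ker\pi \subset \ker\psi$ reduces modulo $\ker\psi$ to $[\alpha^{-1},\theta X^s] - u[\Id,\theta X^s]$. The unit $u$ emerges from the combination $\frac{a_p^2 - \binom r2 p^2}{pa_p} = \frac{(2-r)u}{2}$, via the same mechanism as in the proof of Proposition~\ref{F0a2} and using the binomial-coefficient congruences of Proposition~\ref{congrbinom2}; the hypothesis $v(a_p^2-\binom{r}{2}p^2) = 2+v(r-2)$ (i.e.\ $\tau=t$) guarantees that this quantity is genuinely a unit. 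Finally, the hypothesis $v(\delta) < 1$ gives $p\Lambda \subset \ker\psi$ by Lemma~\ref{kerpsi}, which is what allows us to discard the various $O(p)$ error terms arising in the Hecke expansion.

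The main obstacle is the precise construction of $f$ and the careful bookkeeping of coefficients: one needs the coefficient of $[\Id,\theta X^s]$ surviving in $(T-a_p)f \bmod \ker\psi$ to be \emph{exactly} $u$, not merely $u$ modulo some proper subideal — otherwise the crucial subtraction $u-1=\delta$ does not cleanly produce the factor of $\delta$ required to identify $\psi(\xi)$ with $\psi([\Id,\delta\theta X^s])$ via Lemma~\ref{imageinSt}.
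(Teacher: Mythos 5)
Your decomposition $\gamma_1 e' - e' = (1+w)\xi$ with $\xi = [\alpha^{-1},\theta X^s]-[\Id,\theta X^s] = [g^0_{1,0},\theta X^s]-[\Id,\theta X^s]$ is formally correct, and the idea of exploiting $\mu\circ w = \mu$ (the $m=1$ case of Lemma~\ref{linearStomega}) is exactly the symmetry the paper also uses. However, the argument breaks at the reduction to the single-term congruence $[\alpha^{-1},\theta X^s] \equiv u\,[\Id,\theta X^s] \pmod{\ker\psi}$. There are two intertwined problems. First, $\psi(\xi)$ is not defined: $\pi(\xi) = (\gamma_1-1)\pi([\Id,\theta X^s])$, and writing $\pi([\Id,\theta X^s]) = s_1+s_2+s_3\in S_1\oplus S_2\oplus S_3$, the $S_2$-component of $\pi(\xi)$ is, modulo $\delta S_2$, equal to $a_{2,2}(\gamma_1)(s_2) - s_2$ (the cross terms $a_{1,2}(\gamma_1)(s_1)$ and $a_{3,2}(\gamma_1)(s_3)$ lie in $\delta S_2$ by Proposition~\ref{S1S2}, but $a_{2,2}$ is unconstrained). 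In $\bar S_2 \cong \St\otimes\omega$, the class $\bar s_2$ is the (nonzero) image of $[\Id,X^{p-1}]$, supported at $v_0$ in the tree, while $\gamma_1\bar s_2$ is supported at the neighbouring vertex $KZ\alpha$; they are not equal, so the $S_2$-component of $\pi(\xi)$ does \emph{not} lie in $\delta S_2$, i.e.\ $\pi(\xi)\notin\Theta'$. Second, and relatedly, the Hecke computation from Proposition~\ref{F0a2} yields the relation $\sum_{\lambda\in\F_p}[g^0_{1,[\lambda]},\theta X^s] - u[\Id,\theta X^s]\in\ker\pi + p\Lambda$, a sum over \emph{all $p$ neighbours}; there is no way to extract from the Hecke operator a relation involving just the single neighbour $[g^0_{1,0},\theta X^s]$, which is what your congruence asserts.

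The paper circumvents both problems by keeping $\gamma_1 e' - e'$ intact. It applies $-w$ to the full Hecke relation to produce the companion relation $\sum_{\lambda\in\F_p^\times}[g^0_{1,[\lambda]},\theta X^s] + [\alpha,\theta Y^s] - u[\Id,\theta Y^s]\in\ker\pi+p\Lambda$, then subtracts; the $p-1$ terms indexed by $\lambda\neq 0$ cancel, leaving exactly $(\gamma_1e'-e') + \delta([\Id,\theta Y^s]-[\Id,\theta X^s])\in\ker\pi+p\Lambda\subset\ker\psi$. Since $\gamma_1e'-e'$ \emph{is} in the domain of $\psi$ (because $\pi(e')\in S_1$ and $\Theta'$ is $G$-stable), this gives $\psi(\gamma_1e'-e') = \psi([\Id,\delta\theta X^s]) - \psi([\Id,\delta\theta Y^s])$, and $\mu$ evaluates to $1-(-1)=2$. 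So the $w$-symmetry you spotted is indeed central, but it must act on the sum-over-neighbours Hecke relation, not on a hypothetical individual-vertex congruence.
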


\begin{proof}
We have $\gamma_1e' = [g_{1,0}^0,\theta X^s] + [\Id,\theta Y^s]$, so
$\gamma_1e'-e' = [g_{1,0}^0,\theta X^s] + [\Id,\theta Y^s] - [\Id,\theta X^s]- [\alpha,\theta Y^s]$. 

The computations in the proof of Proposition \ref{F0a2} show that 
\begin{eqnarray}
\label{one}
\sum_{\lambda\in \F_p}[g_{1,[\lambda]}^0,\theta X^s]- u(a_p)[\Id,\theta X^s] \in \ker\pi + p\Lambda.
\end{eqnarray}
Indeed, taking $f$ as in that proof, we have $(T-a_p)f$ is integral by (\ref{october}), 
so is in $\ker \pi$.
%In the case we are interested in, that is 
%As $v(a_p^2-\binom{r}{2}p^2) =v(r-2)+2$,
Then the inclusion above follows, multiplying \eqref{october} by $u(a_p)=\frac{2c}{2-r}$, which is a unit in this section, and by noting that $[g, X^r]\in \ker \pi+p\Lambda$, for $g\in G$.
%Indeed, taking $f$ as in that proof, we have $(T-a_p)f$ is integral, so is in $\ker \pi$.
%In the case we are interested in, that is, $v(a_p^2-\binom{r}{2}p^2) =
%v(r-2)+2$, we see that all the mod $p$ congruences in the proof of
%Proposition \ref{F0a2} are really mod $p \Lambda$ 
%(and not just mod $\varpi \Lambda$). So it follows from the computations there 
%that %$\frac{2-r}{2}$ times 
%the function on the LHS of (\ref{one}) lies in $\ker \pi + p\Lambda$. 

Applying the matrix $-w=-\matr 0{1}{1}0$ on the left hand side of \eqref{one} and going mod $p\Lambda$,  we get 
\begin{eqnarray}
  \label{two}
  \sum_{\lambda\in \F_p^*}[g_{1,[\lambda]}^0,\theta X^s] + [\alpha,\theta Y^s]- u(a_p)[\Id,\theta Y^s] \in \ker\pi+p\Lambda,
\end{eqnarray} 
using $w g_{1,[\lambda]}^0= g_{1,[\lambda]^{-1}}^0 \matr 0{-[\lambda]^{-1}} {[\lambda]} p w$, for $\lambda\in\F_p^*$, 
and  $w g_{1,0}^0= \alpha w$. %, in $G$.
Subtracting (\ref{two}) from (\ref{one}), and writing $u(a_p) = 1+\delta$, we have
$$
[g_{1,0}^0,\theta X^s]-[\Id,\theta X^s] + [\Id,\theta Y^s] 
- [\alpha,\theta Y^s]
+ ([\Id,\delta\theta Y^s]-[\Id,\delta\theta X^s]) \in \ker\pi+p\Lambda.
$$
Since $v(\delta) < 1$,  %the second statement in Lemma~\ref{imageXr}, 
we have 
$\psi(\gamma_1e'-e') = \psi([\Id,\delta \theta X^s]-[\Id,\delta \theta Y^s])$, by Lemma~\ref{kerpsi}. Hence the result.
\end{proof}

So we have proved the following proposition, which gives part (1) of Theorem
\ref{a2ramifiethm}.
\begin{prop}
\label{taudelta}
If $v(\delta) < 1$, then the extension $0 \rightarrow N \rightarrow M \rightarrow \omega \rightarrow 0$ is non-split, where 
$M\cong E_\tau\otimes \omega$, with $[\tau] = (0:1)$.
\end{prop}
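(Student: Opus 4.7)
The plan is to assemble the two preceding propositions with the general criterion of Lemma \ref{computetauomega}, applied in the case $m = 1$ to the extension
\[
0 \to N \to M \to \omega \to 0
\]
inside $\bar\Theta'$, where $N \cong \St \otimes \omega$. First I would verify that $e = \psi(e')$ is a legitimate choice of lift: by construction of the filtration $M_1' \subset M_2' \subset \Theta'$, the element $\pi(e') \in S_1 = S_2'$ projects to a generator of $S_2'/M_1' = S_2'/\delta S_2 \cong \bar S_1 \cong \omega$, so $e$ lies in $M$ but not in the subrepresentation $N$.

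Next I would combine the outputs of the two propositions preceding the statement. They give $\mu(i(1+p)e - e) = \mu(\gamma_0 e - e) = 0$ and $\mu(i(p)e - e) = \mu(\gamma_1 e - e) = 2$. Since $p \geq 5$, the value $2 \in k_E$ is nonzero, so the pair $(0 : 2) = (0 : 1)$ defines a well-defined class in $\P^1(\bar\F_p)$. Applying Lemma \ref{computetauomega} directly then identifies $M$ with $E_\tau \otimes \omega$ for a (necessarily non-zero) $\tau$ with $[\tau] = (0:1)$; in particular the extension is non-split.

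There is essentially no genuine obstacle remaining at this stage, since the hard work—producing a rational function whose image under $(T-a_p)$ is integral and yields the prescribed value of $\mu$ on $\gamma_1 e - e$—was already carried out in Proposition \ref{gamma1delta}, using the delicate computation summarised in \eqref{october} and the congruence $u(a_p) = 1 + \delta$ with $v(\delta) < 1$, which allows the terms of valuation $v(\delta)$ to be tracked while absorbing everything of larger valuation into $\ker\psi$ via Lemma \ref{kerpsi}. The present proposition is then just the formal packaging of those two computations through the framework of Section \ref{tau}.
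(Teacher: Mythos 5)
Your proof is correct and follows exactly the route the paper intends: the paper itself does not write out a proof of this proposition, instead concluding after Proposition~\ref{gamma1delta} with ``So we have proved the following proposition,'' which is precisely the packaging of the values $\mu(\gamma_0 e - e) = 0$ and $\mu(\gamma_1 e - e) = 2$ through Lemma~\ref{computetauomega}. Your explicit verification that $e = \psi(e')$ lies in $M$ but not $N$, and the remark that $2 \neq 0$ in characteristic $p \geq 5$ so that $(0:2) = (0:1)$ is a genuine point of $\P^1(k_E)$ forcing the extension to be non-split, fills in the (small) details the paper leaves implicit.
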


\subsubsection{The case $\delta = p$}
\label{deltap}

\begin{prop}\label{gamma0e-e}
$\mu(\gamma_0e-e) = 3r-2$.
\end{prop}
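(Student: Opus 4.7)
The plan is to compute $\mu(\gamma_0 e-e)$ in three stages: Taylor-expand $\gamma_0 e'-e'$ modulo $p^2\Lambda$, reduce the surviving $\mu\circ\psi$-terms to a single unknown via equivariance, and eliminate that unknown by an auxiliary function combined with an $I(1)$-invariance argument. Using $\gamma_0 F(X,Y)=F((1+p)X,Y)$, the expansion $(1+p)^n\equiv 1+np\pmod{p^2}$, and the commutation $\gamma_0\alpha=\alpha\gamma_0$, one obtains
\[
\gamma_0 e' - e' \equiv -p[\Id,X^{r-1}Y] + pr[\Id,\theta X^{r-p-1}] - p[\alpha,XY^{r-1}] \pmod{p^2\Lambda}.
\]
Since $\psi$ kills $p^2\Lambda$ by Lemma~\ref{kerpsi} and $\mu(\psi(p[\Id,\theta X^{r-p-1}]))=1$ by normalisation, and since $[\Id,XY^{r-1}]=w\cdot[\Id,X^{r-1}Y]$, $\alpha w=w\cdot i(p)$, together with the equivariances $\mu\circ w=\mu$ (Lemma~\ref{linearStomega}, case $m=1$) and $\mu\circ i(p)=\omega(p)\mu=\mu$, the two remaining terms coincide and equal $y_1:=\mu(\psi(p[\Id,X^{r-1}Y]))=\mu^0(\psi^0(p[\Id,X^{r-1}Y]))$ (the last equality by Lemma~\ref{muindepap} at $a_p^0=p(1-r)$). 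Thus $\mu(\gamma_0 e-e)=r-2y_1$.

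Next, combining Lemma~\ref{lemmanew}, Lemma~\ref{projection}, and the split decomposition $W_1\cong J_0\oplus J_1$ (Proposition~\ref{es1}(ii), $a=2$) yields the key congruence
\[
X^{r-1}Y \equiv \tfrac{2-r}{2}\theta X^{r-p-1} - \tfrac{r}{2}\theta X^{r-2p}Y^{p-1} + \tfrac{r}{2}\theta Y^{r-p-1} \pmod{X_r+V_r^{**}}.
\]
Multiplying by $p$, applying $\mu^0\circ\psi^0$ (the $X_r+V_r^{**}$-error dying by Lemma~\ref{imageVr**}), and using $\mu^0(\psi^0(p[\Id,\theta Y^{r-p-1}]))=-1$ (which follows from $\mu\circ w=\mu$ and $w\theta=-\theta$) gives $y_1=(1-r)-\tfrac{r}{2}x_1$, where $x_1:=\mu^0(\psi^0(p[\Id,\theta X^{r-2p}Y^{p-1}]))$. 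Substituting back yields $\mu(\gamma_0 e-e)=3r-2+rx_1$, so it suffices to show $rx_1=0$.

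The hard step is showing $rx_1=0$. When $r\equiv 0\pmod p$ (which includes $r=2p$), this holds automatically; otherwise $r\not\equiv 0\pmod p$ forces $r>2p$, and I would then introduce the auxiliary function $\phi:=[\Id,\tfrac{1}{p}\theta X^{r-2p}Y^{p-1}]$. Using the explicit $T^\pm$-formulas together with the Teichm\"uller identity $[\lambda]^p\equiv[\lambda]\pmod p$ (which forces the constant-term coefficient to vanish and makes the degree-$2$-or-higher coefficients divisible by $p$), one verifies that $(T-a_p^0)\phi$ is integral with
\[
(T-a_p^0)\phi \equiv \sum_{\lambda\neq 0}[g^0_{1,[\lambda]},X^{r-1}Y] + (r-1)[\Id,\theta X^{r-2p}Y^{p-1}] \pmod{p\Lambda^0}.
\]
Since $(T-a_p^0)\phi\in\ker\pi^0$, multiplying by $p$ places the relation in $p\ker\pi^0\subset\ker\psi^0$, so applying $\mu^0\circ\psi^0$ yields $0=\sum_{\lambda\neq 0}\mu^0(g^0_{1,[\lambda]}v)+(r-1)x_1$, where $v:=\psi^0(p[\Id,X^{r-1}Y])$.

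The final ingredient is that $v$ is $I(1)$-invariant in $N\cong\St\otimes\omega$: for $h=\matr{a}{b}{pc}{d}\in I(1)$, one has $h\cdot X^{r-1}Y\equiv X^{r-1}Y+bX^r\pmod p$, and the $X^r$-contribution vanishes under $\psi^0$ after multiplying by $p$ by Lemma~\ref{imageVr**}. Since $N^{I(1)}$ is the line generated by $v_0:=\psi^0(p[\Id,\theta X^{r-p-1}])$ with $\mu^0(v_0)=1$, we conclude $v=y_1v_0$. Specialising the ``(october)'' relation from the proof of Proposition~\ref{F0a2} to $a_p^0$ (where $\tfrac{2-r}{2c}=1$) and applying $p\cdot\mu^0\circ\psi^0$ (the auxiliary $h''''$-term dies by Lemma~\ref{imageVr**}) gives $\sum_\lambda\mu^0(g^0_{1,[\lambda]}v_0)=\mu^0(v_0)=1$; subtracting the $\lambda=0$ contribution $\mu^0(i(p)v_0)=\omega(p)=1$ yields $\sum_{\lambda\neq 0}\mu^0(g^0_{1,[\lambda]}v_0)=0$. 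Substituting into the $\phi$-relation gives $(r-1)x_1=0$, and since $r\not\equiv 1\pmod p$ (always forced by $v(a_p^0)=1$) we obtain $x_1=0$. The main obstacle throughout is recognising the $I(1)$-invariance of $v$; once that is in hand, the final elimination is immediate.
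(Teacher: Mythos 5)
Your proof is correct, but it takes a genuinely different and considerably longer route than the paper's. The divergence occurs right after the expansion of $\gamma_0 e'-e'$. You use the relation $\alpha w = w\,i(p)$ together with $\mu\circ w=\mu$ and $\mu\circ i(p)=\mu$ to identify $\mu(\psi([\alpha,pXY^{r-1}]))$ with $y_1 = \mu(\psi([\Id,pX^{r-1}Y]))$, reducing to a single unknown; you then evaluate $y_1$ once via Lemma~\ref{lemmanew}, which leaves a residual $rx_1$ term where $x_1 = \mu(\psi([\Id,p\theta X^{r-2p}Y^{p-1}]))$. The paper instead uses only $A$-equivariance to move $\alpha$ to $\Id$, keeping $pX^{r-p}Y^p$ and $pXY^{r-1}$ as two \emph{distinct} polynomials, and evaluates both through the $F'$-congruence of Lemma~\ref{lemmanew} (once as stated, once after applying $w$); the $x_1$-contributions then appear with opposite signs, $-\tfrac{r}{2}x_1$ and $+\tfrac{r}{2}x_1$, and cancel automatically in the final sum, giving $3r-2$ without any further work.

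What you prove at length via the auxiliary function $\phi=[\Id,\tfrac{1}{p}\theta X^{r-2p}Y^{p-1}]$, the $I(1)$-invariance of $v=\psi^0(p[\Id,X^{r-1}Y])$, and a second invocation of the ``october'' relation, is effectively $x_1=0$, a true but (here) unnecessary side fact. It also has a much shorter proof than yours: one has $w\cdot\theta X^{r-2p}Y^{p-1} = -\theta X^{p-1}Y^{r-2p}$, and $\theta X^{p-1}Y^{r-2p}\equiv \theta X^{r-2p}Y^{p-1}\pmod{V_r^{**}}$; combining this with $\mu\circ w = \mu$ and Lemma~\ref{sameimage} gives $x_1 = -x_1$, hence $x_1 = 0$ as $p>2$. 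So the whole machinery of $\phi$, the Teichm\"uller trick, the $I(1)$-invariance, and the $\sum_{\lambda\neq 0}\mu^0(g^0_{1,[\lambda]}v_0)=0$ computation is avoidable. Still, your $I(1)$-invariance observation for $v$ is a nice structural point, and the logical chain you built around it does close; I did not find an actual gap.
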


\begin{proof}
In the lattice $\Lambda$ we have 
$\gamma_0e'-e' - p([\Id,s\theta X^s - X^{r-p}Y^p]-[\alpha,XY^{r-1}]) \in
p^2\Lambda$. 
So $$\mu(\gamma_0e-e) = s - \mu(\psi([\Id,pX^{r-p}Y^p])) - \mu(\psi([\alpha,pXY^{r-1}])),$$ 
by Lemma~\ref{kerpsi}. Note that $\mu(\psi([\alpha,pXY^{r-1}])) = \mu(\psi([\Id,pXY^{r-1}]))$, by the
equivariance of $\mu$ with respect to the action of $A$. 

%Suppose first that $p$ divides $r$. Since $X^{r-p}Y^p = X^{r-1}Y - \theta X^s$, by Lemma 6.4 (ii)
%and Lemma~\ref{sameimage}, we have
%$\psi([\Id,pX^{r-p}Y^p]) = 0$ and so $\psi([\Id,pXY^{r-1}]) = -\psi([\Id,p\theta Y^s]))$. 
%Thus $\mu(\gamma_0e-e) = -2$.

%Suppose now that $p$ does not divide $r$, so $r > 2p$.
We know from Lemma~\ref{lemmanew} that the image of 
\begin{eqnarray}
  \label{defF'}
F'(X,Y) = X^{r-1}Y + (r/2) \theta X^{r-2p}Y^{p-1} -(r/2)\theta Y^s -(1-r/2)\theta X^s 
\end{eqnarray}
in $V_r$ lies in $X_r + V_r^{**}$. So, by Lemma \ref{sameimage},
and using that $X^{r-p}Y^p = X^{r-1}Y - \theta X^s$, we have 
$\mu(\psi([\Id,pX^{r-p}Y^p])) = (-r/2)\mu(\psi([\Id,p\theta X^{r-2p}Y^{p-1}]))
-r$.
Applying $w$ to $F'(X,Y)$, we see that the image of %$$H'(X,Y) = 
$$ XY^{r-1} - (r/2)\theta X^{r-2p}Y^{p-1} +(r/2)\theta X^s
+(1-r/2)\theta Y^s$$ in $V_r$ is also in $X_r + V_r^{**}$ (note that
$(r/2) \theta X^{r-2p}Y^{p-1}$ and $(r/2) \theta X^{p-1}Y^{r-2p}$ have the same
image in $V_r/V_r^{**}$, for $r \geq 2p$). So by Lemma \ref{sameimage}, we have
$\mu(\psi([\Id,pXY^{r-1}])) = (r/2)\mu(\psi([\Id,p\theta X^{r-2p}Y^{p-1}]))
+1-r$.
So finally $\mu(\gamma_0e-e) = 3r-2$.
\end{proof}

\begin{prop}
\label{existsc} 
$\mu(\gamma_1e-e) = 2\bar{t}+c$,  where $\bar{t}$ is the image in $k_E$ of $t \in R$ with $u(a_p) = 1+tp$, 
and $c \in \F_p$ is independent of $a_p$.

\end{prop}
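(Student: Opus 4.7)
The strategy is to refine the analysis of Proposition~\ref{gamma1delta}, isolating a $t$-linear contribution of $2\bar t$ and a reference contribution $c$ computable at the base point $a_p^0 := p(1-r)$ (for which $t = 0$). Mimicking that computation verbatim, but now with $u(a_p) - 1 = tp$, we obtain
$$\eta := \gamma_1 e' - e' = tp\bigl([\Id, \theta X^s] - [\Id, \theta Y^s]\bigr) + z,$$
with $z \in \ker\pi + p\Lambda$. Since $\psi$ is $R$-linear and $\mu(\psi([\Id, p\theta X^s])) = 1$ by our normalization (valid as $\delta = p$ in the present case), the explicit $t$-term contributes $\bar t \cdot (1 - (-1)) = 2\bar t$ to $\mu(\psi(\eta))$. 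It remains to show that $\mu(\psi(z)) = c$ for some constant $c \in \F_p$ independent of $a_p$.

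To this end, we apply the same derivation at $a_p^0$. Since $u(a_p^0) = 1$, the $\delta$-term disappears and $\eta \in \ker\pi^0 + p\Lambda^0$. We fix an $a_p$-independent decomposition $\eta = k_0 + p\xi$ with $k_0 \in \ker\pi^0$ and $\xi \in \Lambda^0$. By Lemma~\ref{muindepap},
$$\mu(\psi(p\xi)) = \mu^0(\psi^0(p\xi)) = \mu^0(\psi^0(\eta)) =: c,$$
which lies in $\F_p$ and is manifestly independent of $a_p$, being obtained from a computation entirely over $\Z_p$. So we reduce to establishing $\mu(\psi(k_0)) = 2\bar t$.

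To compute $\mu(\psi(k_0))$, write $k_0 = (T - a_p^0)g$ with $g \in \Lambda^0 \otimes \Q_p$ recovered from the explicit functions $f^0$ in the proof of Proposition~\ref{F0a2} at $a_p^0$ (whose coefficients have denominators of order $1/p$). Then $\pi(k_0) = (a_p - a_p^0)\pi(g)$ in $\hat\Theta$. The identity $u(a_p) - 1 = (a_p - a_p^0) \cdot v_0$ for a unit $v_0 \in E^\times$ of valuation $-1$ gives $a_p - a_p^0 = tp^2 \cdot u_1$ for some unit $u_1$, so $\pi(k_0) \in p\hat\Theta \subset \Theta'$. Tracking the cancellation between the $1/p$ denominators of $g$ and the factor $tp$ in $a_p - a_p^0$ then identifies the image of $\pi(k_0)$ in $N = pS_2/\varpi pS_2$ as $\bar t$ times an explicit nonzero element on which $\mu$ takes the value $2$, yielding $\mu(\psi(k_0)) = 2\bar t$.

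The main obstacle is this final identification. It requires an explicit description of $g$ (equivalently, a first-order expansion in $t$ of the function $f$ underlying the relations $(T-a_p)f \in \ker\pi$ from the proof of Proposition~\ref{F0a2}) and careful bookkeeping of the resulting $S_2$-component in $\bar\Theta'$ against the normalization of $\mu$. The hypotheses on $E$ (unramified over $\Q_p$, or $u-1$ a uniformizer of $E$) are used precisely to ensure that $p^2 \hat\Theta \subset \varpi\Theta'$, so that the higher-order terms in the expansion die in the reduction to $\bar\Theta'$ and the leading contribution $2\bar t$ is indeed the whole answer beyond the reference constant $c$.
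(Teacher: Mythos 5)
The core structure you identify — $\mu(\gamma_1 e - e) = 2\bar{t} + c$ with $c$ computed at the base point $a_p^0 = p(1-r)$ — matches the paper's target, but your argument has a genuine gap precisely where the paper deploys Lemma~\ref{descrelement}, and you acknowledge this at the end. The paper's route is the following: Lemma~\ref{descrelement} asserts that the relations coming from the proof of Proposition~\ref{F0a2} can be written with \emph{fixed}, $a_p$-independent correction terms $z, z' \in \Lambda^0$, so that the only $a_p$-dependence is the coefficient $u(a_p)$. Subtracting the two relations then gives $\gamma_1 e' - e' = tp([\Id,\theta X^s]-[\Id,\theta Y^s]) + p(z'-z) + (\ker\psi)$, and applying $\mu$ yields $2\bar t + \mu(\psi(p(z'-z)))$; Lemma~\ref{muindepap} shows the last quantity equals $\mu^0(\psi^0(p(z'-z))) \in \F_p$. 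Your approach, by contrast, works with the single element $\eta := \gamma_1 e' - e'$ (which does not depend on $a_p$), decomposes it over $\Z_p$ as $\eta = k_0 + p\xi$ with $k_0 \in \ker\pi^0$, and tries to extract the $\bar t$-linear term from $\mu(\psi(k_0))$. The difficulty is that $\pi(k_0) = (a_p - a_p^0)\pi(g)$ with $g$ having $1/p$-denominators, and identifying its image in $N = pS_2 / \varpi p S_2$ as $\bar t$ times an explicit element requires the same $a_p$-independent bookkeeping that Lemma~\ref{descrelement} encodes. You have not carried this out, and without it the proof is incomplete. Moreover, the decomposition $\eta = k_0 + p\xi$ is not canonical, and you would also need to verify that $\mu(\psi(k_0))$ is independent of the choice (equivalently, that $\mu(\psi(p\kappa)) = 0$ for $\kappa \in \ker\pi^0$), which is an additional burden the paper's formulation avoids.

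Two smaller issues: first, your claim that the hypotheses on $E$ (unramified, or $u - \eps$ a uniformizer) are used to ensure $p^2\hat\Theta \subset \varpi\Theta'$ is misplaced — that inclusion holds unconditionally when $\delta = p$. Those hypotheses are instead used to make $\delta$ a uniformizer of $E$, which is what Proposition~\ref{S1S2} needs when $r \equiv 2 \bmod p$ (and when $r \not\equiv 2 \bmod p$ the argument of Section~\ref{compalat} applies instead). Second, the characterization ``$z \in \ker\pi + p\Lambda$'' for the residual term in your first display is not quite the right one for the purpose at hand: $\ker\psi$ is neither contained in nor contains $\ker\pi + p\Lambda$ when $\delta = p$, and $\psi$ does not kill $p\Lambda$ in this regime, so the constant $c$ is hiding in exactly the $p\Lambda$-part that your decomposition does not control $a_p$-independently. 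The repair is to invoke (or reprove) the paper's Lemma~\ref{descrelement}.
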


\begin{lemma}
\label{descrelement}
There are elements $z$
and $z'$ in $\Lambda^0$, independent of $a_p$, such that
$$
\sum_{\lambda\in \F_p}[g_{1,[\lambda]}^0,\theta X^s]- u(a_p)[\Id,\theta X^s]
+ pz
\in \ker\psi
$$
and
$$
\sum_{\lambda\in \F_p^*}[g_{1,[\lambda]}^0,\theta X^s] + [\alpha,\theta Y^s]-
u(a_p)[\Id,\theta Y^s]
+pz'
\in \ker\psi.
$$
\end{lemma}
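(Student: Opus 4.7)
\textbf{Proof plan for Lemma \ref{descrelement}.} The strategy is to recycle the function $f = f_0 + f_1 + f_\infty$ constructed in the proof of Proposition~\ref{F0a2} and multiply the key equation \eqref{october} by the unit $u(a_p) = 2c/(2-r)$. Since $(T-a_p)f \in \ker\pi \subset \ker\psi$, we obtain the integral congruence
\begin{eqnarray*}
\sum_{\lambda\in \F_p}[g^0_{1,[\lambda]}, \theta X^s] - u(a_p)[\Id, \theta X^s] + u(a_p)\bigl(h'''' + R\bigr) \in \ker\psi,
\end{eqnarray*}
where $h''''$ is the sum of $[g, X^r]$-terms collected in the proof of Proposition~\ref{F0a2} and $R \in p\Lambda$ is the $O(p)$-remainder. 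It remains to show that the error $u(a_p)(h'''' + R)$ is congruent, modulo $\ker\psi$, to $pz$ for a specific $z \in \Lambda^0$ that is independent of $a_p$.

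The first step is to replace each term $[g, X^r]$ appearing in $h''''$ by $-p w_g$ modulo $\ker\psi$, using Lemma~\ref{imageVr**}, where $w_g \in \Lambda^0$ is independent of $a_p$. Writing $u(a_p) = 1 + tp$ with $t \in R$ (legitimate since $v(u(a_p)-1) \geq 1$ in the case $\delta = p$), the contribution $tpR$ lies in $p^2\Lambda \subset \ker\psi$ by Lemma~\ref{kerpsi}; similarly each $tp \cdot [g, X^r]$ is congruent to $-tp^2 w_g \in p^2\Lambda \subset \ker\psi$. Hence $u(a_p)(h'''' + R) \equiv h'''' + R \bmod \ker\psi$. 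The second step is to reduce the $a_p$-dependence of the coefficients in $h''''$ and the entries of $R$: the inclusion $p\m_E\Lambda \subset \ker\psi$ (which follows from $\pi(p\m_E\Lambda) = p\m_E\Theta = \delta\m_E\Theta \subset \m_E\Theta'$ when $\delta = p$) shows that only the residues modulo $\m_E$ of these coefficients matter. The hypothesis $v(u(a_p)-1) \geq 1$, together with the technical condition when $r \equiv 2 \mod p$ (which ensures $v(a_p - a_p^0)$ is large enough relative to the ramification index of $E$), implies that $a_p \equiv a_p^0 = p(1-r)$ to a sufficient precision so that these reductions coincide with the analogous coefficients obtained in the computation for $a_p^0$. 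Taking $z \in \Lambda^0$ to be the element that arises from this $a_p^0$-computation (where $u(a_p^0) = 1$ by direct calculation, so all coefficients are integers independent of $a_p$) yields the first identity.

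The second identity follows from the first by applying the matrix $w = \matr{0}{1}{1}{0}$. Using the elementary relations $w g^0_{1,[\lambda]} = g^0_{1,[\lambda]^{-1}} \cdot k_\lambda$ for $\lambda \in \F_p^\times$ (with $k_\lambda \in KZ$ acting on $\theta X^s$ to give $-\theta X^s$ up to a known scalar), the identity $w g^0_{1,0} = \alpha w$, and $w \cdot \theta X^s = -\theta Y^s$, one transforms the first congruence term by term into the second, with $z' \in \Lambda^0$ obtained from $z$ by the same $w$-action.

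The main obstacle is the careful tracking in the second step: verifying that after multiplication by $u(a_p)$ and the substitution of $[g, X^r]$ via Lemma~\ref{imageVr**}, the resulting coefficients agree modulo $\m_E$ with their counterparts for $a_p = a_p^0$. This is where the technical hypothesis when $r \equiv 2 \mod p$ enters crucially, as it secures the necessary congruence $a_p \equiv a_p^0$ modulo a power of $p$ large enough to dominate the negative valuations introduced by the factors $1/c$ and $1/a_p$ appearing in the coefficients of $h''''$.
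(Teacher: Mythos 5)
Your proposal takes essentially the same approach as the paper, whose own proof is deliberately terse ("revisiting carefully the computations leading to the proof of Proposition~\ref{F0a2}" for $z$, and "applying the matrix $-w$" for $z'$). You correctly identify the substance: multiply \eqref{october} by the unit $u(a_p)$, use Lemma~\ref{imageVr**} to replace each $[g,X^r]$ by $-pz_g$ with $z_g \in \Lambda^0$ independent of $a_p$, absorb the $a_p$-dependent discrepancies into $\ker\psi$ via $p^2\Lambda \subset \ker\psi$ and $p\m_E\Lambda \subset \ker\psi$, and reduce to the computation at $a_p^0 = p(1-r)$ (where indeed $u(a_p^0)=1$). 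The one misattribution worth flagging is that you assert the technical hypothesis for $r \equiv 2 \bmod p$ ($E$ unramified or $u-\eps$ a uniformizer) "enters crucially" here; in fact the paper states explicitly, in the remark following the theorem, that this hypothesis is used in Proposition~\ref{S1S2} (via Section~\ref{compalat}), not in Lemma~\ref{descrelement}. The congruence $a_p/p \equiv 1-r \bmod \m_E$, which is all that is needed to fix the residues of the coefficients and the integrality of $t = (u(a_p)-1)/p$, already follows from "$a_p$ is close to $\eps p(1-r)$" together with $\delta = p$ and does not need the extra hypothesis.
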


\begin{proof}
The existence of $z$ can be proven by 
revisiting carefully the computations leading to the proof of Proposition
\ref{F0a2}.  
The existence of $z'$ then follows by applying the matrix $-w$ as in the proof of Proposition \ref{gamma1delta}.
\end{proof}

\begin{proof}[Proof of Proposition \ref{existsc}]
Let $z$ and $z'$ be the elements, independent of $a_p$, in $\Lambda^0$, from Lemma \ref{descrelement}.
%By Lemma \ref{muindepap}, we have that
%$\mu(\psi(pz))$ and $\mu(\psi(pz'))$ are independent of $a_p$. 
We set $c = \mu(\psi(pz'-pz)) \in \bar{\F}_p$. By Lemma~\ref{muindepap}, 
we have $c = \mu^0(\psi^0(pz'-pz)) \in \F_p$.
Since $u(a_p) = 1+tp$, as in the proof of Proposition \ref{gamma1delta}, we see that
$\psi(\gamma_1e'-e') = \bar{t}\psi([\Id,p\theta X^s]-[\Id,p\theta Y^s]) + \psi(pz'-pz)$.
Hence
$\mu(\gamma_1e-e) = 2\bar{t}+c$. 
\end{proof}

%Finally, we obtain when $r \not\equiv 2 \mod p$:

\begin{prop}
\label{taup}
If $\delta = p$, then the extension 
$0 \rightarrow N \rightarrow M \rightarrow \omega \rightarrow 0$
is non-split and $[\tau]$ is of the form $(1:x)$, for some $x\in k_E$. 
Moreover, for any fixed $r \:(\not\equiv 1, 2/3 \mod p)$, as $a_p$ varies, all values of $x \in
\bar{\F}_p$ can occur.
\end{prop}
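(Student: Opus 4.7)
The plan is to apply Lemma \ref{computetauomega} to the element $e = \psi(e') \in M \setminus N$, which directly gives
\[
[\tau] \,=\, \bigl(\,\mu(\gamma_0 e - e)\,:\,\mu(\gamma_1 e - e)\,\bigr) \,=\, \bigl(\,\overline{3r-2}\,:\,2\bar t + c\,\bigr),
\]
combining Propositions \ref{gamma0e-e} and \ref{existsc}. The hypothesis $r \not\equiv 2/3 \mod p$ is exactly the statement that $\overline{3r-2} \in k_E^{\times}$, so we may rescale to write $[\tau] = (1 : x)$ with $x = (2\bar t + c)/\overline{3r-2} \in k_E$. In particular the first coordinate is nonzero, so $\tau \neq 0$ and the extension $0 \to N \to M \to \omega \to 0$ is non-split, isomorphic to $E_\tau \otimes \omega$ with $[\tau] \in \P^1(k_E)$ of the claimed shape.

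For the surjectivity statement, since $c$ is a universal constant depending only on $r$ (not on $a_p$), it suffices to show that $\bar t$ ranges over all of $\bar\F_p$ as $a_p$ varies through those $p$-adic numbers in finite extensions of $\Q_p$ with $v(a_p) = 1$, $\overline{a_p/p} = \overline{1-r}$, and $v(u(a_p) - 1) \geq 1$. I would carry out a direct Taylor expansion: write $a_p/p = (1-r) + p\beta$ for $\beta \in \bar\Z_p$; a short calculation starting from the definition $u = (2\alpha^2 - r(r-1))/(\alpha(2-r))$ with $\alpha = a_p/p$ yields
\[
u(a_p) - 1 \;\equiv\; p\beta \cdot \frac{2-3r}{(r-1)(r-2)} \pmod{p^2},
\]
so that $\bar t \equiv \bar\beta \cdot \overline{\frac{2-3r}{(r-1)(r-2)}}$ in $\bar\F_p$. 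Under the hypotheses $r \not\equiv 2/3, 1 \mod p$ and (for now) $r\not\equiv 2\mod p$, the coefficient $\overline{(2-3r)/((r-1)(r-2))}$ is a nonzero element of $\F_p^\times$, hence as $\bar\beta$ ranges over $\bar\F_p$ so does $\bar t$, and hence so does $x$.

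The main technical obstacle is the exceptional case $r \equiv 2 \mod p$, where the denominator $(r-2)$ above is no longer a unit and the Taylor expansion must be recalibrated. In that case the extra hypothesis of Theorem \ref{a2ramifiethm}(2)(b)(iii) (that $E = \Q_p(a_p)$ is unramified over $\Q_p$ or $u - \eps$ is a uniformizer of $E$) was exactly what was used in Proposition \ref{S1S2} to ensure the lattice $\Theta'$ is well-defined via Proposition \ref{compquotient}; for the surjectivity, the analogous expansion of $u(a_p)-1$ in terms of $\beta$ must be carried out allowing $\beta$ itself to range over elements of bounded denominator in $\bar\Q_p$, so that one still reaches every class in $\bar\F_p$ after dividing by the appropriate power of a uniformizer of $E$. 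Once this is verified, $\bar t$ surjects onto $\bar\F_p$ and the proposition follows.
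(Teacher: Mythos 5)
Your derivation of $[\tau] = (\overline{3r-2}:2\bar t+c)$ from Propositions~\ref{gamma0e-e} and~\ref{existsc}, and the observation that $r\not\equiv 2/3 \bmod p$ makes the first coordinate a unit, is exactly the paper's argument for the first claim.

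For the surjectivity statement you take a genuinely different route. The paper works \emph{backward}: given a target $x$, it chooses $t$ as an unramified lift of $\tfrac{(\overline{3r-2})x-c}{2}$, then treats $u(a_p)=1+tp$ as a quadratic in $\alpha=a_p/p$ and invokes the mod-$p$ factorization (roots $\overline{1-r}$ and $\overline{r/2}$, distinct precisely because $r\not\equiv 2/3$) to produce a unit root in an unramified extension. You work \emph{forward}: parametrize $\alpha = (1-r)+p\beta$ and Taylor-expand. Your identity $u(a_p)-1 \equiv p\beta\cdot\tfrac{2-3r}{(r-1)(r-2)} \pmod{p^2}$ is correct, and it does prove surjectivity when $r\not\equiv 2\bmod p$, where the coefficient is a unit.

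The gap is in your sketch of the $r\equiv 2\bmod p$ case, and it is in the wrong direction. Writing the exact expression
$$u-1 \;=\; \frac{p\beta(2-3r)+2p^2\beta^2}{(2-r)\alpha},$$
one sees $v(u-1)=1+v(\beta)-v(r-2)$. Thus $\delta=p$ forces $v(\beta)\geq v(r-2)\geq 1$: $\beta$ must have \emph{larger} valuation, not bounded denominator as you suggest — shrinking $v(\beta)$ would make $u-1$ into a uniformizer or even a unit and would leave the $\delta=p$ regime entirely. Writing $\beta=(r-2)\gamma$ one then gets $\bar t = \overline{\tfrac{2-3r}{r-1}}\cdot\bar\gamma$ with the leading coefficient a unit. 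You must also still verify that the resulting $a_p$ satisfies the hypothesis of Theorem~\ref{a2ramifiethm} when $r\equiv 2\bmod p$, i.e.\ that $E=\Q_p(a_p)$ is unramified; this follows if $\gamma$ is taken in $\Z_p^{\mathrm{ur}}$, since then $\alpha\in\Z_p^{\mathrm{ur}}$. The paper's backward construction gets all of this essentially for free from Hensel's lemma applied to the quadratic over an unramified ring, which is why it is the cleaner route.
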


\begin{proof} The first statement follows from Proposition
\ref{gamma0e-e}, since $3r-2\not\equiv 0\mod p$.  

Now let us fix $r\not\equiv 1, 2/3\mod p$. Given any $x
\in\bar\F_p$, let %$t \in\bar\Z_{p^\infty}$ 
$t$ be a lift of
$\frac{(\overline{3r-2})x-c}{2}\in\bar\F_p$ in some unramified extension of $\Z_p$.  Then the equation $u(a_p) =
1 + tp$ gives rise to a quadratic polynomial in $a_p/p$, %at least 
one of whose roots reduces to $\overline{(1-r)}$, hence is a unit. Note that if $p\mid 2-r$, then this root lies in an unramified extension of $\Z_p$.
 This gives us a $p$-adic integer $a_p$ with
$v(a_p)=1$ such that $a_p$ is close to $\eps p(1-r)$ for $\eps=1$, and moreover 
$E=\Q_p(a_p)$ is unramified when $p\mid 2-r$. Therefore the corresponding $\tau$ satisfies $[\tau]=(3r-2:2\bar t+c)=(1: x)$,
as desired.
\end{proof}

This completes the proof of Theorem~\ref{a2ramifiethm}. \\

%\begin{remark}
%When $r \equiv 2 \mod p$, the additional condition that $u(a_p)-1$ is a
%uniformizer of $\Q_p(a_p)$ prevents the case $\bar{t} = 0$ from
%occurring. If we could remove this additional condition 
%the same result would hold also in this case.
%\end{remark}

\subsection{Trivial semisimplification}
\label{sectiontrivialnonsplit}

In this section we make some remarks about the case when $\bar{V}_{k,a_p}^{ss}$
is the trivial representation, up to a twist. 

Assume that the  reduction $\bar{V}_{k,a_p}$ of a given $G_{\Q_p}$-stable lattice in $V_{k,a_p}$ is a non-split
extension of the trivial representation $1$ by $1$, up to a twist. Then one can
ask whether, after the same twist, the reduction is unramified or ramified. More precisely, the cocycle
describing the (twist of the) reduction is just a non-zero map from $G_{\Q_p}$ to $\bar\F_p$ lying
in the cohomology group $\mathrm{H}^1(G_{\Q_p}, \bar\F_p) \cong \bar\F_p^2$, and is again well-defined 
up to a constant. We say that (the twist of) $\bar{V}_{k,a_p}$ is unramified if this map factors through the 
Galois group of the maximal unramified extension of $\Q_p$, and is ramified otherwise.
In the context of Theorem~\ref{maintheoslopeone}, this question arises in exactly one case, 
namely when % $a = 1$ or $b = p$ (cf. Section~\ref{casea=1}). More precisely, it arises when
\begin{enumerate}
\item [(3)] $a=1$, $r \geq 3p-2$, $p$ divides $r$ and $\lambda = \pm \overline{1}$, so
          $\overline{\frac{a_p}{p}} = \lambda \pm \overline{\sqrt{r/p}}$.
\end{enumerate}

It is known, see \cite[Prop. 11]{BC14} (we thank F. Herzig for providing this reference), that the  $G_{\Q_p}$-stable lattices in this setting 
(when the semisimplification of 
the reduction is a direct sum of  equal characters) form a convex, bounded subset of the tree, whose
interior vertices have full valency and correspond to lattices whose reduction is split, and 
whose extremal vertices have valency one and correspond to lattices whose reduction is non-split. 
In particular, since
there may be a large number of extremal vertices, there may be a large number of non-homothetic 
$G_{\Q_p}$-stable lattices to study. 

We shall answer this question now, but only for the lattice in $V_{k,a_p}$ corresponding to the
standard lattice $\Theta = \Theta_{k,a_p}$ on the automorphic side.

We let $p \geq 3$ (allowing $p = 3$ in this subsection; for some related  work for $p = 2$, but in the global setting, see \cite{AC17}).
Assume that $r > 2p$, with $r \equiv 1 \mod (p-1)$, so $a = 1$, and that $p \mid r$. 
Assume, for simplicity, that $\lambda = 1$. We showed (Propositions~\ref{F_2=0}, and \ref{elimination and} with $c = 2$) that $\bar\Theta$, 
the reduction of the standard lattice, is isomorphic to $\ind_{KZ}^G (V_{p-2} \otimes D)/(T-1)^2$. 
The following theorem shows that the corresponding (twist of the) reduction $\bar{V}_{k,a_p} \otimes \omega^{-1}$ 
is an unramified non-split extension of $G_{\Q_p}$, as mentioned in the Introduction.

\begin{theorem}
\label{unramified}
Let $\Pi = (\ind_{KZ}^G V_{p-2}) /(T-1)^2$. Then $V(\Pi)$
is a non-split, unramified extension of the trivial mod $p$ representation by
itself.
\end{theorem}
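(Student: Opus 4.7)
The plan is to apply Colmez's exact functor $V$ to the natural two-step filtration of $\Pi$. Setting $M := \ind_{KZ}^G V_{p-2}$ and $\pi := M/(T-1) = \pi(p-2,1,1)$, one first observes that $\End_G(M) \cong \bar{\mathbb{F}}_p[T]$, so $T-1$ is injective on $M$; combined with $M/(T-1)M = \pi \neq 0$, this produces the short exact sequence
\[
0 \to \pi \xrightarrow{T-1} \Pi \to \pi \to 0.
\]
Since $(r,\lambda)=(p-2,1)\notin\{(0,\pm 1),(p-1,\pm 1)\}$, $\pi$ is an irreducible principal series, and by compatibility of $V$ with the semisimple mod $p$ LLC (applied to $\pi(p-2,1,1)^{ss}\oplus\pi(p-2,1,1)^{ss}\leftrightarrow \mathbf{1}\oplus\mathbf{1}$, since $[p-3-(p-2)]=p-2$ and $\omega^{p-1}=1$), we have $V(\pi)=\mathbf{1}$, the trivial $1$-dimensional representation of $G_{\Q_p}$. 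Exactness of $V$ then yields $0\to\mathbf{1}\to V(\Pi)\to \mathbf{1}\to 0$.

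To prove non-splitness, I would first argue that $\Pi$ itself is a non-split self-extension of $\pi$: any splitting would force $T-1$ to vanish identically on $\Pi$, whereas $(T-1)\Pi=(T-1)M/(T-1)^2M\cong \pi\neq 0$ by the injectivity of $T-1$ on $M$. Invoking the fully faithful nature of Colmez's functor on the Pa\v{s}k\=unas block containing $\pi$ (equivalently, the block on the Galois side corresponding to the semisimple representation $\mathbf{1}\oplus\mathbf{1}$) then transports non-splitness to $V(\Pi)$.

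The main obstacle is showing that the extension is unramified, and my plan is to pass to a characteristic-zero lift using Theorem \ref{a=1}(ii). For any $(k,a_p)$ satisfying $r=k-2\equiv 1\mod(p-1)$, $p\mid r$, $v(a_p)=1$, and $c = \overline{a_p/p - (r-p)/a_p} = 2$, the polynomial $T^2-cT+1$ becomes $(T-1)^2$, and the analysis of Proposition \ref{elimination and}(ii), combined with the JH content dictated by the mod $p$ LLC, upgrades the surjection there to an isomorphism $\bar\Theta_{k,a_p} \cong \Pi\otimes \omega$. Compatibility of $V$ with reduction then identifies $V(\Pi)\otimes \omega\cong V(\bar\Theta_{k,a_p}) = \bar V_{k,a_p}$, so the task reduces to showing that $\bar V_{k,a_p}\otimes \omega^{-1}$ is unramified. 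This would be verified by an explicit analysis of the filtered $\varphi$-module $D_{k,a_p}$ of \cite[\S 2.3]{Berger11} in this particular subcase: both $\varphi$-eigenvalues of the crystalline lift have slope $1$ and reduce (after scaling by $p^{-1}$) to the scalar $1$, and weak admissibility combined with the shape of the Hodge filtration forces the inertia action on the reduction to be scalar equal to $\omega$, precisely as required. The technical heart of the argument is this filtered $\varphi$-module computation; in case of difficulty, a backup would be to compute the $(\varphi,\Gamma)$-module attached to $\Pi$ directly on the automorphic side and observe that $\Gamma$ acts trivially.
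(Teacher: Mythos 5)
Your reduction of $V(\Pi)$ to a self-extension of the trivial character via the exact sequence $0\to\pi\xrightarrow{T-1}\Pi\to\pi\to 0$ and exactness of Colmez's functor is fine, and matches the paper's starting point in spirit. Your non-splitness argument via full faithfulness on the relevant block is a plausible alternative to the paper's, which instead extracts non-splitness as a corollary once the cocycle $\tau$ is computed and seen to satisfy $\tau(p)\neq 0$ (Proposition~\ref{Colmezunram} and the computation at the end of Section~\ref{sectiontrivialnonsplit}).

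The genuine gap is in the unramifiedness step. You propose to identify $\bar\Theta_{k,a_p}\cong\Pi\otimes\omega$ for a suitable crystalline lift with $c=2$ (this identification is correct: the surjection from Proposition~\ref{elimination and} must be an isomorphism because both sides have Jordan--H\"older length two by the mod~$p$ LLC) and then verify on the Galois side that $\bar V_{k,a_p}\otimes\omega^{-1}$ is unramified by analyzing the filtered $\varphi$-module $D_{k,a_p}$. But this last step is not a routine computation: the reduction of a crystalline representation is \emph{not} read off from the $\varphi$-eigenvalues and the Hodge filtration via weak admissibility. In particular, the assertion that the two $\varphi$-eigenvalues having slope $1$ and equal reduction "forces the inertia action on the reduction to be scalar" does not follow; the reduction is a non-split extension whose ramification behaviour depends on the integral structure (e.g.~the Wach module or $(\varphi,\Gamma)$-module lattice), not merely on the rational filtered module. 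Determining the reduction from $D_{k,a_p}$ is exactly the hard problem that this entire paper circumvents by passing to the automorphic side. The paper's actual proof instead stays on the automorphic side: it uses Colmez's description (Proposition~\ref{Colmezunram}) of $V(\Pi)$ in terms of the Jacquet module $J(\Pi)$, shows that the image of $[1,Y^{p-2}]$ is non-zero in $J(\pi(p-2,1,1))$ (Lemma~\ref{nonzeroinJ}), and then computes $\tau(d)=0$ for $d\in\Z_p^\times$ and $\tau(p)=1$ by explicit manipulations with the Hecke operator formula and elements of $\Pi(N)$. This is a finite, concrete computation, whereas your proposed filtered $\varphi$-module computation is left as a black box whose contents would be a substantial new argument. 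Your "backup" of computing the $(\varphi,\Gamma)$-module on the automorphic side is closer in spirit to the paper's route, but the Jacquet-module formulation is the efficient way to package it.
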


\begin{proof}
Let $W = V(\Pi)$. By the semisimple mod $p$ local Langlands correspondence,
$W$ is a two-dimensional mod $p$ representation of
$G_{\Q_p}$ with trivial semisimplification. By \cite[Section
1.5.1 and Lemma 1.5.2]{Kis}, we know that there is an operator $S$ acting
on $W$ such that $W$ is a free $\bar\F_p[[S]]/S^2$-module of rank $1$ and
$S$ commutes with the action of $G_{\Q_p}$.
By \cite[Lemma 1.5.9]{Kis}, we know that $G_{\Q_p}$ acts on $W$ through
the character $\mu_{1+S}$ (with values in $(\bar\F_p[[S]]/S^2)^\times$, noting that the central character 
$\psi$ in the statement of the lemma is $\omega^{p-2}$). So we see that
the action of $G_{\Q_p}$ on $W$ is unramified. Moreover $S$ acts
non-trivially on $W$, so $W$ is not the trivial representation of
dimension $2$ of $G_{\Q_p}$.
\end{proof}

\section{Examples}
\label{sectionexamples}

In this final section we  compare our general theorems with some 
specific examples in the literature, and also 
provide some new information about these examples. 

Consider the Delta function $\Delta = \sum_{n=1}^\infty \tau(n) q^n$, the unique
normalized cusp form of level 1 and weight  $k=12$. 
It is known that the only primes $p < 10^6$ for which $\Delta$ has 
positive slope (i.e., $p \mid \tau(p) = a_p)$ are $p = 2$, $3$, $5$, $7$ and $2,411$, though recently it
was discovered that the 10 digit prime $p = 7,758,337,633$ is also of positive slope \cite{LR10}.
Serre and Swinnerton-Dyer \cite{Serre73} computed the global reduction of the Galois representations
attached to cusp forms of level 1 and small weight,  for small primes $p$ in order to explain 
congruences going back to Ramanujan. In particular, they proved that the shape of 
$\bar\rho_\Delta^\mathrm{ss} : G_\Q \rightarrow \mathrm{GL}_2(\F_p)$, for $p \leq 7$, is 
as follows:
\begin{itemize}
   \item If $p = 2$, so $\tau(p) = -24$  and $v(a_p) = 3$, then $\bar\rho_\Delta^{ss}$ is trivial. 
   \item If $p = 3$, so $\tau(p) = 252$ and $v(a_p) = 2$,  then $\bar\rho_\Delta^{ss} \cong
                                           \left( \begin{matrix} 
                                                        \omega & 0  \\ 0 & 1
                                           \end{matrix} \right)$.  
  \item If $p = 5$, so $\tau(p) = 4,830$ and $v(a_p) = 1$, then
            $\bar\rho_\Delta^{ss} \cong
                                          \left( \begin{matrix} 
                                                        \omega & 0  \\ 0 & 1
                                           \end{matrix} \right) \otimes \omega$. 
  \item If $p = 7$, so $\tau(p) = -16,744$, then $v(a_p) = 1$ and 
            $\bar\rho_\Delta^{ss} \cong
                                          \left( \begin{matrix} 
                                                        \omega^{3} & 0  \\ 0 & 1
                                           \end{matrix} \right) \otimes \omega$. 
\end{itemize}
%Of course, if $p \geq 11$ and  $v(a_p) > 0$ (e.g.,  $p = 2,411$ or $7,758,337,633$), then 
%            $\bar\rho_\Delta |_{G_{\Q_p}} % = \left( \begin{matrix} 
%                                              %          \omega_2^{11} & 0  \\ 0 & \omega_2^{11p}
%                                           %\end{matrix} \right).$$
%                            \cong \ind (\omega_2^{11})$.

For primes $p$ of positive slope, the restriction of 
$\bar{\rho}_\Delta^{ss}$ to $G_{\Q_p}$ is isomorphic to $\bar{V}_{12, \tau(p)}^{ss}$, 
at least if $\tau(p)^2 \neq 4p^{11}$,
so one may compare the results above for slope 1 with the results in this paper.
The primes $p \geq 11$ are in the well understood Fontaine-Lafaille range $2 \leq k \leq p+1$,  covered by
the work of Edixhoven \cite{Edixhoven92} (for all positive slopes), so we do not comment further on these
primes here. Also, the restriction of $\bar\rho_\Delta^\mathrm{ss}$ above to $G_{\Q_p}$ when $p = 7$ 
matches with what was computed by Breuil \cite{Br03} who treated weights $k \leq 2p+1$ for all positive slopes 
(though the case $k = 2p+1$ was only later stated in \cite{Berger11}).

We now consider the case $p = 5$. In this case, the shape of the global representation $\bar\rho_\Delta^{ss}$ above 
explains the congruence $\tau(n) \equiv n \sigma_1(n) \mod 5$, for all $n \geq 1$, which follows, e.g., from \cite{Bam46}.  We show
that the restriction of $\bar\rho_\Delta^{ss}$ to $G_{\Q_p}$ 
matches with what is predicted in general by Theorem~\ref{maintheoslopeone}.
In the notation of that theorem, we have $r = 10 \equiv 2 \mod (p-1)$, so $b =2$. Moreover 
$$v \left(\frac{a_p}{p} - {r \choose 2}\frac{p}{a_p} \right)= 0 = v(2-r),$$
so we are in the middle case of the  trichotomy there. Since 
$$\lambda \quad \equiv \quad \frac{2}{2-r} \left( \frac{a_p}{p} - {r \choose 2} \frac{p}{a_p} \right)
 \quad \equiv \quad
                          -\frac{2}{8} (966) \quad \equiv \quad  1 \mod p, $$ 
the unramified characters $\mu_{\lambda^{\pm1}}$ are trivial, and we recover that 
$(\bar \rho_\Delta |_{G_{\Q_p}})^{ss} \cong \omega^2 \oplus \omega$. 

But we can say more. Since $\frac{a_p}{p} \equiv 1 \equiv \varepsilon(1-r)  \mod p$ with $\eps = 1$,  and
$v(u  - \eps)$ is necessarily $\geq 1$ being a positive integer, we 
deduce by Theorem~\ref{maintheopeuvstres} that when $p = 5$,
    $$\bar\rho_\Delta |_{G_{\Q_p}} \quad \text{is a tr\`es ramifi\'ee extension}, $$
whenever it is a non-split extension of $\omega$ by $\omega^2$.

We remark that  the formula for $\lambda$ above simplifies 
to the more na\"ive one occurring for $3 \leq b \leq p-1$ in Theorem~\ref{maintheoslopeone},
whenever ${r \choose 2} \equiv 0 \mod p$, and in particular when $p \> | \> r$, as was the case 
above. We now give an example to show that the more complicated formula above for $\lambda$ when
$b = 2$ is indeed required. Consider instead $\Delta_{16}$, the unique cusp form of level 1 and weight $k= 16$, and
again take $p = 5$. Then $p \> \| \> a_p = 52,110$, so $v(a_p) = 1$.  Also $r = 14$ so
$b = 2$ as before, and
$$\lambda \quad \equiv \quad %\frac{2}{2-r} \left( \frac{a_p}{p} - {r \choose 2} \frac{p}{a_p} \right) 
            %\quad \equiv \quad
                          -\frac{2}{12} \left( 10,422 - \frac{ 7 \cdot 13} {10,422} \right) \quad \equiv \quad 1 \mod p,$$ compared to the more na\"ive (and incorrect) $\lambda = 3$,  and we again recover the
result that  $(\bar \rho_{\Delta_{16}} |_{G_{\Q_p}})^{ss} \cong \omega^2 \oplus \omega$, for $p = 5$,
from \cite{Serre73}. We also remark that Theorem~\ref{maintheopeuvstres} provides no extra information in this example, since $r \equiv 2/3 \mod p$.

For other examples, for small values of $k$ and $p$, we refer the reader to \cite{Roz16}, where an 
algorithm to compute $\bar{V}_{k,a_p}^{ss}$ is described and implemented for all positive slopes.

\vspace{.3cm}
\noindent {\bf Acknowledgements:} 
The first author was supported by grant 1246/2014 from the Germany-Israel
Foundation for Scientific Research and Development.
This work was partly supported by the NSF under Grant No. 0932078 000,
while the second and third authors were in residence at MSRI during the
Fall 2014 semester. 
The second author would like to thank Princeton University, University
of British Columbia and \'ENS de Lyon, for their hospitality during the 
periods April-June 2015, October 2015 and June 2016. 
The third author was partially supported by the ANR grant PerCoLaTor (ANR 14-CE25).
Finally, we thank the referee for a careful reading of the paper 
and for many helpful suggestions to simplify our arguments in Section 7.

\end{document}